\newcommand{\e}{\mathsf e}
\DeclareRobustCommand{\subtitle}[1]{\\#1}
\newtheorem{fact}{Fact}
\newtheorem{proposition}{Proposition}
\newtheorem{corollary}{Corollary}
\newtheorem{theorem}{Theorem}
\newtheorem{lemma}{Lemma}
\newtheorem{definition}{Definition}
\newtheorem{example}{Example}
\newtheorem{remark}{Remark}
\newcommand\blfootnote[1]{%
  \begingroup
  \renewcommand\thefootnote{}\footnote{#1}%
  \addtocounter{footnote}{-1}%
  \endgroup
}
\title{An algebraic theory of clones\subtitle{\small with an application to a question of Birkhoff and Maltsev}}
\author{Antonio Bucciarelli}
\address{Institut de Recherche en Informatique Fondamentale,
Universit\'e de Paris, 8 Place Aur\'elie Nemours, 75205 Paris Cedex 13, France}
\email{buccia@irif.fr}
\urladdr{www.irif.fr/$\sim$buccia} 
\author{Antonino Salibra}
\address{Department of Environmental Sciences, Informatics and Statistics,
Universit\`a Ca'Foscari Venezia, Via Torino 155, 30173 Venezia, Italia}
\email{salibra@unive.it}
\urladdr{www.dsi.unive.it/$\sim$salibra}
\begin{document}

\blfootnote{ {\it 2020 Mathematics Subject Classification}.
    Primary: 08A40; Secondary: 08B05, 08B15, 08C05.}
 \blfootnote{ {\it Key words and phrases}. clones, clone algebras, functional clone algebras, $\omega$-clones, representation theorem, lattices of equational theories.}

\begin{abstract}

The functional composition $f(g_1,\ldots,g_k)$,
the substitution $t[t_1/v_1,\ldots,t_k/v_k]$ of the terms $t_j$'s for the variables  $v_j$'s in $t$, the statement  {\tt  case}$(x,y_1,\ldots,y_k)$ returning one of  $y_j$'s depending on the value of $x$, are all instances of a unique $(k+1)$-ary operation, $q_k(x,y_1,\ldots,y_k)$,  equipped with a set  of 
$k$ constants, representing  respectively projections, variables  and (generalised) truth-values.
Needless to say, these are very basic operations, largely used in computer science and algebra.
This observation is at the root of some recent developments,
at the frontier between universal algebra and computer science.
They  concern in particular the $k$-dimensional generalisations of
Boolean algebras ($k\geq 1$,  the case $k=1$ giving rise to the skew Boolean algebras) 
and the one-sorted, purely algebraic presentation of the notion of clone introduced in this paper.  

\emph{Clone algebras} ($\mathsf{CA}$) are defined by true identities and thus form a variety in the sense of universal algebra.
 The most natural $\mathsf{CA}$s, the ones the axioms are intended to characterise, are algebras of  functions, called \emph{functional clone algebras} ($\mathsf{FCA}$).  
The universe of a $\mathsf{FCA}$, called  \emph{$\omega$-clone}, is a set of infinitary operations from $A^\omega$ into $A$, for a given set $A$, containing the projection $p_i$ and closed under finitary compositions. We show that there exists a bijective correspondence between clones (of finitary operations) and a suitable subclass of $\mathsf{FCA}$s, called \emph{block algebras}.  Given a clone, the corresponding block algebra is obtained by extending the operations of the clone by countably many dummy arguments.
  
One of the main results of this paper is the general representation theorem, where it is shown that every $\mathsf{CA}$ is isomorphic to a $\mathsf{FCA}$. 
 In another result of the paper we prove that  the variety of $\mathsf{CA}$s is generated by the class of block algebras. This implies that every $\omega$-clone is algebraically generated by a suitable family of clones by using direct products, subalgebras and homomorphic images.

We conclude the paper with two applications. In the first one, we use clone algebras  to answer 
a classical question about the lattices of equational theories. 
The second application is to the study of the category $\mathcal{VAR}$ of all varieties. 
We introduce the category $\mathcal{CA}$ of all clone algebras (of arbitrary similarity type) with pure homomorphisms (i.e., preserving only the nullary operators $\e_i$ and the operators $q_n$) as arrows.
We show that the category $\mathcal{VAR}$ is categorically isomorphic to a full subcategory  of $\mathcal{CA}$. We use this result to provide a generalisation of a classical theorem on independent varieties.

\end{abstract}

\maketitle

\renewcommand{\subtitle}[1]{}
 
%
%
%
%

\section{Introduction}\label{sec:intro}

Clones are sets of finitary operations on a given set that contain all the projections and are closed under composition. They play an important role in universal algebra due to the fact that the set of all term operations of an algebra, always forms a clone. Moreover, important properties, like whether a given subset forms a subalgebra, or whether a given map is a homomorphism, do not depend on the specific fundamental operations of the considered algebra, but rather on the clone of its term operations. Hence, comparing clones of algebras is much more suitable than comparing their signatures, in order to classify them according to essentially different behaviours (see \cite{SZ86,T93}).


Some attempts  have been made to encode clones into algebras. A particularly important one led to  the concept of abstract clones  \cite{Co65,T93}, which are many-sorted algebras axiomatising composition of finitary functions and projections. Every abstract clone has a concrete representation as an isomorphic clone of finitary operations. Modulo a caveat about nullary operations, we remark that abstract clones may be recasted as a reformulation of the concept of Lawvere's algebraic theories \cite{Law63}.
The latter constitutes a common category theoretic means to capture equational theories independently of their presentation (i.e. of the chosen similarity type).


Somehow unexpectedly, some recent work at the frontier of theoretical computer science and universal algebra provides  tools for
giving an alternative algebraic account of clones.
There is a thriving literature on abstract treatments of the if-then-else construct of computer science, starting with McCarthy's seminal investigations \cite{MC}. On the algebraic side, one of the most influential approaches originated with Dicker's axiomatisation of Boolean algebras in the language with the if-then-else as primitive \cite{D63}. Accordingly, this construct was treated as a proper algebraic operation $q_2^\mathbf A$ of arity three on algebras $\mathbf A$ whose type contains, besides the ternary term $q_2$, two constants $0$ and $1$, and having the property that for every $a, b\in A$, $q_2^\mathbf A(1^\mathbf A, a, b) = a$ and $q_2^\mathbf A(0^\mathbf A, a, b) = b$. Such algebras, called Church algebras of dimension $2$ in \cite{Bucciasali}, will be termed here $2$-Church algebras.
This approach was generalised in \cite{BLPS18} (see also \cite{Bucciasali,SBLP20}) to algebras $\mathbf A$ having $n$ designated elements $\e_1,\dots, \e_n$ ($n\geq  2$) and a $(n + 1)$-ary operation $q_n$ (a sort of ``generalised if-then-else'') satisfying the identities $q_n(\e_i,a_1,\dots,a_n)=a_i$. These algebras will be called here $n$-Church algebras.

At the root of the most important results in the theory of Boolean algebras (including Stone's representation theorem) there is the simple observation that every element $c\neq 0, 1$ of a Boolean algebra $B$ decomposes $B$ as a Cartesian product $[0, c]\times [c, 1]$ of two nontrivial Boolean algebras. In the more general context of $n$-Church algebras, we say that an element $c$ of an $n$-Church algebra $\mathbf A$ is $n$-central if $\mathbf A$ can be decomposed as the product $\mathbf A/\theta(c, \e_1)\times\dots\times \mathbf A/\theta(c, \e_n)$, where $\theta(c, \e_i)$ is the smallest congruence on $\mathbf A$ that collapses $c$ and $\e_i$. An $n$-Church algebra where every element is $n$-central, called Boolean-like algebra of dimension $n$ in \cite{BLPS18}, will be termed here $n$-Boolean-like algebra ($n$BA, for short). 
Varieties of nBAs share many remarkable properties with the variety of Boolean algebras. In particular, any variety of nBAs is generated by the nBAs of finite cardinality $n$. In the pure case (i.e., when the type includes just the generalised if-then-else $q_n$ and the $n$ constants), the variety is generated by a unique algebra $\mathbf n$ of universe $\{\e_1,\dots, \e_n \}$, so that any pure nBA is, up to isomorphism, a subalgebra of $\mathbf n^X$, for a suitable set $X$. The variety of all $2$BAs in the type $(q_2,0,1)$ is term-equivalent to the variety of Boolean algebras.

In the framework of  $n$-Church and $n$-Boolean like  algebras, the constants 
$\e_i$ and the $n+1$-ary operation $q_n$ represent the generalised truth-values and the generalised conditional operation, respectively.
More generally, these constants and operation allow to express neatly other fundamental algebraic concepts as one-sorted,  purely algebraic theories.
These include in particular: (i) variables and term-for-variable substitution in free algebras on one side, and (ii) projections and functional composition in clones on the other.

Building up on this observation, we introduce  in this paper an algebraic theory of clones. 
%
%
%
Indeed, the variety of clone algebras  ($\mathsf{CA}$) introduced here  constitutes a purely one-sorted algebraic theory of clones in the same spirit as  Boolean algebras constitute an algebraic theory of classical propositional logic. 
Clone algebras of a given similarity type $\tau$ ($\mathsf{CA}_\tau$s) are defined by universally quantified equations and thus form a variety in the universal algebraic sense. 
The operators of type $\tau$ are taken as fundamental operations in $\mathsf{CA}_\tau$s.
 A crucial feature of our approach is connected with the role played by variables in algebras (resp. by projections in clones)  as placeholders. In clone algebras this is abstracted out, and takes the form of a system of fundamental elements (nullary operations) $\e_1,\e_2,\dots,\e_n,\dots$ of the algebra. This  important feature is borrowed from algebraic logic, namely cylindric and polyadic algebras and from lambda abstraction algebras (see \cite{HMT,SG99}).
One important consequence of the abstraction of variables is the abstraction of term-for-variable substitution (or functional composition) in $\mathsf{CA}_\tau$s, obtained by introducing an $n+1$-ary operator $q_n$ for every $n\geq 0$.
Roughly speaking, $q_n(a,b_1,\dots,b_n)$  represents
the substitution of $b_i$ for $\e_i$ into $a$ for $1\leq i\leq n$ (or the composition of  $a$ with  $b_1,\dots,b_n$). 
Every clone algebra is an $n$-Church algebra, for every $n$.

 The most natural $\mathsf{CA}$s, the ones the axioms are intended to characterise, are algebras of functions, called \emph{functional clone algebras}. 
 The elements of a functional clone algebra are infinitary operations from $A^\omega$ into $A$, for a given set $A$. In this framework  $q_n(f,g_1,\dots,g_n)$ represents the $n$-ary composition of $f$ with $g_1,\dots,g_n$, acting on the first $n$ coordinates:
 $$q_n(f,g_1,\dots,g_n)(s)= f(g_1(s),\dots,g_n(s),s_{n+1},s_{n+2},\dots),\ \text{for every $s\in A^\omega$}$$
 and the nullary operators are the projections $p_i$ defined by $p_i(s)=s_i$ for every $s\in A^\omega$.
Hence, the universe of a functional clone algebra is a set of infinitary operations containing the projection $p_i$ and closed under finitary compositions, called hereafter \emph{$\omega$-clone}. We show that there exists a bijective correspondence between clones (of finitary operations) and a suitable subclass of functional clone algebras, called \emph{block algebras}.  Given a clone $C$, the corresponding block algebra is obtained by extending the operations of the clone by countably many dummy arguments.
If $f\in C$ has arity $k$, then the top expansion  of $f$ is an infinitary operation $f^\top:A^\omega\to A$:
$$f^\top(s_1,s_2,\dots,s_k,s_{k+1},\dots)=f(s_1,\dots,s_k),\quad \text{for every $(s_1,s_2,\dots,s_k,s_{k+1}\dots)\in S^\omega$}.$$ 
By collecting all these top expansions in a set $C^\top=\{f^\top: f\in C\}$, we get a functional clone algebra, called block algebra.
In the first representation theorem of the paper we show that the ``concrete'' notion of block algebra coincides, up to isomorphism, with  the abstract notion of finite-dimensional clone algebra, where a clone algebra is finite-dimensional if each of its elements can be assigned a finite dimension, generalising the notion of arity to infinitary functions.


The axiomatisation of functional clone algebras is a central issue in the algebraic approach to clones.
 We say that a clone algebra is functionally representable if it is isomorphic to a functional clone algebra. One of the main results of this paper is the general representation theorem, where it is shown that every $\mathsf{CA}$ is functionally representable. 
 Therefore, the  clone algebras are the full algebraic counterpart of $\omega$-clones, while the block algebras are the algebraic counterpart of clones.
 In another result of the paper we prove that  the variety of clone algebras is generated by the class of block algebras. This implies that every $\omega$-clone is algebraically generated by a suitable family of clones by using direct products, subalgebras and homomorphic images.

We conclude the paper with two applications. The first one is to the lattice of equational theories problem stated by Birkhoff \cite{bir2} in 1946: Find an algebraic characterisation of those lattices which can be isomorphic to  a lattice of equational theories. Maltsev \cite{mal} was instrumental in attracting attention to this problem, which is sometimes referred to as Maltsev's Problem. This problem is still open, but work on it has led to many results described in \cite[Section 4]{nulty}.

The problem of characterising the lattices of equational theories as the congruence lattices of a class of algebras was tackled by Newrly \cite{newrly} and Nurakunov \cite{nur}. 
In this paper we propose an alternative  answer to the lattice of equational theories problem. We prove that a lattice is isomorphic to a lattice of equational theories if and only if it is isomorphic to the lattice of all congruences of a finite dimensional clone algebra. Unlike in Newrly's and Nurakunov's approaches, we are able to provide  the equational axiomatisation of the variety whose congruence lattices are exactly the lattices of equational theories, up to isomorphisms.
We also show that a lattice is isomorphic to a lattice of subclones if and only if it is isomorphic to the lattice of subalgebras of a finite dimensional clone algebra.

The second application is to the study of the category $\mathcal{VAR}$ of all varieties. 
We say that a clone algebra is \emph{pure} if it is an algebra in the type of  the nullary operators $\e_1,\e_2,\dots$ and of the operators $q_n$ ($n\geq 0$). The \emph{pure reduct} of a clone algebra of type $\tau$ is a pure clone algebra. It is worth mentioning that  
important properties of a variety depend on the pure reduct of the clone algebra associated with its free algebra.  
After characterising central elements in clone algebras, we introduce the concept of a minimal clone algebra. We show that  a clone algebra $\mathbf C$ of type $\tau$ is minimal if and only if the $\tau$-reduct $\mathbf C_\tau$ of $\mathbf C$ is the free algebra over a countable set of generators in the variety generated by $\mathbf C_\tau$.
We introduce the category $\mathcal{CA}$ of all clone algebras (of arbitrary similarity type) with pure homomorphisms (i.e., preserving only the nullary operators $\e_i$ and the operators $q_n$) as arrows and show that $\mathcal{CA}$ is equivalent both to the full subcategory $\mathcal{MCA}$ of minimal clone algebras and, more to the point, to the variety $\mathsf{CA}_0$ of pure clone algebras.
Moreover, we show that $\mathcal{MCA}$ is isomorphic to $\mathcal{VAR}$ as a category. This result allows us to directly use $\mathcal{MCA}$ to study  the category $\mathcal{VAR}$. We conclude the paper by showing  that the category $\mathcal{MCA}$ is closed under categorical product and utilise this result and central elements to provide a generalisation of the theorem on independent varieties presented by Gr\"atzer et al. in \cite{GLP}.

\subsection{Plan of the work}
In Section \ref{sec:prelim} we present some preliminary notions, 
including those of factor congruence and decomposition operator, and  those of 
Church and Boolean-like algebra, less well known; we also expose the Birkhoff and Maltsev's 
problem and sketch some related work. In Section \ref{sec:clo} we introduce the notion of a clone with nullary operations;
we also recall abstract clones.
Section \ref{sec:clonAlg} introduces the clone algebras that we propose as an algebraic one-sorted
counterpart of clones. In Sections \ref{sec:dueuno} and \ref{sec:clones} we present two prototypical classes 
of clone algebras: functional clone algebras, whose carriers are named $\omega$-clones, and block algebras. Those are algebras of infinitary operations. The former are uncostrained, and in particular they may be sensible to countably many arguments, whereas the latter are finite dimensional, since they are obtained by suitable extensions, called top extensions, of finitary operations.
We show that there is a bijection between clones and block algebras.
In Section \ref{sec:rep} we introduce the representable (finitary) operations
inside a clone algebra, which are those operations whose behaviour is univocally determined by an element of the algebra, via the operators $q_n$.
The representable operations of $\mathbf C$ turn out to be a clone and the top extension of this clone is a block algebra, 
isomorphic to a finite dimensional subalgebra of $\mathbf C$. 
This subalgebra coincides with $\mathbf C$ whenever $\mathbf C$ is finite dimensional. 
Since all the basic operation of a clone algebra are representable, there is no loss of information in replacing each of them 
with the corresponding element: we show in Section \ref{sec:nullary} that the variety 
of clone $\tau$-algebras and that of clone algebras with $\tau$-constants are term equivalent.
In Section \ref{sec:GRT} we prove the main representation theorem, 
indicating the pertinence of our approach to the theory of clones.
It can be summarized as follows:
the variety of clone algebras is the algebraic counterpart of $\omega$-clones, 
the class of block algebras is the algebraic counterpart of clones, and the $\omega$-clones
are algebraically generated by clones through direct products, subalgebras and 
homomorphic images. In other words, the variety of clone algebras is generated by the class of block algebras.
Section \ref{sec:eqth} presents an application of clone algebras to the Birkhoff and Maltsev's 
problem: we prove that a lattice is isomorphic to a lattice of equational theories if and only if it is isomorphic 
to the lattice of all congruences of a finite dimensional clone algebra.
The last section of the paper is devoted to some applications of the theory of clone algebras to the study of the category of all varieties. In the conclusions we present some directions for future work.

\section{Preliminaries}\label{sec:prelim}

The notation and terminology in this paper are pretty standard. For
concepts, notations and results not covered hereafter, the reader is
referred to \cite{BS,Co65,mac} for universal algebra and to \cite{L06,SZ86,T93} for the theory of clones. 


In this paper $\omega=\{1,2,\dots\}$ denotes the set of positive natural numbers.

By an \emph{operation} on a set $A$ we will always mean a finitary operation (i.e., a function $f:A^n\to A$ for some $n\geq 0$), and by an \emph{infinitary operation} on $A$ we mean a function from $A^\omega$ into $A$.
As a matter of notation, operations will be denoted by the letters $f,g,h,\dots$ and infinitary operations by the greek letters $\varphi,\psi,\chi,\dots$.

We denote by $\mathcal{O}_A$  the set of all operations on a set $A$, and by $\mathcal{O}_A^{(\omega)}$ the set of all infinitary operations on $A$. If 
$F\subseteq\mathcal{O}_A $, then $F^{(n)}= \{f:A^n\to A\ |\ f\in  F\}$. 

In the following we fix a countable infinite set $I=\{v_1, v_2,\dots,  v_n,\dots\}$ of \emph{indeterminates or variables} that we assume totally ordered: $v_1 < v_2 <\dots < v_n<\dots$.


\subsection{Algebras}\label{sec:alg}
If $\tau$ is an algebraic type, an algebra $\mathbf{A}$ of type $\tau $ is
called \emph{a }$\tau $\emph{-algebra}, or simply an algebra when $\tau $ is
clear from the context. An algebra is \emph{trivial} if its carrier set is a singleton set.

Superscripts that mark the difference between operations and operation symbols will be dropped whenever the context is sufficient for a disambiguation. 

If $t$ is a $\tau$-term, then we write $t=t(v_1,\dots,v_n)$ if $t$ can be built up starting from variables $v_1,\dots,v_n$.
Not all variables $v_1,\dots,v_n$ may occur in $t$. If $t=t(v_1,\dots,v_n)$, then $t=t(v_1,\dots,v_m)$ for every $m\geq n$.
A term is \emph{ground} if no variable occurs in it. 

We denote by $T_{\tau}(\omega)$ the set of  $\tau$-terms over the countable infinite set $I$ of variables.

$\mathrm{Con}\,\mathbf{A}$ is the lattice of all
congruences on an algebra $\mathbf{A}$, whose bottom and top elements are,
respectively, $\Delta =\{(a,a):a\in A\}$ and $\nabla =A\times A$. 
Given $a,b\in A$, we write $\theta(a,b)$ for the smallest congruence $\theta$
such that $( a,b) \in \theta $.


Closure under  homomorphic images, direct products, subalgebras and  isomorphic images is denoted by $\mathbb{H}$, $\mathbb{P}$, $\mathbb{S}$ and $\mathbb{I}$ respectively.
We denote by $\mathbb{U}_p$ the closure under ultraproducts.

A class $\mathcal{V}$ of $\tau$-algebras is a \emph{variety} if it is closed
under subalgebras, direct products and homomorphic images, i.e., $\mathcal{V}= \mathbb{HSP}(\mathcal{V})$.
The variety $\mathrm{Var}(K)$ generated by a class $K$ of $\tau$-algebras is the smallest variety including $K$: $\mathrm{Var}(K)= \mathbb{HSP}(K)$.
 If $K=\{\mathbf A\}$ we write $\mathrm{Var}(\mathbf A)$ for $\mathrm{Var}(\{\mathbf A\})$.

If $\mathcal V$ is a variety, then we denote by $\mathbf F_\mathcal V$ its free algebra over the countable infinite set
$I$ of generators.

Recall that $n$ subvarieties $\mathcal V_1,\dots,\mathcal V_n$ of a variety $\mathcal V$ of type $\tau$ are said to be
 \emph{independent}, if there exists a term $t(v_1,\dots,v_n)$ of type $\tau$, containing at most the
indicated variables, such that $\mathcal V_i\models t(v_1,\dots,v_n)=v_i$ ($i=1,\dots,n$).
Moreover,  the \emph{product of similar varieties} $\mathcal V_1,\dots,\mathcal V_n$ is
defined as $\mathcal V_1\times\dots\times \mathcal V_n =\mathbb{I}\,\{\mathbf A_1\times\dots\times\mathbf A_n  :\mathbf A_i \in \mathcal V_i\}$. We have $\mathcal V_1\times\dots\times \mathcal V_n\subseteq \mathcal V_1\lor\dots\lor \mathcal V_n$.



We recall from \cite[Page 245]{mac} that an \emph{interpretation} of a variety $\mathcal V$ of type $\tau$ into a variety $\mathcal W$ of type $\nu$  is a mapping $f$ with domain $\tau$ satisfying:
\begin{itemize}
\item If $\sigma\in\tau$ has arity $n>0$, then $f(\sigma)$ is an $n$-ary $\nu$-term;
\item If $\sigma\in\tau$ has arity $0$, then $f(\sigma)=t$ is a unary $\nu$-term such that the equation $t(v_1)=t(v_2)$ is valid in $\mathcal W$;
\item For every algebra $\mathbf A\in \mathcal W$, the algebra $\mathbf A^f=(A,f(\sigma)^{\mathbf A,k})_{\sigma\in\tau}$ belongs to $\mathcal V$, where $f(\sigma)^{\mathbf A,k}$ ($\sigma$ of arity $k$) is the $k$-ary term operation defined in Section \ref{sec:to}.
\end{itemize}

\subsection{Factor Congruences and Decomposition}\label{sec:fcd}
Directly indecomposable algebras play an important role in the characterisation of the structure of a variety of
algebras.
In this section we summarise the basic ingredients of factorisation:
tuples of complementary factor congruences
and  decomposition operators (see  \cite{mac}).

\begin{definition}\label{def:cong} 
A sequence $(\theta_{1},\dots,\theta_n)$ of congruences on a $\tau$-algebra $\mathbf{A}$ is an $n$-tuple of complementary factor congruences exactly when:
\begin{enumerate}
\item $\bigcap_{1\leq i\leq n}\theta_{i}=\Delta$;

\item $\forall (a_1,\dots,a_n)\in A^n$, there is a unique $u\in A$ such that $a_i\theta_{i}\,u$,
for all $1\leq i\leq n$.
\end{enumerate}
\end{definition}

If $(\theta_{1},\dots,\theta_n)$  is an $n$-tuple of complementary factor congruences on $\mathbf{A}
$, then the function
$f:\mathbf{A}\rightarrow \prod\limits_{i=1}^n\mathbf{A}/\theta _{i}$,
defined by $f(a) =(a/\theta _{1},\dots,a/\theta _{n})$, is an
isomorphism. Moreover, every factorisation of $\mathbf A$ in $n$ factors univocally determines an $n$-tuple of complementary factor congruences.

A pair  $(\theta_{1},\theta_{2})$ of congruences is a pair of complementary factor
congruences if and only if $\theta_{1}\cap\theta_{2}=\Delta$ and $\theta_{1}\circ\theta_{2}=\nabla$. 
A \emph{factor congruence} is any congruence which belongs to a pair of
complementary factor congruences. 
Notice that, if $(\theta_{1},\dots,\theta_n)$ is an $n$-tuple of complementary factor congruences, then $\theta_i$ is a factor congruence for each $1\leq i\leq n$, because
the pair $(\theta_i, \bigcap_{j\neq i} \theta_j)$ is a pair of complementary factor congruences. 

It is possible to characterise $n$-tuples of complementary factor congruences in terms of certain algebra
homomorphisms called \emph{decomposition operators} (see \cite[Def.~4.32]{mac} for additional details). 

\begin{definition}
\label{def:decomposition} 
An \emph{$n$-ary decomposition operator} on a $\tau$-algebra $\mathbf{A}$ is a function $f:A^{n}\rightarrow A$ satisfying the following conditions: 
\begin{description}
\item[D1] $f( x,x,\dots,x) =x$;
\item[D2] $f( f( x_{11},x_{12},\dots,x_{1n}),\dots,f(x_{n1},x_{n2},\dots,x_{nn}))=f(x_{11},\dots,x_{nn})$;
\item[D3] $f$ is a homomorphism from $\mathbf{A}^n$ onto $ \mathbf{A}$.
\end{description}
\end{definition}

There is a bijective correspondence between $n$-tuples of complementary factor
congruences and $n$-ary decomposition operators, and thus, between $n$-ary decomposition
operators and factorisations of an algebra in $n$ factors.

If $f:A^n\to A$ is a function, then we denote by $f_i:A^2\to A$ the binary function defined as follows:
$$f_i(x,y)= f(y,\dots,y,x,y,\dots,y)\quad\text{$x$ at position $i$}.$$

\begin{theorem}
\label{prop:pairfactor} Any $n$-ary decomposition operator $f:A^{n}\rightarrow A$ on an algebra $\mathbf{A}$ induces an $n$-tuple of
complementary factor congruences $\theta _{1},\dots,\theta _{n}$, where each $\theta _{i}\subseteq A\times A$ is defined by: 
\begin{equation*}
a\ \theta _{i}\ b\ \ \text{iff}\ \ f_i(b,a)=a.
\end{equation*}
Moreover, $f(x_1,\dots,x_n)$ is the unique element such that $x_i \theta_i f(x_1,\dots,x_n)$ for all $i$.
Conversely, any $n$-tuple $\theta _{1},\dots,\theta _{n}$ of complementary factor
congruences induces a decomposition operator $f$ on $\mathbf{A}$: 
$f(a_1,\dots,a_n)=u$ iff $a_{i}\,\theta _{i}\,u$ for all $i$. 
\end{theorem}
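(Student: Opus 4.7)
The plan is to prove the two directions of the correspondence separately, and then argue that the constructions are mutually inverse. I would proceed in the order stated: first analyse the relations $\theta_i$ induced by a decomposition operator $f$, and then invert the construction.

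Starting from a decomposition operator $f$ satisfying \textbf{D1}--\textbf{D3}, the first task is to verify that each $\theta_i$ is a congruence. Reflexivity of $\theta_i$ is immediate from \textbf{D1}, since $f_i(a,a)=f(a,\dots,a)=a$. For the congruence property, the point is to notice that $\theta_i$ coincides with the kernel of a homomorphism: if I define $\pi_i : A \to A$ by $\pi_i(x) = f_i(x,a)$ with $a$ fixed --- or better, exploit the fact that by \textbf{D3} the map $f$ itself is a homomorphism $\mathbf{A}^n \to \mathbf{A}$ --- I can use \textbf{D2} applied to the $i$-th slice to reduce the condition $f_i(b,a)=a$ to a property preserved by every basic operation. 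Concretely, applying \textbf{D3} componentwise and repeatedly using the diagonal identity \textbf{D1} and the associativity-like identity \textbf{D2}, one shows that $a\,\theta_i\,b$ is equivalent to $f_i(a,b) = b$ (symmetry) and transitivity then follows by evaluating $f_i(c,a) = f_i(c,f_i(b,a)) = f_i(f_i(c,c),f_i(b,a))$ and reshuffling via \textbf{D2}. The congruence property itself will then be a direct consequence of \textbf{D3}.

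Next, I would verify the two clauses of Definition \ref{def:cong}. For clause (2), given $(a_1,\dots,a_n)\in A^n$, the natural candidate for $u$ is $u = f(a_1,\dots,a_n)$; one checks that $f_i(a_i,u) = f(u,\dots,u,a_i,u,\dots,u)$ equals $u$ by expanding $u$ as $f(a_1,\dots,a_n)$ on the outside, applying \textbf{D2}, and collapsing with \textbf{D1}. This gives $a_i\,\theta_i\,u$ for all $i$, and uniqueness of $u$ will follow once clause (1) is established. For clause (1), $\bigcap_i \theta_i = \Delta$, suppose $a\,\theta_i\,b$ for all $i$, i.e.\ $f_i(b,a)=a$ for every $i$; then by \textbf{D2} applied to the $n\times n$ matrix whose $(i,i)$-entry is $b$ and whose off-diagonal entries are $a$, one computes $f(b,b,\dots,b) = f(f_1(b,a),\dots,f_n(b,a)) = f(a,\dots,a)$, so \textbf{D1} forces $b=a$.

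For the converse direction, given $n$ complementary factor congruences $\theta_1,\dots,\theta_n$, clause (2) guarantees that the rule $f(a_1,\dots,a_n)=u$ iff $a_i\,\theta_i\,u$ for all $i$ defines a total function. The axioms \textbf{D1}--\textbf{D3} then have to be checked: \textbf{D1} is trivial (take $u=x$); \textbf{D3} follows because each $\theta_i$ is a congruence, so applying any basic operation to the defining equations $a_{ij}\,\theta_i\,u_j$ preserves them; and \textbf{D2} follows by checking that both sides of the equation belong to the same $\theta_i$-class as $x_{ii}$ for every $i$, hence are equal by clause (1). Finally, I would verify that starting from $f$, building $\theta_i$, and rebuilding $f$ gives back the same operation (this is precisely the uniqueness part of the theorem's last sentence), and conversely.

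The main technical obstacle, I expect, will be the symmetry and transitivity of $\theta_i$, since the defining equation $f_i(b,a)=a$ is visibly asymmetric in $a,b$. The key lever is \textbf{D2}: it is the only axiom strong enough to reshuffle nested applications of $f$ so that arguments in distinct coordinates do not interfere, and all of the subsequent computations --- uniqueness, $\bigcap_i \theta_i = \Delta$, and the verification of \textbf{D2} in the converse direction --- reduce to careful instantiations of this ``matrix'' identity.
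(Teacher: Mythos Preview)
Your proposal is essentially correct and follows the standard route to this result. However, the paper does not supply a proof of Theorem~\ref{prop:pairfactor} at all: it is stated in the preliminaries as a known fact from universal algebra, with a pointer to \cite[Def.~4.32]{mac} for details. So there is no in-paper argument to compare your sketch against; what you have written is, in outline, the classical proof one would find in that reference.

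One small point of care: in your verification of clause~(2) you compute $f_i(a_i,u)$ and show it equals $u$, which is the relation $u\,\theta_i\,a_i$ rather than $a_i\,\theta_i\,u$. That is fine provided symmetry has already been established, which you do claim earlier; just be sure the order of the lemmas reflects this dependency when you write it out in full.
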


\subsection{Church algebras}\label{dobbiaco}
In this section we recall from \cite{BLPS18} the notion of an $n$-Church algebra. These algebras have $n$ nullary operations $\e_1,\dots, \e_n$ ($n \geq 2$) and an operation $q_n$ of arity $n + 1$ (a sort of ``generalised if-then-else'') satisfying the identities $q_n(\e_i, x_1,\dots, x_n) = x_i$. The operator $q_n$ induces, through the so-called $n$-central elements, a decomposition of the algebra into $n$ factors. 

\begin{definition}
Algebras of type $\tau$, equipped with at least $n$ nullary operations $\e_1,\dots,\e_n$ and a term operation $q_n$ of arity $n+1$ satisfying $q_n(\e_i,x_1,\dots,x_n)=x_i$, are called \emph{$n$-Church algebras} ($n\mathrm{CH}$, for short); 
 $n\mathrm{CH}$s admitting only the $(n+1)$-ary $q_n$ operator and the $n$ constants $\e_{1},\dots ,\e_{n}$ are called \emph{pure} $n\mathrm{CH}$s.
\end{definition}


$2$CHs were introduced as Church algebras in \cite{MS08} and studied in \cite{first}.  Examples of $2$CHs  are Boolean algebras (with $q_2(x,y,z) =(x\wedge y)\vee (\lnot x\wedge z)$) or rings with unit (with $q_2( x,y,z) =xy+z-xz$). 
%


In \cite{vaggione}, Vaggione introduced the notion
of \emph{central element} to study algebras whose complementary factor
congruences can be replaced by certain elements of their universes. 
Central elements coincide with central idempotents in rings with
unit and with members of the centre in ortholattices. 

\begin{theorem} \cite{BLPS18}
\label{thm:centrale} If $\mathbf{A}$ is an $n\mathrm{CH}$ of type $\tau $
and $c\in A$, then the following conditions are equivalent:

\begin{enumerate}

\item the sequence of congruences $\theta (c,\e_{1}),\dots
,\theta (c,\e_{n})$ is an $n$-tuple of complementary factor congruences of $\mathbf{A}$;

\item for all $a_{1},\dots ,a_{n}\in A$, $q_n(c,a_{1},\dots ,a_{n})$ is the
unique element such that $$a_{i}\ \theta (c,\e_{i})\ q(c,a_{1},\dots ,a_{n}),\ \text{for all $1\leq i\leq n$;}$$

\item The function $f_{c}$, defined by $f_{c}(a_{1},\dots,a_{n})=q_n(c,a_{1},\dots ,a_{n})$ for all $a_1,\dots,a_n\in A$, is an $n$-ary decomposition operator on $\mathbf{A}$ such that 
$f_{c}(\e_{1},\dots ,\e_{n})=c.$
\end{enumerate}
\end{theorem}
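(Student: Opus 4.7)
The plan is to prove the cyclic chain $(1) \Rightarrow (2) \Rightarrow (3) \Rightarrow (1)$, with Theorem \ref{prop:pairfactor} serving as the bridge in the final step.

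For $(1) \Rightarrow (2)$, existence is automatic: since $c\,\theta(c,\e_i)\,\e_i$, congruence-substitution in the first argument of $q_n$ yields $q_n(c,a_1,\dots,a_n)\,\theta(c,\e_i)\,q_n(\e_i,a_1,\dots,a_n)=a_i$ for each $i$. Uniqueness is then precisely clause (2) of Definition \ref{def:cong} applied to the complementary factor congruences $\theta(c,\e_1),\dots,\theta(c,\e_n)$.

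For $(2) \Rightarrow (3)$, each verification reduces to exhibiting a witness to the property ``$a_i\,\theta(c,\e_i)\,u$ for all $i$'' and invoking the uniqueness from (2). For D1, $x$ itself witnesses the characterizing property of $q_n(c,x,\dots,x)$; for the identity $f_c(\e_1,\dots,\e_n)=c$, the witness is $c$ itself. For D2, I would set $y_i=f_c(x_{i1},\dots,x_{in})$ and use transitivity of $\theta(c,\e_i)$ to chain from $x_{ii}$ through $y_i$ to $f_c(y_1,\dots,y_n)$, matching $f_c(x_{11},\dots,x_{nn})$ by uniqueness. For D3, I would check compatibility with an arbitrary fundamental operation $\sigma$: since each $\theta(c,\e_i)$ is a congruence, both sides of the homomorphism equation satisfy the same uniqueness clause; surjectivity is immediate from D1 applied to constant tuples.

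For $(3) \Rightarrow (1)$, Theorem \ref{prop:pairfactor} applied to the decomposition operator $f_c$ produces an $n$-tuple $(\phi_1,\dots,\phi_n)$ of complementary factor congruences, with $f_c(a_1,\dots,a_n)$ being the unique $u$ such that $a_i\,\phi_i\,u$ for all $i$. I then claim $\phi_i=\theta(c,\e_i)$. One direction is straightforward: since $f_c(\e_1,\dots,\e_n)=c$, we have $\e_i\,\phi_i\,c$, hence $\theta(c,\e_i)\subseteq\phi_i$ by minimality. The harder direction: suppose $a\,\phi_i\,b$, so by Theorem \ref{prop:pairfactor} the equation $q_n(c,a,\dots,a,b,a,\dots,a)=a$ holds (with $b$ in position $i$); projecting to the quotient by $\theta(c,\e_i)$ identifies $[c]$ with $[\e_i]$, and the axiom $q_n(\e_i,x_1,\dots,x_n)=x_i$ collapses the left-hand side to $[b]$. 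Thus $a\,\theta(c,\e_i)\,b$, establishing $\phi_i\subseteq\theta(c,\e_i)$. The main obstacle, and the only step requiring genuine care, is this reverse containment in the last implication; everything else amounts to systematic bookkeeping with the uniqueness characterization from (2) and the defining $n$-Church identity.
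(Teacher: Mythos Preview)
The paper does not actually prove this theorem: it is quoted from \cite{BLPS18} and stated without proof, so there is no in-paper argument to compare against. Your cyclic proof $(1)\Rightarrow(2)\Rightarrow(3)\Rightarrow(1)$ is correct and is the standard route for results of this kind; each step is sound, including the one you flag as the ``main obstacle'' (the containment $\phi_i\subseteq\theta(c,\e_i)$), which you handle cleanly by passing to the quotient and invoking the $n$-Church identity $q_n(\e_i,x_1,\dots,x_n)=x_i$.
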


\begin{definition}
\label{def:ncentral} If $\mathbf{A}$ is an $n\mathrm{CH}$, then $c\in A$
is called \emph{$n$-central} if it satisfies one of the equivalent conditions of Theorem \ref{thm:centrale}.
An $n$-central element $c$ is \emph{nontrivial} if $c\notin\{\e_{1},\dots ,\e_{n}\}$.
\end{definition}

Every $n$-central element $c\in A$ induces a decomposition of $\mathbf{A}$ as a direct product of the 
algebras $\mathbf{A}/\theta (c,\e_{i})$, for $i\leq n$.

The set of all $n$-central elements of an $n\mathrm{CH}$ $\mathbf{A}$ is a subalgebra of the pure reduct of $\mathbf{A}$.
We denote by $\mathbf{Ce}_{n}(\mathbf{A})$ the algebra 
$(\mathrm{Ce}_{n}(\mathbf{A}),q_n,\e_{1},\dots ,\e_{n})$ 
of all $n$-central elements of a 
$n\mathrm{CH}$ $\mathbf{A}$.

%

\subsection{Boolean-like algebras}\label{sec:nba}

Boolean algebras are $2$-CHs all of whose elements are 
$2$-central. It turns out that, among the $n$-CHs, those
algebras all of whose elements are $n$-central inherit many of the
remarkable properties that distinguish Boolean algebras. 

\begin{definition}
\label{mezzucci} \cite{BLPS18,Bucciasali} An $n\mathrm{CH}$ $\mathbf{A}$ of type $\tau$ is called an \emph{$n$-Boolean-like algebra} ($n\mathrm{BA}$, for short) if every element of $A$ is $n$-central.  An  $n\mathrm{BA}$ of empty type is called a pure $n\mathrm{BA}$.
\end{definition}

We denote by $n\mathsf{BA}_\tau$  the class of all $n\mathrm{BA}$s of type $\tau$.
If $\tau$ is empty, then $n\mathsf{BA}$ denotes the class of all pure $n\mathrm{BA}$s.

In an $n\mathrm{BA}$ $q_n(x,-,\dots,-)$ is an $n$-ary decomposition operator for every element $x$ of the universe of the algebra. Then, by Definition \ref{def:decomposition}
the class $n\mathsf{BA}_\tau$ is the variety of all $n\mathrm{CH}$s of type $\tau$ that satisfy the identities defining $q_n(x,-,\dots,-)$ as an $n$-ary decomposition operator.
%
%
%
%

$2\mathrm{BA}$s were introduced in \cite{first} with the
name \textquotedblleft Boolean-like algebras\textquotedblright . \emph{Inter
alia}, it was shown in that paper that the variety of $2\mathrm{BA}$s is term-equivalent to the variety of Boolean algebras.

\begin{example}\label{exa:canonical}
The algebra $\mathbf{Ce}_{n}(\mathbf{A})$ of all $n$-central
elements of an $n\mathrm{CH}$ $\mathbf{A}$ of type $\tau$ is a canonical example of pure $n\mathrm{BA}$.
\end{example}

\begin{example}\label{exa:n}
The algebra 
$\mathbf{n}=( \{ \mathsf \e_{1},\dots,\mathsf \e_{n}\} ,q_n^{\mathbf{n}},\mathsf e^\mathbf{n}_{1},\dots,\mathsf e^\mathbf{n}_{n})$,
where $q_n^{\mathbf{n}}( \mathsf \e_{i}^{\mathbf{n}},x_{1},\dots,x_{n}) =x_{i}$ for every $i\leq n$, is a pure $n\mathrm{BA}$.
\end{example}


\begin{example}\label{exa:parapa} (\emph{$n$-Partitions}) Let $X$ be a set. An \emph{$n$-partition} of $X$ is a sequence $(Y^{1},\ldots ,Y^{n})$ of subsets of $X$ such that $\bigcup_{i=1}^{n}Y^{i}=X$ and $Y^{i}\cap Y^{j}=\emptyset $ for all $i\neq j$.
The set of $n$-partitions of $X$ 
becomes a pure $n\mathrm{BA}$ if we define an operator $q_n$ and $n$ constants $\e_1,\dots,\e_n$ as follows, for all $n$-partitions $\mathbf{y}^{i}= (Y^i_{1},\dots,Y^i_{n})$:
$$
q_n( \mathbf{y}^0,\mathbf{y}^1,\dots ,\mathbf{y}^n) =(\bigcup\limits_{i=1}^{n}Y^0_{i}\cap Y_{1}^{i},\dots,\bigcup\limits_{i=1}^{n}Y^0_{i}\cap Y_{n}^{i});\quad 
\e_1=(X,\emptyset,\dots,\emptyset),\dots, \e_n=(\emptyset,\dots,\emptyset,X).
$$
Notice that the algebra  of $n$-partitions of $X$ can be proved isomorphic to the $n\mathrm{BA}$ $\mathbf{n}^X$  (the Cartesian product of $|X|$ copies of the algebra $\mathbf{n}$).
\end{example}


\begin{remark} 
It is  known from \cite{MS08} that the set of all $2$-central
elements of a $2\mathrm{CH}$ $\mathbf{A}$ is a Boolean algebra with respect to the following operations:
$$x\land y=q_2(x,\e_1,y);\quad x\lor y=q_2(x,y,\e_2);\quad \neg x=q_2(x,\e_2,\e_1).$$
The correspondence $a\in\mathrm{Ce}_2(\mathbf{A}) \mapsto \theta(a,\e_1)$
determines an isomorphism between the Boolean algebra of $2$-central elements and the Boolean algebra of factor congruences of $\mathbf A$. Notice that the factor congruence $\theta(a,\e_2)$ is the complement of the factor congruence $\theta(a,\e_1)$.
\end{remark}


The variety $\mathsf{BA}$ of Boolean algebras is semisimple as every $\mathbf{A}\in \mathsf{BA}$ is
subdirectly embeddable into a power of the $2$-element Boolean algebra, which
is the only subdirectly irreducible (in fact, simple) member of $\mathsf{BA}$. This property finds an analogue in the structure theory of $n\mathsf{BA}$s.

\begin{theorem}
\label{lem:subirr} \cite{BLPS18,Bucciasali} 
\begin{enumerate}
\item[(i)] The algebra $\mathbf n$ is the unique simple pure $n\mathrm{BA}$ and it generates the variety $n\mathsf{BA}$.
\item[(ii)] the variety $n\mathsf{BA}_\tau$ of $n\mathrm{BA}$s of type $\tau$ is generated by its finite members of cardinality $n$.
\end{enumerate}

\end{theorem}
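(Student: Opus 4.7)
The plan is to derive both parts from the same subdirect representation argument, by showing that the subdirectly irreducible members of the varieties in question are precisely the algebras on the $n$-element set $\{\e_1,\ldots,\e_n\}$. The main tool is the observation that in an $n\mathrm{BA}$ every element is $n$-central, so every element that is distinct from the $n$ constants immediately produces a nontrivial decomposition via Theorem \ref{thm:centrale}.

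For (i), I would first check that $\mathbf n$ is simple. If $\theta$ is a congruence on $\mathbf n$ with $\e_i\mathrel{\theta}\e_j$ for some $i\neq j$, then for any $k,\ell\in\{1,\dots,n\}$, applying $q_n^{\mathbf n}$ to the tuple obtained by placing $\e_k$ in position $i$, $\e_\ell$ in position $j$, and $\e_m$ in each other position $m$, the projection identities together with compatibility give $\e_k\mathrel{\theta}\e_\ell$; hence $\theta=\nabla$. Note that in any nontrivial pure $n\mathrm{BA}$ the constants $\e_1,\dots,\e_n$ are pairwise distinct, for if $\e_i=\e_j$ then the projection identity forces $a_i=a_j$ for arbitrary $a_i,a_j$. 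Next, I would show that any subdirectly irreducible (in particular simple) pure $n\mathrm{BA}$ $\mathbf A$ satisfies $|A|=n$: otherwise there is $c\in A\setminus\{\e_1,\dots,\e_n\}$, and since $c$ is $n$-central, $\theta(c,\e_1),\dots,\theta(c,\e_n)$ is an $n$-tuple of complementary factor congruences, each proper (because $c\neq \e_i$), whose intersection is $\Delta$, contradicting subdirect irreducibility. So $A=\{\e_1,\dots,\e_n\}$; but on this set the projection identities entirely determine $q_n$, so $\mathbf A\cong\mathbf n$. By Birkhoff's subdirect representation theorem $n\mathsf{BA}=\mathbb{IS}\mathbb{P}(\mathbf n)\subseteq\mathrm{Var}(\mathbf n)\subseteq n\mathsf{BA}$, yielding (i).

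For (ii), I would run the same argument within type $\tau$. Let $\mathbf A\in n\mathsf{BA}_\tau$ be subdirectly irreducible. The elements $\e_i$ and the operator $q_n$ are part of the type $\tau$, so the congruences $\theta(c,\e_i)$ appearing in Definition \ref{def:ncentral} are congruences of the full $\tau$-algebra $\mathbf A$, and the decomposition they induce is a decomposition of $\mathbf A$ as a $\tau$-algebra. Since every element of $\mathbf A$ is $n$-central, the previous argument applies unchanged: if $|A|>n$, any $c\notin\{\e_1,\dots,\e_n\}$ delivers a nontrivial complementary $n$-tuple of factor congruences, contradicting subdirect irreducibility. Hence every subdirectly irreducible member of $n\mathsf{BA}_\tau$ has cardinality $n$, and by Birkhoff's theorem $n\mathsf{BA}_\tau$ is generated by its finite members of cardinality $n$.

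The only subtlety, which I regard as the main point deserving attention rather than a genuine obstacle, is the interaction between $n$-centrality (defined via the Church-algebra reduct) and the extra operations in $\tau$: one must make sure that decomposing $\mathbf A$ via $c$ respects all operations of type $\tau$. This is automatic in our setting because $n$-centrality is phrased through congruences of $\mathbf A$ itself, and the decomposition map $a\mapsto (a/\theta(c,\e_1),\dots,a/\theta(c,\e_n))$ is a $\tau$-algebra isomorphism. Once this is observed, the proof of (ii) is essentially a type-aware replay of (i).
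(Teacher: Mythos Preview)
The paper does not give its own proof of this theorem: it is quoted from \cite{BLPS18,Bucciasali} in the preliminaries, with no argument supplied. Your proposal is a correct, self-contained proof and is exactly the standard route one expects in those references: show that in a nontrivial $n\mathrm{BA}$ the constants $\e_1,\dots,\e_n$ are pairwise distinct, use Theorem~\ref{thm:centrale} to see that any element outside $\{\e_1,\dots,\e_n\}$ yields an $n$-tuple of proper complementary factor congruences with trivial intersection, conclude that subdirectly irreducibles have underlying set $\{\e_1,\dots,\e_n\}$, and invoke Birkhoff's subdirect representation theorem. Your remark that in part~(ii) the decomposition respects all of $\tau$ because centrality is defined via congruences of the full $\tau$-algebra is exactly the right observation, and there is nothing further to justify.
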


The next corollary shows that, for any $n\geq 2$, the $n\mathrm{BA}$ $\mathbf{n}$ plays a role analogous to the Boolean algebra $\mathbf{2}$ of truth values.

\begin{corollary}
\label{cor:stn} Every pure $n\mathrm{BA}$ is isomorphic to a subdirect power of $\mathbf{n}^{X}$, for some set $X$.
\end{corollary}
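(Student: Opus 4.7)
The plan is to apply Birkhoff's subdirect representation theorem and then show that $\mathbf{n}$ is, up to isomorphism, the only non-trivial subdirectly irreducible pure $n\mathrm{BA}$; composing the two facts embeds any pure $n\mathrm{BA}$ subdirectly into a power of $\mathbf{n}$. More precisely, Birkhoff gives, for any pure $n\mathrm{BA}$ $\mathbf{A}$, a subdirect embedding $\mathbf{A} \hookrightarrow \prod_{i\in I}\mathbf{A}_i$ with each $\mathbf{A}_i$ a non-trivial subdirectly irreducible pure $n\mathrm{BA}$ (all in the variety $n\mathsf{BA}$, which is closed under $\mathbb{HS}$). Once each such $\mathbf{A}_i$ is shown to be isomorphic to $\mathbf{n}$, the embedding lands in $\mathbf{n}^I$ and the set $X = I$ does the job.

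The substantive step is therefore the characterisation of non-trivial subdirectly irreducible pure $n\mathrm{BA}$s. Let $\mathbf{B}$ be one. Since subdirect irreducibility implies direct indecomposability, $\mathbf{B}$ admits no non-trivial direct product decomposition. Pick any $c \in B$; by purity every element of $\mathbf{B}$ is $n$-central, so Theorem \ref{thm:centrale} yields $\mathbf{B}\cong\prod_{i=1}^{n}\mathbf{B}/\theta(c,\e_i)$ together with $\bigcap_{i}\theta(c,\e_i)=\Delta$. Direct indecomposability and non-triviality of $\mathbf{B}$ together force exactly one of the $n$ factors to be non-trivial, say the $i_0$-th, so $\theta(c,\e_i)=\nabla$ for every $i\neq i_0$. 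The intersection condition then collapses to $\theta(c,\e_{i_0})=\Delta$, i.e.\ $c=\e_{i_0}$. Hence $B \subseteq \{\e_1,\dots,\e_n\}$.

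It remains to observe that non-triviality of $\mathbf{B}$ forces the constants to be pairwise distinct: if $\e_i = \e_j$ for some $i \neq j$, then for any $x_1,\dots,x_n \in B$ we would have $x_i = q_n(\e_i,x_1,\dots,x_n) = q_n(\e_j,x_1,\dots,x_n) = x_j$, collapsing $\mathbf{B}$ to a singleton. Therefore $B = \{\e_1,\dots,\e_n\}$ consists of $n$ pairwise distinct elements, and the $n$-Church identities pin down $q_n$ uniquely, yielding $\mathbf{B}\cong\mathbf{n}$. The main (rather mild) obstacle is passing from direct indecomposability (formulated as a two-factor notion) to triviality of all but one of $n$ factors; this is handled by pairing the indices $\{1,\dots,n\}$ into two groups and iterating, using that a product of $n$ factors with at least two non-trivial ones splits as a non-trivial two-factor product, contradicting direct indecomposability.
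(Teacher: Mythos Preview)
Your proof is correct and follows the same route the paper implicitly takes: the corollary is stated in the paper with no proof, as an immediate consequence of Theorem~\ref{lem:subirr} (quoted from prior work), which records that $\mathbf{n}$ is the unique simple pure $n\mathrm{BA}$ and generates the variety $n\mathsf{BA}$. Your argument supplies exactly the missing detail behind that citation---namely, that every non-trivial subdirectly irreducible pure $n\mathrm{BA}$ is isomorphic to $\mathbf{n}$---by exploiting $n$-centrality of every element together with direct indecomposability, and then Birkhoff's subdirect representation does the rest. So the approach is the same; you have simply written out what the paper delegates to \cite{BLPS18,Bucciasali}.
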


By Example \ref{exa:parapa} and Corollary \ref{cor:stn} every pure $n\mathrm{BA}$ is isomorphic to an  $n\mathrm{BA}$ of $n$-partitions of some set $X$.

One of the most remarkable properties of the $2$-element Boolean algebra, called \emph{primality} in universal algebra \cite[ Sec. 7 in Chap. IV]{BS}, is the definability of all finite Boolean functions in terms of the connectives {\sc and, or, not}. This property is inherited by $n$BAs.

\begin{theorem}\label{prop:nbaprim} \cite{BLPS18}
The variety $n\mathsf{BA}=\mathrm{Var}(\mathbf n)$ is primal.
\end{theorem}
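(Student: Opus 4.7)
The plan is to verify primality of $\mathbf n$ directly, which by definition means showing that every finitary operation on the universe $\{\e_1,\dots,\e_n\}$ is a term operation of $\mathbf n$. Since $\mathbf n$ generates the variety $n\mathsf{BA}$ by Theorem \ref{lem:subirr}(i), the primality of the variety is equivalent to the primality of this single generator.

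The core of the argument is a straightforward induction on the arity $k$ of the operation to be realised. For $k=0$, every nullary operation is one of the $\e_i$, which is a nullary term symbol of $\mathbf n$. For $k=1$, given any $f\colon\{\e_1,\dots,\e_n\}\to\{\e_1,\dots,\e_n\}$, the term
\[
t_f(x) \;=\; q_n\bigl(x,\, f(\e_1),\, f(\e_2),\,\dots,\, f(\e_n)\bigr)
\]
represents $f$: substituting $x=\e_i$ and applying the defining identity $q_n(\e_i,y_1,\dots,y_n)=y_i$ yields $f(\e_i)$, as required.

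For the inductive step, assume every $k$-ary operation on $\{\e_1,\dots,\e_n\}$ is a term operation, and let $f\colon\{\e_1,\dots,\e_n\}^{k+1}\to\{\e_1,\dots,\e_n\}$ be given. For each $i\in\{1,\dots,n\}$, define the $k$-ary section
\[
g_i(x_2,\dots,x_{k+1}) \;=\; f(\e_i,x_2,\dots,x_{k+1}),
\]
which by induction is represented by some $\tau$-term $t_i(x_2,\dots,x_{k+1})$ in the language of $\mathbf n$. Then the term
\[
t(x_1,\dots,x_{k+1}) \;=\; q_n\bigl(x_1,\, t_1(x_2,\dots,x_{k+1}),\,\dots,\, t_n(x_2,\dots,x_{k+1})\bigr)
\]
represents $f$: for any tuple $(\e_i,a_2,\dots,a_{k+1})$, the identity for $q_n$ collapses the outer application to $t_i(a_2,\dots,a_{k+1})=g_i(a_2,\dots,a_{k+1})=f(\e_i,a_2,\dots,a_{k+1})$. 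Since $i$ was arbitrary and the $\e_i$'s exhaust the universe, $t$ and $f$ agree everywhere.

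There is no serious obstacle here, since the defining identity of $n$-Church algebras is exactly tailored to perform case analysis over the constants $\e_1,\dots,\e_n$; this is precisely the ``generalised if-then-else'' intuition emphasised throughout the paper. The only point worth being a touch careful about is the induction step above, where one must insist that the arity-$k$ terms $t_i$ are formulated in the same language (only $q_n$ and the constants $\e_j$) so that the composite $q_n(x_1,t_1,\dots,t_n)$ is a genuine term operation of the \emph{pure} algebra $\mathbf n$. Once this is observed, the induction establishes $\operatorname{Clo}(\mathbf n)=\mathcal O_{\{\e_1,\dots,\e_n\}}$, and hence $\mathbf n$, and therefore the variety $n\mathsf{BA}=\mathrm{Var}(\mathbf n)$, is primal.
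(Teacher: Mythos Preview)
Your argument is correct and is the standard way to establish primality of $\mathbf n$: the identity $q_n(\e_i,y_1,\dots,y_n)=y_i$ gives an $n$-way case split on the first argument, and an induction on arity immediately yields every finitary operation as a term. The paper itself does not prove this theorem; it simply cites \cite{BLPS18}, so there is no proof in the paper to compare against. Your write-up supplies exactly the expected argument.
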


\subsection{Lattices of equational theories}\label{sec:pre:leq}
We say that $L$ is a \emph{lattice of equational theories} iff $L$ is isomorphic to the lattice $L(T )$ of all equational theories containing some equational theory $T$ (or dually, to the lattice of all subvarieties of some variety of algebras). 
Thus, if $T$ were the equational theory of all groups, then $L(T)$ would be the lattice of all equational theories of groups and one of the members of $L(T)$ would be the equational theory of Abelian groups.

The lattice $L(T )$ is ordered by set-inclusion, the meet in this lattice is just intersection and the join of a collection $E$ of equational theories is just the equational theory based on $\bigcup E$.
A lattice of equational theories is algebraic and coatomic, possessing a compact top element; but no stronger property was known before Lampe's discovery that any lattice of equational theories obeys a weakening of semidistributivity called the Zipper condition, which is a nontrivial implication in the language of bounded lattices  (see Lampe \cite{lampe}).
Lampe's Theorem suggests that the class of lattices of the form $L(T)$ might have interesting structural properties. 

In 1946 Birkhoff \cite{bir2} stated the lattice of equational theories problem: Find an algebraic characterisation of those lattices which can be isomorphic to $L(T)$ for some equational theory $T$. Maltsev \cite{mal} was instrumental in attracting attention to this problem, which is sometimes referred to as Maltsev's Problem, and this led to many interesting results summarised in \cite[Section 4]{nulty}.
 
Trying to characterise the lattices of equational theories as the congruence lattices of a class of algebras is a natural, though difficult, way of approaching the problem.
In \cite{newrly} Newrly shows that a lattice of equational theories is the congruence lattice of an algebra whose fundamental operations consist of one monoid operation with right zero and one unary operation. In \cite{nur} Nurakunov describes a class of monoids enriched by two unary operations, the so-called Et-monoids, and proves that a lattice $L$ is a lattice of equational theories if and only if $L$ is isomorphic to the congruence lattice of some Et-monoid. Nevertheless, the varieties of algebras generated by Newrly's monoids and by  Nurakunov's Et-monoids have not been thoroughly investigated, and in particular they do not admit a known equational axiomatisation. Hence the problem of characterising the lattices of equational theories is still open.

\section{Clones of operations}\label{sec:clo}
Given an algebra $\mathbf A$ of type $\tau$, one is often interested in the term operations of the algebra rather than in the basic operations $\sigma^\mathbf A$ itself ($\sigma\in\tau$). In particular, if two algebras have the same set of term operations, then one might consider their difference as a mere question of representation. This motivates a notion that describes precisely those sets of operations that can arise as sets of term operations of an algebra and that is exactly what a clone is.

\bigskip

A \emph{$k$-ary projection} is a function $p^{(k)}_i: A^k\to A$ ($k\geq i$) defined by $p^{(k)}_i(a_1,\dots,a_k)=a_i$. 
A \emph{basic projection} is a projection $p^{(i)}_i$ ($i\geq 1$). We denote a basic projection 
by $p_i=p^{(i)}_i$. We denote by $\mathcal J_A$ the set of all projections.

A \emph{$k$-ary constant operation} is a function $c^{(k)}_a: A^k\to A$ ($k\geq 0$ and $a\in A$) such that  
$c^{(k)}_a(a_1,\dots,a_k)=a$, for all $a_1,\dots,a_k\in A$.

\bigskip

One may consider various natural operations on  $\mathcal{O}_A$, the set of all operations on $A$, and among them
the composition operation is of paramount importance. 
In the following definition we formally define the composition.

\begin{definition}\label{def:comp}
 The \emph{composition}  of $f\in  \mathcal{O}_A^{(n)}$ with  $g_1,\dots,g_n\in   \mathcal{O}_A^{(k)}$ is the operation $f(g_1,\dots,g_n)_k\in  \mathcal{O}_A^{(k)}$ defined as follows: 
$$f(g_1,\dots,g_n)_k(\mathbf a)= f(g_1(\mathbf a),\dots, g_n(\mathbf a))\quad\text{for all $\mathbf a\in A^k$}.$$
\end{definition}
In particular, if $f\in  \mathcal{O}_A^{(0)}$ then 
$f()_k(\mathbf a)= f$ for all $\mathbf a\in A^k$.

When there is no danger of confusion, we write $f(g_1,\dots,g_n)$ for $f(g_1,\dots,g_n)_k$.

\begin{definition}\label{def:fic} Let $A$ be a set and $n >0$. An $n$-ary operation $f:A^n\to A$
\begin{itemize}
\item[(i)]   \emph{depends on its $i$-th argument} ($1\leq i\leq n$) if there are $a_1,\dots,a_n,b,c\in A$ such that 
$$f(a_1,\dots,a_{i-1},b,a_{i+1},\dots,a_n)\neq f(a_1,\dots,a_{i-1},c,a_{i+1},\dots,a_n).$$
 \item[(ii)] is \emph{fictitious in the $i$-th argument} if it does not depend on its $i$-th argument.
 \item[(iii)] is \emph{fictitious} if $f$ is fictitious in its $n$-th (i.e., last) argument.
\end{itemize}
\end{definition}

\begin{definition} Let $f$ be a fictitious $n$-ary operation on $A$ and $g$ be an $(n-1)$-ary operation on $A$.
We say that $g$ is the \emph{restriction} of $f$ and $f$ is the \emph{fictitious expansion} of $g$
 if  
$$g(a_1,\dots,a_{n-1})=f(a_1,\dots,a_{n-1},b),\quad\text{for all $a_1,\dots,a_{n-1},b\in A$}.$$
\end{definition}

We say that a set $X$ of operations is \emph{closed under restriction} if 
$X$ contains the restriction of every fictitious operation of $X$.

\begin{definition}\label{def:clo}  A \emph{clone on a set $A$} is a subset $F$ of $\mathcal{O}_A$ containing all projections
$p^{(n)}_i$ and closed under composition and restriction. 
\end{definition}

A \emph{clone on a $\tau$-algebra $\mathbf A$} is a clone on $A$  containing 
 the basic operations $\sigma^\mathbf A$ ($\sigma\in\tau$) of $\mathbf A$.

\begin{remark} The classical approach to clones, as evidenced by the standard monograph \cite{SZ86}, considers clones only containing operations that are at least unary. However, with only minor modifications, most of the usual theory can be lifted to clones allowing nullary operations (see \cite{B14}). 
Typically, clones as abstract clones (see below) and  Lawvere's algebraic theories \cite{Law63} include nullary operations.
The full generality of some results in this paper requires clones allowing nullary operators.
\end{remark}

\begin{remark} Clones without nullary operations do not require the closure under restriction, because using projections  and composition it is possible to define the at least unary restriction of every fictitious operation.
Clones allowing nullary operations do require the closure under restriction. Using projections and composition it is not possible to define the nullary operator $c^{(0)}_a$ that is restriction of the constant unary operation $c^{(1)}_a$.
\end{remark}
 
 Clones on a set $A$ are closed under arbitrary intersection, so that they constitute a complete lattice denoted by $\mathrm{Lat}(\mathcal{O}_A)$. The clone generated by a set $F$ of operations will be denoted by $[F]$.
 If $F=\{f\}$ is a singleton, then we will write $[f]$ for $[\{f\}]$.
 
The set $\mathcal O_A$ of all operations and the set $\mathcal J_A$ of all projections are clones on $A$. 
$\mathcal O_A$ and $\mathcal J_A$ are respectively the top element and the bottom element of the lattice $\mathrm{Lat}(\mathcal{O}_A)$.

\subsection{Clone of the term operations}\label{sec:to} The \emph{clone of the term operations of a $\tau$-algebra $\mathbf A$}, denoted by  $\mathrm{Clo} \mathbf A$, is the smallest clone on $\mathbf A$. It is constituted by the set of all term operations of $\mathbf A$. The  definition of term operation must be carefully given.  
Every term $t$ determines an infinite set $T_t^\mathbf A$ of \emph{term operations} $t^{\mathbf A,k}:A^k\to A$, where $k$ is $\geq r$ for a suitable $r$ depending on $t$. We define $T_t^\mathbf A$ by induction as follows.
\begin{itemize}
\item $T_{v_i}^\mathbf A=\{ v_i^{\mathbf A,k}: k\geq i\}$, where $v_i^{\mathbf A,k}(a_1,\dots,a_k)=a_i$ for every $a_1,\dots,a_k\in A$ and variable $v_i$.
\item If $t=\sigma(t_1,\dots,t_m)$ and $t_1^{\mathbf A,k}  \in T_{t_1}^\mathbf A,\dots,t_m^{\mathbf A,k}\in T_{t_m}^\mathbf A$, then $t^{\mathbf A,k}\in T_{t}^\mathbf A$ is defined as 
$t^{\mathbf A,k}(a_1,\dots,a_k)=\sigma^\mathbf A(t_1^{\mathbf A,k}(a_1,\dots,a_k),\dots,t_m^{\mathbf A,k}(a_1,\dots,a_k))$ for every $a_1,\dots,a_k\in A$.
\item If $t^{\mathbf A,k+1}\in T_{t}^\mathbf A$ is fictitious, then $t^{\mathbf A,k}\in T_{t}^\mathbf A$ is  the restriction of $t^{\mathbf A,k+1}$.
\end{itemize}

\begin{proposition}
  $\mathrm{Clo} \mathbf A = \bigcup_{t\in T_\tau(\omega)} T_{t}^\mathbf A$.
\end{proposition}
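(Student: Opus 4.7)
The plan is to prove the two inclusions separately, establishing that $\bigcup_{t\in T_\tau(\omega)} T_t^\mathbf A$ is a clone containing all basic operations of $\mathbf A$, and that conversely every element of such a union must live in any clone on $\mathbf A$.

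For the inclusion $\bigcup_{t\in T_\tau(\omega)} T_t^\mathbf A \subseteq \mathrm{Clo}\,\mathbf A$, I would fix an arbitrary clone $F$ on $\mathbf A$ and show by induction on the structure of $t$ that every $t^{\mathbf A,k} \in T_t^\mathbf A$ belongs to $F$. The base case $t = v_i$ is immediate since $v_i^{\mathbf A,k} = p_i^{(k)}$ and clones contain all projections. For $t = \sigma(t_1,\dots,t_m)$, the inductive hypothesis gives $t_j^{\mathbf A,k} \in F$ for each $j$, and then $t^{\mathbf A,k} = \sigma^\mathbf A(t_1^{\mathbf A,k},\dots,t_m^{\mathbf A,k})_k$ lies in $F$ because $\sigma^\mathbf A \in F$ and $F$ is closed under composition. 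The third clause of the inductive definition (restriction of fictitious operations) is handled directly using the closure of $F$ under restriction. Since $F$ was arbitrary and $\mathrm{Clo}\,\mathbf A$ is the intersection of all clones on $\mathbf A$, the inclusion follows.

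For the converse inclusion, I would show that $U := \bigcup_{t\in T_\tau(\omega)} T_t^\mathbf A$ is itself a clone on $\mathbf A$; minimality of $\mathrm{Clo}\,\mathbf A$ will then give $\mathrm{Clo}\,\mathbf A \subseteq U$. Projections $p_i^{(k)}$ belong to $U$ via the term $v_i$, and each basic operation $\sigma^\mathbf A$ equals $\sigma(v_1,\dots,v_m)^{\mathbf A,m} \in T_{\sigma(v_1,\dots,v_m)}^\mathbf A$. Closure under restriction is built into the definition of $T_t^\mathbf A$. The main step is closure under composition: given $f = t^{\mathbf A,n} \in T_t^\mathbf A$ and $g_1,\dots,g_n$ of arity $k$, with $g_i = t_i^{\mathbf A,k} \in T_{t_i}^\mathbf A$, I claim that $f(g_1,\dots,g_n)_k = s^{\mathbf A,k}$, where $s$ is the term $t[t_1/v_1,\dots,t_n/v_n]$. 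This is proved by an inner induction on $t$: the variable case unfolds to the identity $v_i^{\mathbf A,n}(g_1,\dots,g_n)_k = g_i = t_i^{\mathbf A,k}$, and the operation-symbol case uses the recursive clause of $T_t^\mathbf A$ together with the induction hypothesis applied to the subterms. The fictitious-restriction clause in the definition of $T_t^\mathbf A$ lets one handle arity mismatches that may arise when some $t_j$ do not occur syntactically in $t$.

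The main obstacle, and the one requiring the most care, is the bookkeeping around fictitious/restriction operations and the case of nullary symbols. A term like $t = v_1$ can yield term operations $v_1^{\mathbf A,k}$ for all $k \ge 1$, and these arities must line up correctly in the substitution lemma; similarly, when $\sigma$ is nullary, $\sigma^\mathbf A$ is genuinely a $0$-ary operation of $\mathbf A$ whose presence in $U$ must be witnessed without using a projection (justifying why the definition explicitly closes clones under restriction rather than relying on projection-composition to manufacture constants, as noted in the remarks). Once these arity issues are tracked carefully through the substitution induction, both inclusions combine to give the stated equality.
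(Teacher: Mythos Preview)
Your proposal is correct. The paper states this proposition without proof, so there is nothing to compare against; your argument via the two inclusions (showing by induction that every $T_t^{\mathbf A}$ sits inside any clone on $\mathbf A$, and conversely that the union is itself a clone on $\mathbf A$) is the standard route and is sound, including your awareness that the restriction clause is what makes the nullary case and the arity bookkeeping go through.
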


\subsection{Abstract clones}\label{sec:attempt}
We describe an attempt (among others) aiming to encode clones into algebras  (see \cite{T93} and \cite[p.\,239]{evans}).
An \emph{abstract clone} is a many-sorted algebra composed of  disjoint sets $B_n$ ($n\geq 0$),
elements $\pi_i^{(n)}\in B_n$ ($n\geq 1$) for all $i\leq n$, and
a family of operations $C^n_k: B_n\times (B_k)^n \to B_k$ for all $k$ and $n$
such that
\begin{enumerate}
\item $C^n_k(C^m_n(x,y_1,\dots,y_m),\mathbf z)= C^m_n(x,C^n_k(y_1,\mathbf z),\dots,C^n_k(y_m,\mathbf z))$, where $x$ is a variable of sort $m$, $y_1,\dots,y_m$ are variables of sort $n$ and $\mathbf z=z_1,\dots,z_n$ are variables of sort $k$;
\item $C^n_n(x,\pi_1^{(n)},\dots,\pi_n^{(n)})=x$, where $x$ is a variable of sort $n$;
\item $C^n_k(\pi_i^{(n)},y_1,\dots,y_n)=y_i$, where $y_1,\dots,y_n$ are variables of sort $k$.
\end{enumerate}
Any clone $F$ on a set $A$ determines an abstract clone 
$\mathbf F=( F^{(n)},C^n_k,\pi_i^{(k)})_{n\geq 0,k\geq 1}$, where the nullary operators $\pi_i^{(k)}=p_i^{(k)}\in F^{(k)}$ are the projections  and
$C^n_k$ is the operator of composition introduced in Definition \ref{def:comp}:
$C^n_k(f,g_1,\dots,g_n)=f(g_1,\dots,g_n)_k$ is the composition of $f\in \mathcal F^{(n)}$ with $g_1,\dots,g_n\in \mathcal F^{(k)}$.


\subsection{Neumann's abstract $\aleph_0$-clones \cite{neu70,T93}}\label{sec:neu} The idea here is to regard an $n$-ary operation $f$ as an infinitary operation
that only depends on the first $n$ arguments (see Section \ref{sec:blocktop}). The corresponding abstract definition is as follows.
An \emph{abstract $\aleph_0$-clone} is an infinitary algebra $(A,\e_i,q_\infty)_{1\leq i<\omega}$, where the $\e_i$ are nullary operators and $q_\infty$ is an $\omega$-ary operation on $A$, satisfying the following axioms: 
\begin{itemize}
\item[(i)] $q_\infty(\e_i,x_1,\dots,x_n,\dots)=x_i$;
\item[(ii)] $q_\infty(x,\e_1,\dots,\e_n,\dots)=x$;
\item[(iii)] $q_\infty(q_\infty(x,\mathbf  y),\mathbf z)= q_\infty(x,q_\infty(y_1,\mathbf z),\dots,q_\infty(y_n,\mathbf z),\dots)$,
where  $\mathbf y=y_1,\dots,y_n,\dots$ and\\ $\mathbf  z=z_1,\dots,z_n,\dots$ are countable infinite sequences of variables.  
\end{itemize}

The most natural abstract $\aleph_0$-clones, the ones the axioms are intended to characterise, are algebras of infinitary operations, called \emph{functional $\aleph_0$-clones}, containing the projections and closed under infinitary composition. 
More precisely, a functional $\aleph_0$-clone is an infinitary algebra $(F,\e_i^\omega,q_\infty^\omega)_{1\leq i<\omega}$ defined as follows, for all $\varphi,\psi_i\in F$ and $s\in A^\omega$: 
\begin{itemize}
\item[(a)]  $F\subseteq \mathcal O_A^{(\omega)}$;
\item[(b)] $\e_i^\omega(s)=s_i$;
\item[(c)] $q_\infty^\omega(\varphi,\psi_1,\dots,\psi_n,\dots)(s)=\varphi(\psi_1(s),\psi_2(s),\dots,\psi_n(s),\dots).$
\end{itemize}
There is a faithful functor from the category of clones to the category of abstract $\aleph_0$-clone, but this functor is not onto. The problem is that these infinitary algebras may contain elements that correspond to operations essentially of infinite rank. Technical difficulties have caused this approach to be largely abandoned.

\section{Clone algebras}\label{sec:clonAlg}
 We have described in Section \ref{sec:attempt} an attempt that has been made to encode clones into algebras using  many-sorted algebras, and in Section \ref{sec:neu}  an attempt based on infinitary algebras.  
In this section we introduce the variety of \emph{clone algebras} as a more canonical algebraic account of clones using standard one-sorted algebras. 
In Sections \ref{sec:dueuno} and \ref{sec:clones} we will show how to encode clones inside clone algebras. 
The algebraic type of clone algebras contains a countable infinite family of nullary operators $\e_i$ and, for each $n\geq 0$, an operator $q_n$ of arity $n+1$. Informally, the constant $\e_i$ represents the $i$-th projection and the operator $q_n$ represents the $n$-ary functional composition. In the relevant example of free algebras the constants $\e_i$  represent the variables and the operators $q_n$  the term-for-variable substitutions.
Each operator $q_n$ embodies the countable infinite family of operators $C^n_k$ ($k\geq 1$) of abstract clones described in Section \ref{sec:clo}. 


In the remaining part of this paper 
when we write $q_n(x,\mathbf y)$ it will be implicitly stated that  $\mathbf y=y_1,\dots,y_n$ is a sequence of length $n$.

The algebraic type of clone $\tau$-algebras is $\tau \cup\{ q_n: n\geq 0\} \cup\{ \e_i : i\geq 1\}$. 

%

\begin{definition} \label{def:clonealg}
 A \emph{clone $\tau$-algebra}   is an algebra 
 $\mathbf C = (\mathbf C_\tau, q^\mathbf C_n,\e^\mathbf C_i)_{n\geq 0,i\geq 1}$ satisfying the following conditions:
\begin{enumerate}
\item[(C0)] $\mathbf C_\tau = (C,\sigma^\mathbf C)_{\sigma\in\tau}$ is a $\tau$-algebra;
\item[(C1)] $q_n(\e_i,x_1,\dots,x_n)=x_i$ $(1\leq i\leq n)$;
\item[(C2)] $q_n(\e_j,x_1,\dots,x_n)=\e_j$ $(j>n)$;
\item[(C3)] $q_n(x,\e_1,\dots,\e_n)=x$ $(n\geq 0)$;
\item[(C4)] $q_k(x, y_1,\dots,y_k)= q_n(x,y_1,\dots,y_k,\e_{k+1},\dots,\e_n)$ ($n> k$);
 \item[(C5)] $q_n(q_n(x,y_1,\dots,y_n),z_1,\dots,z_n)=q_n(x,q_n(y_1,z_1,\dots,z_n),\dots,q_n(y_n,z_1,\dots,z_n))$;
\item[(C6)]  $q_n(\sigma(x_1,\dots,x_k),y_1,\dots,y_n) = \sigma(q_n(x_1,y_1,\dots,y_n),\dots,q_n(x_k,y_1,\dots,y_n))$ for every $\sigma\in\tau$ of arity $k$ and every $n\geq 0$.
\end{enumerate}
If $\tau$ is empty, an algebra satisfying (C1)-(C5) is called a \emph{pure clone algebra}.
\end{definition}

In the following, when there is no danger of confusion,
we will write $\mathbf C = (\mathbf C_\tau, q^\mathbf C_n,\e^\mathbf C_i)$  for $\mathbf C = (\mathbf C_\tau, q^\mathbf C_n,\e^\mathbf C_i)_{n\geq 0,i\geq 1}$.

\begin{definition}
  If $\mathbf C$ is a clone $\tau$-algebra, then $\mathbf C_0=( C, q^\mathbf C_n,\e^\mathbf C_i)$ is called the \emph{pure reduct of $\mathbf C$}.
\end{definition}

The class of clone $\tau$-algebras  is denoted by $\mathsf{CA}_\tau$ and the class of all clone  algebras of any type by $\mathsf{CA}$. $\mathsf{CA}_0$ denotes the class of all pure clone algebras.
We also use $\mathsf{CA}_\tau$ as shorthand for the phrase ``clone $\tau$-algebra'', and similarly for $\mathsf{CA}$. 

By (C1) every $\mathsf{CA}_\tau$ is an $n\mathrm{CH}$, for every $n$ (see Section \ref{dobbiaco}).

We start the study of clone algebras with two simple lemmas.

%

\begin{lemma}\label{allungo} Let $\mathbf y=y_1,\dots,y_n$ and $\mathbf z=z_1,\dots,z_k$. Then the following identities follow from (C1)-(C5):
\begin{enumerate}
\item[(i)] If $n< k$, then
$q_k(q_n(x,\mathbf y),\mathbf z) = q_k(x,q_k(y_1,\mathbf z),\dots,q_k(y_n,\mathbf z),z_{n+1},\dots,z_k)$.
\item[(ii)] If $n\geq k$,  then
$q_k(q_n(x,\mathbf y),\mathbf z) = q_n(x,q_k(y_1,\mathbf z),\dots,q_k(y_n,\mathbf z))$.
\end{enumerate}
\end{lemma}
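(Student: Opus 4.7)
The plan is to prove both identities by reducing the mixed-arity composition $q_k \circ q_n$ to the uniform-arity case governed by axiom (C5). The key lever is axiom (C4), which lets us pad a $q_m$ up to a $q_\ell$ (for $\ell > m$) by appending the constants $\e_{m+1},\dots,\e_\ell$ in the extra argument positions. Once the inner and outer arities coincide, we apply (C5), and afterwards we clean up any padding using (C1) or (C4) again.

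For part (i), where $n<k$: first I would use (C4) to expand the inner term, writing
\[
q_n(x,y_1,\dots,y_n)=q_k(x,y_1,\dots,y_n,\e_{n+1},\dots,\e_k).
\]
Substituting into $q_k(q_n(x,\mathbf y),\mathbf z)$, both compositions now have arity $k$, so (C5) gives
\[
q_k\bigl(x,\,q_k(y_1,\mathbf z),\dots,q_k(y_n,\mathbf z),\,q_k(\e_{n+1},\mathbf z),\dots,q_k(\e_k,\mathbf z)\bigr).
\]
The trailing terms $q_k(\e_i,\mathbf z)$ with $n+1\le i\le k$ collapse to $z_i$ by (C1), yielding (i).

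For part (ii), with $n\ge k$: if $n=k$ the claim is just (C5). Otherwise $n>k$, so I would use (C4) in the outer position to pad the $q_k$ up to a $q_n$, writing
\[
q_k(q_n(x,\mathbf y),\mathbf z)=q_n\bigl(q_n(x,\mathbf y),\,z_1,\dots,z_k,\e_{k+1},\dots,\e_n\bigr).
\]
Now both compositions have arity $n$, so (C5) applies and produces
\[
q_n\bigl(x,\,q_n(y_1,z_1,\dots,z_k,\e_{k+1},\dots,\e_n),\dots,q_n(y_n,z_1,\dots,z_k,\e_{k+1},\dots,\e_n)\bigr).
\]
Finally, (C4) applied in reverse simplifies each inner term $q_n(y_i,z_1,\dots,z_k,\e_{k+1},\dots,\e_n)$ back to $q_k(y_i,\mathbf z)$, giving (ii).

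I do not anticipate any real obstacle: the whole argument is a bookkeeping exercise around (C4) and (C5), with (C1) doing the final cleanup in case (i). The one minor subtlety is remembering that (C4) requires the target arity to be strictly larger than the source arity, so part (ii) splits into the subcases $n=k$ (handled directly by (C5)) and $n>k$ (handled by padding as above).
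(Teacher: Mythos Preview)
Your proof is correct and follows the same approach as the paper: pad via (C4) so that both the inner and outer $q$'s share a common arity, apply (C5), then simplify the padding using (C1) in case (i) and (C4) in case (ii). The only cosmetic difference is that you explicitly separate the $n=k$ subcase in (ii), whereas the paper treats it uniformly.
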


\begin{proof} 
 \[
\begin{array}{lll}
q_k(q_n(x,\mathbf y),\mathbf z)  & =_{(C4)}  &  q_k(q_k(x,\mathbf y, \e_{n+1},\dots,\e_k),\mathbf z) \\
  & =_{(C5)}  & q_k(x,q_k(y_1,\mathbf z),\dots,q_k(y_n,\mathbf z),q_k(\e_{n+1},\mathbf z),\dots,q_k(\e_k,\mathbf z))  \\
  & =_{(C1)}  &  q_k(x,q_k(y_1,\mathbf z),\dots,q_k(y_n,\mathbf z), z_{n+1},\dots,z_k)\\  
\end{array}
\]
\[
\begin{array}{lll}
q_k(q_n(x,\mathbf y),\mathbf z)  & =_{(C4)}  &  q_n(q_n(x,\mathbf y),\mathbf z, \e_{k+1},\dots,\e_n) \\
  & =_{(C5)}  & q_n(x,q_n(y_1,\mathbf z, \e_{k+1},\dots,\e_n),\dots,q_n(y_n,\mathbf z, \e_{k+1},\dots,\e_n))  \\
  & =_{(C4)}  &  q_n(x,q_k(y_1,\mathbf z),\dots,q_k(y_n,\mathbf z))
\end{array}
\]
\end{proof}

\begin{lemma}\label{lem:24}
 Let $\mathbf C= (\mathbf C_\tau,q_n^\mathbf C,\e_i^\mathbf C)$ be a clone $\tau$-algebra and $\mathbf b= b_1,\dots,b_n\in C$. Then the map $s_\mathbf b:C\to C$, defined by
 $$s_\mathbf b(a)=q_n(a,\mathbf b)\quad\text{for every $a\in C$},$$
 is an endomorphism of the $\tau$-algebra $\mathbf C_\tau$, satisfying $s_\mathbf b(\e_i)=b_i$ for $1\leq i\leq n$, and $s_\mathbf b(\e_i)=\e_i$ for  $i > n$.
\end{lemma}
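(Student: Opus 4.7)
The plan is to read the three required properties directly off the axioms (C1), (C2), and (C6), with no additional work needed from Lemma \ref{allungo}.

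First I would verify the behavior of $s_\mathbf b$ on the distinguished constants. For $1\leq i\leq n$, applying the definition of $s_\mathbf b$ gives $s_\mathbf b(\e_i)=q_n(\e_i,b_1,\dots,b_n)$, which equals $b_i$ by axiom (C1). For $i>n$, the same definition gives $s_\mathbf b(\e_i)=q_n(\e_i,b_1,\dots,b_n)$, which equals $\e_i$ by axiom (C2). These two computations settle both bullet-point claims about the constants $\e_i$.

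Next I would verify that $s_\mathbf b$ is a homomorphism of the $\tau$-reduct $\mathbf C_\tau$. Fix an operation symbol $\sigma\in\tau$ of arity $k$ and elements $x_1,\dots,x_k\in C$. By definition,
\[
s_\mathbf b(\sigma(x_1,\dots,x_k)) \;=\; q_n(\sigma(x_1,\dots,x_k),b_1,\dots,b_n).
\]
Applying axiom (C6) to the right-hand side yields
\[
\sigma\bigl(q_n(x_1,b_1,\dots,b_n),\dots,q_n(x_k,b_1,\dots,b_n)\bigr) \;=\; \sigma(s_\mathbf b(x_1),\dots,s_\mathbf b(x_k)),
\]
so $s_\mathbf b$ commutes with every fundamental $\tau$-operation, hence is a $\tau$-endomorphism of $\mathbf C_\tau$.

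There is no real obstacle here: the statement is essentially packaging axioms (C1), (C2), and (C6) into a single convenient map. The only conceptual point worth noting is that the codomain algebra is the $\tau$-reduct $\mathbf C_\tau$ rather than the full clone algebra $\mathbf C$, so one does not need $s_\mathbf b$ to commute with the operators $q_m$ or to fix the constants $\e_i$; this is fortunate because in general $s_\mathbf b(\e_i) = b_i \neq \e_i$ for $i\leq n$, so $s_\mathbf b$ is not an endomorphism of the whole clone algebra.
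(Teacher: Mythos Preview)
Your proof is correct and follows the same approach as the paper, which simply writes ``By (C6).'' You have spelled out in full what the paper leaves implicit: (C6) yields the $\tau$-endomorphism property, while (C1) and (C2) handle the values on the constants $\e_i$.
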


\begin{proof} By (C6).
\end{proof}

In the remaining part of this section we define the notions of independence and dimension in clone algebras.

\begin{definition}
  An element $a$ of a clone algebra $\mathbf  C$ \emph{is independent of} $\e_{n}$ if 
$q_{n}(a,\e_1,\dots,\e_{n-1},\e_{n+1})=a$. If $a$ is not independent of $\e_{n}$, then we say that 
$a$ \emph{is dependent on} $\e_{n}$.
\end{definition}

\begin{lemma}\label{lem:ind1} Let $\mathbf C$ be a clone algebra and $\mathbf b=b_1,\dots, b_{n-1}\in C$. 
If $k \geq n$ and $a\in C$ is independent of $\e_n,\e_{n+1}\dots,\e_k$, then
$$q_k(a,\mathbf b,b_n,\dots,b_k)=q_{n-1}(a,\mathbf b),\quad\text{for all $b_n,\dots,b_k\in C$}.$$
\end{lemma}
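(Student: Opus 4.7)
The strategy is to reduce the arity of $q_k$ by one, using independence of $\e_k$ to absorb the last argument $b_k$, and then iterate this reduction from $k$ down to $n$. The hypothesis that $a$ is independent of each of $\e_n,\e_{n+1},\dots,\e_k$ is exactly what is needed to justify each of the $k-n+1$ successive absorption steps.

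The core step is the following auxiliary claim: if $a$ is independent of $\e_j$, then for every $c_1,\dots,c_j\in C$,
\[
q_j(a,c_1,\dots,c_{j-1},c_j)=q_{j-1}(a,c_1,\dots,c_{j-1}).
\]
To establish this, I start from the defining equation of independence, $a=q_j(a,\e_1,\dots,\e_{j-1},\e_{j+1})$, and substitute it into the left-hand side. Applying (C5) to the outer $q_j$ and then (C1) and (C2) to simplify the inner compositions $q_j(\e_i,c_1,\dots,c_j)$ (which yield $c_i$ for $i\leq j-1$ and $\e_{j+1}$ for the last slot) transforms the left-hand side into $q_j(a,c_1,\dots,c_{j-1},\e_{j+1})$. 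Since this expression no longer mentions $c_j$, it remains unchanged when $c_j$ is replaced by $\e_j$; hence it also equals $q_j(a,c_1,\dots,c_{j-1},\e_j)$, which by axiom (C4) is exactly $q_{j-1}(a,c_1,\dots,c_{j-1})$.

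With this step in hand, the lemma follows by a straightforward downward induction on the arity: starting from $q_k(a,\mathbf b,b_n,\dots,b_k)$, apply the auxiliary claim using independence of $\e_k$ to obtain $q_{k-1}(a,\mathbf b,b_n,\dots,b_{k-1})$, then again using independence of $\e_{k-1}$, and so on until only $\mathbf b=b_1,\dots,b_{n-1}$ remain as arguments, at which point one has $q_{n-1}(a,\mathbf b)$. There is no real obstacle beyond careful bookkeeping of indices; the only conceptual ingredient is the observation that the equation produced after applying (C5) has dropped the last variable $c_j$, which lets one swap $c_j$ for $\e_j$ freely and then invoke (C4) to lower the arity.
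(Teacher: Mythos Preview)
Your proposal is correct and follows essentially the same approach as the paper: the paper first treats the case $k=n$ via the same substitution-then-(C5,C1,C2) computation yielding $q_n(a,\mathbf b,b_n)=q_n(a,\mathbf b,\e_{n+1})$, observes this is independent of $b_n$ so one may take $b_n=\e_n$ and apply (C4), and then handles the general case by iterating this step---exactly your auxiliary claim and downward induction.
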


\begin{proof} Let $\mathbf e = \e_1,\dots,\e_{n-1}$. First we analyse the case $k=n$.
Since
\begin{equation}\label{mah}
q_n(a,\mathbf b,b_n)=_{(hyp)} q_n(q_n(a,\mathbf e,\e_{n+1}),\mathbf b,b_n)=_{(C5,C1,C2)} q_n(a,\mathbf b,\e_{n+1}),
\end{equation}
then 
\begin{equation}\label{mah2} q_{n-1}(a,\mathbf b)=_{(C4)} q_n(a,\mathbf b,\e_n)=_{(\ref{mah})}q_n(a,\mathbf b,\e_{n+1}) =_{(\ref{mah})} q_n(a,\mathbf b,b_n).
\end{equation}
The general case is obtained by applying several times (\ref{mah2}) to $q_{n-1}(a,\mathbf b)$.
\end{proof}

Let $a$ be an element of a clone algebra $\mathbf C$.  We define
$$\Gamma(a)= \{i:\ \text{$a$ is dependent on $\e_i$} \};\qquad
\gamma(a)=\begin{cases}
\omega&\text{if $\Gamma(a)$ is infinite}\\ 
0&\text{if $\Gamma(a)$ is empty}\\ 
\mathrm{max}\,\Gamma(a) &\text{otherwise}\\
\end{cases}$$
An element $a\in C$ is said to be: (i) \emph{$k$-dimensional} if $\gamma(a)=k$; (ii) \emph{finite dimensional} if it is $k$-dimensional for some $k<\omega$; (iii) \emph{zero-dimensional}  if $\gamma(a)=0$.

\begin{example}
  The nullary operator
$\e_i$ is $i$-dimensional, because $\Gamma(\e_i)=\{i\}$ and $\gamma(\e_i)=i$.
\end{example}

We consider the following subsets of  $\mathbf C$:
\begin{itemize}
\item The set $\mathrm{Fi}_k\,\mathbf C$ of all elements of $\mathbf C$ whose dimension is $\leq k$;
\item  The set $\mathrm{Fi}\,\mathbf C=\bigcup \mathrm{Fi}_k\,\mathbf C$ of all finite dimensional elements of $\mathbf C$.
\end{itemize}
We say that $\mathbf C$ is \emph{finite dimensional} if $C=\mathrm{Fi}\,\mathbf C$.

\begin{lemma}\label{lem:preservare}
\begin{itemize}
\item[(i)]  If $a,\mathbf b$ have dimension $\leq k$, then $\sigma(\mathbf b)$ and $q_n(a,\mathbf b)$ have dimension $\leq k$.
\item[(ii)] If $h: \mathbf C\to \mathbf D$ is a homomorphism of clone algebras and $a\in C$ has dimension $\leq k$, then $h(a)$ has dimension $\leq k$ in $\mathbf D$.
\end{itemize}
\end{lemma}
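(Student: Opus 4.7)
The key observation is that ``dimension $\leq k$'' is equivalent to being fixed by every substitution operator of the form $\tau_i : c \mapsto q_i(c,\e_1,\dots,\e_{i-1},\e_{i+1})$ for $i > k$. So the plan is to verify, for each $i > k$, that $\tau_i$ fixes $\sigma(\mathbf b)$, $q_n(a,\mathbf b)$, and $h(a)$.

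For part (i), case $\sigma(\mathbf b)$: apply Lemma~\ref{lem:24} with $\mathbf b = (\e_1,\dots,\e_{i-1},\e_{i+1})$; this makes $\tau_i$ a $\tau$-endomorphism of $\mathbf C_\tau$, so $\tau_i(\sigma(\mathbf b)) = \sigma(\tau_i(b_1),\dots,\tau_i(b_n))$. Since each $b_j$ has dimension $\leq k < i$, we have $\tau_i(b_j) = b_j$, and the claim follows.

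For part (i), case $q_n(a,\mathbf b)$: compute $\tau_i(q_n(a,\mathbf b)) = q_i(q_n(a,\mathbf b),\e_1,\dots,\e_{i-1},\e_{i+1})$ using Lemma~\ref{allungo}. If $i \leq n$, Lemma~\ref{allungo}(ii) gives $\tau_i(q_n(a,\mathbf b)) = q_n(a,\tau_i(b_1),\dots,\tau_i(b_n)) = q_n(a,\mathbf b)$ directly. If $i > n$, Lemma~\ref{allungo}(i) gives
\[
\tau_i(q_n(a,\mathbf b)) \;=\; q_i(a,\tau_i(b_1),\dots,\tau_i(b_n),\e_{n+1},\dots,\e_{i-1},\e_{i+1}) \;=\; q_i(a,b_1,\dots,b_n,\e_{n+1},\dots,\e_{i-1},\e_{i+1}).
\]
By (C4), $q_n(a,\mathbf b) = q_i(a,b_1,\dots,b_n,\e_{n+1},\dots,\e_{i-1},\e_i)$, so it remains to replace $\e_{i+1}$ by $\e_i$ in the last slot. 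This is the main obstacle: I need a small auxiliary observation that if $a$ is independent of $\e_i$, then the value of $q_i(a,c_1,\dots,c_i)$ does not depend on $c_i$. This follows from (C5) applied to the identity $a = q_i(a,\e_1,\dots,\e_{i-1},\e_{i+1})$: substituting arbitrary $c_1,\dots,c_i$ yields $q_i(a,c_1,\dots,c_i) = q_i(a,c_1,\dots,c_{i-1},q_i(\e_{i+1},c_1,\dots,c_i)) = q_i(a,c_1,\dots,c_{i-1},\e_{i+1})$ by (C2), so the last argument is immaterial, and in particular the two expressions above are equal.

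For part (ii), the argument is a one-line naturality: since $h$ preserves the operators $\e_j$ and $q_i$ of the clone algebra signature, $h \circ \tau_i^{\mathbf C} = \tau_i^{\mathbf D} \circ h$. Hence if $\tau_i^{\mathbf C}(a) = a$ for all $i > k$, then $\tau_i^{\mathbf D}(h(a)) = h(a)$ for all $i > k$, so $h(a)$ has dimension $\leq k$ in $\mathbf D$.
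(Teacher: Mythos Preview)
Your proof is correct and follows essentially the same approach as the paper: both compute $q_m(\,\cdot\,,\e_1,\dots,\e_{m-1},\e_{m+1})$ using (C6) (your Lemma~\ref{lem:24}) for the $\sigma$ case and Lemma~\ref{allungo} for the $q_n$ case, then eliminate the trailing $\e_{m+1}$ using the independence of $a$ from $\e_m$. The only cosmetic difference is that your ``auxiliary observation'' reproves inline the special case of Lemma~\ref{lem:ind1} that the paper simply cites.
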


\begin{proof} (i) Let $m > k$,  $\mathbf e =\e_1,\dots,\e_{m-1},\e_{m+1}$ and $\mathbf d =\e_1,\dots,\e_{m-1}$. Then
 $$q_m(\sigma(\mathbf b),\mathbf e)   =_{(C6)}   \sigma(q_m(b_1,\mathbf  e),\dots, q_m(b_n,\mathbf e))
  =_{b_i\ \text{ind.}\ \e_m} 
     \sigma(\mathbf b).$$
 If $m > n$, then we have:
    \[
\begin{array}{lll}
q_m(q_n(a,\mathbf b),\mathbf e)  & =_{\mathrm{Lem}\, \ref{allungo}(i)}  &  q_m(a,q_m(b_1,\mathbf e),\dots,q_m(b_n, \mathbf e),\e_{n+1}\dots,\e_{m-1},\e_{m+1})  \\
  &  =_{b_i\ \text{ind.}\ \e_m} &  q_m(a,\mathbf b,\e_{n+1}\dots,\e_{m-1},\e_{m+1}) \\
  &  =_{a\ \text{ind.}\ \e_m,\ \mathrm{Lem}\, \ref{lem:ind1}} &  q_{m-1}(a,\mathbf b,\e_{n+1}\dots,\e_{m-1}) \\
  & =_{(C4)}  &   q_n(a,\mathbf b)
\end{array}
\]
Similarly, if $m\leq n$.

(ii) Trivial.
\end{proof}

\begin{proposition} Let $\mathbf C$ be a clone $\tau$-algebra. Then we have:
\begin{itemize}
\item[(i)]    $\mathrm{Fi}\,\mathbf C$ is a subalgebra of $\mathbf C$.
\item[(ii)]  $\mathrm{Fi}_k\,\mathbf C$ is a subalgebra of $\mathbf C_\tau$ closed under all $q$-operators and containing $\e_1,\dots,\e_k$.
\item[(iii)] $a\in \mathrm{Fi}_0\,\mathbf C\ \text{and}\ \mathbf b\in C^n\ \Longrightarrow\ q_n(a,\mathbf b)=a$.
\item[(iv)] $a\in \mathrm{Fi}\,\mathbf C,  n\geq \gamma(a)\ \text{and}\ \mathbf b\in (\mathrm{Fi}_0\,\mathbf C)^n\ \Longrightarrow\ q_n(a,\mathbf b)\in \mathrm{Fi}_0\,\mathbf C$.
\end{itemize}
\end{proposition}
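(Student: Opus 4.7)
The plan is to derive (i) and (ii) almost immediately from Lemma~\ref{lem:preservare}(i), and then to handle (iii) and (iv) by reducing every application of some $q_m$ to a situation in which independence hypotheses become available, so that Lemma~\ref{lem:ind1} can be invoked.

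For (ii), I would first record that $\e_j$ is dependent on $\e_j$ but independent of every $\e_i$ with $i \neq j$; both facts are direct consequences of (C1)--(C3), so $\e_1,\dots,\e_k \in \mathrm{Fi}_k\,\mathbf{C}$. Closure of $\mathrm{Fi}_k\,\mathbf{C}$ under the $\tau$-operations and under every $q_n$ is then just Lemma~\ref{lem:preservare}(i). Statement (i) follows at once from (ii): any finite tuple of elements of $\mathrm{Fi}\,\mathbf{C}$ lies inside some $\mathrm{Fi}_k\,\mathbf{C}$ (take $k$ to be the maximum of the dimensions in the tuple, together with the arities of the operators applied), so each fundamental operation is interpreted inside that subalgebra by (ii).

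For (iii), a zero-dimensional $a$ is by definition independent of every $\e_j$, so in particular of $\e_1,\dots,\e_n$. Lemma~\ref{lem:ind1} applied with the role of $n$ taken by $1$ and the role of $k$ taken by $n$ then yields $q_n(a,\mathbf{b}) = q_0(a)$ for every $\mathbf{b} \in C^n$, and (C3) in its degenerate instance $n = 0$ gives $q_0(a) = a$. The case $n = 0$ is immediate from (C3) alone.

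The bulk of the work is (iv). Fix $a$ with $\gamma(a) \leq n$ and a zero-dimensional tuple $\mathbf{b} = b_1,\dots,b_n$; the goal is to show that $q_n(a,\mathbf{b})$ is independent of every $\e_m$, i.e.\ that $q_m(q_n(a,\mathbf{b}),\mathbf{e}') = q_n(a,\mathbf{b})$ with $\mathbf{e}' = \e_1,\dots,\e_{m-1},\e_{m+1}$. I split on $m \leq n$ versus $m > n$. In the first case, Lemma~\ref{allungo}(ii) rewrites the left-hand side as $q_n(a, q_m(b_1,\mathbf{e}'),\dots,q_m(b_n,\mathbf{e}'))$; applying (iii) to each zero-dimensional $b_i$ collapses every inner $q_m(b_i,\mathbf{e}')$ to $b_i$, leaving $q_n(a,\mathbf{b})$. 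In the second case, Lemma~\ref{allungo}(i) gives $q_m(a, q_m(b_1,\mathbf{e}'),\dots,q_m(b_n,\mathbf{e}'), \e_{n+1},\dots,\e_{m-1},\e_{m+1})$; applying (iii) again simplifies the $q_m(b_i,\mathbf{e}')$ to $b_i$, and since $\gamma(a) \leq n < m$ forces $a$ to be independent of $\e_{n+1},\dots,\e_m$, Lemma~\ref{lem:ind1} (with $n+1$ in place of its $n$ and $m$ in place of its $k$) collapses the trailing string of $\e$'s and recovers $q_n(a,\mathbf{b})$. The anticipated obstacle is purely bookkeeping: tracking the missing index $m$ in $\mathbf{e}'$ while aligning arities between Lemmas~\ref{allungo} and~\ref{lem:ind1}; but each step is a direct instance of a previously established identity.
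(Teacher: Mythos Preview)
Your proposal is correct and follows essentially the same route as the paper: (i)--(ii) via Lemma~\ref{lem:preservare}, (iii) via Lemma~\ref{lem:ind1} reducing to $q_0(a)=a$, and (iv) by the same case split $m\le n$ versus $m>n$ combined with Lemma~\ref{allungo} and Lemma~\ref{lem:ind1}. The only cosmetic differences are that the paper proves (iii) by a short induction on $n$ rather than a single application of Lemma~\ref{lem:ind1}, and in (iv) it simplifies $q_k(b_i,\e_1,\dots,\e_{k-1},\e_{k+1})$ to $b_i$ directly from the definition of independence (rather than via your already-proved (iii)) and then peels off one tail argument with Lemma~\ref{lem:ind1} before applying (C4), whereas you collapse the whole tail in one shot.
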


\begin{proof}
 (i)-(ii) By Lemma \ref{lem:preservare}.
 
 (iii) The proof is by induction on $n$. By (C3) $q_0(a)=a$.
By Lemma \ref{lem:ind1} and by applying the induction hypothesis we get $q_n(a,b_1,\dots,b_{n-1},b_n)=q_{n-1}(a,b_1,\dots,b_{n-1})=a$.

(iv) Let $\mathbf e = \e_1,\dots,\e_{k-1}$. If $k\leq n$,  then
\[
\begin{array}{lll}
 q_k(q_n(a,\mathbf b),\mathbf e,\e_{k+1}) & =_{\text{Lem}\, \ref{allungo}(ii)}  & q_n(a,q_k(b_1,\mathbf e,\e_{k+1}),\dots,q_k(b_n,\mathbf e,\e_{k+1}))  \\
  & =_{b_i\in \mathrm{Fi}_0\,\mathbf C}  & q_n(a,\mathbf b)  \\
\end{array}
\]
 If $k >n$, then
 \[
\begin{array}{llll}
 q_k(q_n(a,\mathbf b),\mathbf e,\e_{k+1}) & =_{Lem\, \ref{allungo}(i)}  & q_k(a,q_k(b_1,\mathbf e,\e_{k+1}),\dots,q_k(b_n,\mathbf e,\e_{k+1}),\e_{n+1},\dots,\e_{k-1},\e_{k+1}) & \\
  & =_{b_i\in \mathrm{Fi}_0\,\mathbf C}  & q_k(a,\mathbf b,\e_{n+1},\dots,\e_{k-1},\e_{k+1}) & \\
  & =_{\gamma(a)<k}  & q_{k-1}(a,\mathbf b,\e_{n+1},\dots,\e_{k-1}) & \\
    & =_{(C4)}  & q_{n}(a,\mathbf b). & \\
\end{array}
\]

\end{proof}

We conclude this section with an example. Other examples of clone algebras will be presented in  Section \ref{sec:dueuno} and Section \ref{sec:clones}.

\begin{example} \label{exa:free} Let $\mathcal V$ be a variety of algebras of type $\tau$ and $\mathbf F_\mathcal V$ be its free algebra over a countable infinite set $I=\{v_1,\dots,v_n,\dots\}$ of generators. 
 We define  an $n+1$-ary operation $q_n^\mathbf F$ on $\mathbf{F}_\mathcal{V}$ as follows (see \cite[Definition 3.2]{mac2}). For
$a,b_1,\dots,b_n\in F_\mathcal{V}$ we put 
$$ q_n^\mathbf F(a,b_1,\dots,b_n)=s(a),$$
where $s$ is the unique endomorphism
of $\mathbf{F}_\mathcal{V}$ which sends the generator $ v_i\in I$ to $b_i$ ($1\leq i\leq n$). 
More suggestively: $q_n^\mathbf F(a,b_1,\dots,b_n)$ is the equivalence class of the term 
$t[w_1/v_1,\dots,w_n/v_n]$, where $t\in a$ and $w_i\in b_i$. 
If we put $\e_i^\mathbf F=v_i$, then the algebra $(\mathbf F_\mathcal V,q_n^\mathbf F, \e_i^\mathbf F)$ is a clone $\tau$-algebra.
We remark that Lampe's proof of the Zipper condition described in Section \ref{sec:pre:leq} uses the operator $q_2^\mathbf F$  (see the proof of McKenzie Lemma in \cite{lampe}).
\end{example}



\section{Functional Clone Algebras}\label{sec:dueuno}
 The most natural $\mathsf{CA}$s, the ones the axioms are intended to characterise, are algebras of functions, called \emph{functional clone algebras}. They will be introduced in this section.
 The elements of a functional clone algebra are infinitary functions from $A^\omega$ into $A$, for a given set $A$. In this framework  $q_n(\varphi,\psi_1,\dots,\psi_n)$ represents the $n$-ary composition of $\varphi$ with $\psi_1,\dots,\psi_n$, acting on the first $n$ coordinates:
 $$q_n(\varphi,\psi_1,\dots,\psi_n)(s)= \varphi(\psi_1(s),\dots,\psi_n(s),s_{n+1},s_{n+2},\dots),\ \text{for every $s\in A^\omega$}$$
and the nullary operators are the projections $p_i$ defined by $p_i(s)=s_i$ for every $s\in A^\omega$.
Hence, the universe of a functional clone algebra is a set of infinitary operations containing the projection $p_i$ and closed under finitary compositions, called hereafter \emph{$\omega$-clone}.
 We will see in Section \ref{sec:clones}  that there exists a bijective correspondence between clones (of  operations) and a suitable class of functional clone algebras, called \emph{block algebras}. 
 Given a clone, the corresponding block algebra is obtained by extending the operations of the clone by countably many dummy arguments.

  A clone algebra is functionally representable if it is isomorphic to a functional clone algebra. One of the main results of this paper is the general representation theorem of Section \ref{sec:GRT}, where it is shown that every $\mathsf{CA}$ is functionally representable. 
Therefore, the clone algebras are the algebraic counterpart of $\omega$-clones, while the block algebras are the algebraic counterpart of clones. By Corollary \ref{cor:cablk}  the variety of clone algebras is generated by the class of block algebras. Then every $\omega$-clone is algebraically generated by a suitable family of clones by using direct products, subalgebras and homomorphic images.

\bigskip

Let $A$ be a set and   $\mathcal O_A^{(\omega)}$ be
 the set of all infinitary operations from $A^\omega$ into $A$. If $r\in A^\omega$ and $a_1,\dots,a_n\in A$ then $r[a_1,\dots,a_n]\in A^\omega$ is defined by
 $$r[a_1,\dots,a_n](i)=\begin{cases}a_i&\text{if $i\leq n$}\\ r_i&\text{if $i > n$}\end{cases}$$

\begin{definition}\label{def:fca}
 Let $\mathbf A$ be a $\tau$-algebra.
The algebra ${\mathbf O}_\mathbf A^{(\omega)}=(\mathcal O_A^{(\omega)},\sigma^\omega, q^\omega_n,\e^\omega_i)$, where, for every $s\in A^\omega$ and $\varphi,\psi_1,\dots,\psi_n\in \mathcal O_A^{(\omega)}$, 
\begin{itemize}
\item $\e^\omega_i(s)=s_i$;
\item $q^\omega_n(\varphi,\psi_1,\dots,\psi_n)(s)=\varphi(s[\psi_1(s),\dots, \psi_n(s)])$;
\item $\sigma^\omega(\psi_1,\dots,\psi_n)(s)=\sigma^\mathbf A(\psi_1(s),\dots, \psi_n(s))$ for every $\sigma\in\tau$ of arity $n$;
\end{itemize}
 is called the \emph{full functional clone $\tau$-algebra with value domain $\mathbf A$}.
\end{definition}

\begin{lemma}\label{lem:fun} 
The algebra ${\mathbf O}_\mathbf A^{(\omega)}$ is a clone $\tau$-algebra.
\end{lemma}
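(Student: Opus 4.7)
The plan is to verify the axioms (C0)--(C6) of Definition \ref{def:clonealg} directly from the definitions of $\e^\omega_i$, $q^\omega_n$ and $\sigma^\omega$. Axiom (C0) is trivial since ${\mathbf O}^{(\omega)}_{\mathbf A}$ carries each $\sigma^\omega$ for $\sigma\in\tau$. For the other axioms, the single organising observation is the behaviour of the modified sequence: for any $s\in A^\omega$ and $a_1,\dots,a_n\in A$,
\[
s[a_1,\dots,a_n](i)=a_i \text{ if } i\leq n,\qquad s[a_1,\dots,a_n](i)=s_i \text{ if } i>n.
\]
From this, (C1) and (C2) are immediate: evaluating $q^\omega_n(\e^\omega_j,\psi_1,\dots,\psi_n)$ at $s$ gives $s[\psi_1(s),\dots,\psi_n(s)](j)$, which equals $\psi_j(s)$ when $j\leq n$ and $s_j=\e^\omega_j(s)$ when $j>n$. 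Axiom (C3) reduces to $s[s_1,\dots,s_n]=s$, so $q^\omega_n(\varphi,\e^\omega_1,\dots,\e^\omega_n)(s)=\varphi(s)$. Axiom (C4) is equally direct: for $n>k$, $s[\psi_1(s),\dots,\psi_k(s),s_{k+1},\dots,s_n]=s[\psi_1(s),\dots,\psi_k(s)]$, so the right-hand side evaluated at $s$ coincides with the left.

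The main step is (C5). Write $\mathbf\psi=\psi_1,\dots,\psi_n$ and $\mathbf\chi=\chi_1,\dots,\chi_n$, and set $s'=s[\chi_1(s),\dots,\chi_n(s)]$. The left-hand side $q^\omega_n(q^\omega_n(\varphi,\mathbf\psi),\mathbf\chi)(s)$ unfolds to $\varphi(s'[\psi_1(s'),\dots,\psi_n(s')])$. Using the boxed identity, $s'[\psi_1(s'),\dots,\psi_n(s')](i)=\psi_i(s')$ for $i\leq n$ and $s'_i=s_i$ for $i>n$; hence this equals $\varphi(s[\psi_1(s'),\dots,\psi_n(s')])$. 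Since $\psi_i(s')=q^\omega_n(\psi_i,\mathbf\chi)(s)$, this is precisely the right-hand side. Axiom (C6) is an immediate consequence of the pointwise definition of $\sigma^\omega$: both sides evaluated at $s$ yield $\sigma^{\mathbf A}(\psi_1(s[\mathbf\chi(s)]),\dots,\psi_k(s[\mathbf\chi(s)]))$.

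No step presents a real obstacle; the only point where one has to be careful is (C5), where the verification hinges on the fact that composing with $s[\,\cdot\,]$ twice collapses correctly because coordinates with index $>n$ in $s'$ are untouched and therefore agree with the corresponding coordinates of $s$. Everything else is unwinding definitions.
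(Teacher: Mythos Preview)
Your proposal is correct and is precisely the routine verification the paper has in mind; the paper states Lemma~\ref{lem:fun} without proof, treating it as an immediate check of axioms (C1)--(C6) from Definition~\ref{def:fca}, which is exactly what you carry out.
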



\begin{definition} 
 A subalgebra of ${\mathbf O}_\mathbf A^{(\omega)}$  is called a \emph{functional clone algebra with value domain} $\mathbf A$.
\end{definition}
 
 The class of  functional clone algebras is denoted by $\mathsf{FCA}$.  $\mathsf{FCA}_\tau$  is the class of $\mathsf{FCA}$s   whose value domain is a $\tau$-algebra.
 We also use $\mathsf{FCA}_\tau$ as shorthand for the phrase ``functional clone algebra of type $\tau$'', and similarly for $\mathsf{FCA}$.

 
In the following lemma the algebraic and functional notions of independence  are shown to be equivalent.

\begin{lemma}\label{lem:independent}
  An infinitary operation  $\varphi\in \mathcal O_A^{(\omega)}$  is \emph{independent of $\e_n$} iff, for all $s,u\in A^\omega$, $u_i=s_i$ for all $i\neq n$ implies $\varphi(u) = \varphi(s)$.
\end{lemma}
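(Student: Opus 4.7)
The plan is to unfold the definitions on each side and reduce the equivalence to a short manipulation of sequences. Recall that, by Definition~\ref{def:fca},
\[
q^\omega_n(\varphi,\e^\omega_1,\dots,\e^\omega_{n-1},\e^\omega_{n+1})(s)=\varphi\bigl(s[s_1,\dots,s_{n-1},s_{n+1}]\bigr),
\]
and the tuple $s[s_1,\dots,s_{n-1},s_{n+1}]$ is exactly the sequence $s^\flat$ obtained from $s$ by replacing $s_n$ with $s_{n+1}$ and leaving every other coordinate untouched. So the algebraic condition ``$\varphi$ is independent of $\e_n$'' unfolds to the statement
\[
(\ast)\qquad \varphi(s)=\varphi(s^\flat)\quad\text{for every }s\in A^\omega.
\]

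The direction from the semantic (``only depends on coordinates $i\neq n$'') condition to $(\ast)$ is immediate, since $s$ and $s^\flat$ agree on every coordinate other than the $n$-th.

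For the converse, I would take any two sequences $s,u\in A^\omega$ with $s_i=u_i$ for all $i\neq n$, and apply $(\ast)$ twice. On one hand $\varphi(s)=\varphi(s^\flat)$, where $s^\flat$ equals $s$ except $s^\flat_n=s_{n+1}$. On the other hand $\varphi(u)=\varphi(u^\flat)$, where $u^\flat_n=u_{n+1}=s_{n+1}$. Since $s$ and $u$ agree on every coordinate $i\neq n$, the sequences $s^\flat$ and $u^\flat$ agree on every coordinate including the $n$-th; that is, $s^\flat=u^\flat$. Hence $\varphi(s)=\varphi(s^\flat)=\varphi(u^\flat)=\varphi(u)$, which is precisely the claimed semantic independence.

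There is no real obstacle; the only subtle point is the observation that the algebraic notion of independence, which a priori permits only the very specific replacement ``$s_n \mapsto s_{n+1}$'', already forces the apparently stronger property ``$s_n$ may be replaced by anything'', because one is allowed to apply $(\ast)$ simultaneously to two sequences whose $(n+1)$-th coordinate is the same. This trick of applying $(\ast)$ to both $s$ and $u$ is what makes the argument go through in a single step.
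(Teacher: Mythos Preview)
Your proof is correct and follows essentially the same approach as the paper's: both directions unfold the definition of $q_n^\omega$ to the condition $\varphi(s)=\varphi(s[s_1,\dots,s_{n-1},s_{n+1}])$, and in the forward direction both apply this identity once to $s$ and once to $u$ to conclude $\varphi(s)=\varphi(u)$ via the common intermediate sequence. Your exposition is, if anything, slightly more explicit than the paper's in highlighting why $s^\flat=u^\flat$.
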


\begin{proof} Let $\mathbf e=\e_1,\dots,\e_{n-1}$ and $s,u\in A^\omega$ such that $u_i=s_i$ for all $i\neq n$. Let 
$\mathbf u=u_1,\dots,u_{n-1}$ and $\mathbf s=s_1,\dots,s_{n-1}$.

($\Rightarrow$) If $\varphi=q^\omega_n(\varphi,\mathbf e,\e_{n+1})$ then 
$\varphi(u)= q^\omega_n(\varphi,\mathbf e,\e_{n+1})(u)= \varphi(u[\mathbf u,u_{n+1}])= 
 \varphi(s[\mathbf s,s_{n+1}])=\dots=\varphi(s)$, because $u_i=s_i$ for all $i\neq n$.
 
($\Leftarrow$) $\varphi(s)=  \varphi(s[\mathbf s,s_n])= \varphi(s[\mathbf s,s_{n+1}])= q^\omega_n(\varphi,\mathbf e,\e_{n+1})(s)$, because
$s_i=s[\mathbf s,s_n]_i= s[\mathbf s,s_{n+1}]_i$ for all $i\neq n$.
\end{proof}



\begin{example}\label{exa:semi}
  We now provide an example of a zero-dimensional element of a $\mathsf{FCA}$ that is not a constant function.
A function $\varphi:A^\omega\to A$ is \emph{semiconstant} if it is not constant and, for every $r,s\in A^\omega$, $|\{i: r_i\neq s_i\}|<\omega$ implies $\varphi(r)=\varphi(s)$. Let $2=\{0,1\}$.
The function $\varphi:2^\omega\to 2$, defined by 
$$\varphi(s)=\begin{cases}0&\text{if $|\{i:s_i=0\}|<\omega$}\\
1&\text{otherwise}\end{cases}$$
is an example of semiconstant function.
It is easy to see that every  semiconstant function  is zero-dimensional in the full $\mathsf{FCA}$ $\mathcal O_A^{(\omega)}$.

\end{example}

\begin{example} Let $2=\{0,1\}$. The function $\psi:2^\omega\to 2$, defined by 
$$\psi(s)=\begin{cases}0&\text{if $|\{i:s_i=0\}|$ is finite and even }\\
1&\text{otherwise}\end{cases}$$
is infinite dimensional. 
\end{example}

In the following proposition we show that the notions of functional clone algebra and Neumann's functional $\aleph_0$-clone (see Section \ref{sec:neu}) are distinct.

\begin{proposition} Every  abstract $\aleph_0$-clone is a clone algebra, but there are functional clone algebras that are not functional $\aleph_0$-clones.
\end{proposition}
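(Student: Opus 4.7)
The plan is to handle the two parts of the statement separately. For the first, given an abstract $\aleph_0$-clone $(A,\e_i,q_\infty)$, I would define finitary operators by
\[
q_n(x,y_1,\dots,y_n) := q_\infty(x,y_1,\dots,y_n,\e_{n+1},\e_{n+2},\dots),
\]
and verify (C1)--(C5) directly from axioms (i)--(iii). Axioms (C1)--(C3) are immediate: (C1) and (C2) follow from (i) by reading off the $i$-th argument of $q_\infty$, while (C3) is a restatement of (ii). Axiom (C4) is essentially a tautology, since both $q_k(x,y_1,\dots,y_k)$ and $q_n(x,y_1,\dots,y_k,\e_{k+1},\dots,\e_n)$ unfold to the same $q_\infty$-expression, padded by the infinite tail $\e_{k+1},\e_{k+2},\dots$. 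For (C5) I would apply axiom (iii) to the nested expression $q_\infty(q_\infty(x,y_1,\dots,y_n,\e_{n+1},\dots),z_1,\dots,z_n,\e_{n+1},\dots)$ and use (i) once more to observe that $q_\infty(\e_{n+j},z_1,\dots,z_n,\e_{n+1},\dots)=\e_{n+j}$ for every $j\geq 1$, so the infinite tail on the right-hand side collapses back to $\e_{n+1},\e_{n+2},\dots$, leaving exactly $q_n(x,q_n(y_1,\mathbf z),\dots,q_n(y_n,\mathbf z))$. Since no $\tau$-operators are present, (C6) is vacuous and the result is a pure $\mathsf{CA}$.

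For the second part, I would exhibit a $\mathsf{FCA}$ that is not closed under the infinitary composition $q_\infty^\omega$. My candidate is the subalgebra $\mathbf F$ of $\mathbf O_\mathbf A^{(\omega)}$, with $A=\{0,1\}$, generated by the projections together with the semiconstant function $\varphi$ of Example \ref{exa:semi}. The decisive computation is that for any $\psi_1,\dots,\psi_n\in\mathcal O_A^{(\omega)}$ the sequence $s[\psi_1(s),\dots,\psi_n(s)]$ differs from $s$ in at most $n$ coordinates, so semiconstancy forces $q_n^\omega(\varphi,\psi_1,\dots,\psi_n)=\varphi$; combined with the fact that finitary compositions of projections yield projections, a straightforward induction gives $F=\{p_i:i\geq 1\}\cup\{\varphi\}$, and hence $\mathbf F$ is a functional clone algebra.

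To conclude I would evaluate
\[
q_\infty^\omega(\varphi,p_1,p_1,p_1,\dots)(s) = \varphi(s_1,s_1,s_1,\dots),
\]
which returns $1$ when $s_1=0$ (infinitely many zeros) and $0$ when $s_1=1$ (no zeros). The resulting function is nonconstant and depends only on $s_1$, so it is neither a projection nor $\varphi$, and therefore does not lie in $F$. Hence $\mathbf F$ is not closed under $q_\infty^\omega$ and cannot be a functional $\aleph_0$-clone. The main obstacle I foresee is the bookkeeping in (C5): once one carefully distinguishes the $\e$-padding from the genuine arguments $y_i,z_i$ of the nested $q_\infty$, the verification is purely mechanical, but keeping track of the indices is where a careless attempt would go wrong.
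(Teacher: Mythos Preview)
Your proof is correct and follows the same strategy as the paper: the first part is handled exactly as in the paper via $q_n(x,\mathbf y):=q_\infty(x,\mathbf y,\e_{n+1},\e_{n+2},\dots)$, and for the second part both you and the paper exhibit a functional clone algebra built from projections together with semiconstant functions on $\{0,1\}$, then observe that $q_\infty^\omega(\varphi,p_1,p_1,\dots)$ escapes the algebra. The only cosmetic difference is that the paper takes \emph{all} semiconstant functions while you take the single $\varphi$ of the cited example; your smaller carrier makes the verification that the set is a subalgebra marginally cleaner, but the decisive computation is identical.
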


\begin{proof} Every  abstract $\aleph_0$-clone is a clone algebra because 
$$q_n(x,y_1,\dots,y_n)=q_\infty(x,y_1,\dots,y_n,\e_{n+1},\e_{n+2},\dots).$$
We now show that the subalgebra $\{\psi:2^\omega \to 2\ |\ \text{$\psi$ is semiconstant}\}\cup\{\e_i^\omega\ |\  i\geq 1\}$  of the full $\mathsf{FCA}$ $\mathcal O_2^{(\omega)}$ is not an abstract $\aleph_0$-clone.
  Let $s,r\in 2^\omega$ such that $s_i=0$ and $r_i=1$ for all $i$. 
 If $\varphi:2^\omega\to 2$ is any semiconstant function such that $\varphi(s)=0$ and  $\varphi(r)=1$, then $q_\infty^\omega(\varphi,\e_1^\omega,\e_1^\omega,\dots,\e_1^\omega,\dots)$ is not a semiconstant function. 
\end{proof}

 \section{Clones of operations and block algebras}\label{sec:clones}
 In this section we introduce an equivalence relation over the set $\mathcal O_A$ of operations of a given set $A$, in order to turn $\mathcal O_A$ into a functional clone algebra. Roughly speaking, two operations are equivalent if the one having greater arity extends the other one by a bunch of dummy arguments.
Each \emph{block} (equivalence class) of this equivalence relation determines univocally an infinitary function that we call the\emph{ top extension} of the block. The set of these top extensions is a $\omega$-clone and it is exactly the functional clone algebra associated to $\mathcal O_A$, called the \emph{full block algebra on $A$}. A block algebra on $A$ is a subalgebra of the full block algebra on $A$. In the last result of this section we prove that there exists a bijective correspondence between clones and block algebras.
 
 \bigskip
 
We  define a partial order $\preceq$ on the set $\mathcal O_A$ of all operations (see \cite{sangalli}). For all $f\in \mathcal O^{(k)}_A$ and $g\in \mathcal O^{(n)}_A$ we put
$$f\preceq g \Leftrightarrow k\leq n\ \text{and}\ \forall \mathbf a\in A^k\ \forall \mathbf b\in A^{n-k}:\ f(\mathbf a)=g(\mathbf a,\mathbf b).$$ 
Using the terminology of Section \ref{sec:clo}, the operation $g$ is fictitious in the last $n-k$ arguments.

\begin{definition}\label{def:similar}
 We say that two operations $f,g\in \mathcal O_A$ are \emph{similar}, and we write $f\approx_{\mathcal O_A} g$, if  either $f\preceq g$ or $g\preceq f$.
\end{definition}

\begin{lemma} \cite[Lemma 1]{sangalli}
 \begin{enumerate}
\item The relation $\approx_{\mathcal O_A}$ is an equivalence relation on $\mathcal O_A$. 
\item Each block of the relation $\approx_{\mathcal O_A}$ is totally ordered by $\preceq$ and has a minimal element. 
\end{enumerate}
\end{lemma}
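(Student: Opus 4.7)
My plan is to verify the three standard properties of an equivalence relation, then use the arity function to extract the minimal element of each block.

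For reflexivity, $f \preceq f$ holds because $k \leq k$ and the condition $\forall \mathbf a\in A^k\,\forall \mathbf b\in A^{0}:\ f(\mathbf a)=f(\mathbf a)$ is trivial. Symmetry is immediate from the definition $f \approx_{\mathcal O_A} g \Leftrightarrow f\preceq g \lor g\preceq f$. Transitivity is the only real content; I would handle it by splitting into the four cases corresponding to the orientations of the two given comparabilities. The direct cases $f\preceq g\preceq h$ and $h\preceq g\preceq f$ reduce to showing that $\preceq$ is itself transitive, which is immediate by substituting the defining equalities. The mixed cases require a small argument: if $f\preceq g$ and $h\preceq g$, let $k,m,n$ be the arities of $f,h,g$; assuming WLOG $k\leq m$, for any $\mathbf a\in A^k$ and $\mathbf c\in A^{m-k}$ pick any $\mathbf b\in A^{n-m}$ and compute $f(\mathbf a)=g(\mathbf a,\mathbf c,\mathbf b)=h(\mathbf a,\mathbf c)$, giving $f\preceq h$. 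The symmetric situation $g\preceq f$ and $g\preceq h$ is handled the same way, splitting the argument of the larger-arity operation to match the common factor $g$.

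For part (2), given a block $B$, any two $f,g\in B$ are comparable by $\preceq$ directly from the definition of $\approx_{\mathcal O_A}$, so $\preceq$ is a total order on $B$. To exhibit a minimal element, I would consider the set $\{\mathrm{ar}(f) : f\in B\}\subseteq\mathbb N$, which is nonempty and hence contains a least element $k_0$. Pick any $f_0\in B$ with arity $k_0$; I would then show $f_0$ is the minimum of $(B,\preceq)$: for any $g\in B$, by the total order we have $f_0\preceq g$ or $g\preceq f_0$, but in the latter case $\mathrm{ar}(g)\leq k_0$, forcing equality of arities and then $g=f_0$ from the defining equality of $\preceq$ (with empty block of extra arguments). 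Uniqueness of $f_0$ follows by the same reasoning.

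The only subtle point is the mixed case of transitivity, but it is a routine application of the defining ``extension'' property of $\preceq$; no surprises are expected. Since the statement is attributed to Sangalli, I anticipate the authors will either cite the reference or produce a similarly short argument.
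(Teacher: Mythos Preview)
Your proof is correct and complete; the paper itself does not give a proof of this lemma but simply cites \cite[Lemma 1]{sangalli}, exactly as you anticipated in your final remark. One minor point: in the mixed case of transitivity (and in extracting a witness $\mathbf b\in A^{n-m}$) you implicitly use that $A$ is nonempty, which is harmless in this context but worth noting if you want a fully self-contained write-up.
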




We denote by $\mathcal{B}_A$ the set of all blocks of the relation $\approx_{\mathcal O_A}$.

If $f\in\mathcal O_A$ then $\langle f\rangle$ denotes the unique block containing $f$.

\begin{definition}
\begin{enumerate}
\item  An operation $f$ is said to be \emph{a generator} if $f$ is the minimal element w.r.t. $\preceq$ of the unique block $\langle f\rangle$ containing $f$.
\item A block $B$ \emph{has arity} $k$ if the generator of the block $B$ has arity $k$.
\end{enumerate}
\end{definition}


If $B$ is a block of arity $k$, then $|B \cap \mathcal O^{(n)}_A|= 1$ for every $n\geq k$, and $|B \cap \mathcal O^{(n)}_A|= \emptyset$ for every $n< k$.

As a matter of notation, if $B$ is a block of arity $k$, then we denote by $B^{(n)}$ ($n\geq k$) the unique function in $B\cap \mathcal O^{(n)}_A$. Therefore, $B = \{B^{(n)} : n\geq k\}$ and $B^{(k)}$ is the generator of the block $B$.

\begin{lemma}\label{lem:generator}
 An operation $f:A^k\to A$ is a generator if and only if either $k=0$ or there are $a_1,\dots,a_{k-1}$, $b,c\in A$ such that
 $f(a_1,\dots,a_{k-1},b) \neq f(a_1,\dots,a_{k-1},c)$.
\end{lemma}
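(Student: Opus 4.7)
The plan is to unpack the definitions of $\preceq$ and of generator, and to observe that the condition in the statement is precisely the negation of ``$f$ is fictitious in its last argument''. I would split on whether $k=0$ or $k\geq 1$.

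If $k=0$, then $f$ is a nullary operation and no operation has arity strictly less than $0$; thus nothing in the block $\langle f\rangle$ can strictly precede $f$ under $\preceq$, so $f$ is automatically the minimum of $\langle f\rangle$ and the biconditional holds trivially.

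For $k\geq 1$, I would prove the contrapositive: $f$ is fictitious in its last argument (i.e.\ no witnesses $a_1,\dots,a_{k-1},b,c$ as in the statement exist) iff $f$ is not a generator. For the implication (fictitious $\Rightarrow$ not a generator), pick any $b_0\in A$ and define $g\colon A^{k-1}\to A$ by $g(a_1,\dots,a_{k-1})=f(a_1,\dots,a_{k-1},b_0)$. The fictitious hypothesis gives $f(a_1,\dots,a_{k-1},b)=g(a_1,\dots,a_{k-1})$ for every $b\in A$, hence $g\preceq f$ with strictly smaller arity; so $f$ is not minimal in $\langle f\rangle$. Conversely, if $f$ is not a generator, there is some $g\in\langle f\rangle$ with $g\prec f$, say of arity $m<k$. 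By the very definition of $\preceq$, $f(a_1,\dots,a_k)=g(a_1,\dots,a_m)$ for all inputs, so the value of $f$ does not depend on the coordinates $m+1,\dots,k$, in particular not on the last one; hence no witnesses $b\neq c$ can exist.

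There is no real obstacle here, the argument is a direct reading of the definitions. The only mild care needed is to have some $b_0\in A$ available in the first direction, which is fine under the standing convention that carrier sets are nonempty.
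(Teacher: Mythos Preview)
Your proposal is correct and is exactly the natural argument; the paper itself does not give a proof of this lemma, merely remarking afterwards that the statement amounts to ``$f$ is a generator iff it is not fictitious in its last argument'' (Definition~\ref{def:fic}(iii)), which is precisely the reformulation you prove. Your observation about needing a nonempty carrier $A$ for the construction of the restriction $g$ is apt and implicit in the paper's setting.
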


In other words, $f$ is a generator iff it is not fictitious according to Definition \ref{def:fic}(iii).

\begin{example} The constant operations of value $a$ are all equivalent: $c^{(n)}_a\approx_{\mathcal O_A} c^{(k)}_a$ for all $n$ and $k$. The block containing all $c^{(n)}_a$ has arity $0$, because it is generated by $c^{(0)}_a$. This block  will be denoted by $C_a$ and will be called \emph{constant block (of value $a$)}.
\end{example}

\begin{example} We have $p^{(n)}_i\approx_{\mathcal O_A} p_i$ for every $n\geq i$, where $p_i=p^{(i)}_i$.
The block generated by the basic projection $p_i$ contains all the projections $p^{(n)}_i$ ($n\geq i$) and has arity $i$. This block will be denoted by $P_i$ and will be called \emph{projection block}.
\end{example}

\begin{lemma}\label{lemclone} Every clone  is union of blocks.
\end{lemma}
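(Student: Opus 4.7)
The plan is to show that if $f\in F$ for a clone $F$ on $A$, then the whole block $\langle f\rangle$ sits inside $F$. Since blocks are totally ordered by $\preceq$ with a minimum (the generator), I will split the argument into a downward step (from $f$ down to the generator) and an upward step (from the generator up to every element of the block).

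Let $f\in F^{(k)}$ and let $r$ be the arity of $\langle f\rangle$, so that $r\leq k$ and $f=B^{(k)}$ where $B=\langle f\rangle$. For the downward step, I claim $B^{(k-1)},B^{(k-2)},\dots,B^{(r)}$ all belong to $F$. Indeed, for every $j$ with $r<j\leq k$, the operation $B^{(j)}$ is fictitious in its last argument because by definition $B^{(j)}(a_1,\dots,a_{j-1},a_j)=B^{(j-1)}(a_1,\dots,a_{j-1})$ for all $a_1,\dots,a_j\in A$, and the right-hand side does not involve $a_j$. So starting from $f=B^{(k)}\in F$ and iterating closure under restriction (Definition~\ref{def:clo}), I obtain $B^{(k-1)},\dots,B^{(r)}\in F$.

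For the upward step, I need $B^{(n)}\in F$ for all $n\geq r$ (the case $r\leq n\leq k$ being already done). For $n>k$ — or equivalently, starting afresh, for any $n>r$ from the generator $B^{(r)}\in F$ — I use composition with projections. If $r\geq 1$, then
\[
B^{(n)}\;=\;B^{(r)}\bigl(p_1^{(n)},\dots,p_r^{(n)}\bigr)_n,
\]
since for every $\mathbf a\in A^n$ we have $B^{(r)}(p_1^{(n)}(\mathbf a),\dots,p_r^{(n)}(\mathbf a))=B^{(r)}(a_1,\dots,a_r)=B^{(n)}(\mathbf a)$ (using that $B^{(n)}$ agrees with the generator on the first $r$ coordinates and ignores the rest). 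Since $F$ contains all projections and is closed under composition, $B^{(n)}\in F$. If $r=0$, the generator $B^{(0)}$ is a nullary operation, and $B^{(n)}=B^{(0)}()_n$ by Definition~\ref{def:comp}, which again lies in $F$ by closure under composition.

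There is no real obstacle; the only delicate point is making sure that the downward part genuinely needs closure under restriction (as the remark following Definition~\ref{def:clo} points out, this is unavoidable in the nullary case, because one cannot pass from a unary constant operation $c_a^{(1)}$ to the nullary constant $c_a^{(0)}$ by composition with projections). Combining the two steps, $B^{(n)}\in F$ for all $n\geq r$, so $\langle f\rangle\subseteq F$, which shows that $F$ is a union of blocks.
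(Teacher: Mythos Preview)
Your proof is correct and follows essentially the same two-step structure as the paper (go down to the generator, then back up by composing with projections, with the nullary case handled separately). The only minor difference is that for the downward step in the non-constant case the paper reaches the generator in one shot via composition, writing $g=f(p^{(k)}_1,\dots,p^{(k)}_k,p^{(k)}_k,\dots,p^{(k)}_k)_k$, whereas you iterate closure under restriction; both are valid and yield the same conclusion.
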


\begin{proof} Let $F$ be a clone on $A$, $f\in  F$ be an operation of arity $n$ and $g:A^k\to A$ be the generator of the block $\langle f\rangle$. If $f= c_a^{(n)}$ is a constant function, then by Definition \ref{def:clo} the element $g=c_a^{(0)}$ of $A$ belongs to $F$ and $\langle f\rangle^{(m)}=g()_m\in  F$ for all $m\geq 0$.
  If $f$ is not constant, then $g=f(p^{(k)}_1,\dots,p^{(k)}_k, p^{(k)}_k,\dots,p^{(k)}_k)_k\in F$ and $\langle f\rangle^{(m)}=g(p^{(m)}_1,\dots,p^{(m)}_k)_m\in  F$ for all $m\geq k$. In conclusion, $\langle f\rangle \subseteq  F$. 
\end{proof}

\begin{example}\label{exa:to} Let $\mathbf A$ be a $\tau$-algebra and $\mathrm{Clo}\mathbf A$ be the clone of its term operations.
 If $t$ is a $\tau$-term, then the set $T_t^\mathbf A$ (defined in Section \ref{sec:to}) of the term operations determined by $t$ is a block.
Moreover,  $B\subseteq \mathrm{Clo}\mathbf A$ is a block if and only if $B=T_t^\mathbf A$ for some term $t$.
\end{example}

\subsection{Block algebras and top extensions of blocks}\label{sec:blocktop}
In this section we study the properties that a family $G$ of blocks of the relation $\approx_{\mathcal O_A}$ must have 
for  $\bigcup G$ to be a clone on $A$.
To simplify the approach it is convenient to work with the set $\mathcal O^{(\omega)}_A$ of infinitary operations from $A^\omega$ to $A$.

\begin{definition} The \emph{top operator} is a map $(-)^\top:\mathcal O_A\to\mathcal O^{(\omega)}_A$
 defined as follows, for every $f\in \mathcal O_A^{(n)}$:
 $$f^\top(s)=f(s_1,\dots,s_n),\quad \text{for all $s\in A^\omega$}.$$ 
\end{definition}
The infinitary operation  $f^\top$, defined by Neumann \cite{neu70} to formalise $\aleph_0$-clones (see Section \ref{sec:neu}), will be called
 \emph{the top extension} of the operation $f\in \mathcal O_A$.
 
The proof of the following lemma is trivial.

\begin{lemma}\label{lem:similar} Let $f,g\in\mathcal O_A$. Then
  $f\approx_{\mathcal O_A} g$ iff $f^\top = g^\top$.
\end{lemma}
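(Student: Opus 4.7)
The plan is to prove both directions directly from the definitions, using that $f^\top$ is built by evaluating $f$ on the first $n$ coordinates of an infinite sequence (where $n$ is the arity of $f$), while the relation $\approx_{\mathcal{O}_A}$ is controlled by the $\preceq$ order of "adding dummy arguments." Both directions are routine, and I do not anticipate a real obstacle; the only thing to watch is handling the case of operations of different arity.

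For the forward direction, assume $f \approx_{\mathcal{O}_A} g$. By Definition~\ref{def:similar}, without loss of generality $f \preceq g$. Let $f$ have arity $k$ and $g$ have arity $n$ with $k \leq n$. Fix $s \in A^\omega$. Then
\[
f^\top(s) = f(s_1,\dots,s_k) = g(s_1,\dots,s_k,s_{k+1},\dots,s_n) = g^\top(s),
\]
where the middle equality uses the definition of $f \preceq g$ with the choice $\mathbf{b}=(s_{k+1},\dots,s_n)\in A^{n-k}$. Hence $f^\top = g^\top$.

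For the backward direction, suppose $f^\top = g^\top$, where $f$ has arity $k$ and $g$ has arity $n$; again assume $k \leq n$. To show $f \preceq g$, pick arbitrary $\mathbf{a} = (a_1,\dots,a_k) \in A^k$ and $\mathbf{b} = (b_1,\dots,b_{n-k}) \in A^{n-k}$. Choose any $s \in A^\omega$ whose first $n$ coordinates are $(a_1,\dots,a_k,b_1,\dots,b_{n-k})$. Then
\[
f(\mathbf{a}) = f^\top(s) = g^\top(s) = g(\mathbf{a},\mathbf{b}),
\]
so $f \preceq g$, and in particular $f \approx_{\mathcal{O}_A} g$.

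The only mild subtlety is the nullary case: if $k = 0$ then $f$ is a constant $a \in A$, $f^\top$ is the constantly-$a$ function on $A^\omega$, and the argument above still goes through since one chooses $\mathbf{b}$ to be an arbitrary $n$-tuple and concludes $g(\mathbf{b}) = a$ for all such $\mathbf{b}$. No further machinery from the paper is needed.
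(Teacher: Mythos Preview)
Your proof is correct and is precisely the routine verification the paper has in mind; indeed, the paper omits the proof entirely, calling it trivial, so there is nothing further to compare.
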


In other words, the kernel of the top operator coincides with the relation of similarity among operations. This means that the set $\mathcal B_A$ of blocks of the relation $\approx_{\mathcal O_A}$ coincides with the set $\mathcal O_A$ modulo the kernel of the top operator.

By Lemma \ref{lem:similar} the \emph{top extension $B^\top$ of a block} $B$ can be well defined as  
$B^\top=f^\top$ for some (and then all) $f\in B$.
Then the map $B\mapsto B^\top$ embeds the set $\mathcal B_A$ of blocks into $\mathcal O^{(\omega)}_A$. Its
 image $\{B^\top: B\in \mathcal B_A\}$ will be denoted by $\mathcal B_A^\top$. $\mathcal B_A$ and $\mathcal B_A^\top$ are equipotent sets.

If $\varphi$ is the top extension of a block, then we denote by $\varphi_\bot$ the unique block such that 
$(\varphi_\bot)^\top=\varphi$. By Lemma \ref{lem:similar} the block $\varphi_\bot$ is well defined.

Notice that the notion of dimension is an intrinsic property of a function $\varphi\in \mathcal O^{(\omega)}_A$: if $\varphi$ has dimension $k$ in a $\mathsf{FCA}$, then by Lemma \ref{lem:independent} $\varphi$ has dimension $k$ in every $\mathsf{FCA}$ containing $\varphi$.


\begin{lemma}\label{lem:ardim}
A block $B\in\mathcal B_A$  has arity $r$  if and only if $B^\top$ has dimension $r$.
\end{lemma}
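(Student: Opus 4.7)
The plan is to exploit the two characterisations already at hand: Lemma \ref{lem:generator} that expresses what it means for an operation to be a generator (equivalently: nullary or not fictitious in its last argument), and Lemma \ref{lem:independent} that translates the algebraic notion of independence of $\e_i$ into the functional statement that changing only the $i$-th coordinate of an argument does not affect the value of the infinitary operation.

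I will prove the direction ($\Rightarrow$) first, splitting on $r=0$ versus $r\geq 1$. If $r=0$, the generator of $B$ is a nullary constant $c^{(0)}_a$, so $B^{\top}(s)=a$ for every $s\in A^{\omega}$; the constant infinitary operation is trivially independent of every $\e_i$, hence $\Gamma(B^{\top})=\emptyset$ and $\gamma(B^{\top})=0$. If $r\geq 1$, let $g\in B$ be the generator, so $g\colon A^{r}\to A$. For every $s\in A^{\omega}$ we have $B^{\top}(s)=g(s_{1},\dots,s_{r})$, which depends only on the first $r$ coordinates; by Lemma \ref{lem:independent} this gives independence of $\e_i$ for every $i>r$, whence $\gamma(B^{\top})\leq r$. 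On the other hand, since $g$ is a generator of positive arity, Lemma \ref{lem:generator} supplies $a_{1},\dots,a_{r-1},b,c\in A$ with $g(a_{1},\dots,a_{r-1},b)\neq g(a_{1},\dots,a_{r-1},c)$. Choose any $t\in A^{\omega}$ and define $s,u\in A^{\omega}$ that agree with $t$ past coordinate $r$, agree on coordinates $1,\dots,r-1$ with the $a_i$, and have $s_{r}=b$, $u_{r}=c$. Then $s$ and $u$ agree outside coordinate $r$ but $B^{\top}(s)\neq B^{\top}(u)$, so by Lemma \ref{lem:independent} $B^{\top}$ depends on $\e_{r}$; thus $r\in\Gamma(B^{\top})$ and combined with the previous inequality $\gamma(B^{\top})=r$.

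The converse is now free. Suppose $B^{\top}$ has dimension $r$ and let $k$ be the arity of $B$. By the direction just proved, $B^{\top}$ has dimension equal to $k$. Since dimension is a well-defined intrinsic attribute of an element of $\mathcal O^{(\omega)}_{A}$, we conclude $k=r$.

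No serious obstacle is expected: the argument is essentially a bookkeeping exercise that identifies two incarnations of the same notion ("fictitious in the last arguments" on the clone side, "independent of the corresponding $\e_i$'s" on the clone-algebra side), mediated by the top operator and Lemma \ref{lem:independent}. The only mildly delicate point is to treat the case $r=0$ separately, because Lemma \ref{lem:generator} gives the witnesses only when the arity is positive, and to confirm that the constants $\e_i$ with $i>r$ and the coordinates $s_{r+1},s_{r+2},\dots$ play no role because $B^{\top}$ ignores them by construction.
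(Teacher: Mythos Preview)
Your proof is correct and follows essentially the same route as the paper: both arguments use Lemma \ref{lem:independent} to show that $B^\top$ is independent of $\e_k$ for $k>r$ and use the generator property (Lemma \ref{lem:generator}) to witness dependence on $\e_r$, and both dispatch the converse by applying the forward direction to the actual arity of $B$. Your explicit treatment of the case $r=0$ is in fact a small improvement over the paper, which tacitly assumes $r\geq 1$ when invoking the witnesses $a_1,\dots,a_{r-1},b,c$.
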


\begin{proof} ($\Rightarrow$) First we prove that, if $r$ is the arity of a block $B$, then $B^\top$ is dependent on $\e_r$.
Let $a_1,\dots, a_{r-1},b,c\in A$ such that $B^{(r)}(a_1,\dots,a_{r-1},b)\neq B^{(r)}(a_1,\dots,a_{r-1},c)$. Let $s,u\in A^\omega$ such that $s_r=b$,  $u_r=c$, $s_i=u_i=a_i$ for every $i=1,\dots, r-1$  and $s_j=u_j$ for every $j > r$.
Then $B^\top(s)= B^{(r)}(s_1,\dots,s_r)\neq B^{(r)}(u_1,\dots,u_r)=B^\top(u)$. By Lemma \ref{lem:independent}  $B^\top$ is dependent on $\e_r$, where $r$ is the arity of the block $B$.

We now show that $B^\top$ is independent of $\e_k$ for every $k > r$, the arity of $B$. 
For every $s,u\in A^\omega$ such that $s_i=u_i$ for every $i\neq k$ we have:
$$B^\top(s)= B^{(r)}(s_1,\dots,s_r)=_{(\text{$s_i=u_i$ for $i\leq r$})}B^{(r)}(u_1,\dots,u_r) =B^\top(u).$$ 
Then by Lemma \ref{lem:independent} $B^\top$ is independent of $\e_k$.

($\Leftarrow$) Let $k$ be the arity of the block $B$. Since $B^\top(s)= B^{(k)}(s_1,\dots,s_k)$ for all $s\in A^\omega$, then it is easy to verify that $k=r$.
\end{proof}

There exist finite dimensional elements of $\mathcal O^{(\omega)}_A$ that are not top extension of a block.
The semiconstant functions are defined in Example \ref{exa:semi}.

\begin{lemma}\label{lem:semiconstant}
Every semiconstant function $\varphi\in \mathcal O^{(\omega)}_A$  is zero-dimensional, but it is not the top extension of any  operation. 
\end{lemma}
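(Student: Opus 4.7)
The plan has two independent parts, corresponding to the two assertions in the lemma.

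For zero-dimensionality, I would apply the functional characterisation of independence given by Lemma \ref{lem:independent}. Fix $n\geq 1$ and take any $s,u\in A^\omega$ with $u_i=s_i$ for every $i\neq n$. Then $\{i:u_i\neq s_i\}\subseteq\{n\}$ is finite, so the semiconstancy hypothesis gives $\varphi(u)=\varphi(s)$. Hence $\varphi$ is independent of $\e_n$ for every $n\geq 1$, so $\Gamma(\varphi)=\emptyset$ and $\gamma(\varphi)=0$.

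For the second assertion, I would argue by contradiction. Suppose $\varphi=f^\top$ for some $f\in\mathcal O_A^{(n)}$, so that $\varphi(s)=f(s_1,\ldots,s_n)$ for every $s\in A^\omega$, i.e.\ $\varphi$ depends only on its first $n$ coordinates. Given arbitrary $r,s\in A^\omega$, consider the hybrid sequence $t$ with $t_i=r_i$ for $i\leq n$ and $t_i=s_i$ for $i>n$. Then on the one hand $\varphi(t)=f(r_1,\ldots,r_n)=\varphi(r)$, and on the other hand $t$ and $s$ differ in at most the first $n$ positions, so semiconstancy yields $\varphi(t)=\varphi(s)$. Therefore $\varphi(r)=\varphi(s)$ for all $r,s\in A^\omega$, forcing $\varphi$ to be constant — contradicting the defining clause that semiconstant functions are not constant.

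There is no real obstacle here; both parts are essentially one-line applications of the relevant definitions once one notices the key observation that the two-point case of semiconstancy immediately implies coordinate-wise independence, and that a top extension necessarily has finite support among its arguments. The only point requiring mild care is to use the hybrid $t$ in the second part (rather than trying to compare $r$ and $s$ directly), since $r$ and $s$ may differ in infinitely many coordinates and semiconstancy would not apply to them directly.
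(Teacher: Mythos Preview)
Your argument is correct. Both parts are sound: the first is exactly the intended unpacking of Lemma~\ref{lem:independent}, and the hybrid-sequence trick in the second part cleanly forces constancy.

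The paper's own proof is more terse and takes a slightly different route for the second assertion: having already established zero-dimensionality, it implicitly appeals to Lemma~\ref{lem:ardim} (arity of the block equals dimension of its top extension), so that any top extension of $\varphi$ would have to come from a block of arity~$0$, i.e.\ a constant---which contradicts the ``not constant'' clause in the definition of semiconstant. Your approach avoids that lemma and works directly from the definition of $f^\top$, which is arguably more self-contained; the paper's route is shorter once Lemma~\ref{lem:ardim} is in hand. Both reach the same punchline: a semiconstant top extension would be constant.
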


\begin{proof} The function $\varphi:\{0,1\}^\omega\to \{0,1\}$ defined in Example \ref{exa:semi} is zero-dimensional but it is not the top extension of a constant.
 \end{proof}
 
 We now are ready to define a structure of clone algebra on $\mathcal B_A^\top$.
 
 Recall that the full $\mathsf{FCA}$ $\mathbf O^{(\omega)}_A$ with value domain $A$ was introduced in Lemma \ref{lem:fun}.

\begin{lemma}
 $\mathcal B_A^\top$ is a finite dimensional subalgebra of the full $\mathsf{FCA}$ $\mathbf O^{(\omega)}_A$ with value domain $A$.
\end{lemma}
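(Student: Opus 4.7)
My plan is to verify three things: that $\mathcal B_A^\top$ contains the distinguished constants $\e_i^\omega$ of $\mathbf O^{(\omega)}_A$, that it is closed under each operator $q_n^\omega$, and that every element has finite dimension. The third point is essentially free from Lemma~\ref{lem:ardim}, which equates the arity of a block with the dimension of its top extension, so every $B^\top \in \mathcal B_A^\top$ is finite-dimensional, witnessing the ``finite dimensional'' clause in the statement.

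For the constants, I would simply observe that the projection block $P_i$ has generator $p_i$, and $p_i^\top(s) = p_i(s_1,\dots,s_i) = s_i = \e_i^\omega(s)$, so $\e_i^\omega = p_i^\top \in \mathcal B_A^\top$.

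The main obstacle is closure under $q_n^\omega$: one must show that composing top extensions again produces a top extension of some finitary operation. Take $\varphi = B^\top$ with $B$ of arity $k$, and $\psi_j = B_j^\top$ with $B_j$ of arity $m_j$. Set $N = \max(k, m_1, \dots, m_n)$. Since $B_j$ is a block, $\psi_j(s) = B_j^{(N)}(s_1, \dots, s_N)$ for every $s$. The natural candidate for the desired operation $f \colon A^N \to A$ splits into two cases depending on whether $k \leq n$ or $k > n$:
\[
f(\mathbf a) =
\begin{cases}
B^{(k)}\bigl(B_1^{(N)}(\mathbf a), \dots, B_k^{(N)}(\mathbf a)\bigr) & \text{if } k \leq n, \\
B^{(k)}\bigl(B_1^{(N)}(\mathbf a), \dots, B_n^{(N)}(\mathbf a), a_{n+1}, \dots, a_k\bigr) & \text{if } k > n.
\end{cases}
\]
A straightforward unfolding of the definition of $q_n^\omega$ in Definition~\ref{def:fca}, together with the fact that $B^\top$ depends only on the first $k$ arguments of its input sequence, then yields $f^\top(s) = q_n^\omega(\varphi, \psi_1, \dots, \psi_n)(s)$ for every $s \in A^\omega$. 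Hence $q_n^\omega(\varphi, \psi_1, \dots, \psi_n) \in \mathcal B_A^\top$.

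The conceptual point behind the computation is exactly that top extensions only ``see'' finitely many coordinates and composition preserves this property; picking the common arity $N$ makes all finitary restrictions compatible. The nullary case $n = 0$ is covered by the same formula (one simply takes $N = k$ and $f(\mathbf a) = B^{(k)}(a_1, \dots, a_k)$, so $f^\top = B^\top$). Combining the three verifications gives that $\mathcal B_A^\top$ is closed under every fundamental operation of $\mathbf O^{(\omega)}_A$ and consists of finite-dimensional elements, as required.
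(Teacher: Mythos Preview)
Your proof is correct and follows essentially the same strategy as the paper's: verify that the constants are top extensions of projection blocks, then show closure under $q_n^\omega$ by exhibiting an explicit finitary operation whose top extension equals the composite. The paper avoids your case split by choosing a single bound $k \geq n$ above all the arities, writing the result uniformly as $[B^{(k)}(G_1^{(k)},\dots,G_n^{(k)},P_{n+1}^{(k)},\dots,P_k^{(k)})]^\top$.
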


\begin{proof}
 First $\e_i^\omega = P_i^\top$, where $P_i$ is the block of all projections $p^{(k)}_i$. We now show that $\mathcal B_A^\top$ is closed under the operations $q_n^\omega$. Let $B,G_1,\dots,G_n$ be blocks and let $k\geq n$ be greater than the arities of $B,G_1,\dots,G_n$. We now show that $q_n^\omega(B^\top,G_1^\top,\dots,G_n^\top)$ is the top extension of a suitable function of arity $k$. Let $s\in A^\omega$.
 \[
\begin{array}{lll}
  &   & q_n^\omega(B^\top,G_1^\top,\dots,G_n^\top)(s)  \\
  &  = & B^\top(s[G_1^\top(s),\dots,G_n^\top(s)])  \\
  & =  &   B^\top(s[G_1^\top(s),\dots,G_n^\top(s),s_{n+1},\dots,s_k])\\
   & =  &B^{(k)}(G_1^\top(s),\dots,G_n^\top(s),s_{n+1},\dots,s_k)\\
 & =  &  B^{(k)}(G_1^{(k)}(s_{1},\dots,s_k),\dots, G_n^{(k)}(s_{1},\dots,s_k),s_{n+1},\dots,s_k)\\
 & =  &  B^{(k)}(G_1^{(k)}(s_{1},\dots,s_k),\dots, G_n^{(k)}(s_{1},\dots,s_k),P_{n+1}^{(k)}(s_{1},\dots,s_k),\dots,P_{k}^{(k)}(s_{1},\dots,s_k))\\
 & =  &  [B^{(k)}(G_1^{(k)},\dots, G_n^{(k)},P_{n+1}^{(k)},\dots,P_{k}^{(k)})]^\top(s).
\end{array}
\]
\end{proof}

The $\mathsf{FCA}$ $\mathcal B_A^\top$ will be called \emph{the full block algebra on $A$}.

\begin{definition}
 A \emph{block algebra on $A$} is a subalgebra of the full block algebra $\mathcal B_A^\top$.
\end{definition}

 A \emph{block algebra on a $\tau$-algebra $\mathbf A$} is a block algebra on $A$ containing $\langle\sigma^\mathbf A\rangle^\top$ for every $\sigma\in\tau$.

By Lemma \ref{lem:semiconstant} it follows the following corollary.

\begin{corollary}
 The finite dimensional $\mathsf{FCA}$  $\mathrm{Fi}\, \mathbf O^{(\omega)}_A$ with value domain $A$ is not a block algebra on $A$.
\end{corollary}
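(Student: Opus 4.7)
The plan is to exhibit an explicit element of $\mathrm{Fi}\,\mathbf O^{(\omega)}_A$ that cannot lie in any subalgebra of $\mathcal B_A^\top$, so $\mathrm{Fi}\,\mathbf O^{(\omega)}_A$ itself cannot be a block algebra. The witness is a semiconstant function, as in Example \ref{exa:semi}; I will implicitly assume $|A|\geq 2$, since when $|A|=1$ every algebra in sight is trivial and the statement is vacuous.

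First, I would recall that the construction of Example \ref{exa:semi} transfers to any $A$ with at least two elements: one fixes distinct $a,b\in A$ and sets $\varphi(s)=a$ if $\{i:s_i\neq b\}$ is finite and $\varphi(s)=b$ otherwise. This $\varphi$ is clearly semiconstant and not constant, so by Lemma \ref{lem:semiconstant} it is zero-dimensional, hence belongs to $\mathrm{Fi}_0\,\mathbf O^{(\omega)}_A\subseteq \mathrm{Fi}\,\mathbf O^{(\omega)}_A$.

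Next, by Lemma \ref{lem:semiconstant} again, $\varphi$ is not the top extension of any finitary operation, so $\varphi\notin \mathcal B_A^\top$. Since, by definition, a block algebra on $A$ is a subalgebra of $\mathcal B_A^\top$, its universe is contained in $\mathcal B_A^\top$; in particular no block algebra contains $\varphi$. On the other hand, the earlier proposition on finite-dimensional elements guarantees that $\mathrm{Fi}\,\mathbf O^{(\omega)}_A$ is a subalgebra of $\mathbf O^{(\omega)}_A$ and contains $\varphi$. Therefore $\mathrm{Fi}\,\mathbf O^{(\omega)}_A$ is not contained in $\mathcal B_A^\top$, and a fortiori it is not a block algebra on $A$.

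There is no real obstacle here; the entire content is packaged in Lemma \ref{lem:semiconstant}, which furnishes a finite-dimensional infinitary operation that escapes the image of the top operator. The only subtle point, if one wants to state the corollary with full generality, is the tacit assumption $|A|\geq 2$ needed to produce a semiconstant function at all.
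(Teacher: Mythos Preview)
Your argument is correct and follows the paper's own reasoning: the corollary is deduced directly from Lemma~\ref{lem:semiconstant}, using a semiconstant function as a zero-dimensional element of $\mathrm{Fi}\,\mathbf O^{(\omega)}_A$ that is not a top extension. Your explicit note that $|A|\geq 2$ is tacitly assumed is a welcome clarification the paper leaves implicit.
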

 
The above corollary does not contradict  Theorem \ref{thm:firstrepresentation} below, where it is shown that every finite dimensional clone algebra is isomorphic to a block algebra.


 If $F\subseteq \mathcal O_A$, then we define $F^\top=\{f^\top: f\in F\}$. 
 If $G\subseteq \mathcal B_A^\top$ then we define $G_\bot=\{\varphi_\bot: \varphi\in G\}$, where $\varphi_\bot$ is a block for every $\varphi\in G$.
 
 
 \begin{proposition}\label{prop:cloneblock}
 Let $F\subseteq \mathcal O_A$ and $\mathbf A$ be an algebra. Then the following conditions are equivalent:
\begin{description}
\item[(i)] $F$ is a clone on $\mathbf A$;
\item[(ii)] $F^\top$ is the universe of a block algebra on $\mathbf A$.
\end{description}
Moreover,  $\bigcup\, (F^\top)_\bot=F$.
\end{proposition}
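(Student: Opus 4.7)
My plan rests on two ingredients. First, Lemma \ref{lem:similar} gives $(f^\top)_\bot = \langle f\rangle$, so $\bigcup(F^\top)_\bot = \bigcup_{f\in F}\langle f\rangle$ is the smallest union of blocks containing $F$, and the moreover clause is thus equivalent to $F$ being saturated under the block equivalence. Second, the lemma immediately preceding the proposition supplies the explicit formula
\[
q_n^\omega(B^\top,G_1^\top,\dots,G_n^\top) = \bigl[B^{(k)}(G_1^{(k)},\dots,G_n^{(k)},P_{n+1}^{(k)},\dots,P_k^{(k)})\bigr]^\top
\]
valid for any $k\geq n$ exceeding the arities of $B,G_1,\dots,G_n$. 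Both implications will be driven by this identity.

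For (i)$\Rightarrow$(ii), Lemma \ref{lemclone} yields that every clone is a union of blocks, so the moreover holds automatically. To see that $F^\top$ is the universe of a block algebra on $\mathbf A$, I will verify in turn: $F^\top\subseteq\mathcal B_A^\top$ (immediate); $\e_i^\omega = p_i^\top\in F^\top$ since $p_i\in F$; $\langle\sigma^\mathbf A\rangle^\top\in F^\top$ for each $\sigma\in\tau$ since $\sigma^\mathbf A\in F$; and closure under every $q_n^\omega$, which is read off from the displayed formula because the inner composition lies in $F$ by the closure of the clone $F$ under composition and its containment of the projections $P_{n+1}^{(k)},\dots,P_k^{(k)}$.

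For (ii)$\Rightarrow$(i), the clean move is to pass to $F' := \bigcup(F^\top)_\bot$, which satisfies $F\subseteq F'$, $(F')^\top = F^\top$, and is by construction a union of blocks. Since (ii) depends only on $F^\top$, it suffices to show that $F'$ is a clone on $\mathbf A$; the moreover will then reduce to $F = F'$, i.e.\ to $F$ being block-saturated. The clone axioms for $F'$ come out as follows: $\e_i^\omega = P_i^\top\in F^\top$ yields $p_i\in F'$; $\langle\sigma^\mathbf A\rangle^\top\in F^\top$ yields $\sigma^\mathbf A\in F'$; closure under composition follows by inverting the displayed formula, since for $f$ of arity $n$ and $g_1,\dots,g_n$ of common arity $k$ in $F'$ one has $[f(g_1,\dots,g_n)_k]^\top = q_n^\omega(f^\top,g_1^\top,\dots,g_n^\top)\in F^\top$, forcing $f(g_1,\dots,g_n)_k$ into a block belonging to $(F^\top)_\bot \subseteq F'$; and closure under restriction is automatic for a union of blocks.

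The main obstacle is precisely the mild discrepancy between working with $F\subseteq\mathcal O_A$ and with its image $F^\top$: condition (ii) constrains only the top extensions, whereas being a clone is a condition on $F$ proper. This is resolved by passing to the saturation $F'$, after which the moreover $F = F' = \bigcup(F^\top)_\bot$ follows because every clone is a union of blocks, closing the loop between the abstract and concrete sides of the correspondence.
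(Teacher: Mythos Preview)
Your argument for (i)$\Rightarrow$(ii) and for the ``moreover'' under hypothesis (i) is correct and follows the paper's route exactly: projections give $\e_i^\omega\in F^\top$, basic operations give $\langle\sigma^{\mathbf A}\rangle^\top\in F^\top$, and the displayed identity for $q_n^\omega$ yields closure because the inner $k$-ary composition lies in the clone $F$.

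For (ii)$\Rightarrow$(i), however, your reasoning is circular. You rightly observe that condition (ii) constrains only $F^\top$, and you pass to $F'=\bigcup(F^\top)_\bot$ and show that $F'$ is a clone. But to conclude that $F$ itself is a clone you need $F=F'$, and you obtain this from ``every clone is a union of blocks''---which already presupposes that $F$ is a clone. In fact (ii)$\Rightarrow$(i) is false as literally written: with $\tau$ empty and $F=\{p_i^{(i)}:i\geq 1\}$ the set of basic projections, $F^\top=\{\e_i^\omega:i\geq 1\}$ is a block algebra, yet $F$ is not a clone (it lacks $p_1^{(2)}$) and $\bigcup(F^\top)_\bot=\mathcal J_A\neq F$.

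The paper's own one-line proof of (ii)$\Rightarrow$(i) shares this gap: it records only that $f(g_1,\dots,g_n)_k$ lands in the block $q_n^\omega(f^\top,g_1^\top,\dots,g_n^\top)_\bot$, not in $F$. What both you and the paper really establish is the bijection between clones and block algebras given by $F\mapsto F^\top$ with inverse $G\mapsto\bigcup G_\bot$; that is, (i) implies both (ii) and the ``moreover'', and conversely any block algebra $G$ yields a clone $\bigcup G_\bot$ with $(\bigcup G_\bot)^\top=G$. This is precisely the content used downstream (Corollary~\ref{cor:cloneblock}), and your passage to the saturation $F'$ is exactly the right device for it---you should just state the conclusion as a property of $F'$, not of $F$.
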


\begin{proof} (i) $\Rightarrow$ (ii)
 First we have $(p^n_i)^\top =\e_i^\omega$. We now check the closure under $q_n^\omega$ by showing that
$q_n^\omega(f^\top,g_1^\top,\dots,g_n^\top)\in F^\top$ for all $f,g_1,\dots,g_n\in F$.
 Let $k\geq n$ be greater than the arities of $f,g_1,\dots,g_n$. 
For every $s\in A^\omega$, we have:
\begin{equation}\label{bbbbb}
\begin{array}{rll}
q_n^\omega(f^\top,g_1^\top,\dots,g_n^\top)(s)&  = &f^\top(s[g_1^\top(s),\dots,g_n^\top(s)])\\
&  = &\langle f\rangle^{(k)}(g_1^\top(s),\dots,g_n^\top(s),s_{n+1},\dots,s_k)\\
&  = &\langle f\rangle^{(k)}(\langle g_1\rangle^{(k)}(s_1,\dots,s_k),\dots, \langle g_n\rangle^{(k)}(s_1,\dots,s_k),s_{n+1},\dots,s_k)\\
&  = &\langle f\rangle^{(k)}(\langle g_1\rangle^{(k)},\dots, \langle g_n\rangle^{(k)},P^{(k)}_{n+1},\dots,P_k^{(k)})_k(s_1,\dots,s_k)\\
\end{array}
\end{equation}
where $h=\langle f\rangle^{(k)}(\langle g_1\rangle^{(k)},\dots, \langle g_n\rangle^{(k)},P^{(k)}_{n+1},\dots,P_k^{(k)})_k\in F$ because $F$ contains the blocks $\langle f\rangle$, $\langle g_i\rangle$ and $P_i$. Then $q_n^\omega(f^\top,g_1^\top,\dots,g_n^\top)$ is the top expansion of the above function $h\in F$.

(ii) $\Rightarrow$ (i) If $f\in F^{(n)}$ and $g_1,\dots,g_n\in F^{(k)}$, then $ f( g_1,\dots, g_n)_k\in q_n^\omega(f^\top,g_1^\top,\dots,g_n^\top)_\bot.$
\end{proof}

As a consequence of the above proposition, there exists a bijection between the set of clones on  $A$ and the set of block algebras on $A$.


 \begin{corollary}\label{cor:cloneblock} Let $A$ be a set. Then the following lattices are isomorphic:
\begin{enumerate}
\item The lattice $\mathrm{Lat}(\mathcal O_A)$ of all clones on $A$;
\item The lattice of all subalgebras of the full block algebra $\mathcal B_A^\top$.
\end{enumerate}
\end{corollary}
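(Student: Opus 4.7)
The plan is to lift the set-level bijection of Proposition \ref{prop:cloneblock} to an order-isomorphism, and then invoke the standard fact that an order-isomorphism between complete lattices is automatically a lattice isomorphism.

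First I would observe that, by Proposition \ref{prop:cloneblock}, the assignment $\Phi: F \mapsto F^\top$ is a well-defined map from the set of clones on $A$ to the set of universes of block algebras on $A$, and the assignment $\Psi: G \mapsto \bigcup\, G_\bot$ goes in the opposite direction. The identity $\bigcup (F^\top)_\bot = F$ stated in that proposition gives $\Psi \circ \Phi = \mathrm{id}$. For $\Phi \circ \Psi = \mathrm{id}$ I would note that $\varphi \in G$ implies $\varphi = (\varphi_\bot)^\top$ by definition of $\varphi_\bot$ (via Lemma \ref{lem:similar}), so $G = (\bigcup G_\bot)^\top$. Hence $\Phi$ and $\Psi$ are mutually inverse bijections between the underlying sets of the two lattices.

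Next I would check that both $\Phi$ and $\Psi$ are order-preserving, which is immediate from their definitions: $F_1 \subseteq F_2$ trivially yields $F_1^\top \subseteq F_2^\top$, and $G_1 \subseteq G_2$ yields $(G_1)_\bot \subseteq (G_2)_\bot$ and hence $\bigcup (G_1)_\bot \subseteq \bigcup (G_2)_\bot$. Together with the previous paragraph this shows $\Phi$ is an order-isomorphism between the two posets ordered by set inclusion.

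Finally, since the lattice of clones on $A$ and the lattice of subalgebras of $\mathcal B_A^\top$ are both complete lattices in which meets are given by intersection, an order-isomorphism between them automatically preserves all joins and meets; thus $\Phi$ is a lattice isomorphism. I expect no real obstacle here: all the substantive work has been done in Proposition \ref{prop:cloneblock}, and the remaining content of the corollary is essentially bookkeeping about how the bijection $F \leftrightarrow F^\top$ respects inclusion.
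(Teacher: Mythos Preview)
Your proposal is correct and is exactly the natural elaboration the paper intends: the corollary is stated without proof immediately after Proposition \ref{prop:cloneblock}, and your argument (bijection plus order-preservation in both directions, hence lattice isomorphism) is the standard way to spell it out.
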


\section{The block algebra of representable functions}\label{sec:rep}
In this section we introduce the notion of \emph{representable function} in a clone algebra. 
Roughly speaking, every $k$-dimensional element $a$ of a clone algebra $\mathbf C$ determines a block of representable functions $f_n$ ($n\geq k$) through the operators $q_n$: $f_n(x_1,\dots,x_n)=q_n^\mathbf C(a,x_1,\dots,x_n)$.
The set of representable functions includes the basic operations of $\mathbf C$. The representable functions turn out to be a clone and  the top extension of this clone is isomorphic to  the subalgebra $\mathrm{Fi}\,\mathbf C$ of all finite dimensional elements of $\mathbf C$. $\mathrm{Fi}\,\mathbf C$ coincides with $\mathbf C$ whenever $\mathbf C$ is finite dimensional. It follows that every finite dimensional clone algebra is isomorphic to a block algebra. 
\bigskip

Let $\mathbf C$ be a clone $\tau$-algebra and $\sigma\in\tau$ be an operator of arity $k$.
By Lemma \ref{lem:preservare} the element  $\sigma(\e_1,\dots,\e_k)$ has dimension $\leq k$ and  it univocally determines the values $\sigma(\mathbf a)$,  for all $\mathbf a=a_1,\dots,a_k\in C$:
 \begin{equation}\label{sigma-eq}q_k(\sigma(\e_1,\dots,\e_k),\mathbf a) =_{(C6)} \sigma(q_k(\e_1,\mathbf a),\dots,q_k(\e_k,\mathbf a))=_{(C1)}\sigma(\mathbf a).\end{equation}

In the following definition we characterise the operations on $C$ that have a  behaviour similar to the basic operations.

\begin{definition}\label{def:representable} Let $\mathbf C$ be a clone algebra and $f:C^k\to C$ be a function.
 We say that $f$ is \emph{$\mathbf C$-representable}  if  $f(\e_1,\dots,\e_k)$ has dimension $\leq k$ and 
  $$ f(\mathbf a)=q_k(f(\e_1,\dots,\e_k),\mathbf a),\quad \text{for all $\mathbf a$}.$$
 We denote by $R_\mathbf C$ the set of all $\mathbf C$-representable functions.
\end{definition}

As usual, $R_\mathbf C^{(n)}$ denotes the set of all $\mathbf C$-representable functions of arity $n$.

\begin{lemma}\label{cor:sigma}
 Let $\mathbf C$ be a clone $\tau$-algebra. Then every basic operation $\sigma^\mathbf C$ ($\sigma\in\tau$) is $\mathbf C$-representable.
\end{lemma}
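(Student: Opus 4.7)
The plan is to verify directly the two conditions in Definition \ref{def:representable} for $f = \sigma^\mathbf{C}$ of arity $k$. Both conditions are already essentially recorded in the excerpt, so the proof will be a short assembly.

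First I would address the dimension condition: we need $\sigma^\mathbf{C}(\e_1,\dots,\e_k)$ to have dimension $\leq k$. Each $\e_i$ ($1\leq i\leq k$) has dimension $i\leq k$, as noted in the example following the definition of $\gamma$. By Lemma \ref{lem:preservare}(i), basic operations of $\mathbf{C}_\tau$ preserve the property of having dimension $\leq k$, so $\sigma(\e_1,\dots,\e_k)\in \mathrm{Fi}_k\,\mathbf{C}$.

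Second I would address the representability equation. This is exactly the computation already displayed as (\ref{sigma-eq}): for any $\mathbf{a}=a_1,\dots,a_k\in C$,
\[
q_k(\sigma(\e_1,\dots,\e_k),\mathbf{a}) \;=_{(C6)}\; \sigma(q_k(\e_1,\mathbf{a}),\dots,q_k(\e_k,\mathbf{a})) \;=_{(C1)}\; \sigma(a_1,\dots,a_k) \;=\; \sigma(\mathbf{a}).
\]
Combining the two verifications yields that $\sigma^\mathbf{C}$ satisfies Definition \ref{def:representable}, hence $\sigma^\mathbf{C}\in R_\mathbf{C}$.

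There is no real obstacle here; the lemma is really a corollary of Lemma \ref{lem:preservare} together with axioms (C1) and (C6). The only thing to be slightly careful about is the nullary case ($k=0$): then the condition collapses to $\sigma^\mathbf{C}$ having dimension $0$, and the representability equation reduces to $\sigma^\mathbf{C}=q_0(\sigma^\mathbf{C})$, which holds by (C3). So the argument is uniform in the arity of $\sigma$.
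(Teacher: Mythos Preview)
Your proof is correct and follows exactly the same approach as the paper: the paper's one-line proof simply cites equation (\ref{sigma-eq}) and Definition \ref{def:representable}, which amounts precisely to the two verifications you spell out (the dimension bound via Lemma \ref{lem:preservare} is stated in the paragraph just before (\ref{sigma-eq})). Your explicit handling of the nullary case is a harmless elaboration not present in the paper.
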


\begin{proof}
 By (\ref{sigma-eq}) and Definition \ref{def:representable}.
\end{proof}

In the following lemma we show that a function is $\mathbf C$-representable if and only if it satisfies an analogue of  identity (C6) in Definition \ref{def:clonealg}.

\begin{lemma}\label{12}
 Let $\mathbf C$ be a clone algebra and $f:C^k\to C$ be a function. Then the following conditions are equivalent:
 \begin{itemize}
\item[(i)] $f$ is $\mathbf C$-representable;
 \item[(ii)]  $q_n(f(\mathbf a),\mathbf c) = f(q_n(a_1,\mathbf c),\dots, q_n(a_k,\mathbf c))$ for every $n\geq 0$ and every $\mathbf a\in C^k$, $\mathbf c\in C^n$.
\end{itemize}
\end{lemma}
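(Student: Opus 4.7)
The plan is to prove (i) $\Rightarrow$ (ii) by a direct calculation chaining Lemma \ref{allungo} with the dimension hypothesis via Lemma \ref{lem:ind1}, and to prove (ii) $\Rightarrow$ (i) by specialising the universally quantified identity in (ii) to the basic elements $\e_i$.

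For (i) $\Rightarrow$ (ii), set $d = f(\e_1,\dots,\e_k)$, which by hypothesis has dimension $\leq k$, so that $f(\mathbf a) = q_k(d,\mathbf a)$ for every $\mathbf a \in C^k$. Then $q_n(f(\mathbf a),\mathbf c) = q_n(q_k(d,\mathbf a),\mathbf c)$, and I would split on the relation between $n$ and $k$, applying Lemma \ref{allungo} with the roles of $n$ and $k$ swapped. When $k \geq n$, part (ii) of that lemma yields $q_n(q_k(d,\mathbf a),\mathbf c) = q_k(d,q_n(a_1,\mathbf c),\dots,q_n(a_k,\mathbf c))$, which is $f(q_n(a_1,\mathbf c),\dots,q_n(a_k,\mathbf c))$ again by representability. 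When $k < n$, part (i) gives $q_n(q_k(d,\mathbf a),\mathbf c) = q_n(d,q_n(a_1,\mathbf c),\dots,q_n(a_k,\mathbf c),c_{k+1},\dots,c_n)$; here the dimension hypothesis shows $d$ is independent of $\e_{k+1},\dots,\e_n$, so Lemma \ref{lem:ind1} collapses the right-hand side to $q_k(d,q_n(a_1,\mathbf c),\dots,q_n(a_k,\mathbf c))$, and representability closes the argument.

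For (ii) $\Rightarrow$ (i), I would first instantiate (ii) at $\mathbf a = (\e_1,\dots,\e_k)$ and $n = k$, obtaining $q_k(f(\e_1,\dots,\e_k),\mathbf c) = f(q_k(\e_1,\mathbf c),\dots,q_k(\e_k,\mathbf c)) = f(c_1,\dots,c_k)$ by axiom (C1); this is exactly the substitution clause of representability with $\mathbf c$ renamed to $\mathbf a$. For the dimension clause, fix any $m > k$ and instantiate (ii) at $\mathbf a = (\e_1,\dots,\e_k)$, $n = m$, and $\mathbf c = (\e_1,\dots,\e_{m-1},\e_{m+1})$. For each $i \leq k < m$, axiom (C1) gives $q_m(\e_i,\mathbf c) = c_i = \e_i$, so the right-hand side collapses to $f(\e_1,\dots,\e_k)$, proving $f(\e_1,\dots,\e_k)$ is independent of $\e_m$. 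Since this holds for every $m > k$, one has $\gamma(f(\e_1,\dots,\e_k)) \leq k$, which is the remaining condition.

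The main obstacle is the subcase $k < n$ in the forward direction: the naive application of Lemma \ref{allungo} produces an expression with $n - k$ extra arguments $c_{k+1},\dots,c_n$ inside a $q_n$ rather than the desired $q_k$. The key observation is that the dimension hypothesis on $f(\e_1,\dots,\e_k)$ is exactly the technical fact needed to invoke Lemma \ref{lem:ind1} and discard those extra coordinates. Once this reduction step is identified, the rest of the argument is straightforward bookkeeping with (C1) and representability.
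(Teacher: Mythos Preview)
Your proposal is correct and follows essentially the same approach as the paper's proof: both directions use the same instantiations, and the forward direction (i) $\Rightarrow$ (ii) splits on $k \geq n$ versus $k < n$, invoking Lemma~\ref{allungo} in each case and using Lemma~\ref{lem:ind1} together with the dimension hypothesis to discard the extra arguments $c_{k+1},\dots,c_n$ in the second case. The only cosmetic difference is that the paper runs the (i) $\Rightarrow$ (ii) chain starting from the right-hand side $f(q_n(a_1,\mathbf c),\dots)$ rather than from $q_n(f(\mathbf a),\mathbf c)$, but the sequence of equalities is identical.
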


\begin{proof}
 (ii) $\Rightarrow$ (i) Let $\mathbf e=\e_1,\dots,\e_k$. Then
$q_k(f(\mathbf e),\mathbf a)    =  
f(q_k(\e_1,\mathbf a),\dots, q_k(\e_k,\mathbf a)) 
   =   f(\mathbf a)$. We now prove that $f(\mathbf e)$ has dimension $\leq k$.
Let $n>k$ and $\mathbf d=\e_1,\dots,\e_{n-1},\e_{n+1}$. Then $q_n(f(\mathbf e),\mathbf d)=
f(q_n(\e_1,\mathbf d),\dots, q_n(\e_k,\mathbf d))=
f(\mathbf e)$. It follows that $f(\mathbf e)$ has dimension $\leq k$.

 (i) $\Rightarrow$ (ii) 
 \begin{itemize}
\item If $k\geq n$,  then 
$$f(q_n(a_1,\mathbf b),\dots, q_n(a_k,\mathbf b))=_{(i)} q_k(f(\mathbf e),q_n(a_1,\mathbf b),\dots, q_n(a_k,\mathbf b))=_{\text{Lem}\, \ref{allungo}(ii)} q_n(q_k(f(\mathbf e),\mathbf a),\mathbf b).$$
\item If $k< n$, then  by Lemma \ref{lem:ind1} we obtain:
\[
\begin{array}{lll}
f(q_n(a_1,\mathbf b),\dots, q_n(a_k,\mathbf b))  & =_{(i)}  & q_k(f(\mathbf e),q_n(a_1,\mathbf b),\dots, q_n(a_k,\mathbf b))  \\
  & =_{\text{Lem}\, \ref{lem:ind1}}  &  q_n(f(\mathbf e),q_n(a_1,\mathbf b),\dots, q_n(a_k,\mathbf b),b_{k+1},\dots,b_n) \\
  & =_{\text{Lem}\, \ref{allungo}(i)}  &   q_n(q_k(f(\mathbf e),\mathbf a),\mathbf b).
\end{array}
\]
\end{itemize}
\end{proof}

Let $\mathbf C$ be a clone algebra. For every $a\in C$ of finite dimension, we consider
the family $$R(a)=\bigcup_{n\in \omega} \{f\in R_\mathbf C^{(n)}: a=f(\e_1,\dots,\e_n)\}$$ of the $\mathbf C$-representable functions determined by $a$.

\begin{proposition}\label{prop:rr} Let $\mathbf C$ be a clone algebra and and $a,b$ be finite-dimensional elements of C. Then the following conditions hold: 
\begin{enumerate}
\item For every $f\in R_\mathbf C^{(n)}$  and $g\in R_\mathbf C^{(k)}$, $f\approx_{\mathcal O_C} g$ iff $f(\e_1,\dots,\e_n)=g(\e_1,\dots,\e_k)$.
\item $R(a)$ is a block.
\item $R(a)= R(b) \Rightarrow a=b$.
\item $R_\mathbf C=\bigcup_{a\in \mathrm{Fi}\,C} R(a)$ is a clone on $\mathbf C$.
\item The block $R(a)$ has arity $k$ iff the element $a$ has dimension $k$ in $\mathbf C$.
\end{enumerate}
\end{proposition}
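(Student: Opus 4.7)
The plan is to prove the five parts in the natural order, with part (1) and an explicit parameterisation of $R(a)$ doing most of the work.

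For (1), in the forward direction assume without loss of generality that $f\preceq g$ so that $n\leq k$; then $f(\e_1,\dots,\e_n)=g(\e_1,\dots,\e_n,\e_{n+1},\dots,\e_k)=g(\e_1,\dots,\e_k)$. For the converse, set $a:=f(\e_1,\dots,\e_n)=g(\e_1,\dots,\e_k)$; by representability of $f$, $a$ has dimension $\leq n$ and hence is independent of $\e_{n+1},\dots,\e_k$. For any $\mathbf{a}\in C^n$ and $\mathbf{b}\in C^{k-n}$, representability of $g$ gives $g(\mathbf{a},\mathbf{b})=q_k(a,\mathbf{a},\mathbf{b})$, and Lemma~\ref{lem:ind1} collapses this to $q_n(a,\mathbf{a})=f(\mathbf{a})$, whence $f\preceq g$.

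For (2), (3), and (5) I would parametrise $R(a)$ concretely. Put $k:=\gamma(a)$ and for each $n\geq k$ define $f_n(\mathbf{x}):=q_n(a,\mathbf{x})$; a direct check using (C3) and Lemma~\ref{12} shows $f_n\in R(a)$. The hypothesis that $a$ depends on $\e_k$ yields $f_k(\e_1,\dots,\e_{k-1},\e_{k+1})\neq f_k(\e_1,\dots,\e_k)$, so by Lemma~\ref{lem:generator} $f_k$ is a generator of arity $k$; by part (1) the whole family $\{f_n:n\geq k\}$ lies in the block $\langle f_k\rangle$. Conversely, any $g\approx f_k$ of arity $n\geq k$ satisfies $f_k\preceq g$, forcing $g(\e_1,\dots,\e_n)=a$, and a further application of Lemma~\ref{lem:ind1} (using $\gamma(a)\leq k\leq n$) shows $g(\mathbf{a})=q_n(a,\mathbf{a})$, so $g\in R(a)$. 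Hence $R(a)=\langle f_k\rangle$ is a block of arity $\gamma(a)$, settling (2) and (5); (3) is then immediate, since any $f\in R(a)\cap R(b)$ gives $a=f(\e_1,\dots,\e_n)=b$.

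For (4) the displayed union equals $R_\mathbf{C}$ by definition, since every $f\in R_\mathbf{C}^{(n)}$ lies in $R(f(\e_1,\dots,\e_n))$. That $R_\mathbf{C}$ is a clone on $\mathbf{C}$ requires four verifications: projections, for which $p^{(n)}_i\in R(\e_i)$ follows from (C1); basic operations, which are in $R_\mathbf{C}$ by Lemma~\ref{cor:sigma}; composition, where given $f\in R_\mathbf{C}^{(n)}$ with $f(\e_1,\dots,\e_n)=a$ and $g_1,\dots,g_n\in R_\mathbf{C}^{(k)}$ with $g_i(\e_1,\dots,\e_k)=b_i$, one shows that $f(g_1,\dots,g_n)_k$ represents $d:=q_n(a,b_1,\dots,b_n)$ (finite dimensional by Lemma~\ref{lem:preservare}); and restriction, where a fictitious $f\in R_\mathbf{C}^{(n)}$ has $f(\e_1,\dots,\e_n)$ independent of $\e_n$, so its restriction is representable via another application of Lemma~\ref{lem:ind1}. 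The main obstacle is the composition step when $n<k$: unfolding gives $f(g_1,\dots,g_n)_k(\mathbf{c})=q_n(a,q_k(b_1,\mathbf{c}),\dots,q_k(b_n,\mathbf{c}))$, while $q_k(d,\mathbf{c})$ expands via Lemma~\ref{allungo}(i) into a $q_k$-term carrying extra arguments $c_{n+1},\dots,c_k$; matching the two sides requires Lemma~\ref{lem:ind1} to absorb these arguments, which is legitimate precisely because $\gamma(a)\leq n<k$. The case $n\geq k$ is easier, Lemma~\ref{allungo}(ii) yielding the equality at once.
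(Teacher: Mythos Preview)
Your proof is correct and follows the same overall structure as the paper's, with two notable differences worth flagging.

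For part (4), the paper handles closure under composition in one stroke via Lemma~\ref{12}: writing $h=f(g_1,\dots,g_n)_k$ and $\mathbf e=\e_1,\dots,\e_k$, one has
\[
q_k(h(\mathbf e),\mathbf a)=q_k(f(g_1(\mathbf e),\dots,g_n(\mathbf e)),\mathbf a)
=f(q_k(g_1(\mathbf e),\mathbf a),\dots,q_k(g_n(\mathbf e),\mathbf a))
=f(g_1(\mathbf a),\dots,g_n(\mathbf a))=h(\mathbf a),
\]
using the commutation form of representability for $f$ and then representability of the $g_i$'s. Your route via Lemma~\ref{allungo} with a case split on $n\lessgtr k$ is also correct, but the paper's use of Lemma~\ref{12} avoids the split entirely. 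On the other hand, you explicitly address closure under restriction, which the paper omits; this is not a gap in the paper since restriction closure follows immediately once (2) shows $R_{\mathbf C}$ is a union of blocks, but it is good practice to note it.

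One small imprecision: when you write that $d=q_n(a,b_1,\dots,b_n)$ is ``finite dimensional by Lemma~\ref{lem:preservare}'', representability of $h$ actually requires $\gamma(d)\le k$, not merely $\gamma(d)<\omega$. Lemma~\ref{lem:preservare}(i) delivers this directly when $n\le k$ (since then $a$ and all $b_i$ have dimension $\le k$); when $n>k$ a short additional check is needed, using that each $b_i$ is independent of $\e_m$ for $m>k$ and $a$ is independent of $\e_m$ for $m>n$. This is routine, but worth stating.
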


\begin{proof}  (1) ($\Rightarrow$) If $n\leq k$, then $f(\e_1,\dots,\e_n)=g(\e_1,\dots,\e_n,\mathbf b)$ for every $\mathbf b$. In particular for $\mathbf b=\e_{n+1},\dots,\e_k$ we get the conclusion. ($\Leftarrow$) It is trivial by the hypotheses.
 
 (2) By (1).

 (3) If $R(a)=R(b)$ and $f\in R(a)$ has arity $n$, then $a=f(\e_1,\dots,\e_n)=b$.
 
 (4) The projection $p^{(n)}_i$ is $\mathbf C$-representable:
$$a_i=p^{(n)}_i(a_1,\dots,a_n)=q_n(\e_i,a_1,\dots,a_n).$$
If $f$ of arity $n$ and $g_1,\dots,g_n$ of arity $k$ are $\mathbf C$-representable, then the function $h=f(g_1,\dots,g_n)_k$ is also
 $\mathbf C$-representable. Let $\mathbf e=\e_1,\dots,\e_k$ and $\mathbf a=a_1,\dots,a_k$. Then we have:
 \[
\begin{array}{llll}
q_k(h(\e_1,\dots,\e_k),\mathbf a)& =  &q_k(f(g_1(\mathbf e),\dots,g_n(\mathbf e)),\mathbf a) &\\
 & =  &f(q_k(g_1(\mathbf e),\mathbf a),\dots,q_k(g_n(\mathbf e),\mathbf a))  &\text{by Lemma \ref{12}} \\
  &  = &  f(g_1(\mathbf a)),\dots,g_n(\mathbf a))   &\text{by Lemma \ref{12} and (C1)} \\
   &  = & h(\mathbf a)&
\end{array}
\]
The basic operations $\sigma^\mathbf C$ are also $\mathbf C$-representable.

 (5) If $f$ is $\mathbf C$-representable and $a=f(\e_1,\dots,\e_k)$, then $a$ has dimension $\leq k$.   The element $a$ is independent of $\e_k$ iff (by Lemma \ref{lem:ind1}), for every  $b_1,\dots,b_{k-1},c\in C$,
 $f(b_1,\dots,b_{k-1},c)=q_k(a,b_1,\dots,b_{k-1},c)=q_{k-1}(a,b_1,\dots,b_{k-1})$ iff, for every  $b_1,\dots,b_{k-1}$, $c,d\in C$, $f(b_1,\dots,b_{k-1},c)=f(b_1,\dots,b_{k-1},d)$ iff (by Lemma \ref{lem:generator}) 
 $f$ is not a generator. 
\end{proof}

%


\begin{lemma} \label{lem:dim} Let $\mathbf C=(\mathbf C_\tau,q_n,\e_i)$ be a  clone $\tau$-algebra. 
Then $R_\mathbf C^\top=\{R(a)^\top: a\in \mathrm{Fi}\,\mathbf C\}$ is a block algebra on $\mathbf C_\tau$.
\end{lemma}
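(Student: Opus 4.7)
The plan is to invoke Proposition \ref{prop:cloneblock} applied to the set $R_\mathbf C$. The groundwork has already been laid: by Proposition \ref{prop:rr}(4), $R_\mathbf C$ is a clone on $C$, and by Lemma \ref{cor:sigma} every basic operation $\sigma^\mathbf C$ (for $\sigma \in \tau$) is $\mathbf C$-representable, so $R_\mathbf C$ is in fact a clone on the $\tau$-algebra $\mathbf C_\tau$. The implication $(i)\Rightarrow(ii)$ of Proposition \ref{prop:cloneblock} then yields that $R_\mathbf C^\top$ is the universe of a block algebra on $\mathbf C_\tau$.

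To obtain the displayed equality $R_\mathbf C^\top = \{R(a)^\top : a \in \mathrm{Fi}\,\mathbf C\}$, I would unfold both sides. On one hand, by definition $R_\mathbf C^\top = \{f^\top : f \in R_\mathbf C\}$. On the other hand, Proposition \ref{prop:rr}(4) gives $R_\mathbf C = \bigcup_{a \in \mathrm{Fi}\,\mathbf C} R(a)$, while Proposition \ref{prop:rr}(2) tells us that each $R(a)$ is a single block. Consequently, by Lemma \ref{lem:similar}, all functions in $R(a)$ share the same top extension, namely $R(a)^\top$. Thus
\[
R_\mathbf C^\top = \bigcup_{a \in \mathrm{Fi}\,\mathbf C} \{f^\top : f \in R(a)\} = \{R(a)^\top : a \in \mathrm{Fi}\,\mathbf C\},
\]
where the last equality uses that $R(a) = R(b)$ iff $a = b$ (Proposition \ref{prop:rr}(3)), so the map $a \mapsto R(a)^\top$ parametrises the distinct elements.

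No real obstacle is anticipated here, since both Proposition \ref{prop:rr} and Proposition \ref{prop:cloneblock} do the heavy lifting. The only point requiring mild care is to observe that being a block algebra \emph{on $\mathbf C_\tau$} (rather than merely on the set $C$) requires $\langle \sigma^\mathbf C\rangle^\top \in R_\mathbf C^\top$ for every $\sigma \in \tau$; this is immediate from $\sigma^\mathbf C \in R_\mathbf C$ together with the fact that $\sigma^\mathbf C \in R(\sigma^\mathbf C(\e_1,\dots,\e_k)) = \langle \sigma^\mathbf C\rangle$.
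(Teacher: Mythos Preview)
Your proposal is correct and follows exactly the paper's approach: the paper's proof reads simply ``By Propositions \ref{prop:cloneblock} and \ref{prop:rr},'' and you have spelled out precisely how those two results combine. One small remark: the appeal to Proposition \ref{prop:rr}(3) in your last displayed equality is unnecessary---set equality $\{f^\top : f\in R_\mathbf C\} = \{R(a)^\top : a\in\mathrm{Fi}\,\mathbf C\}$ follows already from (2) and (4), since each block $R(a)$ collapses to the single element $R(a)^\top$ under $(-)^\top$; injectivity of $a\mapsto R(a)^\top$ is not needed for the equality of sets.
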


\begin{proof}  By Propositions \ref{prop:cloneblock} and \ref{prop:rr}.
 \end{proof}


\begin{theorem}\label{thm:firstrepresentation} Let $\mathbf C$ be a  finite dimensional clone $\tau$-algebra.
The function $(-)^\top \circ R$ mapping
$$a\in C \mapsto\ R(a)^\top$$
is an isomorphism from $\mathbf C$ onto the block algebra $R_\mathbf C^\top$. 
\end{theorem}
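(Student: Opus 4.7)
The map $\Phi := (-)^\top \circ R$ is well-defined on the whole of $C$ because $\mathbf C$ is finite dimensional, so every $a\in C$ lies in $\mathrm{Fi}\,\mathbf C$ and $R(a)$ is a block by Proposition \ref{prop:rr}(2). To prove $\Phi$ is an isomorphism, I would verify bijectivity first and then check preservation of the three kinds of operations $\e_i$, $q_n$, $\sigma\in\tau$.

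\textbf{Bijectivity.} Surjectivity is immediate from Lemma \ref{lem:dim} and the hypothesis $\mathrm{Fi}\,\mathbf C=C$: by definition $R_\mathbf C^\top=\{R(a)^\top:a\in C\}$. For injectivity, if $R(a)^\top = R(b)^\top$ then by Lemma \ref{lem:similar} (the kernel of the top operator is similarity, i.e.\ equality of blocks) we have $R(a) = R(b)$, and Proposition \ref{prop:rr}(3) yields $a=b$.

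\textbf{Preservation of $\e_i$.} The element $\e_i$ has dimension $i$, and for every $n\geq i$ the representable function $f(x_1,\dots,x_n)=q_n(\e_i,x_1,\dots,x_n)$ equals $x_i$ by (C1), i.e.\ $f=p_i^{(n)}$. Hence $R(\e_i)$ is precisely the projection block $P_i$, so $\Phi(\e_i)=P_i^\top=\e_i^\omega$.

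\textbf{Preservation of $q_n$.} Given $a,b_1,\dots,b_n\in C$, set $c=q_n(a,\mathbf b)$ and choose $k\geq n$ with $k\geq\gamma(a),\gamma(b_1),\dots,\gamma(b_n),\gamma(c)$. For each $s\in C^\omega$,
\[
R(c)^\top(s) \;=\; q_k(q_n(a,\mathbf b),s_1,\dots,s_k) \;=\; q_k\bigl(a,\,q_k(b_1,\mathbf s),\dots,q_k(b_n,\mathbf s),\,s_{n+1},\dots,s_k\bigr)
\]
by Lemma \ref{allungo}(i); on the other side, writing $s'=s[R(b_1)^\top(s),\dots,R(b_n)^\top(s)]$ and using $R(b_j)^\top(s)=q_k(b_j,\mathbf s)$, one computes $q_n^\omega(R(a)^\top,R(b_1)^\top,\dots,R(b_n)^\top)(s)=R(a)^\top(s')=q_k(a,q_k(b_1,\mathbf s),\dots,q_k(b_n,\mathbf s),s_{n+1},\dots,s_k)$, matching the previous display.

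\textbf{Preservation of $\sigma\in\tau$.} For $\sigma$ of arity $k$, pick $m$ dominating the dimensions of all relevant elements. Then using (C6),
\[
R(\sigma(a_1,\dots,a_k))^\top(s) \;=\; q_m(\sigma(a_1,\dots,a_k),\mathbf s) \;=\; \sigma\bigl(q_m(a_1,\mathbf s),\dots,q_m(a_k,\mathbf s)\bigr),
\]
which coincides with $\sigma^\omega(R(a_1)^\top,\dots,R(a_k)^\top)(s)$ by Definition \ref{def:fca}.

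\textbf{Main obstacle.} No step is deep; the only mildly delicate point is the $q_n$-preservation calculation, where one must pick a common large $k$ so that Lemma \ref{allungo}(i) applies symmetrically on both sides. Finite-dimensionality of $\mathbf C$ is used exactly to ensure surjectivity and to guarantee that such a common $k$ exists for every tuple.
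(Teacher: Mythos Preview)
Your proof is correct and follows essentially the same approach as the paper's: pick a common $k$ dominating all dimensions and compute both sides of the $q_n$-preservation identity to reach the same expression $q_k(a,q_k(b_1,\mathbf s),\dots,q_k(b_n,\mathbf s),s_{n+1},\dots,s_k)$. Your treatment is in fact a bit more explicit than the paper's in two places: you spell out injectivity via Proposition~\ref{prop:rr}(3) and Lemma~\ref{lem:similar} (the paper just says ``trivially bijective''), and you verify $\sigma$-preservation on arbitrary arguments using (C6), whereas the paper only checks it on the generators $\e_1,\dots,\e_n$ and implicitly relies on equation~(\ref{sigma-eq}) and the already-established $q_n$-preservation to propagate to general arguments.
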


\begin{proof} $(-)^\top \circ R$ is trivially bijective and $\e_i^\mathbf C\mapsto R(\e_i^\mathbf C)^\top=\e_i^\omega$. The map $(-)^\top \circ R$ preserves the operators $q_n$:
$$R(q_n^\mathbf C(a,b_1,\dots,b_n))^\top= q_n^\omega(R(a)^\top,R(b_1)^\top,\dots,R(b_n)^\top).$$
Let $k\geq n$ be greater than the arities of $R(a), R(b_1),\dots,R(b_n)$ and the dimension of $q_n^\mathbf C(a,b_1,\dots,b_n)$. Let $s\in C^\omega$, $\mathbf s=s_1,\dots,s_k$, $A=R(a)$ and $B_i=R(b_i)$.
\[
\begin{array}{rll}
q_n^\omega(A^\top,B_1^\top,\dots,B_n^\top)(s)&  = &A^\top(s[B_1^\top(s),\dots,B_n^\top(s)])\\
&  = &A^{(k)}(B_1^\top(s),\dots,B_n^\top(s),s_{n+1},\dots,s_k)\\
&  = &A^{(k)}(B_1^{(k)}(\mathbf s),\dots,B_n^{(k)}(\mathbf s),s_{n+1},\dots,s_k)\\
&  = &A^{(k)}(B_1^{(k)},\dots,B_n^{(k)},p^{(k)}_{n+1},\dots,p^{(k)}_k)(\mathbf s)\\
\end{array}
\]
Let $\mathbf b= b_1,\dots,b_n$ and  $f\in R(q_n^\mathbf C(a,\mathbf b))$ of arity $k$. Then, we have
  $$ f(\mathbf a)=q_k (f(\e_1,\dots,\e_k),\mathbf s)=q_k (q_n (a,\mathbf b),\mathbf s)=q_k(a,q_k(b_1,\mathbf s),\dots,q_k(b_n,\mathbf s),s_{n+1},\dots,s_k)$$
  $$= A^{(k)}(B_1^{(k)},\dots,B_n^{(k)},p^{(k)}_{n+1},\dots,p^{(k)}_k)(\mathbf s).$$

Moreover, for every $\sigma\in \tau$ of arity $n$, it is not difficult to show that 
$$R(\sigma^\mathbf C(\e_1,\dots,\e_n))^\top= \sigma^\omega(\e_1^\omega,\dots,\e_n^\omega).$$
%
\end{proof}

We denote by $\mathsf{BLK}$ the class of all block algebras and by $\mathsf{FiCA}$ the class of all finite dimensional clone algebras.
 
\begin{theorem}\label{thm:fica}
 $\mathsf{FiCA}_\tau = \mathbb I\, \mathsf{BLK}_\tau$.
\end{theorem}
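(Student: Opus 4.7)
The plan is to prove the two inclusions separately, and happily most of the work has already been done in the preceding results.

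For the inclusion $\mathsf{FiCA}_\tau \subseteq \mathbb{I}\,\mathsf{BLK}_\tau$, I would invoke Theorem \ref{thm:firstrepresentation} directly. That theorem exhibits, for any finite dimensional clone $\tau$-algebra $\mathbf{C}$, the explicit isomorphism $(-)^\top \circ R \colon \mathbf{C} \to R_\mathbf{C}^\top$, and by Lemma \ref{lem:dim} the target $R_\mathbf{C}^\top$ is a block algebra on $\mathbf{C}_\tau$. Hence $\mathbf{C} \in \mathbb{I}\,\mathsf{BLK}_\tau$.

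For the reverse inclusion $\mathbb{I}\,\mathsf{BLK}_\tau \subseteq \mathsf{FiCA}_\tau$, I would first observe that the full block algebra $\mathcal{B}_A^\top$ was shown (in the lemma preceding its definition) to be a subalgebra of the full $\mathsf{FCA}$ $\mathbf{O}_\mathbf{A}^{(\omega)}$, which is a clone $\tau$-algebra by Lemma \ref{lem:fun}. Therefore every block algebra on $\mathbf{A}$, being a subalgebra of $\mathcal{B}_A^\top$, is itself a clone $\tau$-algebra. What remains is to verify finite dimensionality: every element of $\mathcal{B}_A^\top$ has the form $B^\top$ for some block $B$, and by Lemma \ref{lem:ardim} the dimension of $B^\top$ equals the arity of $B$, which is finite. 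Hence every element of a block algebra has finite dimension, so block algebras themselves lie in $\mathsf{FiCA}_\tau$. Finally, finite dimensionality transfers across isomorphisms by Lemma \ref{lem:preservare}(ii): if $h \colon \mathbf{D} \to \mathbf{C}$ is an isomorphism with $\mathbf{C}$ finite dimensional and $a \in D$, then $h(a)$ has finite dimension in $\mathbf{C}$, and applying the same lemma to $h^{-1}$ yields that $a$ has finite dimension in $\mathbf{D}$. Consequently $\mathbb{I}\,\mathsf{BLK}_\tau \subseteq \mathsf{FiCA}_\tau$.

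No step here presents a serious obstacle; the theorem is essentially a bookkeeping consolidation of Theorem \ref{thm:firstrepresentation}, Lemma \ref{lem:ardim}, and the closure properties of subalgebras of $\mathbf{O}_\mathbf{A}^{(\omega)}$. The only mild subtlety is the need to transport finite dimensionality along the isomorphism in both directions, which is handled by applying Lemma \ref{lem:preservare}(ii) to $h$ and $h^{-1}$.
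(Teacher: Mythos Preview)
Your proof is correct and follows the same approach as the paper. The paper's own proof invokes Theorem \ref{thm:firstrepresentation} for the inclusion $\mathsf{FiCA}_\tau \subseteq \mathbb I\,\mathsf{BLK}_\tau$ and simply declares the reverse inclusion trivial (``every block algebra is a finite dimensional clone algebra''); you have supplied the details behind that triviality using Lemma \ref{lem:ardim} and the transport of finite dimensionality along isomorphisms via Lemma \ref{lem:preservare}(ii).
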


\begin{proof} By Theorem \ref{thm:firstrepresentation} $\mathsf{FiCA}_\tau \subseteq \mathbb I\, \mathsf{BLK}_\tau$.
The  inequality $\mathsf{BLK}_\tau\subseteq \mathsf{FiCA}_\tau$ is trivial, because every block algebra is a finite dimensional clone algebra. 
\end{proof}

\section{The operators of an algebraic type as nullary operators}\label{sec:nullary}
Each $n$-ary basic operation $\sigma^\mathbf C$ of a clone $\tau$-algebra $\mathbf C$ is represented by the element $\sigma^\mathbf C(e_1^\mathbf C,\ldots,e_n^\mathbf C)$.
Taking these elements as nullary operators and discharging the $\sigma$'s, we get \emph{pure clone algebras with constants}. In this section 
we show that the variety of clone algebras of type $\tau$ is equivalent to the variety of pure clone algebras with
$\tau$-constants.

\begin{definition}
 If $\tau$ is a similarity
type, denote by $\tau^*$ the expansion of the type of pure clone algebras by a new
constant $c_\sigma$ for every $\sigma\in\tau$. A \emph{pure clone algebra with $\tau$-constants} is an
algebra $\mathbf A = (A,q_n^\mathbf A,\e_i^\mathbf A, \{c_\sigma^\mathbf A\}_{\sigma\in\tau})$ of type $\tau^*$, where $(A,q_n^\mathbf A,\e_i^\mathbf A)$ is a pure clone algebra.
\end{definition}

The variety of clone $\tau$-algebras  and the variety of pure clone algebra with $\tau$-constants are term equivalent.
Consider the following correspondence.
\begin{itemize}
\item Beginning on the clone algebra side,  if $\mathbf C=(\mathbf C_\tau, q^\mathbf C, \e_i^\mathbf C)$ is a clone $\tau$-algebra, then $\mathbf C^\bullet = (C; q^\mathbf C, \e_i^\mathbf C,c^\bullet_\sigma)$, where $c^\bullet_\sigma = \sigma^\mathbf C(\e_1^\mathbf C,\dots,\e_k^\mathbf C)$ for every $\sigma\in\tau$ of arity $k$, denotes the corresponding algebra in the similarity type of pure clone algebras with $\tau$-constants.
\item Beginning on the other  side, if $\mathbf A= (A, q^\mathbf A_n,\e^\mathbf A_i, c^\mathbf A_\sigma)$ is a pure clone algebra with $\tau$-constants, then 
$\mathbf A^* = (\mathbf A_\tau, q_n^\mathbf A,\e_i^\mathbf A)$, where $\mathbf A_\tau= (A,\sigma^*)_{\sigma\in\tau}$ and $\sigma^*(a_1,\dots,a_k)= q_k^\mathbf A(c^\mathbf A_\sigma,a_1,\dots,a_k)$ for all $a_i\in A$ and every $\sigma\in\tau$ of arity $k$, denotes the corresponding algebra in the similarity type of clone $\tau$-algebras.
\end{itemize}

It is not difficult to prove the following proposition.

\begin{proposition}\label{prop:equiv}
  The above correspondences define a term equivalence between the variety $\mathsf{CA}_\tau$ of clone $\tau$-algebras  and the variety of pure clone algebras with $\tau$-constants. More precisely,
  \begin{itemize}
\item[(i)] If $\mathbf A$ is a pure clone algebra with $\tau$-constants, then $\mathbf A^*$ is a clone $\tau$-algebra;
\item[(ii)] If $\mathbf C$ is a clone $\tau$-algebra, then $\mathbf C^\bullet$ is a pure clone algebra with $\tau$-constants;
\item[(iii)] $(\mathbf A^*)^\bullet = \mathbf A$ and $(\mathbf C^\bullet)^* = \mathbf C$.
\end{itemize}
\end{proposition}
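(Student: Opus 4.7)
The plan is to check each of the three clauses in turn; parts (ii) and (iii) reduce to short verifications from the defining axioms, while part (i) is the one where the real content lies and where Lemma \ref{allungo} does the heavy lifting.

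For (ii), nothing new needs to be proven: $\mathbf{C}^\bullet$ is obtained by adjoining the nullary operators $c^\bullet_\sigma = \sigma^\mathbf{C}(\e_1^\mathbf{C},\dots,\e_k^\mathbf{C})$ to the pure reduct $\mathbf{C}_0$ of $\mathbf{C}$, and $\mathbf{C}_0$ is a pure clone algebra by Definition \ref{def:clonealg}, so $\mathbf{C}^\bullet$ meets the definition of a pure clone algebra with $\tau$-constants. For (iii), both equalities are direct calculations. In $(\mathbf{A}^*)^\bullet = \mathbf{A}$ the only non-trivial point is that each constant is recovered, and indeed $(\sigma^*)^\bullet = \sigma^*(\e_1,\dots,\e_k) = q_k(c_\sigma^\mathbf{A},\e_1,\dots,\e_k) = c_\sigma^\mathbf{A}$ by (C3). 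Dually, for $(\mathbf{C}^\bullet)^* = \mathbf{C}$ the operation $\sigma^\mathbf{C}$ is recovered via (C6) and (C1):
\[
(\sigma^\bullet)^*(\mathbf{a}) = q_k(\sigma^\mathbf{C}(\e_1,\dots,\e_k),\mathbf{a}) = \sigma^\mathbf{C}(q_k(\e_1,\mathbf{a}),\dots,q_k(\e_k,\mathbf{a})) = \sigma^\mathbf{C}(\mathbf{a}).
\]

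The heart of the proof is (i). Axioms (C0)--(C5) for $\mathbf{A}^*$ hold automatically since they concern only the pure reduct $(A, q_n^\mathbf{A}, \e_i^\mathbf{A})$, which is the pure clone algebra we started with. The only axiom requiring work is (C6). Unfolding $\sigma^*(\mathbf{x}) = q_k(c_\sigma,\mathbf{x})$, it becomes
\[
q_n(q_k(c_\sigma,\mathbf{x}),\mathbf{y}) = q_k(c_\sigma, q_n(x_1,\mathbf{y}),\dots,q_n(x_k,\mathbf{y})).
\]
When $n \leq k$, this is exactly Lemma \ref{allungo}(ii) applied with $x=c_\sigma$. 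When $n > k$, Lemma \ref{allungo}(i) rewrites the left-hand side as $q_n(c_\sigma, q_n(x_1,\mathbf{y}),\dots,q_n(x_k,\mathbf{y}), y_{k+1},\dots,y_n)$, while axiom (C4) rewrites the right-hand side as $q_n(c_\sigma, q_n(x_1,\mathbf{y}),\dots,q_n(x_k,\mathbf{y}), \e_{k+1},\dots,\e_n)$; the two sides then coincide provided $c_\sigma$ is independent of $\e_{k+1},\dots,\e_n$.

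The main obstacle is precisely this last point: for (i) to go through uniformly, the variety of pure clone algebras with $\tau$-constants must be understood to include the equations $q_{k+1}(c_\sigma, \e_1,\dots,\e_k,\e_{k+2}) = c_\sigma$ for each $\sigma\in\tau$ of arity $k$, so that each constant has dimension at most its arity. Iterating Lemma \ref{lem:ind1} then yields independence from every $\e_j$ with $j > k$, closing the gap in (C6). In the opposite direction (ii), Lemma \ref{lem:preservare}(i) guarantees that $c^\bullet_\sigma = \sigma^\mathbf{C}(\e_1,\dots,\e_k)$ automatically satisfies those dimension equations, so they are always valid in the image of $\mathbf{C}\mapsto\mathbf{C}^\bullet$. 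With this axiomatisation fixed, the two translations are mutually inverse by (iii) and preserve the equational structure by (i) and (ii), yielding the desired term equivalence.
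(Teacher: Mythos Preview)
Your argument tracks the paper's proof closely. The paper handles (ii) by citing Lemma~\ref{cor:sigma} (every basic operation is representable) and Definition~\ref{def:representable}, and (iii) by the same two short calculations you give. For (i), the paper compresses your Lemma~\ref{allungo}/(C4) case analysis into a single appeal to Lemma~\ref{12}: one observes that $\sigma^*(\mathbf a)=q_k(c_\sigma,\mathbf a)$ and $\sigma^*(\e_1,\dots,\e_k)=c_\sigma$, so $\sigma^*$ is $\mathbf A$-representable, and then Lemma~\ref{12}(i)$\Rightarrow$(ii) is exactly (C6). Unwinding that lemma's proof recovers precisely your two cases.

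You have also put your finger on a genuine lacuna: the paper's definition of ``pure clone algebra with $\tau$-constants'' imposes no dimension constraint on the $c_\sigma$, yet both the paper's proof (via the representability hypothesis in Lemma~\ref{12}, which requires $\sigma^*(\e_1,\dots,\e_k)=c_\sigma$ to have dimension $\le k$) and your proof (in the case $n>k$) need exactly this. Your proposed fix---adding the axioms $q_{k+1}(c_\sigma,\e_1,\dots,\e_k,\e_{k+2})=c_\sigma$ for each $\sigma$ of arity $k$---is the correct one, and your observation that $(-)^\bullet$ automatically lands in this restricted class (via Lemma~\ref{lem:preservare}) shows it is the right target variety for a genuine term equivalence.
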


\begin{proof} (i) By Lemma \ref{12} applied to $\sigma^*$ we get (C6).

(ii) By Lemma \ref{cor:sigma} and Definition \ref{def:representable}.

(iii) First we have $\sigma^*(\e_1,\dots,\e_k)= q_k^\mathbf A(c^\mathbf A_\sigma,\e_1,\dots,\e_k)=c^\mathbf A_\sigma$.\\
Since $c^\bullet_\sigma = \sigma^\mathbf C(\e_1^\mathbf C,\dots,\e_k^\mathbf C)$, then we have:
 $\sigma^*(a_1,\dots,a_k)= q_k^\mathbf C(c^\bullet_\sigma,a_1,\dots,a_k)=$\\
 $q_k^\mathbf C(\sigma^\mathbf C(\e_1^\mathbf C,\dots,\e_k^\mathbf C),a_1,\dots,a_k)=\sigma^\mathbf C(a_1,\dots,a_k)$.
\end{proof}

In view of this proposition, we will denote a clone $\tau$-algebra  either as $\mathbf C = (\mathbf C_\tau, q^\mathbf C_n,\e^\mathbf C_i)$ or as $\mathbf C = (C, q^\mathbf C_n,\e^\mathbf C_i, c^\mathbf C_\sigma)_{\sigma\in\tau}$, whichever seems more convenient.

\bigskip

Proposition \ref{prop:equiv} has two important consequences.

\begin{corollary}
 Let $\mathbf C$ be a clone $\tau$-algebra, $\mathbf C_0$ be the pure reduct of $\mathbf C$ and $\theta$ be an equivalence relation on $C$. Then,
 $\theta$ is a congruence on $\mathbf C$ if and only if $\theta$ is a congruence on $\mathbf C_0$; hence, 
 $$\mathrm{Con}\, \mathbf C=\mathrm{Con}\, \mathbf C_0.$$
\end{corollary}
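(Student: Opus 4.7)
The plan is to derive both directions from Proposition \ref{prop:equiv}, which expresses each basic operation $\sigma^{\mathbf C}$ as a pure-clone term applied to a fixed element.

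First, the ``only if'' direction is immediate: if $\theta$ is a congruence on $\mathbf C$, then it is compatible with every operation in the signature of $\mathbf C$, and in particular with the operators $q_n^{\mathbf C}$ and (vacuously) with the nullary operators $\e_i^{\mathbf C}$, which are precisely the fundamental operations of $\mathbf C_0$. Hence $\theta \in \mathrm{Con}\,\mathbf C_0$.

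For the ``if'' direction, suppose $\theta \in \mathrm{Con}\,\mathbf C_0$. I would fix $\sigma \in \tau$ of arity $k$ and elements $a_1,\dots,a_k, b_1,\dots,b_k \in C$ with $a_i \mathrel{\theta} b_i$ for every $i$, and aim to show $\sigma^{\mathbf C}(a_1,\dots,a_k) \mathrel{\theta} \sigma^{\mathbf C}(b_1,\dots,b_k)$. Setting $c^\bullet_\sigma := \sigma^{\mathbf C}(\e_1^{\mathbf C},\dots,\e_k^{\mathbf C})$, the computation
\[
q_k^{\mathbf C}(c^\bullet_\sigma, a_1,\dots,a_k) \;=_{(C6)}\; \sigma^{\mathbf C}(q_k^{\mathbf C}(\e_1,a_1,\dots,a_k),\dots,q_k^{\mathbf C}(\e_k,a_1,\dots,a_k)) \;=_{(C1)}\; \sigma^{\mathbf C}(a_1,\dots,a_k),
\]
and likewise for the $b_i$'s, reduces the statement to
\[
q_k^{\mathbf C}(c^\bullet_\sigma, a_1,\dots,a_k) \mathrel{\theta} q_k^{\mathbf C}(c^\bullet_\sigma, b_1,\dots,b_k),
\]
which follows from reflexivity ($c^\bullet_\sigma \mathrel{\theta} c^\bullet_\sigma$) together with the compatibility of $\theta$ with $q_k^{\mathbf C}$, an operation of $\mathbf C_0$. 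This is exactly part (iii) of Proposition \ref{prop:equiv} in action: $\sigma^{\mathbf C}$ is definable from $q_k^{\mathbf C}$ and the parameter $c^\bullet_\sigma \in C$, so any congruence of the pure reduct automatically respects it.

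There is no serious obstacle here; the only thing to be careful about is making sure to cite the right identities, (C6) and (C1), that justify the key reduction $\sigma^{\mathbf C}(\mathbf a) = q_k^{\mathbf C}(c^\bullet_\sigma,\mathbf a)$, which is precisely the content of the term equivalence between $\mathsf{CA}_\tau$ and the variety of pure clone algebras with $\tau$-constants established in Proposition \ref{prop:equiv}. The equality $\mathrm{Con}\,\mathbf C = \mathrm{Con}\,\mathbf C_0$ then follows as set equality of congruence lattices.
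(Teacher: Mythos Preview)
Your proof is correct and follows essentially the same approach as the paper: the key step is the identity $\sigma^{\mathbf C}(\mathbf a)=q_k^{\mathbf C}(\sigma^{\mathbf C}(\mathbf e),\mathbf a)$ obtained via (C6) and (C1), after which compatibility with $q_k$ does the work. The paper compresses this into a single displayed line, but the argument is identical.
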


\begin{proof}
 If $\mathbf a\theta \mathbf b$ and $\theta$ preserves the operators $q_n$, then
$\sigma^\mathbf C(\mathbf a)=_{(C6),(C1)}q_k^\mathbf C(\sigma^\mathbf C(\mathbf e),\mathbf a)\ \theta\ q_k^\mathbf C(\sigma^\mathbf C(\mathbf e),\mathbf b)=\sigma^\mathbf C(\mathbf b).$
\end{proof}

\begin{corollary}\label{cor:taunu} Let $\mathbf C$ and $\mathbf D$ be clone algebras of type $\tau$ and $\nu$, respectively.   If $\mathbf C$ and $\mathbf D$ have the same pure reduct, then  $\mathrm{Con}\, \mathbf C=\mathrm{Con}\, \mathbf D$.
\end{corollary}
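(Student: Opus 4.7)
My plan is to derive this as an immediate consequence of the preceding corollary, which asserts that for any clone $\tau$-algebra $\mathbf C$ with pure reduct $\mathbf C_0$ one has $\mathrm{Con}\,\mathbf C = \mathrm{Con}\,\mathbf C_0$. Given the hypothesis, let $\mathbf E$ denote the common pure reduct of $\mathbf C$ and $\mathbf D$; that is, $\mathbf E = \mathbf C_0 = \mathbf D_0$, so in particular $C = D$ as sets and the operators $q_n$ and the constants $\e_i$ agree on the two algebras.

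Applying the preceding corollary to $\mathbf C$ gives $\mathrm{Con}\,\mathbf C = \mathrm{Con}\,\mathbf E$, and applying it to $\mathbf D$ gives $\mathrm{Con}\,\mathbf D = \mathrm{Con}\,\mathbf E$. Composing these equalities yields $\mathrm{Con}\,\mathbf C = \mathrm{Con}\,\mathbf D$, which is the desired conclusion.

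There is essentially no obstacle: the real content has already been absorbed into the previous corollary, whose proof in turn exploits identity (C6) to show that any equivalence relation compatible with the $q_n$ is automatically compatible with every basic operation $\sigma^\mathbf C$, since $\sigma^\mathbf C(\mathbf a) = q_k^\mathbf C(\sigma^\mathbf C(\e_1,\dots,\e_k),\mathbf a)$ by (C6) and (C1). The corollary at hand simply records the type-independent nature of that characterisation: the congruence lattice is an invariant of the pure reduct, regardless of the signature under which that reduct has been enriched.
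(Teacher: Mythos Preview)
Your proof is correct and follows exactly the approach implicit in the paper: the corollary is stated there without proof, as it is immediate from the preceding corollary ($\mathrm{Con}\,\mathbf C=\mathrm{Con}\,\mathbf C_0$) applied to each of $\mathbf C$ and $\mathbf D$.
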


\section{The General Representation Theorem}\label{sec:GRT}

This section is devoted to the proof of the main representation theorem.
Firstly we introduce the class $\mathsf{RCA}$ of \emph{point-relativized functional clone algebras}, which is instrumental in the proof of  the representation theorem.
The following diagram provides the outline of the proof that $\mathsf{CA}=\mathbb I\,\mathsf{FCA}$:
\[
\begin{array}{llll}
\mathsf{CA}  & =  & \mathbb I\,\mathsf{RCA}  &\text{Lemma \ref{lem:ca=rca}}\\
  & \subseteq  &\mathbb I\,\mathbb S\,\mathbb U_p\,  \mathsf{FCA} &\text{Lemma \ref{lem:ultrapower}, Lemma \ref{lem:red}} \\
  & \subseteq  &   \mathbb I\,\mathbb S\,\mathbb P\,  \mathsf{FCA} &\text{Lemma \ref{lem:ultrafca}} \\
  & \subseteq  &\mathbb I\, \mathsf{FCA}&\text{Lemma \ref{lem:subprod}}\\
  & \subseteq  & \mathsf{CA}&\text{Lemma \ref{lem:fun}}\\
\end{array}
\]
In other words, the proof is structured as follows:
\begin{itemize}
\item Each clone algebra is isomorphic to a point relativized functional clone algebra.
\item Each point relativized functional clone algebra embeds into an ultrapower of a functional clone algebra.
\item Each ultrapower of a functional clone algebra is isomorphic to a subdirect product of a family of functional clone algebras.
\item Functional clone algebras are closed under subalgebras and direct products.
\end{itemize}
Moreover, we prove that the variety of  clone algebras is generated by its 
finite dimensional members (or by the class of block algebras):
$$\mathsf{CA} = \mathbb H\,\mathbb S\,\mathbb P (\mathsf{FiCA})=  \mathbb H\,\mathbb S\,\mathbb P (\mathsf{BLK}).$$
Then, the variety of clone algebras is the algebraic counterpart of $\omega$-clones, 
the class of block algebras is the algebraic counterpart of clones, and the $\omega$-clones
are algebraically generated by clones through direct products, subalgebras and 
homomorphic images.

\subsection{Point-relativized functional clone algebras}

Let $A$ be a set.  
We define an equivalence relation on $A^\omega$ as follows 
$$r\equiv s\ \text{iff}\ |\{i: r_i\neq s_i\}|<\omega.$$
Let $A^\omega_r=\{s\in A^\omega : s\equiv r\}$ be the equivalence class of $r$ and $\mathcal O_{A,r}^{(\omega)}$ be the set of all functions from $A^\omega_r$ to $A$. 


\begin{definition} Let $\mathbf A$ be a $\tau$-algebra and $r\in A^\omega$.
The algebra  $\mathbf O_{\mathbf A,r}^{(\omega)}=(\mathcal O_{A,r}^{(\omega)},\sigma^r, q^r_{n},\e_{i}^r)$, where, for every $s\in A^\omega_r$ and $\varphi,\psi_1,\dots,\psi_n\in \mathcal O_{A,r}^{(\omega)}$, 
\begin{itemize}
\item $\e_i^r(s)=s_i$;
\item $q^r_n(\varphi,\psi_1,\dots,\psi_n)(s)=\varphi(s[\psi_1(s),\dots, \psi_n(s)])$;
\item $\sigma^r(\psi_1,\dots,\psi_n)(s)=\sigma^\mathbf A(\psi_1(s),\dots, \psi_n(s))$ for every $\sigma\in\tau$ of arity $n$,
\end{itemize}
is called the \emph{full point-relativized functional clone algebra with value domain} $\mathbf A$ \emph{and thread} $r$.
\end{definition}

Notice that, if $r\equiv s$, then $\mathbf O_{A,r}^{(\omega)}=\mathbf O_{A,s}^{(\omega)}$.

\begin{lemma}\label{lem:fun2}
  The algebra  $\mathbf O_{\mathbf A,r}^{(\omega)}$ is a clone $\tau$-algebra.
\end{lemma}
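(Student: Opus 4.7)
The plan is to mimic the proof of Lemma \ref{lem:fun} (for the full functional clone algebra), the only new ingredient being that one must check the operations $q^r_n$, $\e_i^r$ and $\sigma^r$ are actually well defined on the restricted domain $\mathcal O_{A,r}^{(\omega)}$. Once well-definedness is in place, all axioms (C1)--(C6) are pointwise identities that already hold in $\mathbf O_\mathbf A^{(\omega)}$, so they transfer verbatim to the restricted setting.

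\emph{Well-definedness.} The nullary operations $\e_i^r$ are explicit functions $A^\omega_r \to A$, so there is nothing to check. For $\sigma^r$, if $\psi_1,\dots,\psi_n\in \mathcal O_{A,r}^{(\omega)}$, then $\sigma^r(\psi_1,\dots,\psi_n)$ is, by definition, a function $A^\omega_r\to A$. The only nontrivial point is $q^r_n$: given $\varphi,\psi_1,\dots,\psi_n\in \mathcal O_{A,r}^{(\omega)}$ and $s\in A^\omega_r$, we must evaluate $\varphi$ at $s[\psi_1(s),\dots,\psi_n(s)]$, which requires that this tuple belong to $A^\omega_r$. But $s[\psi_1(s),\dots,\psi_n(s)]$ differs from $s$ in at most the first $n$ positions, hence $s[\psi_1(s),\dots,\psi_n(s)]\equiv s\equiv r$, so it lies in $A^\omega_r$ and $\varphi$ can indeed be applied. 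This is the key observation making the restriction to threads well behaved.

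\emph{Verification of the clone-algebra axioms.} These are all established pointwise, just as in Lemma \ref{lem:fun}. For instance (C1): for any $s\in A^\omega_r$ and $1\le i\le n$, $q^r_n(\e_i^r,\psi_1,\dots,\psi_n)(s)=\e_i^r(s[\psi_1(s),\dots,\psi_n(s)])=\psi_i(s)$. Axioms (C2), (C3) are equally immediate from the definition of the substitution $s[-,\dots,-]$. For the associativity-like axiom (C5), both sides evaluated at $s$ reduce to $\varphi(t[\chi_1(t),\dots,\chi_n(t)])$ where $t=s[\chi_1(s),\dots,\chi_n(s)]$, since the equivalence class is closed under the relevant updates. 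Axiom (C4) follows because replacing $\psi_j$ by $\e_j^r$ simply leaves the $j$-th coordinate of $s[\cdots]$ equal to $s_j$. Axiom (C6) is the pointwise compatibility of $\sigma^\mathbf A$ with componentwise substitution. Finally (C0) is trivial: $(\mathcal O_{A,r}^{(\omega)},\sigma^r)_{\sigma\in\tau}$ is by construction a $\tau$-algebra.

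\emph{Expected obstacle.} There is no real obstacle; the single conceptual point is the closure observation above, namely that the update operator $s\mapsto s[a_1,\dots,a_n]$ sends $A^\omega_r$ into itself because it perturbs only finitely many coordinates. Everything else is pointwise and identical to the unrestricted case of Lemma \ref{lem:fun}, so the proof may reasonably be relegated to a one-line remark referencing that lemma.
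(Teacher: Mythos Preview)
Your proposal is correct and matches the paper's approach: the paper leaves Lemma~\ref{lem:fun2} (like Lemma~\ref{lem:fun}) without proof, taking for granted exactly the well-definedness observation you spell out---that $s[\psi_1(s),\dots,\psi_n(s)]\equiv s$---together with the routine pointwise verification of (C1)--(C6). One small slip: in your sketch of (C5) the common value should read $\varphi(t[\psi_1(t),\dots,\psi_n(t)])$ (the inner functions), not $\chi_i(t)$, but this is only notational.
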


\begin{definition} 
 A subalgebra of  $\mathbf O_{\mathbf A,r}^{(\omega)}$ is called a   \emph{point-relativized functional clone algebra} with value domain $\mathbf A$ and thread $r$.
\end{definition}
 
 The class of point-relativized functional clone algebras is denoted by $\mathsf{RCA}$.  $\mathsf{RCA}_\tau$ is the class of $\mathsf{RCA}$s  whose value domain is a $\tau$-algebra.

We introduce the following maps.
 \begin{itemize}
\item If $B\in \mathcal B_A$ is a block, then \emph{the $r$-relativized top extension $B^\top_r:A^\omega_r \to A$ of $B$} is defined by $B^\top_r(s)=B^{(n)}(s_1,\dots,s_n)$, for every $s\in A^\omega_r$ and $n$ greater than the arity of $B$.
\item \emph{The $r$-relativized $n$-ary restriction $\psi_{n,r}:A^n\to A$ of $\psi\in \mathcal O_{ A,r}^{(\omega)}$} is defined as follows: 
$$\psi_{n,r}(a_1,\dots,a_n)= \psi(r[a_1,\dots,a_n])\ \text{for all $a_1,\dots,a_n\in A$}.$$
\end{itemize}

An analogous of Lemma \ref{lem:independent}, relating the algebraic and functional notions of independence, holds for $\mathsf{RCA}$s. 

The following lemma, which is true in $\mathcal O_{ A,r}^{(\omega)}$ and false in $\mathcal O_{ A}^{(\omega)}$ (see Lemma \ref{lem:semiconstant}), explains well the difference between $\mathsf{RCA}$s and $\mathsf{FCA}$s.

\begin{lemma} \label{lem:toptop}
Let $\varphi\in \mathcal O_{ A,r}^{(\omega)}$. Then the following conditions are equivalent:
\begin{itemize}
\item[(i)] $\varphi=B^\top_r$ for some block $B$;
\item[(ii)] $\varphi$ is finite dimensional in the clone algebra $\mathbf O_{ A,r}^{(\omega)}$.
\end{itemize} 
\end{lemma}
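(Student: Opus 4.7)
\smallskip

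The plan is to use the analog of Lemma~\ref{lem:independent} for point-relativized algebras, combined with the crucial fact that any $s\in A^\omega_r$ differs from $r$ in only finitely many coordinates, which is precisely what separates $\mathbf O_{A,r}^{(\omega)}$ from $\mathbf O_A^{(\omega)}$ (recall the semiconstant functions of Example~\ref{exa:semi}).

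For (i)$\Rightarrow$(ii), suppose $\varphi=B^\top_r$ with $B$ of arity $k$. Then for any $m>k$ and any $s,u\in A^\omega_r$ agreeing off coordinate $m$, we have $\varphi(s)=B^{(k)}(s_1,\dots,s_k)=B^{(k)}(u_1,\dots,u_k)=\varphi(u)$ because the first $k$ coordinates are unaffected. By the $\mathsf{RCA}$-analog of Lemma~\ref{lem:independent}, $\varphi$ is independent of $\e_m$ in $\mathbf O_{A,r}^{(\omega)}$ for every $m>k$, so $\gamma(\varphi)\le k$ and $\varphi\in\mathrm{Fi}\,\mathbf O_{A,r}^{(\omega)}$.

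For (ii)$\Rightarrow$(i), assume $\varphi$ has dimension $\le k$ in $\mathbf O_{A,r}^{(\omega)}$ and consider the $r$-relativized restriction $f:=\varphi_{k,r}:A^k\to A$ defined by $f(a_1,\dots,a_k)=\varphi(r[a_1,\dots,a_k])$. The key claim is that $\varphi(s)=f(s_1,\dots,s_k)$ for every $s\in A^\omega_r$. Indeed, since $s\equiv r$ the set $F=\{i>k:s_i\neq r_i\}$ is finite; enumerate it as $i_1,\dots,i_\ell$ and define a chain $s=s^{(0)},s^{(1)},\dots,s^{(\ell)}=r[s_1,\dots,s_k]$ where $s^{(j)}$ is obtained from $s^{(j-1)}$ by replacing the $i_j$-th coordinate with $r_{i_j}$. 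Each consecutive pair agrees off coordinate $i_j>k$ and lies in $A^\omega_r$, so independence of $\varphi$ from $\e_{i_j}$ (via the $\mathsf{RCA}$-analog of Lemma~\ref{lem:independent}) yields $\varphi(s^{(j-1)})=\varphi(s^{(j)})$. Hence $\varphi(s)=\varphi(r[s_1,\dots,s_k])=f(s_1,\dots,s_k)$.

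It remains to package $f$ as a block. Take $B:=\langle f\rangle\in\mathcal B_A$ and let $k_0\le k$ be its arity. By definition of $\approx_{\mathcal O_A}$ one has $B^{(k)}=f$, and for every $s\in A^\omega_r$,
\[
B^\top_r(s)=B^{(k)}(s_1,\dots,s_k)=f(s_1,\dots,s_k)=\varphi(s),
\]
so $\varphi=B^\top_r$ as required. The main obstacle is the claim $\varphi(s)=f(s_1,\dots,s_k)$: this is where the restriction to threads equivalent to $r$ is essential, since a finite-dimensional $\varphi$ on the full $A^\omega$ need not be determined by its finitely many ``active'' coordinates (as semiconstant functions witness), whereas on $A^\omega_r$ the finite-support difference $s-r$ lets us walk coordinate-by-coordinate from $s$ to $r[s_1,\dots,s_k]$ within $A^\omega_r$, invoking independence at each step.
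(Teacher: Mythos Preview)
Your proof is correct and follows essentially the same approach as the paper's. Both directions match: for (i)$\Rightarrow$(ii) you and the paper verify independence of $\e_m$ for $m>k$ by comparing values on sequences agreeing off coordinate $m$; for (ii)$\Rightarrow$(i) you and the paper both walk from $s$ to $r[s_1,\dots,s_k]$ one coordinate at a time beyond the dimension bound, using independence at each step, and then identify $\varphi$ with the $r$-relativized top extension of the block of $\varphi_{k,r}$.
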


\begin{proof} (i) $\Rightarrow$ (ii) If $B$ has arity $k$ then, for every $s\in A^\omega_r$ and $n\geq k$, we have
$$\varphi(s)= B^\top_r(s)=B^{(n)}(s_1,\dots,s_n)=B^\top_r(r[s_1,\dots,s_n]) =\varphi(r[s_1,\dots,s_n]).$$ 
We now prove that 
$\varphi$ is independent of $\e_n$ for every $n> k$. Let
$s,u\in A^\omega_r$ such that $s_i=u_i$ for every $i\neq n$.
Then  $\varphi(s)=\varphi(r[s_1,\dots,s_k]) = \varphi(r[u_1,\dots,u_k]) = \varphi(u)$. Then by Lemma \ref{lem:independent} $\varphi$ is independent of $\e_n$ for every $n>k$. In other words, $\varphi$ is finite dimensional.
 
 (ii) $\Rightarrow$ (i) Let $n$ be the dimension of $\varphi$ and $s\in A^\omega_r$. We have to show that  $\varphi(s)=(\varphi_{n,r})^\top_r(s)$. 
 Let $k$ be  minimal such that $s=r[s_1,\dots,s_k]$. If $k \leq n$ then $s_i=r_i$ for all $k+1\leq i\leq n$ and $s=r[s_1,\dots,s_k]= r[s_1,\dots,s_n]$. Hence, 
 $(\varphi_{n,r})^\top_r(s)= \varphi_{n,r}(s_1,\dots,s_n) = \varphi(r[s_1,\dots,s_n]) =\varphi(s)$.
 If $k> n$, then $(\varphi_{n,r})^\top_r(s)= \varphi_{n,r}(s_1,\dots,s_n) = \varphi(r[s_1,\dots,s_n]) =\varphi(r[s_1,\dots,s_n,s_{n+1}])=\dots=\varphi(r[s_1,\dots,s_n,s_{n+1},\dots,s_k])= \varphi(s)$, because $\varphi$ is independent of $\e_{n+1},\dots,\e_k$.
\end{proof}


\subsection{The main theorem}
We recall that  $\mathsf{CA}$ is the class of all clone algebras, $\mathsf{RCA}$ is the class of all point-relativized functional clone algebras, $ \mathsf{FCA}$ is the class of all functional clone algebras,  $\mathsf{FiCA}$ is the class of all finite dimensional clone algebras, and $\mathsf{BLK}$ is the class of all block algebras.

\begin{theorem}\label{thm:main}
 $\mathsf{CA} = \mathbb I\, \mathsf{RCA}=\mathbb I\, \mathsf{FCA}=\mathbb{HSP}(\mathsf{FiCA})=\mathbb{HSP}(\mathsf{BLK})$.
\end{theorem}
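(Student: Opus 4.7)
The plan is to follow the five-step chain of inclusions laid out in the diagram just above the statement, and then to observe that two of those steps already confine $\mathsf{CA}$ inside the $\mathbb{HSP}$-closure of $\mathsf{BLK}$. The conclusion $\mathsf{CA} = \mathbb I\,\mathsf{FCA}$ comes from closing the chain, while $\mathsf{CA} = \mathbb{HSP}(\mathsf{BLK}) = \mathbb{HSP}(\mathsf{FiCA})$ follows from the added observation $\mathsf{FiCA} = \mathbb I\,\mathsf{BLK}$ (Theorem \ref{thm:fica}) together with the automatic inclusion $\mathbb{HSP}(\mathsf{BLK}) \subseteq \mathsf{CA}$, valid because $\mathsf{CA}$ is equationally axiomatised and every block algebra is in $\mathsf{CA}$.

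For $\mathsf{CA} \subseteq \mathbb I\,\mathsf{RCA}$, given $\mathbf C \in \mathsf{CA}_\tau$ I would fix the distinguished thread $r = (\e^\mathbf C_1, \e^\mathbf C_2, \dots) \in C^\omega$ and send each $a \in C$ to the function $\varphi_a \in \mathcal O^{(\omega)}_{C, r}$ defined by $\varphi_a(s) = q^\mathbf C_n(a, s_1, \dots, s_n)$, where $n$ is any index large enough that $s_i = \e^\mathbf C_i$ for all $i > n$; axiom (C4) makes this independent of $n$. Injectivity is immediate because $\varphi_a(r) = q^\mathbf C_0(a) = a$ by (C3). That $a \mapsto \varphi_a$ preserves the $\e_i$ uses (C1) and (C2), preservation of $q_n$ uses Lemma \ref{allungo} combined with (C5), and preservation of each $\sigma \in \tau$ uses (C6). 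The image is thus a subalgebra of $\mathbf O^{(\omega)}_{\mathbf C_\tau, r}$ isomorphic to $\mathbf C$.

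For $\mathsf{RCA} \subseteq \mathbb{SU}_p\,\mathsf{BLK}$, which strengthens the stated $\mathsf{FCA}$ step so as to cover the $\mathsf{BLK}$ chain at once, let $\mathbf F \leq \mathbf O^{(\omega)}_{\mathbf A, r}$ and, for each $n \geq 0$ and each $\varphi \in F$, define $\varphi^{[n]} \in \mathcal O^{(\omega)}_A$ by $\varphi^{[n]}(s) = \varphi(r[s_1, \dots, s_n])$. Each $\varphi^{[n]}$ is the top extension of the finitary operation $(a_1, \dots, a_n) \mapsto \varphi(r[a_1, \dots, a_n])$, so $\varphi^{[n]} \in \mathcal B_A^\top$. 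Fix a non-principal ultrafilter $\mathcal U$ on $\omega$ and map $\varphi$ to $[(\varphi^{[n]})_n] \in (\mathcal B_A^\top)^\omega / \mathcal U$. For each operation $\e_i$, $q_k$, or $\sigma$ the corresponding identity holds in $\mathbf O^{(\omega)}_\mathbf A$ between the $[n]$-restrictions once $n$ exceeds $i$ or $k$, so Łoś's theorem gives preservation in the ultrapower; separation of points follows because any $s_0 \in A^\omega_r$ differs from $r$ in only finitely many coordinates, so $\varphi^{[n]}$ eventually distinguishes $\varphi$ from $\psi$ whenever $\varphi(s_0) \neq \psi(s_0)$. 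Combining, $\mathsf{CA} \subseteq \mathbb{ISU}_p\,\mathsf{BLK}$, and since ultraproducts lie in $\mathbb{HP}$ we conclude $\mathsf{CA} \subseteq \mathbb{HSP}(\mathsf{BLK})$.

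To finish $\mathsf{CA} = \mathbb I\,\mathsf{FCA}$ it remains to show that $\mathsf{FCA}$ is closed under $\mathbb{SP}$: closure under subalgebras is immediate, and for products I would embed $\prod_j \mathbf F_j$ with $\mathbf F_j \leq \mathbf O^{(\omega)}_{\mathbf A_j}$ into $\mathbf O^{(\omega)}_{\prod_j \mathbf A_j}$ via $(\varphi_j)_j \mapsto \Phi$, where $\Phi(\bar s)_j = \varphi_j(\pi_j \circ \bar s)$, and verify coordinatewise that each $\e_i$, $q_k$, and $\sigma$ is preserved. I expect the main obstacle to be the careful Łoś-style verification in the third paragraph: one must track the exact thresholds on $n$ above which the $[n]$-restrictions satisfy each algebraic identity, and handle the three operation families ($\e_i$, every $q_k$, and every $\sigma$) uniformly. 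The remaining checks are essentially bureaucratic coordinate bookkeeping, and the conjunction of the above steps yields the full equality asserted.
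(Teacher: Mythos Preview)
Your treatment of $\mathsf{CA} = \mathbb I\,\mathsf{RCA}$, of the $\mathbb{HSP}(\mathsf{BLK})=\mathbb{HSP}(\mathsf{FiCA})$ equalities, and of closure of $\mathsf{FCA}$ under $\mathbb S$ and $\mathbb P$ is correct and matches the paper. There is, however, a real gap in your derivation of $\mathsf{CA} = \mathbb I\,\mathsf{FCA}$.

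You establish $\mathsf{CA}\subseteq \mathbb I\mathbb S\mathbb U_p\,\mathsf{BLK}\subseteq \mathbb I\mathbb S\mathbb U_p\,\mathsf{FCA}$ and then assert that ``it remains to show that $\mathsf{FCA}$ is closed under $\mathbb{SP}$''. But an ultrapower is a \emph{quotient} of a direct power, not a subalgebra of one: $\mathbb U_p(K)\subseteq \mathbb H\mathbb P(K)$, not $\mathbb S\mathbb P(K)$. Closure of $\mathsf{FCA}$ under $\mathbb S$ and $\mathbb P$ therefore does not absorb the ultrapower step, and your chain stalls at $\mathbb I\mathbb S\mathbb U_p\,\mathsf{FCA}$. (This is exactly why the chain gets you $\mathbb{HSP}(\mathsf{BLK})$ but not, by itself, $\mathbb I\,\mathsf{FCA}$.)

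The paper closes this gap with a separate lemma (Lemma~\ref{lem:ultrafca}): every ultrapower of an $\mathsf{FCA}$ is isomorphic to an $\mathsf{FCA}$. The proof is not bookkeeping. Given an $\mathsf{FCA}$ $\mathbf B$ with value domain $\mathbf A$ and an ultrafilter $U$ on $K$, one takes, for each choice function $ch\colon A^K/U\to A^K$, a map $h_{ch}\colon \mathbf B^K/U\to \mathbf O^{(\omega)}_{\mathbf A^K/U}$ and checks it is a homomorphism; one then shows that as $ch$ varies the family $(h_{ch})$ separates points, so $\mathbf B^K/U$ embeds as a subdirect product of $\mathsf{FCA}$s. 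Only after this does Lemma~\ref{lem:subprod} (your $\mathbb{SP}$-closure) finish the argument. You need to supply this step or an equivalent.
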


 
 The proof of the main theorem is divided into lemmas.
 
 \begin{lemma}\label{lem:ca=rca}
 $\mathsf{CA}_\tau = \mathbb I\, \mathsf{RCA}_\tau$.
\end{lemma}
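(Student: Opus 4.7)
The plan is to prove both inclusions. For $\mathbb I\,\mathsf{RCA}_\tau \subseteq \mathsf{CA}_\tau$, we simply invoke Lemma \ref{lem:fun2}, which asserts that every full point-relativized functional clone algebra is a clone $\tau$-algebra; hence so are its subalgebras and any isomorphic copies. The substantive work is the opposite inclusion: given an arbitrary $\mathbf C = (\mathbf C_\tau, q_n^\mathbf C, \e_i^\mathbf C) \in \mathsf{CA}_\tau$, I want to exhibit it as (isomorphic to) a point-relativized functional clone algebra over the value domain $\mathbf C_\tau$.

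The canonical thread is $r = (\e_1^\mathbf C, \e_2^\mathbf C, \ldots) \in C^\omega$. Every $s \in C_r^\omega$ satisfies $s_i = \e_i^\mathbf C$ for all but finitely many $i$, so it makes sense to define, for each $a \in C$, a function $\varphi_a : C_r^\omega \to C$ by $\varphi_a(s) = q_n^\mathbf C(a, s_1, \ldots, s_n)$, where $n$ is any index with $s_i = \e_i^\mathbf C$ for $i > n$. Axiom (C4) together with (C1) applied to the $\e_i$'s that appear beyond position $n$ shows that this definition is independent of the chosen $n$, so $\varphi_a$ is well-defined. I then set $h : C \to \mathcal O_{C,r}^{(\omega)}$ by $h(a) = \varphi_a$, and claim that $h$ is an injective homomorphism from $\mathbf C$ into $\mathbf O_{\mathbf C_\tau, r}^{(\omega)}$.

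Injectivity is easy: evaluating at $s = r$ gives $\varphi_a(r) = q_0^\mathbf C(a) = a$ by (C3), so $\varphi_a = \varphi_b$ forces $a = b$. Preservation of the $\e_i$ is immediate from (C1), since for any $s \in C_r^\omega$ one has $\varphi_{\e_i^\mathbf C}(s) = q_n^\mathbf C(\e_i, s_1, \ldots, s_n) = s_i = \e_i^r(s)$ for $n \geq i$. Preservation of the $\sigma \in \tau$ uses (C6): $\varphi_{\sigma^\mathbf C(a_1,\ldots,a_k)}(s) = q_n(\sigma^\mathbf C(\mathbf a), s_1, \ldots, s_n) = \sigma^\mathbf C(q_n(a_1,\mathbf s), \ldots, q_n(a_k, \mathbf s)) = \sigma^r(\varphi_{a_1}, \ldots, \varphi_{a_k})(s)$ for $n$ chosen large enough.

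The main obstacle is preservation of the $q_n$ operators, which is the computation that really forces the axioms to do their work. Fix $s \in C_r^\omega$ and pick $k \geq n$ with $s_i = \e_i^\mathbf C$ for $i > k$. Then the left-hand side $\varphi_{q_n^\mathbf C(a,\mathbf b)}(s)$ unfolds to $q_k(q_n(a,\mathbf b), s_1, \ldots, s_k)$, whereas the right-hand side $q_n^r(\varphi_a, \varphi_{b_1}, \ldots, \varphi_{b_n})(s)$ unfolds to $q_k(a, q_k(b_1,\mathbf s), \ldots, q_k(b_n,\mathbf s), s_{n+1}, \ldots, s_k)$. The equality of these two expressions is exactly Lemma \ref{allungo}(i), so the identity (C5) (together with (C4)) does the job. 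Therefore $h$ is a clone-algebra embedding, its image is a subalgebra of $\mathbf O_{\mathbf C_\tau, r}^{(\omega)}$, i.e.\ a point-relativized functional clone algebra with value domain $\mathbf C_\tau$, and $\mathbf C \in \mathbb I\,\mathsf{RCA}_\tau$.
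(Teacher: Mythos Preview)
Your proof is correct and follows essentially the same route as the paper: embed $\mathbf C$ into $\mathbf O_{\mathbf C_\tau,\epsilon}^{(\omega)}$ with thread $\epsilon=(\e_1^\mathbf C,\e_2^\mathbf C,\ldots)$ via $a\mapsto \varphi_a$, check injectivity by evaluation at $\epsilon$, and verify the $q_n$-preservation using Lemma~\ref{allungo}. The only cosmetic difference is that you always pick $k\geq n$ (so only Lemma~\ref{allungo}(i) or (C5) is needed), whereas the paper fixes $k$ minimal and splits into the cases $n\geq k$ (Lemma~\ref{allungo}(ii)) and $n<k$.
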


\begin{proof} Let $\mathbf C=(\mathbf C_\tau,q^\mathbf C_n,\e^\mathbf C_i)$ 
be a clone $\tau$-algebra.
Let $\mathbf O_{ \mathbf C_\tau,\epsilon}^{(\omega)}$    be the full $\mathsf{RCA}$ with value domain $\mathbf C_\tau$ and thread $\epsilon$, where  $\epsilon_i=\e^\mathbf C_i$ for every $i$. We define a map $F:C\to  \mathcal O_{C,\epsilon}^{(\omega)}$ as follows:
$$F(c)(s)=q_k^\mathbf C(c, s_1,\dots,s_k),\ \text{for every $s\in C^\omega_\epsilon$ such that $s= \epsilon[s_1,\dots,s_k]$ and $s_k\neq \e_k^\mathbf C$.}$$
Notice that by (C4) $F(c)(s)=q_n^\mathbf C(c, s_1,\dots,s_k,\e_{k+1}^\mathbf C,\dots,\e_n^\mathbf C)$ for every $n\geq k$.
$F$ is injective because $F(c)(\epsilon)=q_0^\mathbf C(c)=c$ for every $c\in C$. 
We prove that $F$ embeds $\mathbf C$ into $\mathbf O_{\mathbf C_\tau,\epsilon}^{(\omega)}$.  Let $\mathbf a=s_1,\dots,s_k$. 

If $n\geq k$ then we have:
$$
\begin{array}{rlll}
  F(q^\mathbf C_n(b,\mathbf c))(s) 
 & =  &  q_k^\mathbf C(q^\mathbf C_n(b,\mathbf c), \mathbf a) &\text{Def. $F$}\\
  & =  & q_n^\mathbf C(b,q_k^\mathbf C(c_1,\mathbf a),\dots,q_k^\mathbf C(c_n,\mathbf a))&\text{Lemma \ref{allungo}(ii)} \\
&=& q_n^\mathbf C(b,F(c_1)(s),\dots,F(c_n)(s))&\text{Def. $F$}\\
  &  = & F(b)(\epsilon[F(c_1)(s),\dots,F(c_n)(s)])  &\text{Def. $F$}\\
    &  = & F(b)(s[F(c_1)(s),\dots,F(c_n)(s)])  &\text{by $n\geq k$}\\
  &  = & q^\epsilon_n(F(b),F(c_1),\dots,F(c_n))(s).&
 \end{array}
$$
A similar proof works for $n< k$. We  conclude the proof as follows:
$
F(\sigma^\mathbf C(\mathbf b))(s) 
 =   q^\mathbf C_k(\sigma^\mathbf C(\mathbf b),\mathbf a)
  =  \sigma^\mathbf C(q^\mathbf C_k(b_1,\mathbf a),\dots,q^\mathbf C_k(b_n,\mathbf a))  
 =  \sigma^\mathbf C(F(b_1)(s),\dots,F(b_n)(s))  
  =   \sigma^\epsilon(F(b_1),\dots,F(b_n))(s). 
$
\end{proof}

By the $n$-reduct of a clone $\tau$-algebra $\mathbf C=(\mathbf C_\tau,q^\mathbf C_n,\e^\mathbf C_i)_{n\geq 0,i\geq 1}$ we mean the algebra 
$$\mathrm{Rd}_n \mathbf C:=(\mathbf C_\tau,q^\mathbf C_0,\dots,q^\mathbf C_n,\e^\mathbf C_1,\dots,\e^\mathbf C_n).$$

\begin{lemma}\label{lem:red} Let $\mathbf A$ be a $\tau$-algebra and
 $\mathbf D$ be a $\mathsf{RCA}_\tau$ with value domain $\mathbf A$ and thread $r$. For every $n> 0$ 
 the map $F_{r,n}: D \to \mathcal B_{A}^\top$, defined by
 $$F_{r,n}(\varphi)(s)=\varphi(r[s_1,\dots,s_n])\qquad\text{for every $\varphi\in D$ and $s\in A^\omega$},$$
 is a homomorphism of $\mathrm{Rd}_n \mathbf D$ into the $n$-reduct of the full block algebra
 $\mathbf B_{\mathbf A}^\top$.
\end{lemma}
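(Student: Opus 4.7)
The plan is to verify four things about $F_{r,n}$: (i) that its image lies in $\mathcal{B}_A^\top$; (ii) that it preserves the constants $\e_1,\dots,\e_n$; (iii) that it preserves the operators $q_0,\dots,q_n$; and (iv) that it preserves each basic $\tau$-operator $\sigma$. None of these is deep — the whole content is careful bookkeeping of sequence substitutions, so I would simply chase definitions.

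For (i), observe that $F_{r,n}(\varphi)(s)=\varphi(r[s_1,\dots,s_n])$ depends on $s$ only through its first $n$ coordinates. Defining $f_\varphi : A^n\to A$ by $f_\varphi(a_1,\dots,a_n)=\varphi(r[a_1,\dots,a_n])$ (which is legitimate since $r[a_1,\dots,a_n]\equiv r$ lies in $A^\omega_r$), one gets $F_{r,n}(\varphi)= f_\varphi^\top$, and this belongs to $\mathcal{B}_A^\top$ by Lemma \ref{lem:similar}. For (ii) we simply compute $F_{r,n}(\e_i^r)(s)= \e_i^r(r[s_1,\dots,s_n]) = (r[s_1,\dots,s_n])_i = s_i = \e_i^\omega(s)$, which is valid because $i\leq n$. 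For (iv), preservation is automatic since both $\sigma^r$ and $\sigma^\omega$ are defined pointwise through $\sigma^\mathbf{A}$: each side reduces to $\sigma^\mathbf{A}(\psi_1(r[s_1,\dots,s_n]),\dots,\psi_m(r[s_1,\dots,s_n]))$.

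Step (iii) is the main piece, although still routine. The crucial elementary fact is the sequence identity
$$r[s_1,\dots,s_n]\,[b_1,\dots,b_k] \;=\; r[b_1,\dots,b_k,\,s_{k+1},\dots,s_n],$$
which holds precisely because $k\leq n$, together with $s[b_1,\dots,b_k]_i = s_i$ for $i>k$. Using these, expanding the left-hand side $F_{r,n}(q_k^r(\varphi,\psi_1,\dots,\psi_k))(s)$ via the definitions of $F_{r,n}$ and $q_k^r$ yields $\varphi(r[F_{r,n}(\psi_1)(s),\dots,F_{r,n}(\psi_k)(s),\,s_{k+1},\dots,s_n])$, and expanding the right-hand side $q_k^\omega(F_{r,n}(\varphi),F_{r,n}(\psi_1),\dots,F_{r,n}(\psi_k))(s)$ via the definitions of $q_k^\omega$ and $F_{r,n}$ gives the very same expression. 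The hypothesis $k\leq n$ is what guarantees this coincidence: it keeps the substitution inside the first $n$ coordinates, preserving the shape $r[\,\cdot,\dots,\cdot\,]$, and this is precisely why the statement is for the $n$-reduct rather than the whole clone algebra.
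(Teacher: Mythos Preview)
Your proof is correct and follows essentially the same approach as the paper: the paper's argument is precisely the chain of equalities you describe in step (iii), using $u=r[s_1,\dots,s_n]$ and the identity $u[b_1,\dots,b_k]=r[b_1,\dots,b_k,s_{k+1},\dots,s_n]$ (for $k\le n$), and then the analogous pointwise computation for $\sigma$. If anything you are slightly more thorough, since you explicitly verify that the image lands in $\mathcal B_A^\top$ and that the constants $\e_1,\dots,\e_n$ are preserved, points the paper leaves implicit.
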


\begin{proof}
 Let $k\leq n$,  $s\in A^\omega$ and $u=r[s_1,\dots,s_n]$. Let $F= F_{r,n}$ in this proof. 
 \[
\begin{array}{rll}
F(q^r_k(\varphi,\psi_1,\dots,\psi_k))(s)   & =  & q^r_k(\varphi,\psi_1,\dots,\psi_k)(u)\\
  &= & \varphi(u[\psi_1(u),\dots,\psi_k(u)])  \\
  &  = &   \varphi(r[\psi_1(u),\dots,\psi_k(u),s_{k+1},\dots,s_n])\\
    &=&F(\varphi)(s[\psi_1(u),\dots,\psi_k(u),s_{k+1},\dots,s_n])\\
   &=&F(\varphi)(s[F(\psi_1)(s),\dots,F(\psi_k)(s),s_{k+1},\dots,s_n]) \\
   &=&F(\varphi)(s[F(\psi_1)(s),\dots,F(\psi_k)(s)]) \\
 &=& q_k^\omega (F(\varphi),F(\psi_1),\dots,F(\psi_k))(s)\\
 &&\\
F(\sigma^r(\psi_1,\dots,\psi_k))(s)   & =  & \sigma^r(\psi_1,\dots,\psi_k)(u)\\
  &= & \sigma^\mathbf A(\psi_1(u),\dots,\psi_k(u))  \\
  &= & \sigma^\mathbf A(F(\psi_1)(s),\dots,F(\psi_k)(s))  \\  
  &  = &  \sigma^\omega(F(\psi_1),\dots,F(\psi_k))(s). \\
  \end{array}
\]
\end{proof}

\begin{lemma}\label{lem:cafica} $\mathsf{CA}_\tau =\mathbb{HSP}(\mathsf{FiCA}_\tau)$.
\end{lemma}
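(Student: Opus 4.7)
My plan is to obtain $\mathsf{CA}_\tau\subseteq \mathbb{HSP}(\mathsf{FiCA}_\tau)$ (the reverse inclusion is immediate since $\mathsf{CA}_\tau$ is a variety and $\mathsf{FiCA}_\tau\subseteq \mathsf{CA}_\tau$). To achieve this I will exhibit, for every $\mathbf C\in\mathsf{CA}_\tau$, an embedding of $\mathbf C$ into an ultraproduct of block algebras. Since $\mathsf{FiCA}_\tau=\mathbb{I}\,\mathsf{BLK}_\tau$ by Theorem~\ref{thm:fica} and since $\mathbb{HSP}(\mathsf{FiCA}_\tau)$ is a variety and therefore closed under ultraproducts (by \L{}o\'s's theorem), this suffices.

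Concretely, I would start with $\mathbf C\in\mathsf{CA}_\tau$. By Lemma~\ref{lem:ca=rca}, $\mathbf C$ is isomorphic to a subalgebra $\mathbf D$ of $\mathbf O^{(\omega)}_{\mathbf C_\tau,\epsilon}$, where $\epsilon=(\e_1^\mathbf C,\e_2^\mathbf C,\dots)$. For every $n\geq 1$, Lemma~\ref{lem:red} furnishes a homomorphism $F_{\epsilon,n}:\mathrm{Rd}_n\mathbf D\to\mathrm{Rd}_n\mathbf{B}^\top_{\mathbf C_\tau}$ into the $n$-reduct of the full block algebra. Fix any non-principal ultrafilter $\mathcal U$ on $\omega$ and consider the ultraproduct $\mathbf P=\bigl(\prod_{n\geq 1}\mathbf B^\top_{\mathbf C_\tau}\bigr)/\mathcal U$. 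Define
\[
F:D\longrightarrow P,\qquad F(\varphi)=\bigl[(F_{\epsilon,n}(\varphi))_{n\geq 1}\bigr]_{\mathcal U}.
\]

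The key verification is that $F$ is a homomorphism of full clone $\tau$-algebras, not merely of reducts. For each symbol $q_k$ (or $\sigma\in\tau$ of arity $k$, or the constant $\e_i$), the set of indices $n$ for which $F_{\epsilon,n}$ preserves that symbol contains $\{n\geq 1:n\geq k\}$ (respectively $\{n\geq 1:n\geq i\}$), which is cofinite and thus belongs to $\mathcal U$. Hence the pointwise preservation of each operation holds $\mathcal U$-almost everywhere, and $F$ is a genuine $\mathsf{CA}_\tau$-homomorphism into $\mathbf P$. For injectivity, if $\varphi\neq\psi$ in $D$ pick $u\in C^\omega_\epsilon$ with $\varphi(u)\neq\psi(u)$; since $u\equiv\epsilon$, there is $m$ with $u=\epsilon[u_1,\dots,u_m]$, and for every $n\geq m$ one checks $F_{\epsilon,n}(\varphi)(u)=\varphi(u)\neq\psi(u)=F_{\epsilon,n}(\psi)(u)$. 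The cofinite set $\{n\geq m\}$ lies in $\mathcal U$, so $F(\varphi)\neq F(\psi)$.

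This shows $\mathbf C\in\mathbb{IS}\mathbb{U}_p(\mathsf{BLK}_\tau)\subseteq \mathbb{IS}\mathbb{U}_p(\mathsf{FiCA}_\tau)\subseteq \mathbb{HSP}(\mathsf{FiCA}_\tau)$, where the last inclusion uses that the variety $\mathbb{HSP}(\mathsf{FiCA}_\tau)$ is closed under ultraproducts. I expect the delicate point to be the argument that the collection of $F_{\epsilon,n}$, each of which only respects the $n$-reduct operations, assembles through the ultraproduct into a homomorphism for all operations $q_k$ simultaneously; this is precisely where the non-principality of $\mathcal U$ and the uniform cofinite-tail preservation property are essential. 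Everything else (injectivity, the reduction to the RCA case, and the appeal to Theorem~\ref{thm:fica}) is bookkeeping.
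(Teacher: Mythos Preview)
Your proof is correct, but it takes a different route from the paper's own argument. The paper proves the lemma by a direct equational verification: given an identity $t=u$ holding in all finite-dimensional clone $\tau$-algebras, it checks the identity pointwise in each full RCA $\mathbf O_{\mathbf A,r}^{(\omega)}$. For a fixed evaluation point $s\in A^\omega_r$ and $n$ large enough that $t,u$ involve only $q_0,\dots,q_n,\e_1,\dots,\e_n$, the map $F_{s,n}$ of Lemma~\ref{lem:red} (using $s$ itself as the thread, which is legitimate since $A^\omega_r=A^\omega_s$) transfers the evaluation at $s$ into the full block algebra, where the identity holds by hypothesis, and back. No ultraproducts appear.

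Your argument instead embeds an arbitrary $\mathbf C\in\mathsf{CA}_\tau$ into an ultrapower of the full block algebra, assembling the fixed-thread maps $F_{\epsilon,n}$ through a non-principal ultrafilter. This is essentially the content of the paper's Lemma~\ref{lem:ultrapower}, which the paper proves immediately afterwards and uses for a different step of Theorem~\ref{thm:main} (the inclusion $\mathsf{RCA}\subseteq \mathbb{IS}\mathbb{U}_p\,\mathsf{FCA}$). Your route actually yields the sharper statement $\mathsf{CA}_\tau\subseteq \mathbb{IS}\mathbb{U}_p(\mathsf{BLK}_\tau)$, while the paper's pointwise argument is more elementary and avoids the ultrafilter machinery altogether by exploiting that a single identity only mentions finitely many operation symbols.
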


\begin{proof}
 Let $t(v_1,\dots,v_k)=u(v_1,\dots,v_k)$ be an identity (in the language of clone $\tau$-algebras) satisfied by every finite dimensional clone $\tau$-algebra.
 We now show that the identity $t=u$ holds in every clone $\tau$-algebra. Since $\mathsf{CA}_\tau =\mathbb I\,\mathsf{RCA}_\tau$
 it is sufficient to prove that the identity $t=u$ holds in the full $\mathsf{RCA}_\tau$ $\mathbf O_{ \mathbf A,r}^{(\omega)}$ with value domain  $\mathbf A$ and thread $r$. If $t^r$ and $u^r$ are the interpretation of $t$ and $u$ in $\mathbf O_{ \mathbf A,r}^{(\omega)}$, we have to show that
  $$t^r(\varphi_1,\dots, \varphi_k)(s)=u^r(\varphi_1,\dots, \varphi_k)(s)$$
for all $\varphi_1,\dots,\varphi_k\in \mathcal O_{ A,r}^{(\omega)}$ and all $s \in A^\omega_r$.
Let $n>k$ such that $q_m$ and $\e_m$ do not occur in $t,u$ for every $m> n$. Since $\mathbf O_{ \mathbf A,r}^{(\omega)}$ satisfies the equation $t=u$ iff the $n$-reduct $\mathrm{Rd}_n\, \mathbf O_{ \mathbf A,r}^{(\omega)}$ satisfies it, then
we can use the function $F_{s,n}$  defined in Lemma \ref{lem:red}. Let $t^\omega$ and $u^\omega$ be the interpretation of $t$ and $u$ in the full block algebra $\mathbf B_{\mathbf A}^\top$. Recall that  $\mathbf B_{\mathbf A}^\top$ is a finite-dimensional subalgebra of the full $\mathsf{FCA}$ $\mathbf O_{ \mathbf A}^{(\omega)}$.
\[
\begin{array}{lll}
t^r(\varphi_1,\dots, \varphi_k)(s)  & =  & t^r(\varphi_1,\dots, \varphi_k)(s[s_1,\dots,s_n])  \\
  & =  &F_{s,n}(t^r(\varphi_1,\dots, \varphi_k))(s)   \\
  &  = &  t^\omega(F_{s,n}(\varphi_1),\dots, F_{s,n}(\varphi_k))(s) \\ 
  &  = &  u^\omega(F_{s,n}(\varphi_1),\dots, F_{s,n}(\varphi_k))(s) \\ 
    & =  &F_{s,n}(u^r(\varphi_1,\dots, \varphi_k))(s)   \\
 & =  & u^r(\varphi_1,\dots, \varphi_k)(s)
\end{array}
\]
because the image of $F_{s,n}$ is a finite dimensional $\mathsf{FCA}$.
\end{proof}

Then, the following corollary is a consequence of  Theorem \ref{thm:fica}.

\begin{corollary}\label{cor:cablk}
 $\mathsf{CA}_\tau =\mathbb{HSP}(\mathsf{BLK}_\tau)$.
\end{corollary}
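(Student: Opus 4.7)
The plan is to derive this corollary as a direct consequence of the two preceding results, namely Theorem \ref{thm:fica} (which states $\mathsf{FiCA}_\tau = \mathbb I\,\mathsf{BLK}_\tau$) and Lemma \ref{lem:cafica} (which states $\mathsf{CA}_\tau = \mathbb{HSP}(\mathsf{FiCA}_\tau)$). No new combinatorial or algebraic work is required; the argument is a short chain of class-operator manipulations.

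For the nontrivial inclusion $\mathsf{CA}_\tau \subseteq \mathbb{HSP}(\mathsf{BLK}_\tau)$, I would start from Lemma \ref{lem:cafica} and substitute the characterisation given by Theorem \ref{thm:fica}:
\[
\mathsf{CA}_\tau \;=\; \mathbb{HSP}(\mathsf{FiCA}_\tau) \;=\; \mathbb{HSP}(\mathbb I\,\mathsf{BLK}_\tau) \;=\; \mathbb{HSP}(\mathsf{BLK}_\tau),
\]
where the last equality uses the standard fact that $\mathbb I \subseteq \mathbb S$ (an isomorphic image of $\mathbf B$ is a subalgebra of itself up to relabelling) and so $\mathbb{HSP}\,\mathbb I = \mathbb{HSP}$.

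For the reverse inclusion $\mathbb{HSP}(\mathsf{BLK}_\tau) \subseteq \mathsf{CA}_\tau$, I would observe that the class $\mathsf{CA}_\tau$ is a variety in the sense of universal algebra, since it is axiomatised by the equations (C1)--(C6) of Definition \ref{def:clonealg}. Moreover every block algebra is, by definition, a subalgebra of the full block algebra $\mathcal B_A^\top$, which in turn is a subalgebra of the full functional clone algebra $\mathbf O_\mathbf A^{(\omega)}$; the latter is a clone $\tau$-algebra by Lemma \ref{lem:fun}. Hence $\mathsf{BLK}_\tau \subseteq \mathsf{CA}_\tau$, and because varieties are closed under $\mathbb H$, $\mathbb S$, and $\mathbb P$, we get $\mathbb{HSP}(\mathsf{BLK}_\tau) \subseteq \mathsf{CA}_\tau$.

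There is no real obstacle here; the only point worth being careful about is to make explicit that block algebras of type $\tau$ really are clone $\tau$-algebras (so that closure of $\mathsf{CA}_\tau$ under $\mathbb{HSP}$ applies), which is immediate from their definition as subalgebras of a $\mathsf{FCA}_\tau$.
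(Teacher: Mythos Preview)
Your proposal is correct and follows exactly the route the paper takes: the corollary is obtained immediately from Lemma \ref{lem:cafica} ($\mathsf{CA}_\tau = \mathbb{HSP}(\mathsf{FiCA}_\tau)$) combined with Theorem \ref{thm:fica} ($\mathsf{FiCA}_\tau = \mathbb I\,\mathsf{BLK}_\tau$), using $\mathbb{HSP}\,\mathbb I = \mathbb{HSP}$. Your separate verification of the inclusion $\mathbb{HSP}(\mathsf{BLK}_\tau) \subseteq \mathsf{CA}_\tau$ is redundant (it is already contained in the equality of Lemma \ref{lem:cafica}), but harmless.
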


The remaining part of the proof of Theorem \ref{thm:main} is technical and it is postponed in Appendix.

\section{A characterisation of the lattices of equational theories}\label{sec:eqth}
In this section we propose a possible answer to the lattice of equational theories problem described in Section \ref{sec:pre:leq}. We prove that a lattice is isomorphic to a lattice of equational theories if and only if it is isomorphic to the lattice of all congruences of a finite dimensional clone algebra. Unlike Newrly's and Nurakunov's approaches \cite{newrly,nur}, we have an equational axiomatisation of the variety generated by  the class of finite dimensional clone algebras (see Theorem \ref{thm:main} and  Section \ref{sec:pre:leq}).

The main steps of the proof are the following:
\begin{itemize}
\item Given a variety of $\tau$-algebras axiomatised by the equational theory $T$, we turn its free algebra into a finite dimensional clone $\tau$-algebra, whose lattice of congruences is isomorphic to the lattice of equational theories extending $T$.
\item Given a finite dimensional clone $\tau$-algebra $\mathbf C$, we build a new algebraic type $\rho_\mathsf C$ and  a variety $\mathcal V$ of $\rho_\mathsf C$-algebras such that the congruence lattice of $\mathbf C$ is isomorphic to the lattice of equational theories extending the equational theory of   $\mathcal V$.
\end{itemize}
We conclude the section by showing that a lattice is isomorphic to a lattice of subclones if and only if it is isomorphic to the lattice of all subalgebras of a finite dimensional clone algebra.

\bigskip

It is well known that any lattice of equational theories is isomorphic to a congruence lattice. 
Let $T$ be an equational theory and $\mathcal V$ be the variety axiomatised by $T$. The lattice $L(T)$ of all equational theories extending $T$ is isomorphic 
to the lattice of all fully invariant congruences of the free algebra $\mathbf{F}_\mathcal{V}$ over a countable set $I=\{v_1,v_2,\ldots,v_n,\ldots\}$ of generators.

We say that an endomorphism $f$ of the free algebra $\mathbf{F}_\mathcal{V}$ is \emph{$n$-finite} if $f(v_i)=v_i$ for every $i>n$. An endomorphism is finite if it is $n$-finite for some $n$. 

\begin{lemma}\label{lem:end} \cite{BS,mac} Let $\mathcal V$ be a variety axiomatised by $T$. Then the  lattice $L(T)$ of all equational theories extending $T$ is isomorphic to
the congruence lattice of the algebra $(\mathbf{F}_\mathcal{V},f)_{f\in \mathrm{End}}$, which is an expansion of the free algebra $\mathbf{F}_\mathcal{V}$ by the set End of all its finite endomorphisms.
\end{lemma}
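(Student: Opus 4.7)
The plan is to chain two well-known correspondences and then bridge between ``preserved by finite endomorphisms'' and ``fully invariant''. First, I would recall the classical Birkhoff correspondence: the map $T'\mapsto \theta_{T'}=\{(s,t)\in F_\mathcal V^2:T'\vdash s=t\}$ is an order-isomorphism between $L(T)$ and the lattice $\mathrm{FiCon}\,\mathbf F_\mathcal V$ of \emph{fully invariant} congruences of $\mathbf F_\mathcal V$ (those preserved by every endomorphism of $\mathbf F_\mathcal V$). This reduces the lemma to showing $\mathrm{FiCon}\,\mathbf F_\mathcal V\cong\mathrm{Con}(\mathbf F_\mathcal V,f)_{f\in\mathrm{End}}$.

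Second, I would unpack the congruences of the expanded algebra: an equivalence $\theta$ on $F_\mathcal V$ is a congruence of $(\mathbf F_\mathcal V,f)_{f\in\mathrm{End}}$ iff it is a congruence of $\mathbf F_\mathcal V$ and, viewing each finite endomorphism $f$ as a unary operation, $(a,b)\in\theta$ implies $(f(a),f(b))\in\theta$. In other words, $\mathrm{Con}(\mathbf F_\mathcal V,f)_{f\in\mathrm{End}}$ is exactly the sublattice of $\mathrm{Con}\,\mathbf F_\mathcal V$ consisting of congruences preserved by every finite endomorphism.

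The heart of the argument is therefore to show that a congruence $\theta$ of $\mathbf F_\mathcal V$ is fully invariant iff it is preserved by every finite endomorphism. One direction is immediate, since finite endomorphisms are endomorphisms. For the converse, let $g\in\mathrm{End}$ and $(s,t)\in\theta$. Pick $n$ so large that both $s$ and $t$ lie in the subalgebra generated by $v_1,\dots,v_n$, and define $g_n:\mathbf F_\mathcal V\to\mathbf F_\mathcal V$ as the unique endomorphism with $g_n(v_i)=g(v_i)$ for $i\le n$ and $g_n(v_i)=v_i$ for $i>n$; this is $n$-finite. Because the value of any endomorphism on an element of $\mathbf F_\mathcal V$ depends only on its values on the finitely many generators occurring in a term representative of that element, $g(s)=g_n(s)$ and $g(t)=g_n(t)$. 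By hypothesis $(g_n(s),g_n(t))\in\theta$, and hence $(g(s),g(t))\in\theta$, so $\theta$ is fully invariant.

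Composing the two isomorphisms yields $L(T)\cong\mathrm{Con}(\mathbf F_\mathcal V,f)_{f\in\mathrm{End}}$. The only mildly delicate step is the ``finite support'' observation used above; everything else is a routine check that the maps are order-preserving and mutually inverse.
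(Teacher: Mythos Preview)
Your proof is correct and is precisely the standard argument. The paper does not give its own proof of this lemma: it is stated with the citation \cite{BS,mac} and used as a known fact, so there is nothing to compare beyond noting that your sketch is exactly the classical two-step reduction (Birkhoff's $L(T)\cong\mathrm{FiCon}\,\mathbf F_\mathcal V$ followed by the finite-support observation that invariance under finite endomorphisms already forces full invariance).
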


The set of all $n$-finite endomorphisms can be collectively expressed by an $(n+1)$-ary operation $q_n^\mathbf F$ on $\mathbf{F}_\mathcal{V}$ (see Example \ref{exa:free}):
 \begin{equation}\label{eq:qqq} q_n^\mathbf F(a,b_1,\dots,b_n)=s(a),\quad\text{for every
$a,b_1,\dots,b_n\in F_\mathcal{V}$,}\end{equation}
where $s$ is the unique $n$-finite endomorphism
of $\mathbf{F}_\mathcal{V}$ which sends the generator $ v_i$ to $b_i$ ($1\leq i\leq n$). 


\begin{definition}\label{def:cva}
 Let $\mathcal V$ be a variety and $\mathbf{F}_\mathcal{V}$ be the free $\mathcal V$-algebra over a countable set $I$ of generators.
 Then the algebra  $\mathbf{Cl}(\mathcal{V})=(\mathbf{F}_\mathcal{V}, q_n^\mathbf{F}, \e_i^\mathbf{F})$,
where $\e_i^\mathbf{F}= v_i\in I$ and $q_n^\mathbf{F}$ is defined in (\ref{eq:qqq}),
is  called \emph{the clone  $\mathcal V$-algebra}.
\end{definition}
 
\begin{proposition}\label{prop:free} Let $\mathcal V$ be a variety of $\tau$-algebras axiomatised by the equational theory $T$. Then we have:
\begin{enumerate}
\item   The clone  $\mathcal V$-algebra  $\mathbf{Cl}(\mathcal{V})$
is a finite dimensional clone $\tau$-algebra.
\item  The congruences lattice $\mathrm{Con}\,\mathbf{Cl}(\mathcal{V})$   is isomorphic to the lattice of equational theories $L(T)$.
\item If $w\in F_\mathcal{V}$ has dimension $n>0$ in $\mathbf{Cl}(\mathcal{V})$, then there exists a $\tau$-term $t(v_1,\dots,v_n)$ belonging to $w$. 
\item If $w\in F_\mathcal{V}$ has dimension $0$ in $\mathbf{Cl}(\mathcal{V})$, then there exists a $\tau$-term $t(v_1)\in w$ such that $\mathcal V \models t(v_1)=t(v_2)$.
\item If $\mathrm{Clo}\,\mathbf{F}_\mathcal{V}$ is the clone of term operations of $\mathbf{F}_\mathcal{V}$, then the clone  $\mathcal V$-algebra  $\mathbf{Cl}(\mathcal{V})$ is isomorphic to the block algebra $(\mathrm{Clo}\,\mathbf{F}_\mathcal{V})^\top$ (see Section \ref{sec:to} and Lemma \ref{prop:cloneblock}). 
\end{enumerate}
\end{proposition}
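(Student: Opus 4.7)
I would prove the five parts in order, exploiting the fact that $q_n^\mathbf{F}(a,b_1,\dots,b_n)=s_\mathbf{b}(a)$ where $s_\mathbf{b}$ is the unique $n$-finite endomorphism sending $v_i\mapsto b_i$ for $i\le n$. For part (1), I would verify (C0)--(C6) by unpacking this description: (C1)--(C4) are immediate from the defining and extension properties of $n$-finite endomorphisms, (C5) is composition of them, and (C6) is the fact that $s_\mathbf{b}$ is a $\tau$-homomorphism. Finite dimensionality follows because any $w\in F_\mathcal{V}$ is represented by a term $t$ using variables only among $v_1,\dots,v_N$; for $m>N$, the endomorphism sending $v_m$ to $v_{m+1}$ and fixing every other generator leaves $t$ unchanged, so $q_m(w,\e_1,\dots,\e_{m-1},\e_{m+1})=w$, yielding $\dim w\le N$.

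For part (2), a congruence of $\mathbf{Cl}(\mathcal{V})$ is a $\tau$-congruence on $\mathbf{F}_\mathcal{V}$ preserved by each operator $q_n^\mathbf{F}(-,\mathbf{b})$, which, letting $\mathbf{b}$ vary, amounts to invariance under every finite endomorphism. Since any pair $(a,b)\in F_\mathcal{V}^2$ involves only finitely many generators, any endomorphism can be replaced (without changing its effect on $a$ and $b$) by a finite one agreeing with it on those generators; invariance under finite endomorphisms therefore coincides with full invariance, and Lemma \ref{lem:end} delivers $\mathrm{Con}\,\mathbf{Cl}(\mathcal{V})\cong L(T)$.

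For (3) and (4), I would fix a representing term $t_0\in w$ using variables in $\{v_1,\dots,v_M\}$. In case (3), with $\dim w=n\ge 1$ and $M\ge n$, independence of $w$ from $\e_{n+1},\dots,\e_M$ combined with Lemma \ref{lem:ind1} and (C3) yields $q_M(w,\e_1,\dots,\e_n,\e_n,\dots,\e_n)=q_n(w,\e_1,\dots,\e_n)=w$; the endomorphism collapsing $v_{n+1},\dots,v_M$ onto $v_n$ thus fixes $w$, and its syntactic action on $t_0$ produces a term in $w$ using only $v_1,\dots,v_n$. In case (4), with $\dim w=0$, Lemma \ref{lem:ind1} applied at $n=1$ gives $q_k(w,\mathbf{b})=w$ for every $\mathbf{b}$; specializing to $\mathbf{b}=(v_1,\dots,v_1)$ at $k=M$ yields a term $t(v_1)\in w$, and then $[t(v_2)]=q_1^\mathbf{F}(w,v_2)=w=[t(v_1)]$, so $\mathcal{V}\models t(v_1)=t(v_2)$.

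For part (5), since $\mathbf{Cl}(\mathcal{V})$ is finite-dimensional by (1), Theorem \ref{thm:firstrepresentation} yields $\mathbf{Cl}(\mathcal{V})\cong R_{\mathbf{Cl}(\mathcal{V})}^\top$ via $w\mapsto R(w)^\top$; it remains to identify $R_{\mathbf{Cl}(\mathcal{V})}$ with $\mathrm{Clo}\,\mathbf{F}_\mathcal{V}$. For each $w=[t]$ and each appropriate arity $k$, the term operation $t^{\mathbf{F},k}(\mathbf{a})$ coincides with $q_k^\mathbf{F}(w,\mathbf{a})$ by the very definition of $q_k^\mathbf{F}$ as term-for-variable substitution, which is exactly the representable function determined by $w$. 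The blocks $T_t^\mathbf{F}$ (which exhaust $\mathrm{Clo}\,\mathbf{F}_\mathcal{V}$ by Example \ref{exa:to}) therefore coincide with the blocks $R(w)$, and taking top extensions delivers the isomorphism. The key subtlety throughout is the passage from the algebraic notion of independence from $\e_m$ to the syntactic statement that $v_m$ can be eliminated from some representative term, which underlies (1), (3), and (4); this constructive substitution step is the main thing to get right.
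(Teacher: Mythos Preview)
Your proof is correct and follows essentially the same approach as the paper's: parts (1)--(4) match the paper's arguments (the paper collapses the extra variables onto $v_1$ rather than $v_n$, a cosmetic difference), and for part (5) you factor through Theorem~\ref{thm:firstrepresentation} and then identify $R([t])=T_t^{\mathbf{F}_\mathcal{V}}$, whereas the paper builds the map $[t]\mapsto (T_t^{\mathbf{F}_\mathcal{V}})^\top$ directly---but these are the same isomorphism. One small point worth making explicit in (2): a fully invariant $\tau$-congruence is automatically compatible with $q_n$ in \emph{all} arguments (not just the first), since $q_n(a',\mathbf b)\,\theta\,q_n(a',\mathbf b')$ follows from $\theta$ being a $\tau$-congruence applied to any term representing $a'$.
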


\begin{proof} (1) is straightforward.

(2) By Lemma \ref{lem:end} and the definition of $q_n^\mathbf{F}$.

 (3) Let $w\in F_\mathcal{V}$ of dimension $n>0$ and let $u\in w$ be an arbitrary term. Let $v_k$ be the last variable occurring in $u$ (i.e., $v_i$ does not occur in $u$ for every $i>k$). If $k \leq n$, then $u$ satisfies the required properties. Let $k> n$. Since $q_k^\mathbf{F}(w,\e_1^\mathbf{F},\dots,\e_n^\mathbf{F},\e_{1}^\mathbf{F},\dots, \e_1^\mathbf{F})=w$ is the equivalence class of the term $u[v_1/v_{n+1},v_1/v_{n+2}\dots,v_1/v_k ]$, then this last term belongs to $w$ and satisfies the required properties.
 
 (4) Let $w\in F_\mathcal{V}$ be zero-dimensional and $u\in w$ be an arbitrary term. If $u$ is ground, then $u=u(v_1)$ and we are done. Otherwise, we follow the reasoning in item (3).
 
(5) If $t(v_1,\dots,v_n)$ is a $\tau$-term and $\mathbf A\in\mathcal V$ is a $\tau$-algebra, then the set $T_t^\mathbf A$ (defined in Section \ref{sec:to}) of the term operations determined by $t$ is a block.
Notice that the arity of the block $T_t^\mathbf A$ may be less than $n$.  If $\mathcal V\models t_1=t_2$, then $T_{t_1}^\mathbf A = T_{t_2}^\mathbf A$ for every $\mathbf A\in\mathcal V$.
For the free algebra $\mathbf{F}_\mathcal{V}$, we have that $\mathcal V\models t_1=t_2$ iff $T_{t_1}^\mathbf A = T_{t_2}^\mathbf A$ iff $(T_{t_1}^\mathbf A)^\top = (T_{t_2}^\mathbf A)^\top$. It easily follows that $\mathbf{Cl}(\mathcal{V})$ is isomorphic to the block algebra $(\mathrm{Clo}\,\mathbf{F}_\mathcal{V})^\top$.
\end{proof}

\begin{definition}\label{def:ctype}
 Let $\mathbf C$ be a  clone algebra and  $R_\mathbf C$ be the clone of all $\mathbf C$-representable functions described in Definition \ref{def:representable}. 
\begin{enumerate}
\item The \emph{$\mathbf C$-type} is the algebraic type
$\rho_\mathbf C =  \{\overline f: f \in R_\mathbf C\}$, 
where  the operation symbol $\overline f$ has  arity $k$ if  $f\in R_\mathbf C$ is a $k$-ary function. 
\item The $\rho_\mathbf C$-algebra $\mathbf R_\mathbf C=(C,  f)_{f\in R_\mathbf C}$ is called  the \emph{algebra of $\mathbf C$-representable functions};
\item The algebra $\overline{\mathbf R}_\mathbf C=(\mathbf R_\mathbf C,q_n^\mathbf C,\e_i^\mathbf C)$ is called the \emph{clone $\rho_\mathbf C$-algebra of $\mathbf C$-representable functions}.
\end{enumerate}

\end{definition}

%
%
%

\begin{theorem}\label{thm:ch} Let $\mathbf C$ be a finite dimensional clone algebra.
Then we have:
\begin{itemize}
\item[(i)]  $\mathbf R_\mathbf C$ is isomorphic to the free $\rho_\mathsf C$-algebra over a countable set of generators in the variety $\mathrm{Var}(\mathbf R_\mathbf C)$;
\item[(ii)] $\overline{\mathbf R}_\mathbf C$ is isomorphic to the clone $\mathrm{Var}(\mathbf R_\mathbf C)$-algebra.
\end{itemize}
\end{theorem}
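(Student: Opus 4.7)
The plan is to show that $\{\e_i^\mathbf C : i\geq 1\}$ freely generates $\mathbf R_\mathbf C$ in the variety $\mathcal V:=\mathrm{Var}(\mathbf R_\mathbf C)$, and then to transport the operators of the clone $\mathcal V$-algebra (Definition~\ref{def:cva}) along the resulting isomorphism. A key preliminary observation is that $R_\mathbf C = \mathrm{Clo}\,\mathbf R_\mathbf C$: since $R_\mathbf C$ is a clone on $\mathbf C_\tau$ (Proposition~\ref{prop:rr}(4)) containing all the basic operations of $\mathbf R_\mathbf C$ (namely, every $f\in R_\mathbf C$ is itself a basic operation), it coincides with the clone of term operations of $\mathbf R_\mathbf C$.

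Let $\mathbf F$ be the free $\rho_\mathbf C$-algebra of $\mathcal V$ over $\{v_1,v_2,\dots\}$ and let $\eta\colon\mathbf F\to\mathbf R_\mathbf C$ be the unique homomorphism with $\eta(v_i)=\e_i^\mathbf C$. For surjectivity, fix $a\in C$ and use finite dimensionality: if $\dim(a)\leq n$, the function $f_a(\mathbf x):=q_n^\mathbf C(a,\mathbf x)$ is $\mathbf C$-representable (Definition~\ref{def:representable}) with $f_a(\e_1,\dots,\e_n)=a$ (use (C3) together with Lemma~\ref{lem:ind1}), so $\eta(\overline{f_a}(v_1,\dots,v_n))=a$. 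For injectivity, suppose $\eta(t)=\eta(s)$ for $\rho_\mathbf C$-terms $t,s$ in variables $v_1,\dots,v_n$. Then $t^{\mathbf R_\mathbf C}(\e_1,\dots,\e_n)=s^{\mathbf R_\mathbf C}(\e_1,\dots,\e_n)$; since $t^{\mathbf R_\mathbf C},s^{\mathbf R_\mathbf C}\in\mathrm{Clo}\,\mathbf R_\mathbf C=R_\mathbf C$, the representability property gives $t^{\mathbf R_\mathbf C}(\mathbf a)=q_n^\mathbf C(t^{\mathbf R_\mathbf C}(\mathbf e),\mathbf a)=q_n^\mathbf C(s^{\mathbf R_\mathbf C}(\mathbf e),\mathbf a)=s^{\mathbf R_\mathbf C}(\mathbf a)$ for every $\mathbf a\in C^n$; hence $t^{\mathbf R_\mathbf C}=s^{\mathbf R_\mathbf C}$ as term operations, and Birkhoff's theorem (applied to $\mathcal V=\mathrm{Var}(\mathbf R_\mathbf C)$) yields $t=s$ in $\mathbf F$. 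This proves (i).

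For (ii), I transport the operators of the clone $\mathcal V$-algebra across $\eta$. The constants $\e_i^\mathbf F=v_i$ go to $\e_i^\mathbf C$, as required. For the $q_n$-operator, recall $q_n^\mathbf F(a,\mathbf b)=s(a)$ where $s$ is the unique $\rho_\mathbf C$-endomorphism of $\mathbf F$ sending $v_i\mapsto b_i$ for $i\leq n$ and $v_j\mapsto v_j$ for $j>n$. Under $\eta$ this corresponds to the unique $\rho_\mathbf C$-endomorphism $\bar s$ of $\mathbf R_\mathbf C$ with $\bar s(\e_i)=b_i$ for $i\leq n$ and $\bar s(\e_j)=\e_j$ for $j>n$ (uniqueness by the generation established in part (i)). Now consider the substitution map $s_\mathbf b(x):=q_n^\mathbf C(x,\mathbf b)$ of Lemma~\ref{lem:24}: it has the correct values on the $\e_i$'s, and Lemma~\ref{12} applied to every representable $f\in R_\mathbf C$ shows that $s_\mathbf b(f(\mathbf a))=f(s_\mathbf b(a_1),\dots,s_\mathbf b(a_k))$, so $s_\mathbf b$ is a $\rho_\mathbf C$-endomorphism. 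By uniqueness $\bar s=s_\mathbf b$, and the transported operator is exactly $q_n^\mathbf C$.

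The main obstacle is the injectivity step, and specifically the observation that $R_\mathbf C=\mathrm{Clo}\,\mathbf R_\mathbf C$ together with the representability identity $f(\mathbf a)=q_n^\mathbf C(f(\mathbf e),\mathbf a)$: this is what forces agreement of the term operations $t^{\mathbf R_\mathbf C},s^{\mathbf R_\mathbf C}$ on all of $C^n$ from their agreement on the single point $(\e_1,\dots,\e_n)$, and hence collapses equality of values at the $\e_i$'s to equality of identities in $\mathcal V$. Everything else is bookkeeping enabled by Lemma~\ref{12} and Lemma~\ref{lem:24}.
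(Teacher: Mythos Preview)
Your proof is correct. Both parts work, and the key ingredients (Proposition~\ref{prop:rr}, Lemma~\ref{12}, Lemma~\ref{lem:24}) are deployed appropriately.

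Your route differs from the paper's in part (i). The paper verifies the universal property of $\mathbf R_\mathbf C$ directly: given an arbitrary $\mathbf A\in\mathrm{Var}(\mathbf R_\mathbf C)$ and a map $g\colon\{\e_i^\mathbf C\}\to A$, it constructs the extension $g^*(b)=\overline{f_b^k}^{\mathbf A}(g(\e_1),\dots,g(\e_k))$ explicitly and checks by hand that $g^*$ is a well-defined homomorphism, reducing the computation to an identity in $\mathbf R_\mathbf C$ that then transfers to $\mathbf A$. You instead take the free algebra $\mathbf F$ as given and prove that the canonical map $\eta\colon\mathbf F\to\mathbf R_\mathbf C$ is bijective, the pivotal observation being $R_\mathbf C=\mathrm{Clo}\,\mathbf R_\mathbf C$: since every $n$-ary term operation of $\mathbf R_\mathbf C$ is $\mathbf C$-representable, agreement at the single point $(\e_1,\dots,\e_n)$ forces agreement on all of $C^n$, hence validity of the identity in $\mathcal V$. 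This is a cleaner packaging of the same phenomenon; the paper's explicit extension and your injectivity argument both rest on Lemma~\ref{12}, but your formulation makes the underlying reason for freeness (``representable operations are determined by their value at $\mathbf e$'') visible at a glance, whereas the paper buries it inside a computation. Your observation $R_\mathbf C=\mathrm{Clo}\,\mathbf R_\mathbf C$ is in fact what the paper later isolates (in a different form) as Proposition~\ref{exa:rc}(i). For part (ii) the two proofs coincide.
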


\begin{proof}  
We show that $\mathbf R_\mathbf C$ is the free algebra  over a countable set $\{\e_1^\mathbf C,\dots,\e_n^\mathbf C,\dots\}$ of generators in the variety $\mathrm{Var}(\mathbf R_\mathbf C)$. 
 Let $\mathbf A\in  \mathrm{Var}(\mathbf R_\mathbf C)$, $g: \{\e_1^\mathbf C,\dots,\e_n^\mathbf C,\dots\}\to A$ be an arbitrary map, and $d_i=g(\e_i^\mathbf C)$.
 We extend $g$ to a map $g^*: C\to A$ as follows. Let $b\in C$ of dimension $k$. 
 By Proposition \ref{prop:rr} the set $R(b)=\bigcup_{n\in \omega} \{f\in R_\mathbf C^{(n)}: b=f(\e_1^\mathbf C,\dots,\e_n^\mathbf C)\}$ is a block of arity $k$. For every $m\geq k$, we denote by $f^{m}_b:C^m\to C$  the unique function of arity $m$ belonging to $R(b)$. The function $f^{m}_b$ is defined as follows: $f^{m}_b(c_1,\dots,c_m)=q_m^\mathbf C(b,c_1,\dots,c_m)$ for every $c_1,\dots,c_m\in C$. Since $f^{m}_b$ is $\mathbf C$-representable, then $\overline{f^{m}_b}\in \rho_\mathbf C$ for every $m\geq k$ and we define 
 $$g^*(b)=\overline{f^{k}_b}^\mathbf A(d_1,\dots,d_{k}).$$
Since $\mathbf C\models f^m_b(x_1,\dots,x_k,x_{k+1},\dots, x_m)= f^k_b(x_1,\dots,x_k)$ for every $m\geq k$ and $\mathbf A\in \mathrm{Var}(\mathbf R_\mathbf C)$, then we have   
$$g^*(b) =\overline{f^m_b}^\mathbf A (d_1,\dots,d_m)\quad \text{for every $m\geq k$}.$$ 
We now show that $g^*$ is a homomorphism of $\rho_\mathbf C$-algebras. 
Let $\overline h\in \rho_\mathbf C$ of arity $n$, $\mathbf b=b_1,\dots,b_n\in C$ and $\mathbf e=\e_1,\dots,\e_n$. 
Let $m\geq n$ be a natural number greater than the maximal number among the dimensions of the elements $b_1,\dots,b_n,q^\mathbf C_n(h(\mathbf e),\mathbf b)$.
Let $\mathbf d=d_1,\dots,d_m$ and $\mathbf o=\e_1,\dots,\e_m$. 
We now show that 
$$g^*( h(\mathbf b))=\overline h^\mathbf A(g^*(b_1),\dots,g^*(b_n)).$$
Recalling that $h(\mathbf b)=q^\mathbf C_n(h(\mathbf e),\mathbf b)$ (see Definition \ref{def:representable}), then we have:
\begin{itemize}
\item $\overline h^\mathbf A(g^*(b_1),\dots,g^*(b_n))=
\overline h^\mathbf A(\overline{f^m_{b_1}}^\mathbf A(\mathbf d),\dots,\overline{f^m_{b_n}}^\mathbf A(\mathbf d))$;
\item $ g^*(h(\mathbf b))
=g^*(q^\mathbf C_n(h(\mathbf e),\mathbf b))= \overline{f^m_r}^\mathbf A(\mathbf d)$, where $r= q^\mathbf C_n(h(\mathbf e),\mathbf b)$.

\end{itemize}

We get that $g^*$ is a homomorphism if  the algebra $\mathbf A$ satisfies the identity 
$$\overline h(\overline{f^m_{b_1}} (x_1,\dots,x_m),\dots,\overline{f^m_{b_n}} (x_1,\dots,x_m))=
\overline{f^m_r}(x_1,\dots,x_m),\quad
\text{where $r= q^\mathbf C_n(h(\mathbf e),\mathbf b)$}.$$
Since $\mathbf A\in\mathrm{Var}(\mathbf R_\mathbf C)$, then it is sufficient to prove that the algebra $\mathbf R_\mathbf C$ satisfies the above identity. By putting $\mathbf x= x_1,\dots,x_m$ the conclusion follows from Lemma \ref{12}(ii):
$$f^m_r(\mathbf x)=q_m^\mathbf C(q^\mathbf C_n(h(\mathbf e),\mathbf b),\mathbf x)= q_m^\mathbf C(h(\mathbf b),\mathbf x)=_{Lem. \ref{12}}
 h(q_m^\mathbf C(b_1,\mathbf x),\dots,q_m^\mathbf C(b_n,\mathbf x))=h(f^m_{b_1}(\mathbf x),\dots,f^m_{b_n} (\mathbf x)).$$ 
It remains to show that the operation $p(x)=q_n^\mathbf C(x,\mathbf b)$ is the unique $n$-finite endomorphism
of the free algebra $\mathbf R_\mathbf C$ which sends $\e_i$ to $b_i$ ($1\leq i\leq n$). This again follows from Lemma \ref{12}(ii) because
$p(h(\mathbf a))=q_n^\mathbf C(h(\mathbf a),\mathbf b) = h(q_n^\mathbf C(a_1,\mathbf b),\dots,q_n^\mathbf C(a_k,\mathbf b))=h(p(a_1),\dots,p(a_k))$ for every $\overline h\in\rho_\mathbf C$ of arity $k$.
\end{proof}



\begin{theorem}\label{thm:BM} A lattice $L$ is isomorphic to a lattice of equational theories if and only if $L$ is isomorphic to the congruence lattice of a finite-dimensional $\mathsf{CA}$. 
  \end{theorem}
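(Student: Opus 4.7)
The plan is to exploit the machinery of Proposition \ref{prop:free} and Theorem \ref{thm:ch} together with Corollary \ref{cor:taunu}, which tells us that the congruence lattice of a clone algebra depends only on its pure reduct.

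For the forward direction ($\Rightarrow$), suppose $L \cong L(T)$ for some equational theory $T$ and let $\mathcal V$ be the variety it axiomatises. I would take $\mathbf C = \mathbf{Cl}(\mathcal V)$, the clone $\mathcal V$-algebra of Definition \ref{def:cva}. By Proposition \ref{prop:free}(1), $\mathbf C$ is finite dimensional; by Proposition \ref{prop:free}(2), $\mathrm{Con}\,\mathbf C \cong L(T) \cong L$, which settles this implication.

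For the reverse direction ($\Leftarrow$), suppose $L \cong \mathrm{Con}\,\mathbf C$ for some finite-dimensional clone algebra $\mathbf C$. The goal is to manufacture an equational theory $T$ whose lattice of extensions realises $\mathrm{Con}\,\mathbf C$. The steps I would carry out in order are the following. First, form the type $\rho_\mathbf C$ and the algebras $\mathbf R_\mathbf C$, $\overline{\mathbf R}_\mathbf C$ as in Definition \ref{def:ctype}, taking \emph{every} $\mathbf C$-representable function as a fundamental operation; this is the move that turns a clone-algebraic structure into an ordinary algebra with enough operations to recover the full behaviour. Second, set $\mathcal V := \mathrm{Var}(\mathbf R_\mathbf C)$ and let $T$ be the equational theory of $\mathcal V$. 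Third, apply Theorem \ref{thm:ch}(ii) to identify $\overline{\mathbf R}_\mathbf C$ with the clone $\mathcal V$-algebra $\mathbf{Cl}(\mathcal V)$ (this relies on Theorem \ref{thm:ch}(i), which says that $\mathbf R_\mathbf C$ is the free $\rho_\mathbf C$-algebra in $\mathcal V$ over the generators $\e_1^\mathbf C,\e_2^\mathbf C,\dots$). Fourth, apply Proposition \ref{prop:free}(2) to get $\mathrm{Con}\,\mathbf{Cl}(\mathcal V) \cong L(T)$.

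The crucial final move is to connect $\mathrm{Con}\,\overline{\mathbf R}_\mathbf C$ back to $\mathrm{Con}\,\mathbf C$: by construction, $\mathbf C$ and $\overline{\mathbf R}_\mathbf C$ share the same universe, the same constants $\e_i^\mathbf C$, and the same operators $q_n^\mathbf C$ --- in other words, they have the same pure reduct. Hence Corollary \ref{cor:taunu} gives $\mathrm{Con}\,\mathbf C = \mathrm{Con}\,\overline{\mathbf R}_\mathbf C$. Chaining everything together,
\[
L \,\cong\, \mathrm{Con}\,\mathbf C \,=\, \mathrm{Con}\,\overline{\mathbf R}_\mathbf C \,\cong\, \mathrm{Con}\,\mathbf{Cl}(\mathcal V) \,\cong\, L(T),
\]
so $L$ is isomorphic to a lattice of equational theories.

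The main obstacle is already absorbed into Theorem \ref{thm:ch}: namely, verifying that $\mathbf R_\mathbf C$ really is free on $\{\e_i^\mathbf C\}$ inside $\mathrm{Var}(\mathbf R_\mathbf C)$, so that $\overline{\mathbf R}_\mathbf C$ can be legitimately identified with the clone algebra of the variety it generates. Granting that, the whole argument rests on the conceptually clean observation --- captured by Corollary \ref{cor:taunu} --- that although $\mathbf C$ and $\overline{\mathbf R}_\mathbf C$ live in very different algebraic types, their congruence lattices coincide because the pure $(q_n,\e_i)$-fragment alone controls which equivalence relations are compatible.
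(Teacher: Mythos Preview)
Your proposal is correct and follows essentially the same approach as the paper: for ($\Rightarrow$) you invoke Proposition \ref{prop:free}, and for ($\Leftarrow$) you pass from $\mathbf C$ to $\overline{\mathbf R}_\mathbf C$, use Corollary \ref{cor:taunu} to equate their congruence lattices via the shared pure reduct, and then apply Theorem \ref{thm:ch}(ii) together with Proposition \ref{prop:free}(2). The paper's proof is organised identically, only more tersely.
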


\begin{proof} ($\Rightarrow$) It follows from Proposition \ref{prop:free}. 

($\Leftarrow$) Let $\mathbf C$  be a finite dimensional clone algebra and $\overline{\mathbf R}_\mathbf C$ be the clone $\rho_\mathbf C$-algebra of $\mathbf C$-representable functions. Since $\mathbf C$ and $\overline{\mathbf R}_\mathbf C$ have the same pure reduct, then by Corollary \ref{cor:taunu} we have $\mathrm{Con}\,\mathbf C=\mathrm{Con}\,\overline{\mathbf R}_\mathbf C$. The conclusion of the theorem follows from Theorem \ref{thm:ch}(ii) and Proposition \ref{prop:free}(2), because $\overline{\mathbf R}_\mathbf C$ is isomorphic to the clone $\mathrm{Var}(\mathbf R_\mathbf C)$-algebra. 
%
\end{proof}

Recalling  that every finite dimensional $\mathsf{CA}$ is isomorphic to a block algebra (see Theorem \ref{thm:fica}), in this corollary we relate lattices of equational theories and clones. 

\begin{corollary}
  A lattice $L$ is isomorphic to a lattice of equational theories if and only if $L$ is isomorphic to the lattice of all congruences of a block algebra. 
\end{corollary}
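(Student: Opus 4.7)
The plan is to derive this corollary as an immediate combination of the two main results preceding it: Theorem \ref{thm:BM}, which characterises lattices of equational theories as congruence lattices of finite dimensional clone algebras, and Theorem \ref{thm:fica}, which asserts $\mathsf{FiCA}_\tau = \mathbb{I}\,\mathsf{BLK}_\tau$. The link between the two statements is the elementary fact that isomorphic algebras have isomorphic congruence lattices, so once we know every finite dimensional $\mathsf{CA}$ is isomorphic to a block algebra (and conversely), we can transfer $\mathrm{Con}$ along that isomorphism without effort.

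For the forward direction, I would start from a lattice $L$ isomorphic to some lattice of equational theories. Theorem \ref{thm:BM} then supplies a finite dimensional clone algebra $\mathbf{C}$ with $L \cong \mathrm{Con}\,\mathbf{C}$. Applying Theorem \ref{thm:fica}, there is a block algebra $\mathbf{B}$ with $\mathbf{C} \cong \mathbf{B}$, and consequently $\mathrm{Con}\,\mathbf{C} \cong \mathrm{Con}\,\mathbf{B}$. Chaining the isomorphisms gives $L \cong \mathrm{Con}\,\mathbf{B}$, as desired.

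For the converse, suppose $L \cong \mathrm{Con}\,\mathbf{B}$ for some block algebra $\mathbf{B}$. The proof of Theorem \ref{thm:fica} records the trivial inclusion $\mathsf{BLK}_\tau \subseteq \mathsf{FiCA}_\tau$, so $\mathbf{B}$ is itself a finite dimensional clone algebra. Theorem \ref{thm:BM} then yields that $L$ is isomorphic to a lattice of equational theories.

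There is no real obstacle here; the corollary is essentially bookkeeping. The only point worth mentioning in the writeup is to invoke the two theorems explicitly and to note that congruence lattices are preserved under isomorphism of algebras, so that the translation between the ``$\mathsf{FiCA}$'' and ``$\mathsf{BLK}$'' formulations is harmless.
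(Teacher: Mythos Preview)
Your proposal is correct and matches the paper's approach exactly: the paper treats this corollary as an immediate consequence of Theorem \ref{thm:BM} together with Theorem \ref{thm:fica}, without spelling out the details you supply. Your writeup is, if anything, slightly more explicit than the paper, which simply remarks that every finite dimensional $\mathsf{CA}$ is isomorphic to a block algebra and states the corollary.
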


We conclude this section by characterising the lattices of subclones.

Let $A$ be a set and $F$ be a clone on $A$. A subset $G\subseteq F$ is called a \emph{subclone} of $F$ if $G$ is a clone on $A$. For example, every clone on $A$ is a subclone of $\mathcal O_A$.

We denote by $\mathrm{Sb}(F)$ the lattice of all subclones of a clone $F$. We say that a lattice $L$ is  isomorphic to a lattice of subclones if there exists a set $A$ and a clone $F$ on $A$ such that $L$ is isomorphic to the lattice $\mathrm{Sb}(F)$.

\begin{proposition}
   A lattice $L$ is isomorphic to a lattice of subclones if and only if $L$ is isomorphic to the lattice of subalgebras of a block algebra. 
\end{proposition}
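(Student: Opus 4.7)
The plan is to derive the result as a localization of Proposition \ref{prop:cloneblock} and Corollary \ref{cor:cloneblock} to the level of subclones of a fixed clone and subalgebras of a fixed block algebra, rather than of all clones on $A$ and all subalgebras of the full block algebra $\mathcal B_A^\top$.

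First I would establish the relativized bijection: for any clone $F$ on $A$, the assignment $G\mapsto G^\top$ is a lattice isomorphism from $\mathrm{Sb}(F)$ onto the lattice of subalgebras of the block algebra $\mathbf B := F^\top$. Indeed, if $G$ is a subclone of $F$, then $G$ is itself a clone on $A$, so by Proposition \ref{prop:cloneblock} the set $G^\top$ is the universe of a block algebra on $A$; since $G\subseteq F$ we have $G^\top\subseteq F^\top$, and hence $G^\top$ is the universe of a subalgebra of $\mathbf B$. Conversely, any subalgebra $\mathbf B'$ of $\mathbf B$ is, as a subalgebra of $\mathcal B_A^\top$, a block algebra on $A$ with universe $B'\subseteq F^\top$, so by Proposition \ref{prop:cloneblock} the set $\bigcup B'_\bot$ is a clone on $A$, and it is contained in $\bigcup (F^\top)_\bot=F$ (using the \emph{moreover} clause), hence it is a subclone of $F$. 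The same \emph{moreover} clause shows that the two assignments are mutually inverse, and both obviously preserve inclusion, so they set up the desired lattice isomorphism.

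With this in hand, the two directions of the proposition are immediate. For $(\Rightarrow)$, suppose $L\cong \mathrm{Sb}(F)$ for some clone $F$ on some set $A$; then by the relativized bijection $L$ is isomorphic to the subalgebra lattice of the block algebra $F^\top$ on $A$. For $(\Leftarrow)$, suppose $L$ is isomorphic to the subalgebra lattice of some block algebra $\mathbf B$ on $A$ with universe $B\subseteq \mathcal B_A^\top$; by Proposition \ref{prop:cloneblock} the set $F:=\bigcup B_\bot$ is a clone on $A$ with $F^\top=B$, and the relativized bijection then identifies the subalgebra lattice of $\mathbf B$ with $\mathrm{Sb}(F)$, so $L$ is isomorphic to a lattice of subclones.

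I do not expect a real obstacle: Proposition \ref{prop:cloneblock} was stated for an arbitrary clone (not only for $\mathcal O_A$) and its proof already yields the inclusion-preserving inverse maps $F\mapsto F^\top$ and $B\mapsto \bigcup B_\bot$; the only thing to check is that passing to a subclone of $F$ corresponds precisely to passing to a subalgebra of $F^\top$, which is what the paragraph above does. No new computation with the operators $q_n^\omega$ or the projections $P_i^\top$ is needed beyond what is already done in Proposition \ref{prop:cloneblock}.
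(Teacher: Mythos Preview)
Your proposal is correct and follows essentially the same approach as the paper, which simply cites Proposition~\ref{prop:cloneblock} and Corollary~\ref{cor:cloneblock}. You have spelled out explicitly the relativization of Corollary~\ref{cor:cloneblock} from the full block algebra $\mathcal B_A^\top$ to an arbitrary block algebra $F^\top$, which is exactly what is implicit in the paper's one-line proof.
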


\begin{proof}
 By Proposition \ref{prop:cloneblock} and Corollary \ref{cor:cloneblock}.
\end{proof}

\section{The category of  varieties}\label{sec:allvar}
Important properties of a variety $\mathcal V$ depend on the pure reduct of the clone $\mathcal V$-algebra $\mathbf{Cl}(\mathcal V)$ associated with its free algebra. However, not every clone $\tau$-algebra is the clone $\mathcal V$-algebra associated with the free algebra of a variety $\mathcal V$ of type $\tau$.
In this section, after characterising central elements in clone algebras, we introduce minimal clone algebras and prove that 
a clone $\tau$-algebra $\mathbf C$ is minimal if and only if $\mathbf C\cong \mathbf{Cl}(\mathcal V)$ for some variety $\mathcal V$ of type $\tau$. 
We also introduce the category $\mathcal{CA}$ of all clone algebras (of arbitrary similarity types) with pure homomorphisms (i.e., preserving only the nullary operators $\e_i$ and the operators $q_n$) as arrows and we show that the category $\mathcal{CA}$ is equivalent both to the full subcategory $\mathcal{MCA}$ of minimal clone algebras and, more to the point, to the variety $\mathsf{CA}_0$ of pure clone algebras.
We prove that $\mathcal{MCA}$ is categorically isomorphic to the category $\mathcal{VAR}$ of all varieties, so that we can use the more manageable category $\mathcal{MCA}$ of minimal clone algebras and pure homomorphisms to study the category $\mathcal{VAR}$. 
We conclude the section by showing  that the category $\mathcal{MCA}$ is closed under categorical product and use this result and central elements to provide a generalisation of the theorem on independent varieties presented by Gr\"atzer et al. in \cite{GLP}.

\subsection{Central elements in clone algebras}\label{sec:central}
Every clone algebra is an $n$CH, for every $n$. Therefore, there exists a bijection between the set of $n$-central elements of a clone algebra and the set of its $n$-tuples of complementary factor congruences.
In this section we show the following results characterising the central elements of a clone algebra $\mathbf C$:
\begin{itemize}
\item[(i)]  Every $n$-central element of $\mathbf C$ is finite dimensional;
\item[(ii)] An element $c\in C$ is $n$-central if and only if $n\geq \gamma(c)$ and $c$ is $m$-central for every $m\geq \gamma(c)$;
\item[(iii)] The set $\{c: \text{$c$ is $n$-central for some $n$}\}$ of all central elements of ${\bf C}$ is a pure clone subalgebra of the pure reduct $\mathbf C_0$ of $\mathbf C$.
\item[(iv)] An element is $n$-central in $\mathbf C$ iff it is $n$-central in the pure reduct $\mathbf C_0$ of $\mathbf C$.
\end{itemize}
As a consequence of (iv),  the decomposability of a variety $\mathcal V$ as product of other varieties only depends on the pure reduct  $\mathbf{Cl}(\mathcal V)_0$ of the clone $\mathcal V$-algebra $\mathbf{Cl}(\mathcal V)$ (see Section \ref{sec:categ}).

\begin{lemma}\label{central1}
Let ${\bf C}$ be a clone algebra and $c\in C$ be  $n$-central, for some $n$. 
Then $c$ is finite dimensional and $\gamma(c)\leq n$.
\end{lemma}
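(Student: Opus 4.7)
My plan is to exhibit, for every $k>n$, the identity $q_k(c,\e_1,\dots,\e_{k-1},\e_{k+1})=c$, which says exactly that $c$ is independent of $\e_k$. Once this is established for all $k>n$, the set $\Gamma(c)$ is contained in $\{1,\dots,n\}$, so $\gamma(c)\le n$ and $c$ is finite dimensional by definition.

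To verify this identity I would exploit the characterisation of $n$-centrality supplied by Theorem~\ref{thm:centrale}. Condition~(2) of that theorem tells me that $c=q_n(c,\e_1,\dots,\e_n)$ is the unique element $u$ of $C$ satisfying $\e_i\,\theta(c,\e_i)\,u$ for every $1\le i\le n$; equivalently (condition~(1)), the congruences $\theta(c,\e_1),\dots,\theta(c,\e_n)$ form an $n$-tuple of complementary factor congruences, so in particular $\bigcap_{i=1}^{n}\theta(c,\e_i)=\Delta$. I therefore set $u:=q_k(c,\e_1,\dots,\e_{k-1},\e_{k+1})$ and aim to show $u\,\theta(c,\e_i)\,c$ for each $1\le i\le n$; the conclusion $u=c$ will follow from the intersection being the identity.

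The verification that $u\,\theta(c,\e_i)\,c$ is a straightforward congruence computation. Since $c\,\theta(c,\e_i)\,\e_i$ and $q_k$ respects $\theta(c,\e_i)$, I obtain
\[
u=q_k(c,\e_1,\dots,\e_{k-1},\e_{k+1})\ \theta(c,\e_i)\ q_k(\e_i,\e_1,\dots,\e_{k-1},\e_{k+1}).
\]
Because $1\le i\le n<k$, axiom (C1) applied to the right-hand side collapses it to $\e_i$, which is in turn $\theta(c,\e_i)$-related to $c$; transitivity then gives $u\,\theta(c,\e_i)\,c$, as required.

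I do not anticipate a real obstacle: the argument is essentially a one-line application of the fact that complementary factor congruences intersect in $\Delta$, combined with the axiom (C1) that makes $q_k(\e_i,-,\dots,-)$ act as the $i$-th projection precisely when $i\le k$. The only point to watch is the indexing $i\le n<k$, which guarantees that (C1) (rather than (C2)) is the applicable axiom in the collapsing step above.
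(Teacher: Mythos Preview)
Your argument is correct and complete. Using $\bigcap_{i=1}^n\theta(c,\e_i)=\Delta$ from condition~(1) of Theorem~\ref{thm:centrale}, together with the compatibility of $q_k$ with each congruence $\theta(c,\e_i)$ and axiom~(C1), you neatly obtain $q_k(c,\e_1,\dots,\e_{k-1},\e_{k+1})=c$ for every $k>n$, which is precisely independence of $\e_k$.

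This is a genuinely different route from the paper's proof. The paper exploits condition~(3) of Theorem~\ref{thm:centrale} instead: since $q_n(c,-,\dots,-)$ is a decomposition operator, it is a homomorphism $\mathbf{C}^n\to\mathbf{C}$ and therefore commutes with the operation $q_m$ (for $m>n$). Substituting $g_i=\e_i$ and $h_i^j=\e_i$ (with $h_m^j=\e_{m+1}$) into the resulting identity and simplifying via centrality and (C1)--(C3), one obtains $c=q_n(c,\e_1,\dots,\e_n)=q_m(c,\e_1,\dots,\e_{m-1},\e_{m+1})$, and then argues by contradiction. Your congruence-theoretic approach is arguably cleaner: it avoids the bookkeeping of the double substitution and gets the desired equality directly from $\bigcap_i\theta(c,\e_i)=\Delta$, rather than via the homomorphism identity. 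The paper's approach, on the other hand, stays purely at the level of term identities and foreshadows the equational manipulations used in the subsequent Lemmas~\ref{central2} and Proposition~\ref{prop:central}.
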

\begin{proof}
By the way of contradiction, let us suppose that either $c$ is finite dimensional and $\gamma(c)>n$  or $\gamma(c)=\omega$. 
In both cases there exists $m>n$ such that $c$ is dependent on $\e_m$, meaning that $c\neq q_m(c,\e_1,\ldots,\e_{m-1},\e_{m+1})$.

Since $c$ is  $n$-central, the equation
\[\begin{array}{lllr}
&&q_n(c,q_m(g_1,h_1^1,\ldots,h_m^1),\ldots, q_m(g_n,h_1^n,\ldots,h_m^n)) &\\
&=& q_m(q_n(c,g_1,\ldots,g_n),q_n(c,h_1^1,\ldots,h_1^n),\ldots,q_n(c,h_m^1,\ldots,h_m^n))&\\
\end{array}
\]
holds for all $g_1,\ldots,g_n,h_1^1,\ldots h^1_m,\ldots, h^n_1,\ldots,  h_m^n$ in $C$.
By letting $g_i=\e_i$ for $1\leq i\leq n$ , $h_i^j=\e_i$ for $1\leq i\leq m-1$ and $1\leq j\leq n$, 
and $h_m^j=\e_{m+1}$ for $1\leq j\leq n$ in the equation above, and by exploiting again the fact that $c$ is $n$-central, we get
$q_n(c,\e_1,\ldots,\e_n)=
q_m(c,\e_1,\dots,\e_{m-1},\e_{m+1})$.

The left-hand side of the equation above being equal to $c$, we get a contradiction using our initial assumption that $c\neq q_m(c,\e_1,\ldots,\e_{m-1},\e_{m+1})$.
\end{proof}

\begin{lemma}\label{central2}
Let ${\bf C}$ be a clone $\tau$-algebra, $c\in C$ be  $n$-central  for some $n$,
and let $m\geq n$. Then $c$ is $m$-central.
\end{lemma}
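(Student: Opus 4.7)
The plan is to show that, under the hypothesis of $n$-centrality, the $m$-ary operation $q_m(c,-,\ldots,-)$ collapses onto $q_n(c,-,\ldots,-)$ acting on the first $n$ coordinates. Once this collapse is in place, each decomposition-operator axiom for dimension $m$ reduces mechanically to the corresponding axiom for dimension $n$, which holds by hypothesis.

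Concretely, the first step is to establish the key identity
$$q_m(c,y_1,\ldots,y_m) \;=\; q_n(c,y_1,\ldots,y_n), \qquad \text{for all } y_1,\ldots,y_m \in C.$$
Since $c$ is $n$-central, Lemma~\ref{central1} gives $\gamma(c)\leq n$, so $c$ is independent of $\e_{n+1},\ldots,\e_m$ (vacuously when $m=n$). Applying Lemma~\ref{lem:ind1} with its parameter $n$ replaced by $n+1$ and its $k$ replaced by $m$ yields exactly the displayed identity.

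Next, by Theorem~\ref{thm:centrale}(3), it suffices to verify that the function $f^{m}_c(\mathbf{a}) := q_m(c,a_1,\ldots,a_m)$ is an $m$-ary decomposition operator with $f^{m}_c(\e_1,\ldots,\e_m)=c$. The latter equality is axiom (C3). For D1, the key identity and D1 for $f^{n}_c$ give $q_m(c,x,\ldots,x) = q_n(c,x,\ldots,x) = x$. For D2 and D3, substituting $q_n(c,-,\ldots,-)$ for every occurrence of $q_m(c,-,\ldots,-)$ via the key identity reduces the $m$-ary associativity and homomorphism identities to the corresponding $n$-ary ones, which hold because $c$ is $n$-central.

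There is no real obstacle: the whole argument hinges on the key identity of Step~1, which is a one-line application of Lemma~\ref{lem:ind1}. The only care needed is routine bookkeeping of indices when unfolding D2 and D3 from dimension $m$ down to dimension $n$.
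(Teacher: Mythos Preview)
Your proposal is correct and follows essentially the same route as the paper's proof: both use Lemma~\ref{central1} to obtain $\gamma(c)\le n$, then invoke Lemma~\ref{lem:ind1} to collapse $q_m(c,y_1,\dots,y_m)$ to $q_n(c,y_1,\dots,y_n)$, and finally reduce each decomposition-operator axiom at dimension $m$ to the corresponding axiom at dimension $n$. The only cosmetic difference is that you isolate the key identity up front and then check D1--D3 systematically (including D2 explicitly), whereas the paper verifies the three defining equations of $m$-centrality one by one, applying Lemma~\ref{lem:ind1} in place each time and leaving D2 implicit (it follows from D3 for the operation $q_m$ together with D1 and $f_c^m(\e_1,\dots,\e_m)=c$).
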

\begin{proof}
By Lemma \ref{central1}, $\gamma(c)\leq n$ so that $c$ is independent of $\e_n$, $\e_{n+1},\ldots,\e_m$. The first equation characterising $m$-centrality, namely $q_m(c,\e_1,\ldots,\e_m)=c$ is
verified {\em a priori}. As for the second one: given $x\in C$, we have 
$x=q_n(c,x,\ldots,x)=q_m(c,x,\ldots,x)$, the second equality following from Lemma \ref{lem:ind1}, 
It remains to verify the third equation of $m$-centrality. We have:

\[\begin{array}{lllr}
 q_m(c,\sigma(x^1_1,\ldots,x^1_k),\ldots,\sigma(x^m_1,\ldots,x^m_k))
&=& q_n(c,\sigma(x^1_1,\ldots,x^1_k),\ldots,\sigma(x^n_1,\ldots,x^n_k))&\\ 
&=& \sigma(q_n(c,x^1_1,\ldots,x^n_1),\ldots,q_n(c,x^1_k,\ldots,x^n_k)) &\\ 
&=& \sigma(q_m(c,x^1_1,\ldots,x^m_1),\ldots,q_m(c,x^1_k,\ldots,x^m_k)) &\\ 
\end{array}
\]
and we are done.
\end{proof}

\begin{proposition}\label{prop:central}
Let ${\bf C}$ be a clone $\tau$-algebra and $c\in C$. If there exists $n$ such that $c$ is   $n$-central,
then, for all $m$,  $c$ is $m$-central if and only if $m\geq \gamma(c)$.
\end{proposition}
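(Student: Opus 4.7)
The plan is to prove the two implications of the biconditional separately. Direction $(\Rightarrow)$ will be a one-line appeal to Lemma \ref{central1}, while direction $(\Leftarrow)$ splits into the easy sub-case $m \geq n$ (handled by Lemma \ref{central2}) and the new sub-case $\gamma(c) \leq m < n$, which is where the real work lies.

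For $(\Rightarrow)$, if $c$ is $m$-central then Lemma \ref{central1}, applied with $m$ in the role of $n$, yields $\gamma(c) \leq m$. For $(\Leftarrow)$, suppose $m \geq \gamma(c)$. If $m \geq n$, then $c$ is $m$-central by Lemma \ref{central2}. Otherwise $\gamma(c) \leq m < n$, and we verify the three identities characterising $m$-centrality (in the same style as in the proof of Lemma \ref{central2}) for the candidate operator $q_m(c, -)$. The crucial observation is that $\gamma(c) \leq m$ means $c$ is independent of $\e_{m+1}, \ldots, \e_n$, so Lemma \ref{lem:ind1} supplies the ``filler identity''
$$q_n(c, a_1, \ldots, a_m, y_{m+1}, \ldots, y_n) \;=\; q_m(c, a_1, \ldots, a_m),$$
valid for \emph{any} choice of $y_{m+1}, \ldots, y_n \in C$, which lets us pass freely between $q_m(c, -)$ and $q_n(c, -)$ and reuse the fact that $q_n(c, -)$ is already an $n$-ary decomposition operator.

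With this filler identity in hand, the first identity $q_m(c, \e_1, \ldots, \e_m) = c$ is immediate from (C3), and the second identity $q_m(c, x, \ldots, x) = x$ follows by expanding to $q_n(c, x, \ldots, x)$ via the filler identity with $y_i := x$ and then invoking the analogous equation for the $n$-central $c$. The main obstacle is the third identity, namely $\sigma$-preservation for $\sigma \in \tau$ of arity $k$: I would choose the fillers $y_i := \sigma(\e_i, \ldots, \e_i)$ (or $y_i := \sigma$ if $k = 0$), so that the left-hand side $q_m(c, \sigma(\mathbf{x}^1), \ldots, \sigma(\mathbf{x}^m))$ rewrites via the filler identity to
$$q_n\bigl(c,\, \sigma(\mathbf{x}^1), \ldots, \sigma(\mathbf{x}^m),\, \sigma(\e_{m+1}, \ldots, \e_{m+1}), \ldots, \sigma(\e_n, \ldots, \e_n)\bigr),$$
in which every argument of the outer $q_n$ already has the shape $\sigma(\cdots)$. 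A single application of the $\sigma$-preservation identity for the $n$-central $q_n(c, -)$ pushes $\sigma$ outside, producing in each $j$-th slot the inner term $q_n(c, x^1_j, \ldots, x^m_j, \e_{m+1}, \ldots, \e_n)$, and a second application of the filler identity collapses each of these back to $q_m(c, x^1_j, \ldots, x^m_j)$, yielding the desired right-hand side. The delicate point is precisely this simultaneous choice of fillers: they must be of the form $\sigma(\cdots)$ (so that $\sigma$-preservation can be applied on the outside) and at the same time have the specific projections $\e_{m+1}, \ldots, \e_n$ in their interior slots (so that independence of $c$ closes the argument column by column).
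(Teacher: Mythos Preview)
Your proof is correct and essentially the same as the paper's: the paper just streamlines by proving once that $c$ is $\gamma(c)$-central and then invoking Lemma~\ref{central2} (with $\gamma(c)$ in the role of $n$) to reach every $m\ge\gamma(c)$, rather than handling each $m$ with $\gamma(c)\le m<n$ separately. One small remark: your filler choice $y_i:=\sigma(\e_i,\ldots,\e_i)$ works but is not as delicate as you suggest---any values $\sigma(z^i_1,\ldots,z^i_k)$ would do, since after $\sigma$-preservation the extra inner arguments are erased by the second application of Lemma~\ref{lem:ind1}; the paper simply leaves them arbitrary.
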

\begin{proof}
By Lemma \ref{central1} and Lemma \ref{central2}, it is enough to show that
$c$ is $\gamma(c)$-central. For the sake of readability, let $\gamma(c)=l$. 
For all $x,x^1_1,\ldots,x^1_k,\ldots,x^l_1,\ldots, x^l_k\in C$, and for all $k$-ary $\sigma$,
using repeatedly Lemma \ref{lem:ind1}, 
we have $q_l(c,x,\ldots,x)=q_n(c,x,\ldots,x)=x$, and 

\[\begin{array}{lllr}
  q_l(c,\sigma(x^1_1,\ldots,x^1_k),\ldots,\sigma(x^l_1,\ldots,x^l_k))
&=& q_n(c,\sigma(x^1_1,\ldots,x^1_k),\ldots,\sigma(x^n_1,\ldots,x^n_k))&\\ 
&=& \sigma(q_n(c,x^1_1,\ldots,x^n_1),\ldots,q_n(c,x^1_k,\ldots,x^n_k)) &\\ 
&=& \sigma(q_l(c,x^1_1,\ldots,x^l_1),\ldots,q_l(c,x^1_k,\ldots,x^l_k)) &\\ 
\end{array}
\]
\end{proof}

We denote by $\mathrm{Ce}_n(\mathbf C)$ the set of all $n$-central elements of $\mathbf C$, and by
$\mathrm{Ce}(\mathbf C)$ the set $\bigcup_{n\geq 1} \mathrm{Ce}_n(\mathbf C)$.
The algebra $(\mathrm{Ce}_n(\mathbf C),q_n^\mathbf C,\e_1^\mathbf C,\dots,\e_n^\mathbf C)$ is an $n\mathrm{BA}$ (see Section \ref{sec:nba}).

\begin{proposition} Let $\mathbf C$ be a clone $\tau$-algebra. Then $\mathrm{Ce}(\mathbf C)$ is a finite dimensional subalgebra of the pure reduct of $\mathbf C$.
\end{proposition}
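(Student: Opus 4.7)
My plan is to verify three things in order: (a) every element of $\mathrm{Ce}(\mathbf{C})$ is finite dimensional; (b) each nullary operator $\e_i$ lies in $\mathrm{Ce}(\mathbf{C})$; (c) $\mathrm{Ce}(\mathbf{C})$ is closed under every operator $q_m$ of the pure reduct. Together these give the conclusion.

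For (a), the work is already done by Lemma \ref{central1}: if $c$ is $n$-central then $\gamma(c)\le n<\omega$, hence $c\in\mathrm{Fi}\,\mathbf{C}$. So $\mathrm{Ce}(\mathbf{C})\subseteq\mathrm{Fi}\,\mathbf{C}$, and finite dimensionality of $\mathrm{Ce}(\mathbf{C})$ is automatic once closure is established.

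For (b), I will show that $\e_i$ is $n$-central for every $n\ge i$ by verifying condition (3) of Theorem \ref{thm:centrale}. The candidate decomposition operator is $f_{\e_i}(a_1,\dots,a_n)=q_n(\e_i,a_1,\dots,a_n)=a_i$ by (C1), i.e.\ the $i$-th $n$-ary projection on $A$. Axioms D1 and D2 hold trivially for projections (the diagonal argument), and D3 holds because any projection $A^n\to A$ is automatically a homomorphism of $\mathbf{C}$. Hence $\e_i\in\mathrm{Ce}_n(\mathbf{C})\subseteq\mathrm{Ce}(\mathbf{C})$.

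For (c), the main obstacle is that the set of $n$-central elements is a priori only closed under the single operator $q_n$ (via Example \ref{exa:canonical}, which makes $\mathrm{Ce}_n(\mathbf{C})$ a pure $n\mathrm{BA}$), whereas I need closure under every $q_m$ simultaneously on the union $\mathrm{Ce}(\mathbf{C})=\bigcup_n \mathrm{Ce}_n(\mathbf{C})$. The key tool is Proposition \ref{prop:central}, which promotes centrality upward: given $c,c_1,\dots,c_m\in\mathrm{Ce}(\mathbf{C})$, I pick $N\ge m$ large enough so that $c$ and all the $c_i$ are simultaneously $N$-central. By (b), $\e_{m+1},\dots,\e_N$ are also $N$-central, so all arguments on the right-hand side of the identity (C4),
\[
q_m(c,c_1,\dots,c_m)=q_N(c,c_1,\dots,c_m,\e_{m+1},\dots,\e_N),
\]
lie in the pure $N\mathrm{BA}$ $\mathrm{Ce}_N(\mathbf{C})$, which is closed under $q_N$. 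Hence the left-hand side is $N$-central, and therefore belongs to $\mathrm{Ce}(\mathbf{C})$. The case $m=0$ is handled separately by $q_0(c)=c$ from (C3). Combining (a)--(c) yields that $\mathrm{Ce}(\mathbf{C})$ is a subalgebra of $\mathbf{C}_0$ sitting inside $\mathrm{Fi}\,\mathbf{C}$, as required.
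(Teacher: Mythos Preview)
Your proof is correct and follows essentially the same route as the paper: use Lemma~\ref{central1} for finite dimensionality, pad $q_m(c,\mathbf{c})$ to $q_N(c,\mathbf{c},\e_{m+1},\dots,\e_N)$ via (C4), promote all arguments to $N$-centrality, and then invoke closure of $\mathrm{Ce}_N(\mathbf{C})$ under $q_N$. The paper cites Lemma~\ref{central2} directly rather than Proposition~\ref{prop:central}, and leaves the $m$-centrality of the padding constants $\e_j$ implicit (they are the trivial $m$-central elements for $j\le m$), but the substance is the same.
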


\begin{proof} Let $a$ and $\mathbf b=b_1,\dots,b_n$ be elements of  $\mathrm{Ce}(\mathbf C)$. We show that $q_n(a,\mathbf b)$ is also central. By Lemma \ref{central1} $a,b_1,\dots,b_n$ are finite dimensional. Let $m\geq n$ be greater than the dimensions of $a,b_1,\dots,b_n$. Since by (C4) $q_n(a,\mathbf b)= q_m(a,\mathbf b,\e_{n+1},\dots,\e_m)$ and by Lemma \ref{central2} the elements $a,b_1,\dots,b_n, \e_{n+1},\dots,\e_m$ are $m$-central, 
then $q_m(a,\mathbf b,\e_{n+1},\dots,\e_m)$ is also $m$-central, because $\mathrm{Ce}_m(\mathbf C)$ is an $m\mathrm{BA}$.
\end{proof}

The variety generated by the class $\{ \mathrm{Ce}(\mathbf C) : \mathbf C\in\mathsf{CA}\}$ will be called the variety of pure $\omega$-Boolean-like algebras ($\omega\mathrm{BA}$, for short). We propose the problem of finding an equational axiomatisation for the variety of $\omega\mathrm{BA}$s.  

\bigskip

We conclude the section with the following useful result. 

\begin{proposition}\label{prop:de} Let $\mathbf C$ be a clone $\tau$-algebra and $c\in C$ of dimension $n$. Then  $c$ is $n$-central in $\mathbf C$ iff it is $n$-central in the pure reduct of $\mathbf C$.
\end{proposition}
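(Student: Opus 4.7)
The plan is the following. The forward implication is immediate: the pure reduct $\mathbf C_0$ has strictly fewer fundamental operations than $\mathbf C$, so any $n$-ary decomposition operator on $\mathbf C$ restricts to one on $\mathbf C_0$, i.e. $n$-centrality in $\mathbf C$ entails $n$-centrality in $\mathbf C_0$.

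For the converse, assume $c$ is $n$-central in $\mathbf C_0$ and consider $f_c(\mathbf x) = q_n(c,\mathbf x)$. Conditions (D1) and (D2) of Definition \ref{def:decomposition} involve only $q_n$ and the projections $\e_i$, so they already hold in $\mathbf C$. The only non-trivial task is to verify condition (D3) for the basic operations $\sigma\in\tau$, i.e. that $f_c$ commutes with each $\sigma$. Fix $\sigma$ of arity $k$ and set $s := \sigma(\e_1,\dots,\e_k)$. By (C6) and (C1) one has the identity $\sigma(a_1,\dots,a_k) = q_k(s,a_1,\dots,a_k)$ for all $a_i\in C$, so $\sigma$ is definable from $q_k$ together with the single element $s$.

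Now the crux of the argument: since $c$ is $n$-central in $\mathbf C_0$, $f_c$ is a homomorphism with respect to the operation $q_k$ of $\mathbf C_0$. Specialising the first argument of $q_k$ to the constant value $s$ in each of the $n$ entries of $f_c$, and invoking (D1) to reduce $f_c(s,\dots,s)$ to $s$, the homomorphism law for $q_k$ collapses exactly to the homomorphism law for $\sigma$ once both sides are rewritten via the identity $\sigma(\mathbf a)=q_k(s,\mathbf a)$. This gives the missing preservation of $\sigma$ by $f_c$ in $\mathbf C$, completing the argument.

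The step I expect to be the decisive one, though not a genuine obstacle, is spotting that (C6) provides exactly the bridge between $q_k$ and $\sigma$: every $\sigma$-application is a $q_k$-application with the distinguished first argument $s = \sigma(\e_1,\dots,\e_k)$, which is itself fixed by $f_c$ by virtue of (D1). The dimension hypothesis $\gamma(c)=n$ is not used actively in the computation; it is consistent with the assumption of $n$-centrality via Lemma \ref{central1} and serves to pin down the natural value of $n$ for which the equivalence is stated.
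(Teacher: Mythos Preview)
Your proof is correct and follows essentially the same approach as the paper. The paper's proof is a one-line remark that each $\sigma^{\mathbf C}$ is definable from $q_k^{\mathbf C}$ and the element $\sigma^{\mathbf C}(\e_1,\dots,\e_k)$ via the identity $\sigma^{\mathbf C}(a_1,\dots,a_k)=q_k^{\mathbf C}(\sigma^{\mathbf C}(\e_1,\dots,\e_k),a_1,\dots,a_k)$; you have spelled out explicitly how this identity, together with (D1) applied to $f_c(s,\dots,s)$, converts the homomorphism law for $q_k$ into the homomorphism law for $\sigma$.
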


\begin{proof}
 The conclusion follows because, for every $\sigma\in\tau$ of arity $k$, $\sigma^\mathbf C$ is defined in terms of $q_k^\mathbf C$ and the element $\sigma^\mathbf C(\e_1,\dots,\e_k)$:
 $\sigma^\mathbf C(a_1,\dots,a_k)=q_k^\mathbf C(\sigma^\mathbf C(\e_1,\dots,\e_k),a_1,\dots,a_k)$.
\end{proof}

 The meaning of the above proposition is that the decomposability of a variety as product of other varieties only depends on the pure clone algebraic structure of its free algebra.

\subsection{Minimal clone algebras}\label{sec:mca} 
Not every clone $\tau$-algebra is the clone $\mathcal V$-algebra associated with the free algebra of some variety $\mathcal V$ of type $\tau$ (see Theorem \ref{thm:ch}).
In this section we introduce minimal clone algebras and prove that 
a clone $\tau$-algebra $\mathbf C$ is minimal if and only if $\mathbf C\cong \mathbf{Cl}(\mathcal V)$ for some variety $\mathcal V$ of type $\tau$. 

Let $\mathbf C=(\mathbf C_\tau,q_n^\mathbf C,\e_i^\mathbf C)$ be a clone $\tau$-algebra. We denote by $M(\mathbf C)$ the minimal subalgebra of the algebra $(\mathbf C_\tau,\e_i^\mathbf C)_{i\geq 1}$. 
The algebra $M(\mathbf C)$ is an algebra in the type $\tau(\e)=\tau\cup\{\e_1,\dots,\e_n,\dots\}$. 
In Lemma \ref{lem:min} we show  that $M(\mathbf C)$ is also closed under the operators $q_n^\mathbf C$ and, as algebra in the type of clone $\tau$-algebras, it is the minimal subalgebra of $\mathbf C$.

A ground $\tau(\e)$-term is a term defined by the following grammar: $t,t_i::= \e_i\ |\ \sigma(t_1,\dots,t_k)$, where  
$\sigma\in \tau$. 

\begin{lemma}\label{lem:min}
 Let $\mathbf C$ be a clone $\tau$-algebra. Then the following conditions hold: 
 \begin{itemize}
 \item[(i)]   $b\in M(\mathbf C)$ if and only if $b=t^\mathbf C$ for some ground $\tau(\e)$-term $t$.
\item[(ii)]  $M(\mathbf C)$ is closed under the operators $q_n^\mathbf C$. 
\item[(iii)] The clone $\tau$-algebra $M(\mathbf C)$ is finite dimensional and it is the minimal subalgebra of $\mathbf C$.
\end{itemize}
\end{lemma}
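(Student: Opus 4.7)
The three parts follow by a routine induction on term structure, exploiting the axioms of Definition \ref{def:clonealg} and Lemma \ref{lem:preservare}. I would handle them in the order (i), (ii), (iii), since (ii) uses (i) and (iii) uses both.

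For (i), the set $N = \{t^\mathbf{C} : t \text{ is a ground } \tau(\e)\text{-term}\}$ clearly contains every $\e_i^\mathbf{C}$ and is closed under each $\sigma^\mathbf{C}$, so $M(\mathbf{C}) \subseteq N$. The converse inclusion is an immediate induction on the ground term $t$: both the base case $t = \e_i$ and the inductive step $t = \sigma(t_1,\dots,t_k)$ produce elements of the minimal $\tau(\e)$-subalgebra $M(\mathbf{C})$.

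For (ii), given $a, b_1,\dots,b_n \in M(\mathbf{C})$, write $a = t^\mathbf{C}$ using (i) and argue by induction on $t$ that $q_n^\mathbf{C}(a,\mathbf b) \in M(\mathbf{C})$. If $t = \e_i$, then by (C1) when $i \leq n$ we get $q_n(\e_i,\mathbf b) = b_i \in M(\mathbf{C})$, and by (C2) when $i > n$ we get $q_n(\e_i,\mathbf b) = \e_i \in M(\mathbf{C})$. If $t = \sigma(t_1,\dots,t_k)$, then (C6) gives
\[
q_n(\sigma(t_1^\mathbf{C},\dots,t_k^\mathbf{C}),\mathbf b) = \sigma(q_n(t_1^\mathbf{C},\mathbf b),\dots,q_n(t_k^\mathbf{C},\mathbf b)),
\]
and the inductive hypothesis together with closure of $M(\mathbf{C})$ under $\sigma^\mathbf{C}$ yield the claim. (Alternatively, one can invoke Lemma \ref{lem:24}: the endomorphism $s_\mathbf{b}$ sends each $\e_i$ into $M(\mathbf{C})$, hence restricts to an endomorphism stabilising $M(\mathbf{C})$.)

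For (iii), finite dimensionality follows by another induction on ground $\tau(\e)$-terms. For $t = \e_i$ we have $\gamma(\e_i^\mathbf{C}) = i < \omega$. For $t = \sigma(t_1,\dots,t_k)$, letting $r = \max\{\gamma(t_j^\mathbf{C}) : 1 \leq j \leq k\}$, Lemma \ref{lem:preservare}(i) gives $\gamma(\sigma^\mathbf{C}(t_1^\mathbf{C},\dots,t_k^\mathbf{C})) \leq r < \omega$, so every element of $M(\mathbf{C})$ is finite dimensional. Finally, by (i) and (ii) $M(\mathbf{C})$ is a subalgebra of $\mathbf{C}$ in the full clone $\tau$-algebra signature, and any such subalgebra must contain all the $\e_i^\mathbf{C}$ and be closed under every $\sigma^\mathbf{C}$, hence contains $M(\mathbf{C})$; so $M(\mathbf{C})$ is indeed the minimal clone $\tau$-subalgebra of $\mathbf{C}$. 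No step presents a real obstacle, the only mildly delicate point being the verification of (ii), which is precisely where the clone-algebra axioms (C1), (C2), (C6) governing the interaction between $q_n$ and both constants and basic operators are put to work.
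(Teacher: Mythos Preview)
Your proposal is correct and follows essentially the same route as the paper: (i) is declared trivial there, (ii) is done exactly as you describe by induction on the ground $\tau(\e)$-term in the first argument of $q_n$ using (C1), (C2), (C6), and for (iii) the paper also argues by induction on ground $\tau(\e)$-terms, showing directly that $t^{\mathbf C}$ is independent of $\e_k$ whenever $\e_k$ does not occur in $t$, which is the same content you obtain via Lemma~\ref{lem:preservare}(i).
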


\begin{proof} (i) Trivial.

 (ii) The proof is by induction over the complexity of the ground $\tau(\e)$-terms in the first argument of $q_n$.
 
(iii) By induction on the complexity of a ground $\tau(\e)$-term $t$, if $\e_k$ does not occur in $t$, then $t^\mathbf C$ is independent of $\e_k$. It follows that, if $t=t(\e_1,\dots,\e_n)$, then $t^\mathbf C$ has dimension $\leq n$.
\end{proof}

\begin{definition}
  We say that a clone $\tau$-algebra $\mathbf C$ is \emph{minimal} if $\mathbf C= M(\mathbf C)$.
\end{definition}

We remark that,  if $h:\mathbf C\to\mathbf D$ is an onto homomorphism of clone $\tau$-algebras and $\mathbf C$ is minimal, then $\mathbf D$ is minimal.

%
%
%

 The translation of the ground $\tau(\e)$-terms into $\tau$-terms in the variables $v_1,v_2,\dots,v_n,\dots$ is defined by
$\e_i^*=v_i$;  $\sigma(t_1,\dots,t_n)^*=\sigma(t_1^*,\dots,t_n^*)$.

\begin{theorem}\label{thm:ch2} Let $\mathbf C=(\mathbf C_\tau,q_n^\mathbf C,\e_i^\mathbf C)$ be a minimal  clone $\tau$-algebra,  $\mathrm{Var}(\mathbf C_\tau)$ be the variety of $\tau$-algebras generated by $\mathbf C_\tau$, and $ \mathrm{Var}(\mathbf C)$ be the variety of clone $\tau$-algebras generated by $\mathbf C$. Then,  
\begin{itemize}
\item[(i)] $\mathbf C_\tau$ is  the free algebra over a countable set of generators in the variety $ \mathrm{Var}(\mathbf C_\tau)$;  
\item[(ii)] $\mathbf C$ is the clone $ \mathrm{Var}(\mathbf C_\tau)$-algebra;
\item[(iii)] $\mathbf C$ is  the free algebra over an empty set of generators in the variety $ \mathrm{Var}(\mathbf C)$.
\end{itemize} 
\end{theorem}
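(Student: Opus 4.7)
The plan is to derive (i) first, then observe that (ii) is essentially a refinement of (i) once one tracks the operators $q_n$, and finally obtain (iii) from the definition of minimality.

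For (i), I would start from $\mathbf C = M(\mathbf C)$, so by Lemma \ref{lem:min}(i) every element of $C$ is of the form $t^\mathbf C$ for some ground $\tau(\e)$-term $t$; equivalently, $\{\e_i^\mathbf C : i \geq 1\}$ generates $\mathbf C_\tau$ under the $\tau$-operations. To establish freeness in $\mathrm{Var}(\mathbf C_\tau)$, by the universal mapping property it suffices to check that whenever $t^*(v_1,\ldots,v_m)$ and $u^*(v_1,\ldots,v_m)$ are $\tau$-terms with $t^{*,\mathbf C}(\e_1^\mathbf C,\ldots,\e_m^\mathbf C) = u^{*,\mathbf C}(\e_1^\mathbf C,\ldots,\e_m^\mathbf C)$, the identity $t^* = u^*$ holds universally in $\mathbf C_\tau$ (and hence, by Birkhoff, in $\mathrm{Var}(\mathbf C_\tau)$). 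The decisive step here is Lemma \ref{lem:24}: for any tuple $\mathbf b = b_1,\ldots,b_m \in C$, the map $s_\mathbf b(a) = q_m^\mathbf C(a,\mathbf b)$ is a $\tau$-endomorphism of $\mathbf C_\tau$ sending $\e_i^\mathbf C \mapsto b_i$. Applying $s_\mathbf b$ to both sides of the hypothesized equality yields $t^{*,\mathbf C}(\mathbf b) = u^{*,\mathbf C}(\mathbf b)$, which is the required universal validity.

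For (ii), the isomorphism constructed in (i) already identifies $\mathbf C_\tau$ with the free algebra $\mathbf F_{\mathrm{Var}(\mathbf C_\tau)}$ via $\e_i^\mathbf C \mapsto v_i$. What remains is to verify that this map preserves the $q_n$. This will follow by comparing the definition of $q_n^\mathbf F$ in Example \ref{exa:free} with the action of $s_\mathbf b$ on $\mathbf C_\tau$: both are the unique $\tau$-endomorphism sending the $i$-th generator to $b_i$ for $i \leq n$ while fixing the generators of index $> n$ (the latter clause for $s_\mathbf b$ being provided by the last part of Lemma \ref{lem:24}). For (iii), minimality yields that $\mathbf C$ is generated as a clone $\tau$-algebra by its nullary operators, so any homomorphism out of $\mathbf C$ is determined uniquely. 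For existence, given $\mathbf D \in \mathrm{Var}(\mathbf C)$, I would set $\phi(t^\mathbf C) = t^\mathbf D$ for each ground $\tau(\e)$-term $t$; if $t^\mathbf C = u^\mathbf C$ then the ground identity $t = u$ holds in $\mathbf C$, hence in $\mathbb{HSP}(\mathbf C) = \mathrm{Var}(\mathbf C)$, so $t^\mathbf D = u^\mathbf D$ and $\phi$ is well-defined; that $\phi$ respects all operations is a routine syntactic unravelling.

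The main obstacle will be the well-definedness check in (i): upgrading an equality evaluated at the specific tuple $(\e_1^\mathbf C,\ldots,\e_m^\mathbf C)$ to universal validity of an identity throughout $\mathbf C_\tau$. This hinges entirely on the substitution endomorphism of Lemma \ref{lem:24}, which is the algebraic content of axioms (C1)--(C6); without this machinery one cannot pass from a pointwise equality to a universal identity, and the result would fail for a generic generating set of an arbitrary $\tau$-algebra.
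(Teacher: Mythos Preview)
Your proposal is correct and follows essentially the same route as the paper: both hinge on Lemma~\ref{lem:24} (the substitution endomorphism $s_\mathbf b$) to establish freeness of $\mathbf C_\tau$ and to identify $q_n^\mathbf C$ with the operator $q_n^\mathbf F$ of Example~\ref{exa:free}, and both derive (iii) from minimality together with the transfer of ground identities to $\mathrm{Var}(\mathbf C)$. Your treatment of (i) via the criterion ``equality at the generators implies universal validity'' is in fact more explicit about well-definedness than the paper's direct construction of the extension $g^*$, but the underlying argument is the same.
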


\begin{proof} (i) We show that $\mathbf C_\tau$ is isomorphic to the free algebra  over a countable set $\{\e_1^\mathbf C,\dots,\e_n^\mathbf C,\dots\}$ of generators in the variety $\mathrm{Var}(\mathbf C_\tau)$. 
 Let $\mathbf A\in  \mathrm{Var}(\mathbf C_\tau)$, $g: \{\e_1^\mathbf C,\dots,\e_n^\mathbf C,\dots\}\to A$ be an arbitrary map, and $d_i=g(\e_i^\mathbf C)$.
 We extend $g$ to a map $g^*: C\to A$ as follows. Let $b\in C$ of dimension $k$ and let $m\geq k$.
 Since $\mathbf C$ is minimal, there exists a ground $\tau(\e)$-term $t$ such that $t^\mathbf C=b$.
 We define 
 $$g^*(b)=(t^*)^{\mathbf A,m}(d_1,\dots,d_m),$$
 where $t^*$ is a $\tau$-term and  $(t^*)^{\mathbf A,m}$ is the term operation defined in Section \ref{sec:to}. 
 The definition of $g^*(b)$ is independent of $m\geq k$.
We now show that $g^*$ is a homomorphism of $\tau$-algebras. 
Let $\sigma\in\tau$ of arity $n$, $\mathbf b=b_1,\dots,b_n\in C$ and $\mathbf e=\e_1,\dots,\e_n$. 
Let $m\geq n$ be a natural number greater than the maximal number among the dimensions of the elements $b_1,\dots,b_n,\sigma^\mathbf C(\mathbf b)$.
Let $\mathbf d=d_1,\dots,d_m$. 
We now show that 
$$g^*( \sigma^\mathbf C(\mathbf b))=\sigma^\mathbf A(g^*(b_1),\dots,g^*(b_n)).$$
If $b_i=t_i^{\mathbf C}$ for some ground $\tau(\e)$-term $t_i$ ($i=1,\dots,n$), then $\sigma^\mathbf C(\mathbf b)=\sigma(t_1,\dots,t_n)^\mathbf C$.
Recalling that $\sigma^\mathbf C(\mathbf b)=q^\mathbf C_n(\sigma^\mathbf C(\mathbf e),\mathbf b)$, then we have:
$$\sigma^\mathbf A(g^*(b_1),\dots,g^*(b_n))=
\sigma^\mathbf A((t_1^*)^{\mathbf A,m}(\mathbf d),\dots,(t_n^*)^{\mathbf A,m}(\mathbf d))=
(\sigma(t_1,\dots,t_n)^*)^{\mathbf A,m}(\mathbf d)=g^*(\sigma^\mathbf C(\mathbf b)).$$

(ii) By Lemma \ref{lem:24} the map $x \mapsto q_n^\mathbf C(x,\mathbf b)$ is the unique endomorphism
of the free algebra $\mathbf C_\tau$ which sends $\e_i$ to $b_i$ ($1\leq i\leq n$). 

(iii) Let $\mathbf A\in  \mathrm{Var}(\mathbf C)$. Then $\mathbf A_\tau\in  \mathrm{Var}(\mathbf C_\tau)$. By (i) there exists a unique homomorphism $f$ from   $\mathbf C_\tau$ into $\mathbf A_\tau$ such that $f(\e_i^\mathbf C)= \e_i^\mathbf A$.
Since $\mathbf C$ is minimal, then $f$ is onto $M(\mathbf A)$ and, for every ground $\tau(\e)$-term $t$, we have $f(t^\mathbf C)=t^\mathbf A$.
The proof that $f$ preserves $q_n$ is by induction over the complexity of the first argument of $q_n$.
\end{proof}

\begin{corollary} A clone $\tau$-algebra $\mathbf C$ is minimal if and only if $\mathbf C\cong \mathbf{Cl}(\mathcal V)$ for some variety $\mathcal V$ of type $\tau$. 
\end{corollary}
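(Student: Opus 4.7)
The plan is to verify both implications directly from Theorem \ref{thm:ch2} and Lemma \ref{lem:min}, with essentially no additional machinery needed.

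For the forward direction, suppose $\mathbf{C}$ is a minimal clone $\tau$-algebra. I would set $\mathcal V := \mathrm{Var}(\mathbf{C}_\tau)$ and then simply invoke Theorem \ref{thm:ch2}(i)-(ii): part (i) identifies $\mathbf{C}_\tau$ with the free $\mathcal V$-algebra $\mathbf{F}_\mathcal V$ on the countably infinite set $\{\e_1^{\mathbf C},\e_2^{\mathbf C},\dots\}$ of generators, and part (ii) says that the clone $\tau$-algebra structure on $\mathbf{C}$ coincides with the clone $\mathcal V$-algebra structure defined in Definition \ref{def:cva}. Concretely, the isomorphism sends $\e_i^{\mathbf C}$ to $v_i = \e_i^{\mathbf F}$ and, since $\mathbf C$ is minimal, is uniquely determined on every other element as the value of the corresponding ground $\tau(\e)$-term.

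For the converse, I would show that every $\mathbf{Cl}(\mathcal V)$ is minimal. Unpacking Definition \ref{def:cva}, the universe of $\mathbf{Cl}(\mathcal V)$ is the free algebra $\mathbf F_\mathcal V$ over $I=\{v_1,v_2,\dots\}$, with $\e_i^{\mathbf F}=v_i$. Every element of $\mathbf F_\mathcal V$ is the equivalence class of some $\tau$-term $t(v_{i_1},\dots,v_{i_k})$; rewriting $v_{i_j}$ as $\e_{i_j}$ exhibits this element as $u^{\mathbf F}$ for the ground $\tau(\e)$-term $u := t(\e_{i_1},\dots,\e_{i_k})$. By Lemma \ref{lem:min}(i) this means every element lies in $M(\mathbf{Cl}(\mathcal V))$, so $\mathbf{Cl}(\mathcal V)$ is minimal. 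Finally, minimality is preserved under isomorphism (since the property $\mathbf D = M(\mathbf D)$ is defined in terms of the generated subalgebra of the $\tau(\e)$-reduct), so $\mathbf C \cong \mathbf{Cl}(\mathcal V)$ yields that $\mathbf C$ itself is minimal.

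I do not expect any genuine obstacle: everything that matters has already been done in Theorem \ref{thm:ch2} and Lemma \ref{lem:min}. The only mild subtlety worth flagging explicitly in the writeup is the translation between ground $\tau(\e)$-terms and ordinary $\tau$-terms in the generators $v_i$, which is exactly the inverse of the map $t \mapsto t^*$ introduced just before Theorem \ref{thm:ch2}; this bijection makes the equivalence between \emph{every element is named by some ground $\tau(\e)$-term} (minimality) and \emph{every element is a class of some $\tau$-term in the generators} (freeness) into an immediate unfolding of definitions.
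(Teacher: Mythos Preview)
Your proposal is correct and follows exactly the approach the paper intends: the forward direction is a direct citation of Theorem~\ref{thm:ch2}(ii), and the converse is the straightforward observation (implicit in Proposition~\ref{prop:free}(1) and Lemma~\ref{lem:min}(i)) that every element of $\mathbf F_{\mathcal V}$ is the value of a ground $\tau(\e)$-term. The paper leaves the corollary unproved precisely because nothing beyond this unfolding is required.
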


Let  $\mathcal V$ be a variety of $\tau$-algebras axiomatised by the equational theory $T$ and
 $\mathcal V^{cl}$ be the variety of clone $\tau$-algebras satisfying $T$. Since $\mathbf{Cl}(\mathcal{V})$ satisfies $T$, then $\mathrm{Var}(\mathbf{Cl}(\mathcal{V}))\subseteq \mathcal V^{cl}$. In the following proposition we show that the inclusion is sometimes strict.


\begin{proposition} 
\begin{itemize}
\item[(i)]  The clone $\mathcal V$-algebra $\mathbf{Cl}(\mathcal{V})$ is the free algebra over an empty set of generators in the variety $\mathcal V^{cl}$;
\item[(ii)] $\mathcal V^{cl}$ is not in general generated by $\mathbf{Cl}(\mathcal{V})$.
\end{itemize}
\end{proposition}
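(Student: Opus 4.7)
For part (i), the plan is to verify directly that $\mathbf{Cl}(\mathcal V)$ is the initial object of $\mathcal V^{cl}$, closely mirroring the argument behind Theorem \ref{thm:ch2}(iii). First I would observe that $\mathbf{Cl}(\mathcal V)$ is a minimal clone $\tau$-algebra: since the generators $v_i=\e_i^{\mathbf F}$ generate $\mathbf F_\mathcal V$ as a $\tau$-algebra, Lemma \ref{lem:min} implies that every element of $F_\mathcal V$ is of the form $t^{\mathbf F}$ for some ground $\tau(\e)$-term $t$. Given any $\mathbf A\in\mathcal V^{cl}$, I would define $h:\mathbf{Cl}(\mathcal V)\to\mathbf A$ by $h(t^{\mathbf F})=t^{\mathbf A}$, and check well-definedness as follows: if $t^{\mathbf F}=u^{\mathbf F}$ in $\mathbf F_\mathcal V$, then by freeness of $\mathbf F_\mathcal V$ we have $\mathcal V\models t^*=u^*$, where $t^*$ denotes the $\tau$-term obtained from $t$ by replacing each $\e_i$ with $v_i$; hence $T\vdash t^*=u^*$, and because $\mathbf A_\tau\models T$ we get $t^{\mathbf A}=u^{\mathbf A}$. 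Preservation of the $\e_i$ and of each $\sigma\in\tau$ is immediate, while preservation of $q_n$ follows from Lemma \ref{lem:24}, which identifies $q_n^{\mathbf A}(-,s_1,\dots,s_n)$ with the $\tau$-endomorphism of $\mathbf A_\tau$ sending $\e_i$ to $s_i$ for $i\leq n$ and fixing $\e_j$ for $j>n$; the analogous fact in $\mathbf{Cl}(\mathcal V)$ is the very definition of $q_n^{\mathbf F}$. Uniqueness is a consequence of minimality: any homomorphism is forced to send each $\e_i^{\mathbf F}$ to $\e_i^{\mathbf A}$, and this determines its values on all ground $\tau(\e)$-terms.

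For part (ii), the plan is to produce an explicit counterexample. I would take $\tau=\emptyset$ and $\mathcal V$ the variety of all sets (empty equational theory $T$), so that $\mathcal V^{cl}$ coincides with the variety $\mathsf{CA}_0$ of pure clone algebras. Then $\mathbf F_\mathcal V=\{v_1,v_2,\dots\}$ and the universe of $\mathbf{Cl}(\mathcal V)$ is $\{\e_1,\e_2,\dots\}$. I would focus on the equation $q_1(x,x)=x$. It holds in $\mathbf{Cl}(\mathcal V)$: for $x=\e_1$ it reduces to axiom (C1), and for $x=\e_i$ with $i\geq 2$ it reduces to axiom (C2). On the other hand, the full functional clone algebra $\mathbf O^{(\omega)}_{\{0,1\}}$ is a pure clone algebra, hence belongs to $\mathcal V^{cl}$, and it violates the equation: for the negation function $\varphi(s)=1-s_1$, a direct computation via Definition \ref{def:fca} gives $q_1^\omega(\varphi,\varphi)(s)=\varphi(s[\varphi(s)])=1-\varphi(s)=s_1$, whereas $\varphi(s)=1-s_1$. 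Therefore $\mathbf O^{(\omega)}_{\{0,1\}}\in\mathcal V^{cl}\setminus\mathrm{Var}(\mathbf{Cl}(\mathcal V))$, establishing the strict inclusion.

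The main obstacle lies in part (i), specifically in reconciling the two presentations of $q_n$: on $\mathbf{Cl}(\mathcal V)$ it is defined concretely via finite endomorphisms of $\mathbf F_\mathcal V$, while on a general $\mathbf A\in\mathcal V^{cl}$ it is an abstract operator satisfying only the clone-algebra axioms. Lemma \ref{lem:24} is precisely the bridge; once it is invoked, preservation of $q_n$ by the candidate homomorphism reduces to a routine induction on the complexity of the ground $\tau(\e)$-term representing its first argument, and all the bookkeeping about well-definedness and preservation of $\tau$-operations falls into place.
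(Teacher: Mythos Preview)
Your proposal is correct. Part~(i) follows essentially the same route as the paper: the paper invokes the freeness of $\mathbf F_{\mathcal V}$ to obtain the unique $\tau$-homomorphism $f$ with $f(\e_i^{\mathbf F})=\e_i^{\mathbf A}$ and then argues, by reference to Theorem~\ref{thm:ch2}(iii), that $f$ preserves each $q_n$ via an induction on the ground $\tau(\e)$-term in the first argument --- exactly your plan, only stated more tersely. One minor wording issue: Lemma~\ref{lem:24} only shows $q_n^{\mathbf A}(-,\mathbf b)$ is \emph{a} $\tau$-endomorphism with the stated effect on the $\e_i$'s, not \emph{the} endomorphism (for general $\mathbf A\in\mathcal V^{cl}$ the $\e_i$'s need not generate $\mathbf A_\tau$); but your induction does not need uniqueness, so the argument goes through as you outline.

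For part~(ii) both you and the paper use the same variety $\mathcal V=\mathcal S$ (sets, empty type) and the same strategy of exhibiting an identity valid in $\mathbf{Cl}(\mathcal S)$ but not in all pure clone algebras. Your separating identity $q_1(x,x)=x$ is strictly simpler than the paper's choice
\[
q_n(y,\,q_n(y,x_{11},\dots,x_{1n}),\dots,q_n(y,x_{n1},\dots,x_{nn}))=q_n(y,x_{11},\dots,x_{nn}),
\]
which the paper refutes (for $n=2$) using the binary ``or'' function on $\{0,1\}$. Your negation-based counterexample is cleaner and the verification is shorter; the paper's identity, on the other hand, is an instance of axiom~(D2) for decomposition operators and thus connects to the theme of $n$-centrality, which may explain its choice. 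Either works.
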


\begin{proof} (i) Let $\mathbf A=(\mathbf A_\tau,q_n^\mathbf A,\e_i^\mathbf A)\in \mathcal V^{cl}$.
Since $\mathbf A_\tau\in \mathcal V$, then there exists a unique homomorphism $f$ of $\tau$-algebras from $\mathbf{F}_\mathcal{V}$ into $\mathbf A_\tau$ such that $f(\e_i^\mathbf F)=f(v_i)= \e_i^\mathbf A$. The proof that $f$ preserves the operators $q_n^\mathbf F$ is similar to that of Theorem \ref{thm:ch2}(ii). 

(ii) If $\mathcal S$ is the class of all sets (i.e., the variety of all algebras in the empty type), then $\mathcal S^{cl}$ is the variety of all pure clone algebras. We show that $\mathbf{Cl}(\mathcal{S})$ does not genetate $\mathcal S^{cl}$.
 $\mathbf{Cl}(\mathcal{S})=(I,q_n^\mathbf I,\e_i^\mathbf I)$ has the set  $I=\{v_1,v_2,\dots,v_n,\dots\}$ as universe and $\e_i^\mathbf I=v_i$. The algebra $\mathbf I$ satisfies the identity
\begin{equation}\label{eq:identity}
q_n(y, q_n(y, x_{11},x_{12},\dots,x_{1n}),\dots,q_n(y,x_{n1},x_{n2},\dots,x_{nn}))=q_n(y,x_{11},\dots,x_{nn})
\end{equation}
 but $\mathcal S^{cl}$ does not satisfy it. Here is a counterexample.
 Let $2=\{0,1\}$ and $f:2^2\to 2$ be a function such that $f(0,0)=0$ and $f(0,1)=f(1,0)=f(1,1)=1$.
 Then $1=f(f(0,1),f(1,0))\neq f(0,0)$. Then the pure functional clone algebra of universe $\mathcal O_2^{(\omega)}$ does not satisfies identity (\ref{eq:identity}): 
  $$q_2^\omega(f^\top,q_2^\omega(f^\top,\e_1^\omega,\e_2^\omega), q_2^\omega(f,\e_2^\omega,\e_1^\omega))\neq q_2^\omega(f^\top,\e_1^\omega,\e_1^\omega).$$
Let $r\in 2^\omega$ such that $r_1=0$ and $r_2=1$. Then,
 \[
\begin{array}{lll}
 q_2^\omega(f^\top,q_2^\omega(f^\top,\e_1^\omega,\e_2^\omega), q_2^\omega(f^\top,\e_2^\omega,\e_1^\omega))(r)  
  & =  & f^\top(r[q_2^\omega(f^\top,\e_1^\omega,\e_2^\omega)(r),q_2^\omega(f^\top,\e_2^\omega,\e_1^\omega)(r)])  \\
  &  = & f^\top(r[f^\top(r[\e_1^\omega(r),\e_2^\omega(r)]),f^\top(r[\e_2^\omega(r),\e_1^\omega(r)])])  \\
  &  = & f^\top(r[f^\top(r[0,1]),f^\top(r[1,0])])  \\
   &  = & f^\top(r[f(0,1),f(1,0)])  \\ 
   &  = & f(f(0,1),f(1,0)) =1 \\ 
\end{array}
\]
while $q_2^\omega(f^\top,\e_1^\omega,\e_1^\omega)(r) = f(0,0)=0$.
\end{proof}

In Proposition \ref{exa:rc} below we compare on minimality a clone $\tau$-algebra $\mathbf C$ and the clone $\rho_\mathbf C$-algebra of its $\mathbf C$-representable functions. 

Let $\mathbf C=(\mathbf C_\tau,q_n^\mathbf C,\e_i^\mathbf C)$ be a finite-dimensional clone $\tau$-algebra. We recall that (i) For every $\tau$-term $t$,  $T_t^{\mathbf C_\tau}$ is the block of term operations determined by $t$ (see Section \ref{sec:to}  and  Example \ref{exa:to}); (ii) For every $a\in C$, $R(a)$ is the block of $\mathbf C$-representable functions determined by $a$; (iii) The set $R_\mathbf C$ of $\mathbf C$-representable functions is a clone (see Proposition \ref{prop:rr}).

\begin{lemma}\label{lem:ratt} Let $\mathbf C=(\mathbf C_\tau,q_n^\mathbf C,\e_i^\mathbf C)$ be a finite-dimensional  clone $\tau$-algebra. Then $a\in M(\mathbf C)$ if and only if
$R(a)=T_t^{\mathbf C_\tau}$ for some $\tau$-term $t$.
\end{lemma}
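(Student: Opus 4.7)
The plan is to reduce both directions of the biconditional to a single identity proved by structural induction on ground $\tau(\e)$-terms: for every ground $\tau(\e)$-term $s$,
\[
R(s^\mathbf C) = T_{s^*}^{\mathbf C_\tau}.
\]
Since two blocks of $\mathcal O_C$ sharing a single element coincide, this set-equality will follow from the pointwise identity
\[
q_m^\mathbf C(s^\mathbf C, c_1, \dots, c_m) = (s^*)^{\mathbf C, m}(c_1, \dots, c_m)
\]
at any one arity $m$ that is large enough to lie in both blocks (larger than the dimension of $s^\mathbf C$ and than the maximal variable index occurring in $s$).

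First I would establish the pointwise identity by induction on $s$. The base case $s = \e_i$ collapses by axiom (C1) to $q_m^\mathbf C(\e_i^\mathbf C, \mathbf c) = c_i = v_i^{\mathbf C, m}(\mathbf c)$ for $m \geq i$. In the inductive step $s = \sigma(s_1, \dots, s_n)$, writing $t_j := s_j^*$, axiom (C6) yields
\[
q_m^\mathbf C\bigl(\sigma^\mathbf C(s_1^\mathbf C, \dots, s_n^\mathbf C), \mathbf c\bigr) = \sigma^\mathbf C\bigl(q_m^\mathbf C(s_1^\mathbf C, \mathbf c), \dots, q_m^\mathbf C(s_n^\mathbf C, \mathbf c)\bigr),
\]
which by the induction hypothesis equals $\sigma^\mathbf C(t_1^{\mathbf C, m}(\mathbf c), \dots, t_n^{\mathbf C, m}(\mathbf c)) = (s^*)^{\mathbf C, m}(\mathbf c)$, the last step being exactly the recursive clause defining term operations recalled in Section \ref{sec:to}. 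Note that the right-hand side is a $\tau$-term operation of $\mathbf C_\tau$, while the left-hand side is the $m$-ary representative of the block $R(s^\mathbf C)$; that the term-operation clone $\mathrm{Clo}\,\mathbf C_\tau$ sits inside $R_\mathbf C$ follows from Lemma \ref{cor:sigma} and the fact that $R_\mathbf C$ is a clone (Proposition \ref{prop:rr}(4)), so the comparison is meaningful.

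With the identity in hand, the forward direction is immediate: if $a \in M(\mathbf C)$, Lemma \ref{lem:min}(i) supplies a ground $\tau(\e)$-term $s$ with $a = s^\mathbf C$, and the identity gives $R(a) = T_{s^*}^{\mathbf C_\tau}$, so one takes $t := s^*$. For the converse, given $R(a) = T_t^{\mathbf C_\tau}$, I would let $s$ be the ground $\tau(\e)$-term obtained from $t$ by replacing each variable $v_i$ by $\e_i$, so that $s^* = t$; the identity then yields $R(s^\mathbf C) = T_t^{\mathbf C_\tau} = R(a)$, and Proposition \ref{prop:rr}(3) forces $a = s^\mathbf C$, hence $a \in M(\mathbf C)$. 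I do not expect any serious obstacle here; the only bookkeeping is the choice of $m$ large enough in the base case of the induction, and the finite-dimensionality of $\mathbf C$ is used solely to guarantee that $R(a)$ is a well-defined block of $\mathbf C$-representable functions in the first place.
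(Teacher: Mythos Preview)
Your proof is correct and follows essentially the same approach as the paper: the inductive identity $q_m^\mathbf C(s^\mathbf C,\mathbf c)=(s^*)^{\mathbf C,m}(\mathbf c)$ is exactly what the paper compresses into the chain ``$=_{(C6)}\dots=_{(C6,C1)}$'' in its forward direction. The only cosmetic difference is in the converse, where the paper avoids invoking Proposition~\ref{prop:rr}(3) by directly evaluating $a=q_n^\mathbf C(a,\e_1^\mathbf C,\dots,\e_n^\mathbf C)=u^{\mathbf C_\tau}(\e_1^\mathbf C,\dots,\e_n^\mathbf C)$ from the hypothesis $R(a)=T_u^{\mathbf C_\tau}$; your route via $R(s^\mathbf C)=R(a)\Rightarrow s^\mathbf C=a$ is equally valid.
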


\begin{proof} ($\Rightarrow$) If $a\in M(\mathbf C)$ has dimension $n$, then by Lemma \ref{lem:min}   
$a=t^\mathbf C(\e_1^\mathbf C,\dots,\e_n^\mathbf C)$ for some $\tau(\e)$-term $t= t(\e_1,\dots,\e_n)$. Let $t^*= t(v_1,\dots,v_n)$ be the $\tau$-term translation of $t$. Since
  $$q_k^\mathbf C(a,b_1,\dots,b_k)= q_k^\mathbf C(t^\mathbf C(\e_1^\mathbf C,\dots,\e_n^\mathbf C),b_1,\dots,b_k)=_{(C6)}\dots=_{(C6,C1)} (t^*)^{\mathbf C,k}(b_1,\dots,b_k)$$
  for every $k\geq n$ and $b_1,\dots,b_k\in C$, then $R(a)=T_{t^*}^{\mathbf C_\tau}$.
  
  ($\Leftarrow$) If $R(a)=T_u^{\mathbf C_\tau}$ for some $\tau$-term $u=u(v_1,\dots,v_n)$, then $a=q_n^\mathbf C(a,\e_1^\mathbf C,\dots,\e_n^\mathbf C)= u^{\mathbf C_\tau}(\e_1^\mathbf C,\dots,e_n^\mathbf C)$. By Lemma \ref{lem:min} $a\in M(\mathbf C)$. 
\end{proof}

\begin{proposition}\label{exa:rc}
Let $\mathbf C=(\mathbf C_\tau,q_n^\mathbf C,\e_i^\mathbf C)$ be a finite-dimensional clone $\tau$-algebra and $\overline{\mathbf R}_\mathbf C=(\mathbf R_\mathbf C,q_n^\mathbf C,\e_i^\mathbf C)$ be the clone $\rho_\mathbf C$-algebra of $\mathbf C$-representable functions. Then the following conditions hold: 
\begin{itemize}
\item[(i)]  $\overline{\mathbf R}_\mathbf C$ is minimal.
\item[(ii)] $\mathbf C$ is minimal if and only if $R_\mathbf C = \mathrm{Clo}\,\mathbf C_\tau$.
\end{itemize}
\end{proposition}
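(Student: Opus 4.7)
The plan is to reduce both parts to Lemma \ref{lem:ratt} and the characterisation of $M(\mathbf{C})$ via ground terms (Lemma \ref{lem:min}(i)), via a single key observation: for every $a\in C$ of finite dimension $k$, the $k$-ary function $f_a(c_1,\dots,c_k):=q_k^\mathbf{C}(a,c_1,\dots,c_k)$ is $\mathbf{C}$-representable. Indeed $f_a(\e_1,\dots,\e_k)=a$ has dimension $\leq k$ (as $\mathbf{C}$ is finite dimensional, so every element is finite dimensional; and here $a$ has dimension exactly $k$), and $q_k^\mathbf{C}(f_a(\e_1,\dots,\e_k),\mathbf c)=q_k^\mathbf{C}(a,\mathbf c)=f_a(\mathbf c)$, so Definition \ref{def:representable} is satisfied. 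Thus $\overline{f_a}\in\rho_\mathbf{C}$.

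For part (i), I would use this $f_a$ directly. Given $a\in C$ of dimension $k$, evaluating the ground $\rho_\mathbf{C}(\e)$-term $\overline{f_a}(\e_1,\dots,\e_k)$ in $\overline{\mathbf{R}}_\mathbf{C}$ yields $f_a(\e_1^\mathbf{C},\dots,\e_k^\mathbf{C})=a$. By Lemma \ref{lem:min}(i) applied to $\overline{\mathbf{R}}_\mathbf{C}$ (taking the type there to be $\rho_\mathbf{C}$), this places $a$ in $M(\overline{\mathbf{R}}_\mathbf{C})$. Since $a$ was arbitrary, $\overline{\mathbf{R}}_\mathbf{C}=M(\overline{\mathbf{R}}_\mathbf{C})$, i.e.\ $\overline{\mathbf{R}}_\mathbf{C}$ is minimal.

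For part (ii), the inclusion $\mathrm{Clo}\,\mathbf{C}_\tau\subseteq R_\mathbf{C}$ holds unconditionally: by Lemma \ref{cor:sigma} every basic operation is $\mathbf{C}$-representable, $R_\mathbf{C}$ is a clone on $A$ by Proposition \ref{prop:rr}(4), and $\mathrm{Clo}\,\mathbf{C}_\tau$ is the smallest such clone. So the content is whether $R_\mathbf{C}\subseteq\mathrm{Clo}\,\mathbf{C}_\tau$. Suppose $\mathbf{C}$ is minimal and take $f\in R_\mathbf{C}^{(n)}$; let $a=f(\e_1,\dots,\e_n)$. Since $a\in C=M(\mathbf{C})$, Lemma \ref{lem:ratt} gives $R(a)=T_t^{\mathbf{C}_\tau}$ for some $\tau$-term $t$. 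Because $f\in R(a)$ (the block determined by $a$, by the definition of $R(a)$), $f$ is one of the term operations $t^{\mathbf{C}_\tau,n}$, hence lies in $\mathrm{Clo}\,\mathbf{C}_\tau$. Conversely, if $R_\mathbf{C}=\mathrm{Clo}\,\mathbf{C}_\tau$ and $a\in C$ has dimension $k$, then the key observation above gives $f_a\in R_\mathbf{C}=\mathrm{Clo}\,\mathbf{C}_\tau$, so $f_a=t^{\mathbf{C}_\tau,k}$ for some $\tau$-term $t(v_1,\dots,v_k)$; evaluating at $(\e_1^\mathbf{C},\dots,\e_k^\mathbf{C})$ yields $a=t^{\mathbf{C}_\tau,k}(\e_1^\mathbf{C},\dots,\e_k^\mathbf{C})$, which by Lemma \ref{lem:min}(i) lies in $M(\mathbf{C})$. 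Hence $\mathbf{C}=M(\mathbf{C})$ is minimal.

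The only nontrivial step is the forward direction of (ii), and the work there is already packaged inside Lemma \ref{lem:ratt}; once that lemma is in hand, everything reduces to bookkeeping about arities and the identification of a representable function with a term operation via its value at $(\e_1,\dots,\e_k)$.
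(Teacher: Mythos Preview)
Your proof is correct and follows essentially the same approach as the paper: for (i) you use the same key observation that $f_a(\mathbf{x})=q_k^\mathbf{C}(a,\mathbf{x})$ is a basic operation of $\mathbf{R}_\mathbf{C}$ with $f_a(\e_1,\dots,\e_k)=a$, and for (ii) you unpack the appeal to Lemma~\ref{lem:ratt} into explicit forward and backward arguments. The paper's proof is terser (in (ii) it simply notes $\mathrm{Clo}\,\mathbf{C}_\tau\subseteq R_\mathbf{C}$ via Lemma~\ref{cor:sigma} and then defers both directions of the remaining equivalence to Lemma~\ref{lem:ratt}), but the substance is the same.
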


\begin{proof}
(i) If $a\in C$ has dimension $n$, then the $n$-ary operation $f$ defined by $f(x_1,\dots,x_n)=q_n^\mathbf C(a,x_1,\dots,x_n)$ is $\mathbf C$-representable. Then $f$ is a basic operation of $\mathbf R_\mathbf C$ and $a=f(\e_1^\mathbf C,\dots,\e_n^\mathbf C)$.

(ii) First of all, we have $\mathrm{Clo}\,\mathbf C_\tau\subseteq R_\mathbf C$, because by Lemma \ref{cor:sigma} every basic operation of type $\tau$ is $\mathbf C$-representable. For the opposite inclusion it is sufficient to apply Lemma \ref{lem:ratt}. 
\end{proof}


\subsection{The category of clone algebras and pure homomorphisms}\label{sec:categ}
A map between clone algebras preserving the operators $q_n$ and the nullary operators $\e_i$ will be called a \emph{pure homomorphism}. This means that, given a clone $\tau$-algebra $\mathbf C$ and a clone $\nu$-algebra $\mathbf D$, a map $f:C\to D$ is a pure homomorphism from $\mathbf C$ into $\mathbf D$ if and only if $f$ is a  homomorphism from the pure reduct $\mathbf C_0$ of $\mathbf C$ into the pure reduct $\mathbf D_0$ of  $\mathbf D$.


In this section every variety $\mathcal V$ of $\tau$-algebras will be considered as a category whose objects are the algebras of $\mathcal V$ and whose arrows are the homomorphisms of $\tau$-algebras.

Let Type be the class of all algebraic types. The category $\mathcal{CA}$ has the class $\bigcup_{\tau\in \mathrm{Type}} \mathsf{CA}_\tau$ as class of objects and pure homomorphisms as arrows. We denote by $\mathcal{MCA}$ the full subcategory of  $\mathcal{CA}$ whose objects are the minimal clone algebras.  The variety $\mathsf{CA}_0$ of pure clone algebras is a  full subcategory of  $\mathcal{CA}$.

\begin{proposition}\label{prop:equiva}
 The category $\mathcal{CA}$ is equivalent to both $\mathcal{MCA}$ and $\mathsf{CA}_0$.
\end{proposition}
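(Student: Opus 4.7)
The plan is to produce two equivalences of categories, $\mathcal{CA} \simeq \mathsf{CA}_0$ via the ``pure reduct'' functor and $\mathcal{CA} \simeq \mathcal{MCA}$ via an ``expansion by representable functions'' functor. In both cases the key observation is that a pure homomorphism is, by definition, a homomorphism of the pure reducts alone, so any isomorphism of pure reducts underlies a pure isomorphism between clone algebras of (possibly) different similarity types. The quasi-inverse functors will in fact be the identity on underlying sets, with the natural isomorphisms witnessed by identity set-maps.

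For $\mathcal{CA} \simeq \mathsf{CA}_0$ I would define $U: \mathcal{CA} \to \mathsf{CA}_0$ by $U(\mathbf{C}) = \mathbf{C}_0$ on objects and $U(f) = f$ on arrows; this is well-defined by the very definition of pure homomorphism. The quasi-inverse is the inclusion $I: \mathsf{CA}_0 \hookrightarrow \mathcal{CA}$, which regards a pure clone algebra as a clone algebra of empty similarity type. Then $U \circ I = \mathrm{Id}_{\mathsf{CA}_0}$ on the nose, while for each $\mathbf{C} \in \mathcal{CA}$ the identity set map $\eta_\mathbf{C}: \mathbf{C}_0 \to \mathbf{C}$ is a pure isomorphism natural in $\mathbf{C}$, yielding $I \circ U \cong \mathrm{Id}_\mathcal{CA}$.

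For $\mathcal{CA} \simeq \mathcal{MCA}$ I would define $E: \mathcal{CA} \to \mathcal{MCA}$ by sending each clone $\tau$-algebra $\mathbf{C}$ to the clone $\rho_\mathbf{C}$-algebra $\overline{\mathbf{R}}_\mathbf{C}$ of its representable functions (Definition \ref{def:ctype}), and each pure arrow to itself as a map of underlying sets. Since $\mathbf{C}$ and $\overline{\mathbf{R}}_\mathbf{C}$ share the same underlying set together with the same nullary operators $\e_i$ and operators $q_n$, their pure reducts coincide, so the identity set map is automatically a pure isomorphism $\mathbf{C} \cong \overline{\mathbf{R}}_\mathbf{C}$; this furnishes a natural isomorphism $J \circ E \cong \mathrm{Id}_\mathcal{CA}$, where $J: \mathcal{MCA} \hookrightarrow \mathcal{CA}$ is the inclusion. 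Proposition \ref{exa:rc}(i) supplies the minimality of $\overline{\mathbf{R}}_\mathbf{C}$: every finite-dimensional $a \in C$ of dimension $n$ is exhibited as $\overline{f_a}(\e_1, \ldots, \e_n)$ for the representable function $f_a(\mathbf{x}) = q_n(a, \mathbf{x})$, so each element is a ground $\rho_\mathbf{C}(\e)$-term, placing $E(\mathbf{C})$ in $\mathcal{MCA}$. For $\mathbf{M} \in \mathcal{MCA}$ the identity set map $\mathbf{M} \to E(\mathbf{M})$ is again a pure isomorphism, giving $E \circ J \cong \mathrm{Id}_\mathcal{MCA}$.

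The main point requiring care is verifying that $\overline{\mathbf{R}}_\mathbf{C}$ is indeed a well-formed object of $\mathcal{MCA}$: the clone-algebra axiom (C6) for the added basic operations reduces precisely to Lemma \ref{12}, which characterises representable functions as those operations satisfying the identity $q_n(f(\mathbf{a}), \mathbf{c}) = f(q_n(a_1, \mathbf{c}), \ldots, q_n(a_k, \mathbf{c}))$, and minimality itself is Proposition \ref{exa:rc}(i). Functoriality of $E$ on arrows is then routine: a pure homomorphism $f: \mathbf{C} \to \mathbf{D}$ commutes with each representable function because representability is expressed purely in terms of $q_n$ and $\e_i$, which $f$ preserves. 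Assembling the two equivalences yields $\mathcal{MCA} \simeq \mathsf{CA}_0$ as a corollary, consistent with the subsequent claim that the variety of pure clone algebras captures all information about clone algebras up to pure isomorphism.
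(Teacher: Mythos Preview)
Your argument for $\mathcal{CA}\simeq\mathsf{CA}_0$ is correct and amounts to the same observation the paper makes in a single line: since $\mathsf{CA}_0$ is a full subcategory of $\mathcal{CA}$ and every clone algebra is purely isomorphic (via the identity map) to its own pure reduct, the two categories share a skeleton. The paper simply asserts ``same skeleton''; you spell out the functors $U,I$ and the natural isomorphism, but the content is identical.

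There is, however, a genuine gap in your $\mathcal{CA}\simeq\mathcal{MCA}$ argument. You invoke Proposition~\ref{exa:rc}(i) to certify that $E(\mathbf{C})=\overline{\mathbf{R}}_\mathbf{C}$ is minimal, but that proposition carries the explicit hypothesis that $\mathbf{C}$ be \emph{finite-dimensional}, and your own phrasing (``every finite-dimensional $a\in C$ of dimension $n$ is exhibited as $\overline{f_a}(\e_1,\ldots,\e_n)$'') already gives the game away. If $a\in C$ has infinite dimension, no $\mathbf{C}$-representable $k$-ary function $f$ can satisfy $f(\e_1,\ldots,\e_k)=a$, since Definition~\ref{def:representable} forces $f(\e_1,\ldots,\e_k)$ to have dimension $\le k$; hence $a\notin M(\overline{\mathbf{R}}_\mathbf{C})$ and $\overline{\mathbf{R}}_\mathbf{C}$ is not minimal. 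More fundamentally, Lemma~\ref{lem:min}(iii) says every minimal clone algebra is finite-dimensional, and pure isomorphisms preserve dimension (apply Lemma~\ref{lem:preservare}(ii) in both directions), so a clone algebra with an infinite-dimensional element---such as the full $\mathsf{FCA}$ $\mathbf{O}_A^{(\omega)}$---cannot be purely isomorphic to \emph{any} minimal clone algebra. The paper's own one-line proof does not visibly address this point either; but in your construction the issue surfaces concretely as a misapplication of Proposition~\ref{exa:rc}(i) outside its stated hypotheses.
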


\begin{proof} Two categories are equivalent if and only if they have isomorphic skeletons. The categories $\mathcal{CA}$, $\mathcal{MCA}$ and $\mathsf{CA}_0$ have the same skeleton.
%
%
%
\end{proof}

We denote by $\mathcal{VAR}$  the category whose objects are varieties of algebras and whose arrows are interpretations of varieties (see Section \ref{sec:alg}).

\begin{theorem}\label{thm:iso} The categories $\mathcal{VAR}$ and  $\mathcal{MCA}$ are categorically isomorphic.
 There is a bijection between the class of all varieties of algebras and the class of all minimal clone algebras:
 \[
\begin{array}{rcl}
 \text{Variety $\mathcal V$ of $\tau$-algebras} & \mapsto  & \text{clone $\mathcal V$--algebra $\mathbf{Cl}(\mathcal V)$}  \\
\text{Minimal clone $\tau$-algebra $\mathbf C=(\mathbf C_\tau,q_n^\mathbf C,\e_i^\mathbf C)$}  & \mapsto  & \text{Variety $\mathrm{Var}(\mathbf C_\tau)$ generated by $\mathbf C_\tau$}.  \\
\end{array}
\]
We have $$\mathcal V= \mathrm{Var}(\mathbf{Cl}(\mathcal V)_\tau);\qquad \mathbf C \cong \mathbf{Cl}(\mathrm{Var}(\mathbf C_\tau)).$$
Moreover, there is a bijective correspondence between the sets $\mathrm{Hom}_{\mathcal{VAR}}(\mathcal V,\mathcal W)$ of interpretations and the set $\mathrm{Hom}_{\mathcal{CA}}(\mathbf{Cl}(\mathcal V),\mathbf{Cl}(\mathcal W))$ of pure homomorphisms.
\end{theorem}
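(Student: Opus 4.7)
I would first verify that the two assignments are mutually inverse on objects. Starting from a variety $\mathcal V$ of $\tau$-algebras, Proposition~\ref{prop:free} gives that $\mathbf{Cl}(\mathcal V)$ is a finite-dimensional clone $\tau$-algebra; it is moreover minimal because every element of $\mathbf F_\mathcal V$ is a $\tau(\e)$-term evaluated from the generators $\e_i^\mathbf F = v_i$, so $M(\mathbf{Cl}(\mathcal V)) = \mathbf{Cl}(\mathcal V)$ by Lemma~\ref{lem:min}. Conversely, for a minimal $\mathsf{CA}_\tau$ $\mathbf C$, Theorem~\ref{thm:ch2}(i) identifies $\mathbf C_\tau$ with the free algebra of $\mathrm{Var}(\mathbf C_\tau)$ on $\{\e_i^\mathbf C\}$, and a direct comparison of the operators $q_n^\mathbf C$ with those of $\mathbf{Cl}(\mathrm{Var}(\mathbf C_\tau))$ via Lemma~\ref{lem:24} yields $\mathbf C \cong \mathbf{Cl}(\mathrm{Var}(\mathbf C_\tau))$. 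The identity $\mathcal V = \mathrm{Var}(\mathbf{Cl}(\mathcal V)_\tau)$ is immediate since $\mathbf F_\mathcal V$ generates $\mathcal V$ as a variety.

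\textbf{Morphism-level bijection.} In one direction, given an interpretation $f: \mathcal V \to \mathcal W$, applying the definition to $\mathbf A = \mathbf F_\mathcal W$ yields a $\tau$-algebra $(\mathbf F_\mathcal W)^f \in \mathcal V$; by the universal property of $\mathbf F_\mathcal V$ there is a unique $\tau$-homomorphism $\widehat f:\mathbf F_\mathcal V \to (\mathbf F_\mathcal W)^f$ with $v_i\mapsto v_i$. I would then check that $\widehat f$, viewed as a map $F_\mathcal V \to F_\mathcal W$, is a pure homomorphism $\mathbf{Cl}(\mathcal V)\to\mathbf{Cl}(\mathcal W)$: it preserves $\e_i$ by construction, and it preserves $q_n^\mathbf F$ because $q_n^\mathbf F(a,\mathbf b)$ is defined via the unique substitution endomorphism sending $v_i\mapsto b_i$, so the equality $\widehat f(q_n^\mathbf F(a,\mathbf b)) = q_n^\mathbf F(\widehat f(a),\widehat f(\mathbf b))$ is exactly the statement that $\widehat f$ commutes with the $\tau$-term operations used to evaluate $a$, which is the definition of a $\tau$-homomorphism into $(\mathbf F_\mathcal W)^f$.

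In the opposite direction, given a pure homomorphism $h:\mathbf{Cl}(\mathcal V)\to\mathbf{Cl}(\mathcal W)$, for each $\sigma\in\tau$ of arity $k\geq 1$ I set $\check h(\sigma)$ to be any $\nu$-term in the class $h([\sigma(v_1,\ldots,v_k)])$; for $k=0$, Proposition~\ref{prop:free}(4) supplies a unary $\nu$-term in the zero-dimensional class $h([c_\sigma])$ satisfying $\mathcal W \models t(v_1)=t(v_2)$, as required of an interpretation. The critical lemma, proved by term induction using (C6) and (C1) (together with Lemma~\ref{lem:24}), is that $h([t(v_1,\ldots,v_n)]) = [\check h(t)]$ in $F_\mathcal W$ for every $\tau$-term $t$. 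Hence any equation $s = t$ valid in $\mathcal V$ forces $\check h(s) = \check h(t)$ in $\mathcal W$, which is precisely what is needed to ensure $\mathbf A^{\check h}\in\mathcal V$ for every $\mathbf A\in\mathcal W$. The mutual inverseness $\widecheck{\widehat f} = f$ and $\widehat{\check h} = h$ follows from this same translation lemma together with the fact that $[\sigma(v_1,\ldots,v_k)] = \sigma^\mathbf F(\e_1^\mathbf F,\ldots,\e_k^\mathbf F)$.

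\textbf{Functoriality and the main obstacle.} Preservation of identities is obvious, and preservation of composition reduces to checking that, for interpretations $f:\mathcal V\to\mathcal W$ and $g:\mathcal W\to\mathcal U$, the pure homomorphism $\widehat{g\circ f}$ agrees with $\widehat g\circ \widehat f$ on each generator $\e_i^\mathbf F$ and on the classes $[\sigma(v_1,\ldots,v_k)]$, which is a direct substitution calculation; the universality of $\mathbf F_\mathcal V$ then propagates the agreement everywhere. The main obstacle, to my mind, is the verification that $\check h$ is a legitimate interpretation: the nullary case requires the zero-dimensional representation via Proposition~\ref{prop:free}(4), and the validity of $\mathcal V$-equations in $\mathbf A^{\check h}$ hinges entirely on the inductive translation lemma $h([t]) = [\check h(t)]$, whose proof uses (C6) to unwind $\tau$-term operations into iterated $q_n$'s and depends on $h$ being a pure homomorphism; everything else is essentially bookkeeping once this lemma is in hand.
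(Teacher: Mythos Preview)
Your proposal is correct and follows essentially the same route as the paper. The object-level part is exactly the paper's argument (an appeal to Theorem~\ref{thm:ch2} and Proposition~\ref{prop:free}), and in the homomorphism-to-interpretation direction you and the paper do precisely the same thing: send $\sigma$ to a $\nu$-term representing $h([\sigma(v_1,\dots,v_k)])$, invoking Proposition~\ref{prop:free}(3),(4). The one notable variant is in the interpretation-to-homomorphism direction: the paper defines the map by an explicit recursion on terms, $F(\sigma^\mathbf C(t_1^\mathbf C,\dots,t_n^\mathbf C)) = q_n^\mathbf D(f(\sigma)^\mathbf D,F(t_1^\mathbf C),\dots,F(t_n^\mathbf C))$, whereas you obtain it from the universal property of $\mathbf F_\mathcal V$ into $(\mathbf F_\mathcal W)^f$. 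Your packaging is slightly cleaner since well-definedness comes for free; just be aware that your one-line justification that $\widehat f$ preserves $q_n$ hides the key step that the $\nu$-substitution endomorphism $s'$ of $\mathbf F_\mathcal W$ is automatically a $\tau$-endomorphism of $(\mathbf F_\mathcal W)^f$ (because all $\tau$-operations there are $\nu$-term operations), after which $\widehat f\circ s = s'\circ\widehat f$ follows by comparing on generators. Your explicit isolation of the translation lemma $h([t])=[\check h(t)]$ as the crux of the reverse direction is apt and is left implicit in the paper's sketch.
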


\begin{proof} The first part of the theorem follows from Theorem \ref{thm:ch2}.
We now prove that interpretations of varieties and
 pure  homomorphisms of minimal clone algebras are in bijective correspondence. 

Let $\mathcal V$ be a variety of type $\tau$,  $\mathcal W$ be a variety of type $\nu$, $\mathbf C= \mathbf{Cl}(\mathcal V)$ and $\mathbf D= \mathbf{Cl}(\mathcal W)$. We recall that $\mathbf C_\tau$ is the free algebra over a countable set of generators in variety $\mathcal V$. Similarly for $\mathbf D_\nu$.
An interpretation $f$ of $\mathcal V$ into  $\mathcal W$ determines a pure homomorphism $F$ of $\mathbf C$ into $\mathbf D$.
If $t$ is a $\tau$-term, then we denote by $t^\mathbf C$ its equivalence class in the free algebra $\mathbf C_\tau$. Then we define:
\begin{itemize}
\item If $t=\sigma(t_1,\dots,t_n)$, then $F(\sigma^\mathbf C(t_1^\mathbf C,\dots,t_n^\mathbf C)) =q_n^\mathbf D(f(\sigma)^\mathbf D,F(t_1^\mathbf C),\dots,F(t_n^\mathbf C))$ for every $c\in \tau$ of arity $n>0$.
\item $F(c^\mathbf C)=f(c)^\mathbf D$ for every $c\in \tau$ of arity $0$. Notice that $f(c)$ is a unary $\nu$-term such that $f(c)^\mathbf D$ is zero-dimensional in $\mathbf D$.
\end{itemize}

For the converse, let $\mathbf C$ be a minimal clone $\tau$-algebra, $\mathbf D$ be a minimal clone $\nu$-algebra and $F$ be a pure homomorphism from $\mathbf C$ into $\mathbf D$. Then, for every $n$-ary operator $\sigma\in \tau$ ($n > 0$), we define $f(\sigma)$ to be any $\nu$-term $t=t(v_1,\dots,v_n)$ belonging to $\sigma^\mathbf D(\e_1^\mathbf D,\dots,\e_n^\mathbf D)$ (see Proposition \ref{prop:free}(3)). If $c\in\tau$ is a nullary operator, then we define $f(c)$ to be any $\nu$-term $t=t(v_1)$ belonging to $c^\mathbf D$ (see Proposition \ref{prop:free}(4)).
$f$ is an interpretation from $\mathrm{Var}(\mathbf C_\tau)$ into $\mathrm{Var}(\mathbf D_\nu)$.
\end{proof}

The following corollary is a reformulation of \cite[Theorem 4.140]{mac}. 

\begin{corollary} Two varieties $\mathcal V$ and $\mathcal W$ are isomorphic in the category $\mathcal{VAR}$ (equivalent in the terminology of  \cite[Theorem 4.140]{mac})  if and only if there is a pure isomorphism from $\mathbf{Cl}(\mathcal V)$ onto $\mathbf{Cl}(\mathcal W)$.
\end{corollary}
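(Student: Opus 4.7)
The plan is to derive the corollary directly from Theorem \ref{thm:iso}. Since that theorem asserts that the assignment $\mathcal V \mapsto \mathbf{Cl}(\mathcal V)$ extends to a categorical isomorphism $\Phi : \mathcal{VAR} \to \mathcal{MCA}$, the only thing left to do is unpack what a categorical isomorphism of categories means at the level of isomorphic objects.

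First I would recall that any categorical isomorphism preserves and reflects isomorphisms: if $\Phi$ is an isomorphism of categories and $f : \mathcal V \to \mathcal W$ is an interpretation with inverse $g : \mathcal W \to \mathcal V$, then $\Phi(f)$ and $\Phi(g)$ are mutually inverse pure homomorphisms between $\mathbf{Cl}(\mathcal V)$ and $\mathbf{Cl}(\mathcal W)$, and conversely a pair of mutually inverse pure homomorphisms pulls back through $\Phi^{-1}$ to a pair of mutually inverse interpretations. Concretely, given $f : \mathcal V \to \mathcal W$ and $g : \mathcal W \to \mathcal V$ with $g \circ f = \mathrm{id}_\mathcal V$ and $f \circ g = \mathrm{id}_\mathcal W$, the corresponding pure homomorphisms $F,G$ built in the proof of Theorem \ref{thm:iso} satisfy $G \circ F = \mathrm{id}_{\mathbf{Cl}(\mathcal V)}$ and $F \circ G = \mathrm{id}_{\mathbf{Cl}(\mathcal W)}$, because the bijection of hom-sets given in Theorem \ref{thm:iso} respects composition and identities (this is precisely the content of being a functor).

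For the converse direction, I would take a pure isomorphism $F : \mathbf{Cl}(\mathcal V) \to \mathbf{Cl}(\mathcal W)$ with pure inverse $G$, and let $f,g$ be the interpretations associated to $F,G$ under the bijection of Theorem \ref{thm:iso}. Functoriality again yields $g \circ f = \mathrm{id}_\mathcal V$ and $f \circ g = \mathrm{id}_\mathcal W$ in $\mathcal{VAR}$, so $\mathcal V$ and $\mathcal W$ are isomorphic as objects of $\mathcal{VAR}$.

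The only point that requires any care is making sure that the bijection in Theorem \ref{thm:iso} is truly functorial (sends identities to identities and composition to composition); but this is exactly what is meant when the theorem asserts a \emph{categorical} isomorphism, so there is no real obstacle. Accordingly, I do not expect any genuine difficulty in carrying out this proof: it is a purely formal consequence of Theorem \ref{thm:iso} together with the general fact that an isomorphism of categories sends isomorphisms to isomorphisms and reflects them.
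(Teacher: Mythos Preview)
Your proposal is correct and matches the paper's approach: the paper states this corollary immediately after Theorem \ref{thm:iso} without giving a separate proof, clearly intending it as a direct formal consequence of the categorical isomorphism $\mathcal{VAR}\cong\mathcal{MCA}$ established there. Your unpacking of why a categorical isomorphism preserves and reflects isomorphisms is exactly the implicit reasoning the paper relies on.
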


The common skeleton of $\mathsf{CA}_0$, $\mathcal{MCA}$ and $\mathcal{CA}$  is  the lattice of interpretability types of the varieties.

Hereafter, we identify the categories $\mathcal{MCA}$ and $\mathcal{VAR}$.

Given a clone algebra $\mathbf C$, recall from Definition \ref{def:ctype} the definition of the type $\rho_\mathbf C$ and of the clone $\rho_\mathbf C$-algebra $\overline{R}_{\mathbf C}$.

\begin{definition} \label{def:catprod}
  The \emph{categorical product} $\mathbf C\odot \mathbf D$  of $\mathbf C,\mathbf D\in\mathcal{MCA}$ is defined as the clone $\rho_{\mathbf C_0\times\mathbf D_0}$-algebra $\overline{R}_{\mathbf C_0\times\mathbf D_0}$ of all $\mathbf C_0\times\mathbf D_0$-representable functions. 
\end{definition}

$\mathbf C\odot \mathbf D$ is minimal by Proposition \ref{exa:rc}(i).  Moreover,  $\mathbf C\odot \mathbf D$ is the product of $\mathbf C$ and $\mathbf D$  in $\mathcal{MCA}$, because the categories $\mathcal{MCA}$ and $\mathsf{CA}_0$ are equivalent and  $(\mathbf C\odot \mathbf D)_0=\mathbf C_0\times\mathbf D_0$ is the product of $\mathbf C_0$ and $\mathbf D_0$ in the variety $\mathsf{CA}_0$ of pure clone algebras.

\bigskip

Let $\mathcal V_1,\dots,\mathcal V_n$ be subvarieties of a variety $\mathcal V$.
We recall from Section \ref{sec:alg} the definition of product $\mathcal V_1\times\dots\times\mathcal V_n$ of similar varieties 
(we advertise the reader that this product is not the categorical product).
The equational theory generated by the union $\bigcup_{i=1}^n Eq(\mathcal V_i)$ axiomatises 
$\mathcal V_1\cap\dots\cap\mathcal V_n$, while the join $\mathcal V_1\lor\dots\lor\mathcal V_n$ (in the lattice of subvarieties of $\mathcal V$) is axiomatised by 
$\bigcap_{i=1}^n Eq(\mathcal V_i)$. The join $\mathcal V_1\lor\dots\lor\mathcal V_n$ contains $\mathcal V_1\times\dots\times\mathcal V_n$.


The following theorem provides necessary and sufficient conditions for the independence of varieties, improving a theorem on independent varieties by Gr\"atzer et al. \cite{GLP} (see also \cite{KP09,KPL13}).

Recall from Proposition \ref{prop:rr} that the set of representable functions of a clone algebra is a clone.

\begin{theorem}\label{prop:varind} Let $\mathbf C$ and $\mathbf D$ be minimal $\mathsf{CA}_\tau$s and let $\mathbf E=\mathbf C\times\mathbf D$ be the clone $\tau$-algebra that is the product of $\mathbf C$ and $\mathbf D$ in the variety $\mathsf{CA}_\tau$. Then the following conditions are equivalent:
\begin{enumerate}
\item $\mathbf E$ is minimal.
\item $\mathrm{Var}(\mathbf C_\tau)$ and $\mathrm{Var}(\mathbf D_\tau)$ are independent.
\item $\mathrm{Clo}\,\mathbf E_\tau= R_{\mathbf E}$, where  $R_{\mathbf E}$ is the clone of the $\mathbf E$-representable functions and $\mathrm{Clo}\,\mathbf E_\tau$ is the clone of term operations of the $\tau$-algebra $\mathbf E_\tau$ (the $\tau$-reduct of the clone $\tau$-algebra $\mathbf E$).
\end{enumerate}
 If one of the above equivalent conditions holds, then $\mathrm{Var}(\mathbf E_\tau)=\mathrm{Var}(\mathbf C_\tau)\times \mathrm{Var}(\mathbf D_\tau)=\mathrm{Var}(\mathbf C_\tau)\lor \mathrm{Var}(\mathbf D_\tau)$, where the join $\lor$ is taken in the lattice of subvarieties of $\mathrm{Var}(\mathbf E_\tau)$. 
\end{theorem}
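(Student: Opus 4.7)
My plan is to prove the cycle $(3)\Leftrightarrow(1)\Leftrightarrow(2)$ and then, assuming any of the three, deduce the varietal identity.

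First, for $(1)\Leftrightarrow(3)$: since $\mathbf C$ and $\mathbf D$ are minimal, Lemma~\ref{lem:min}(iii) makes them finite dimensional. Products in $\mathsf{CA}_\tau$ are coordinate-wise, so a pair $(a,b)\in E$ is independent of $\e_n^\mathbf E$ iff both $a$ is independent of $\e_n^\mathbf C$ and $b$ is independent of $\e_n^\mathbf D$; hence $\mathbf E$ is finite dimensional. Proposition~\ref{exa:rc}(ii) applied to $\mathbf E$ gives directly $\mathbf E$ minimal iff $R_\mathbf E=\mathrm{Clo}\,\mathbf E_\tau$.

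For $(2)\Rightarrow(1)$: write $\mathcal V_1=\mathrm{Var}(\mathbf C_\tau)$, $\mathcal V_2=\mathrm{Var}(\mathbf D_\tau)$, and fix a $\tau$-term $t(v_1,v_2)$ with $\mathcal V_1\models t(v_1,v_2)=v_1$ and $\mathcal V_2\models t(v_1,v_2)=v_2$. For any $(a,b)\in E$, minimality of $\mathbf C$ and $\mathbf D$ (Lemma~\ref{lem:min}) supplies $\tau$-terms $u,w$, which I pad to a common arity $n$, such that $a=u^\mathbf C(\e_1^\mathbf C,\dots,\e_n^\mathbf C)$ and $b=w^\mathbf D(\e_1^\mathbf D,\dots,\e_n^\mathbf D)$. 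Evaluating the composite $\tau$-term $t(u,w)$ in $\mathbf E_\tau=\mathbf C_\tau\times\mathbf D_\tau$ at $(\e_1^\mathbf E,\dots,\e_n^\mathbf E)$ gives, coordinate-wise,
$$\bigl(t^\mathbf C(a,w^\mathbf C(\dots)),\,t^\mathbf D(u^\mathbf D(\dots),b)\bigr)=(a,b),$$
since in $\mathbf C_\tau\in\mathcal V_1$ the term $t$ projects to its first argument and in $\mathbf D_\tau\in\mathcal V_2$ to its second. Thus $(a,b)\in M(\mathbf E)$ and $\mathbf E$ is minimal.

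For $(1)\Rightarrow(2)$: the element to target is the twisted pair $c=(\e_1^\mathbf C,\e_2^\mathbf D)$. Minimality of $\mathbf E$ with Lemma~\ref{lem:min}(i) yields a $\tau$-term $s(v_1,\dots,v_n)$ (choose $n$ large enough to contain every variable used) with $s^\mathbf E(\e_1^\mathbf E,\dots,\e_n^\mathbf E)=c$. Reading coordinate-wise, $s^\mathbf C(\e_1^\mathbf C,\dots,\e_n^\mathbf C)=\e_1^\mathbf C$ in $\mathbf C_\tau$ and $s^\mathbf D(\e_1^\mathbf D,\dots,\e_n^\mathbf D)=\e_2^\mathbf D$ in $\mathbf D_\tau$. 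By Theorem~\ref{thm:ch2}(i), $\mathbf C_\tau$ is freely generated over $\mathcal V_1$ by $\{\e_i^\mathbf C\}=\{v_i\}$, so the first equality \emph{is} the identity $\mathcal V_1\models s(v_1,\dots,v_n)=v_1$; symmetrically $\mathcal V_2\models s(v_1,\dots,v_n)=v_2$. Since identities are preserved under uniform substitution, setting $r(v_1,v_2):=s(v_1,v_2,v_1,\dots,v_1)$ (replacing $v_3,\dots,v_n$ by $v_1$) produces the required independence term.

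For the varietal identity, assume any (hence all) of the three conditions. The classical theorem on independent varieties (see Section~\ref{sec:alg}) gives $\mathcal V_1\times\mathcal V_2=\mathcal V_1\vee\mathcal V_2$. From $\mathbf E_\tau=\mathbf C_\tau\times\mathbf D_\tau\in\mathcal V_1\times\mathcal V_2$ we obtain $\mathrm{Var}(\mathbf E_\tau)\subseteq\mathcal V_1\times\mathcal V_2$, while the two coordinate projections exhibit $\mathbf C_\tau,\mathbf D_\tau\in\mathbb H(\mathbf E_\tau)\subseteq\mathrm{Var}(\mathbf E_\tau)$, yielding the reverse containment $\mathcal V_1\vee\mathcal V_2\subseteq\mathrm{Var}(\mathbf E_\tau)$. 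The hard step I expect is the $(1)\Rightarrow(2)$ direction, specifically the lifting: turning a single-element representation of the twisted pair $c$ into honest identities of the whole varieties $\mathcal V_1,\mathcal V_2$ through the freeness of $\mathbf C_\tau$ and $\mathbf D_\tau$, and then compressing the witness down to exactly two variables.
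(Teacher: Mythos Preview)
Your proof is correct and follows the paper for $(1)\Leftrightarrow(3)$ and $(2)\Rightarrow(1)$. The interesting divergence is in $(1)\Rightarrow(2)$: the paper first observes that the twisted pair $c=(\e_1^{\mathbf C},\e_2^{\mathbf D})$ is $2$-central (via Proposition~\ref{prop:central}), so that $\mathbf C\cong\mathbf E/\theta(c,\e_1^{\mathbf E})$ and $\mathbf D\cong\mathbf E/\theta(c,\e_2^{\mathbf E})$, and then argues that since $c$ has dimension $2$ it can be represented by a ground $\tau(\e)$-term in $\e_1,\e_2$ only, whose translation witnesses independence directly. You bypass the centrality machinery entirely: you take any term representation of $c$, read the equalities coordinate-wise, invoke freeness of $\mathbf C_\tau$ and $\mathbf D_\tau$ (Theorem~\ref{thm:ch2}(i)) to promote these to varietal identities, and then compress to two variables by substitution. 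Your route is more self-contained (it does not rely on Section~\ref{sec:central}); the paper's route ties the result to the decomposition theory and makes the $2$-centrality of $c$ explicit, which is what feeds into its proof of the final clause.

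For that final clause the paper argues internally: since $\mathbf E$ is minimal, $\mathbf E_\tau$ is the free algebra of $\mathrm{Var}(\mathbf E_\tau)$, and the independence term gives a decomposition operator on this free algebra, forcing $\mathrm{Var}(\mathbf E_\tau)=\mathrm{Var}(\mathbf C_\tau)\times\mathrm{Var}(\mathbf D_\tau)$. You instead invoke the classical Gr\"atzer--Lakser--P\l onka theorem \cite{GLP} externally and sandwich $\mathrm{Var}(\mathbf E_\tau)$ between $\mathcal V_1\vee\mathcal V_2$ (via the coordinate projections) and $\mathcal V_1\times\mathcal V_2$ (since $\mathbf E_\tau=\mathbf C_\tau\times\mathbf D_\tau$); this is fine, though you are appealing to a result the paper only cites rather than proves.
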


%

\begin{proof} 

(1 $\Rightarrow$ 2) By Proposition  \ref{prop:central} there exists 
 a $2$-central element $c=(\e_1^{\mathbf C},\e_2^{\mathbf D})\in E$ of dimension $2$ such that $\mathbf C \cong \mathbf E/\theta(c,\e_1^\mathbf E)$ and $\mathbf D \cong \mathbf E/\theta(c,\e_2^\mathbf E)$. 
 By Lemma \ref{lem:min}(i) and  the minimality of $\mathbf E$ there exists a ground $\tau(\e)$-term $t=t(\e_1,\e_2)$ such that $c=t^\mathbf C$. Let $t^*=t^*(v_1,v_2)$ be the $\tau$-term translation of $t$ (see Section \ref{sec:mca}). By $\mathbf C \cong \mathbf E/\theta(c,\e_1^\mathbf E)$ and $\mathbf D \cong \mathbf E/\theta(c,\e_2^\mathbf E)$ we get $\mathrm{Var}(\mathbf C_\tau) \models t^*(v_1,v_2)=v_1$ and $\mathrm{Var}(\mathbf D_\tau) \models t^*(v_1,v_2)=v_2$. Hence, the varieties $\mathrm{Var}(\mathbf C_\tau)$ and $\mathrm{Var}(\mathbf D_\tau)$ are independent.

(2 $\Rightarrow$ 1) Let $t(v_1,v_2)$ be a $\tau$-term such that $\mathrm{Var}(\mathbf C_\tau) \models t(v_1,v_2)=v_1$ and $\mathrm{Var}(\mathbf D_\tau) \models t(v_1,v_2)=v_2$. 
Let $(a,b)\in E$. Then $a\in C$ and $b\in D$. Since $\mathbf C$ and $\mathbf D$ are minimal, then by Lemma \ref{lem:min}(i) there exist two $\tau(\e)$-terms $u_1$ and $u_2$ such that $a=u_1^\mathbf C$ and $b=u_2^\mathbf D$. 
Then $\mathbf E$ is minimal, because the pair $(a,b)\in E$ coincides with the interpretation of the $\tau(\e)$-term $t(u_1,u_2)$. 

(1 $\Leftrightarrow$ 3) By Proposition \ref{exa:rc}(ii).

We now prove the last condition If $\mathbf E$ is minimal, then  by Theorem \ref{thm:ch2} $\mathbf E_\tau$ is the free algebra of the variety $\mathrm{Var}(\mathbf E_\tau)$ and $\mathbf E_\tau = \mathbf C_\tau\times \mathbf D_\tau$. Then the decomposition operator $t(v_1,v_2)^{\mathbf E_\tau}$ giving the decomposition $\mathbf E_\tau = \mathbf C_\tau\times \mathbf D_\tau$ provides the decomposition $\mathrm{Var}(\mathbf E_\tau)=\mathrm{Var}(\mathbf C_\tau)\times \mathrm{Var}(\mathbf D_\tau)$.
%
\end{proof}

We leave  to the reader the interpretation of the above theorem in the category $\mathcal{VAR}$.

\begin{remark}
 If $\tau$ is a type of unary operators, it is well known that there are no independent varieties of type $\tau$. By Theorem \ref{prop:varind} the algebra $\mathbf E=\mathbf C\times \mathbf D$ is never minimal, because every unary term operation $t^\mathbf E$ cannot be a nontrivial decomposition operator on $\mathbf E$. 
 \end{remark}
 
 \begin{remark}
 Gr\"atzer et al.  \cite{GLP}  provide examples of varieties $\mathcal V$ and $\mathcal W$ of the same type $\tau$ such that $\mathcal V \lor \mathcal W = \mathcal V \times \mathcal W$, but $\mathcal V$ and $\mathcal W$ are not independent. Let $\mathbf E=\mathbf{Cl}(\mathcal V)\times \mathbf{Cl}(\mathcal W)$. 
 We wonder whether the stronger condition $\mathrm{Var}(\mathbf E_\tau)= \mathcal V \times \mathcal W=\mathcal V \lor \mathcal W$  implies that $\mathcal V$ and $\mathcal W$ are independent.
  \end{remark}

%
%

We conclude this section with a generalisation of Theorem \ref{prop:varind} to  clone algebras of different type.

\begin{definition}\label{def:expa}
   Let $\mathbf C$ be a clone $\tau$-algebra, $\mathbf D$ be a  clone $\nu$-algebra and $f: \mathbf C\to \mathbf D$ be a pure homomorphism. The \emph{$f$-expansion} of $\mathbf D$ is the clone $\tau$-algebra $\mathbf D^f=(\mathbf D^f_\tau, q_n^\mathbf D, \e_i^\mathbf D)$, where $\mathbf D^f_\tau=(D,\sigma^{\mathbf D^f})_{\sigma\in\tau}$ and  $\sigma^{\mathbf D^f}$ ($\sigma\in\tau$ of arity $n$) is the $n$-ary operation such that
$\sigma^{\mathbf D^f}(\e_1^\mathbf D,\dots,\e_n^\mathbf D)= f(\sigma^\mathbf C(\e_1^\mathbf C,\dots,\e_n^\mathbf C))$. 
\end{definition}

\begin{lemma}\label{lem:expa} In the hypotheses of Definition \ref{def:expa} we have:
\begin{enumerate}
\item   The map $f:C\to D$ is a homomorphism of $\tau$-algebras from $\mathbf C$ into $\mathbf D^f$;
\item If $\mathbf C$ is minimal and $f$ is onto, then $\mathbf D^f$ is also minimal.
\end{enumerate}
\end{lemma}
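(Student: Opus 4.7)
The key observation is that the definition of $\sigma^{\mathbf D^f}$ only fixes its value at the projections $\e_1^\mathbf D,\dots,\e_n^\mathbf D$; the implicit understanding (in the spirit of Proposition \ref{prop:equiv}) is that $\sigma^{\mathbf D^f}$ is the $\mathbf D^f$-representable extension of this value, namely
\[
\sigma^{\mathbf D^f}(b_1,\dots,b_n)\;:=\;q_n^\mathbf D\bigl(f(\sigma^\mathbf C(\e_1^\mathbf C,\dots,\e_n^\mathbf C)),\,b_1,\dots,b_n\bigr).
\]
With this reading, the first thing I would check is that $\mathbf D^f$ really is a clone $\tau$-algebra, i.e.\ that the $\sigma^{\mathbf D^f}$ satisfy axiom (C6). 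This is immediate from Lemma \ref{allungo} applied to $c=f(\sigma^\mathbf C(\mathbf e^\mathbf C))$: the identity $q_n(q_k(c,\mathbf x),\mathbf y)=q_k(c,q_n(x_1,\mathbf y),\dots,q_n(x_k,\mathbf y))$ is exactly (C6) rewritten through the defining formula of $\sigma^{\mathbf D^f}$.

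For (1), let $\sigma\in\tau$ have arity $n$ and fix $\mathbf a=a_1,\dots,a_n\in C$. Because $\sigma^\mathbf C$ is $\mathbf C$-representable (Lemma \ref{cor:sigma}), we have
\[
\sigma^\mathbf C(\mathbf a)\;=\;q_n^\mathbf C\bigl(\sigma^\mathbf C(\e_1^\mathbf C,\dots,\e_n^\mathbf C),\,\mathbf a\bigr).
\]
Applying the pure homomorphism $f$ (which preserves $q_n$ and the $\e_i$) and then using the definition of $\sigma^{\mathbf D^f}$ yields
\[
f(\sigma^\mathbf C(\mathbf a))\;=\;q_n^\mathbf D\bigl(f(\sigma^\mathbf C(\mathbf e^\mathbf C)),\,f(\mathbf a)\bigr)\;=\;q_n^\mathbf D\bigl(\sigma^{\mathbf D^f}(\e_1^\mathbf D,\dots,\e_n^\mathbf D),\,f(\mathbf a)\bigr)\;=\;\sigma^{\mathbf D^f}(f(\mathbf a)),
\]
the last equality being again the representability formula. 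Hence $f$ is a homomorphism of $\tau$-algebras from $\mathbf C$ into $\mathbf D^f$; since it already preserves $q_n$ and $\e_i$, it is in fact a homomorphism of clone $\tau$-algebras.

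For (2), assume $\mathbf C$ is minimal and $f$ is onto. By Lemma \ref{lem:min}(i) every element of $C$ has the form $t^\mathbf C$ for some ground $\tau(\e)$-term $t$. Given $d\in D$, pick $a\in C$ with $f(a)=d$ and write $a=t^\mathbf C$. Since $f:\mathbf C\to\mathbf D^f$ is a homomorphism of $\tau$-algebras by (1) and also sends $\e_i^\mathbf C\mapsto\e_i^\mathbf D$, induction on $t$ gives $f(t^\mathbf C)=t^{\mathbf D^f}$, so $d=t^{\mathbf D^f}\in M(\mathbf D^f)$. Thus $D=M(\mathbf D^f)$, i.e.\ $\mathbf D^f$ is minimal. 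No step here looks genuinely hard; the only subtle point is making explicit the intended reading of $\sigma^{\mathbf D^f}$ as the $\mathbf D^f$-representable extension of its value at the projections, which is what makes the whole argument go through uniformly.
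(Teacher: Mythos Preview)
The paper states this lemma without proof, so there is nothing to compare against line by line. Your argument is correct and supplies exactly the details the authors omit: the intended reading of $\sigma^{\mathbf D^f}$ as the representable operation determined by the constant $f(\sigma^{\mathbf C}(\mathbf e^{\mathbf C}))$ (this is precisely Proposition~\ref{prop:equiv}), the use of representability of $\sigma^{\mathbf C}$ together with the fact that $f$ is a pure homomorphism for part (1), and the term-by-term transport via Lemma~\ref{lem:min}(i) for part (2).

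One small caveat on the verification that $\mathbf D^f$ is a clone $\tau$-algebra: your appeal to Lemma~\ref{allungo} alone gives the identity $q_n(q_k(c,\mathbf x),\mathbf y)=q_k(c,q_n(x_1,\mathbf y),\dots,q_n(x_k,\mathbf y))$ directly only when $k\geq n$. For $k<n$ one also needs that $c=f(\sigma^{\mathbf C}(\mathbf e^{\mathbf C}))$ has dimension $\leq k$, and then Lemma~\ref{lem:ind1} (this is exactly how the paper handles the analogous case in the proof of Lemma~\ref{12}). The dimension bound holds because $\sigma^{\mathbf C}(\mathbf e^{\mathbf C})$ has dimension $\leq k$ and pure homomorphisms preserve independence of $\e_m$. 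Since you already frame everything ``in the spirit of Proposition~\ref{prop:equiv}'', you could simply cite that proposition (or Lemma~\ref{12}) instead of Lemma~\ref{allungo} and avoid the case split altogether.
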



\begin{theorem}\label{thm:gra}
  Let $\mathbf C_i$ be a minimal clone $\tau_i$-algebra ($i=1,2$), $\mathbf C_1\odot\mathbf C_2$ be the categorical product in $\mathcal{MCA}$ and  $\nu$ be the type of $\mathbf C_1\odot\mathbf C_2$. 
Then the following conditions hold: 
\begin{enumerate}
\item The $\pi_i$-expansion $\mathbf C_i^{\pi_i}$ of $\mathbf C_i$ (see Definition \ref{def:expa}) is a minimal clone $\nu$-algebra, where $\pi_i$  is the projection from  $\mathbf C_1\odot\mathbf C_2$ into $\mathbf C_i$   ($i=1,2$);
\item $\mathbf C_i^{\pi_i}$ is purely isomorphic to  $\mathbf C_i$ ($i=1,2$);
\item $\mathbf C_1^{\pi_1}\times \mathbf C_2^{\pi_2}= \mathbf C_1\odot\mathbf C_2$, where the product $\mathbf C_1^{\pi_1}\times \mathbf C_2^{\pi_2}$ is taken in the variety $\mathsf{CA}_\nu$;
\item The varieties $\mathrm{Var}\,(\mathbf C_1^{\pi_1})_\nu$ and  $\mathrm{Var}\,(\mathbf C_2^{\pi_2})_\nu$ are independent;
\item $\mathrm{Var}\, (\mathbf C_1^{\pi_1}\times \mathbf C_2^{\pi_2})_\nu=\mathrm{Var}\,(\mathbf C_1^{\pi_1})_\nu\times \mathrm{Var}\,(\mathbf C_2^{\pi_2})_\nu= \mathrm{Var}\,(\mathbf C_1)_{\tau_1}\odot \mathrm{Var}\,(\mathbf C_2)_{\tau_2}$.
\end{enumerate}
\end{theorem}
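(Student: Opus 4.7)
My plan is to establish the five clauses in order, with (1) and (2) setting up the pure-level picture, (3) performing the key identification of algebras of type $\nu$, and (4)--(5) then delivered by Theorem~\ref{prop:varind} together with the categorical isomorphism of Theorem~\ref{thm:iso}.

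First I would observe that $\mathbf C_1 \odot \mathbf C_2 = \overline{\mathbf R}_{\mathbf C_{1,0} \times \mathbf C_{2,0}}$ is itself minimal: each $\mathbf C_i$ is minimal, hence finite dimensional by Lemma~\ref{lem:min}(iii); dimensions in a pure direct product are computed coordinatewise, so $\mathbf C_{1,0} \times \mathbf C_{2,0}$ is finite dimensional and Proposition~\ref{exa:rc}(i) applies. The projections $\pi_i$ are coordinate surjections on the underlying set $C_1 \times C_2$ that preserve $q_n$ and $\e_i$, hence onto pure homomorphisms; Lemma~\ref{lem:expa}(2) then gives clause (1). Clause (2) is immediate: by Definition~\ref{def:expa}, $\mathbf C_i^{\pi_i}$ shares its pure reduct with $\mathbf C_i$, so the identity map on $C_i$ is a pure isomorphism.

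For clause (3), the direct product $\mathbf C_1^{\pi_1} \times \mathbf C_2^{\pi_2}$ in $\mathsf{CA}_\nu$ has underlying set $C_1 \times C_2$ and pure reduct $\mathbf C_{1,0} \times \mathbf C_{2,0}$, which is precisely the pure reduct of $\mathbf C_1 \odot \mathbf C_2$. It therefore remains to compare the $\nu$-interpretations. Fix $\sigma \in \nu$ of arity $n$ and set $c := \sigma^{\mathbf C_1 \odot \mathbf C_2}(\e_1,\ldots,\e_n)$. Since every $\nu$-operation of $\mathbf C_1 \odot \mathbf C_2$ is $\mathbf C_{1,0} \times \mathbf C_{2,0}$-representable by construction, Definition~\ref{def:representable} gives $\sigma^{\mathbf C_1 \odot \mathbf C_2}(\mathbf a) = q_n(c,\mathbf a)$; likewise $\sigma^{\mathbf C_i^{\pi_i}}(\mathbf b) = q_n^{\mathbf C_i}(\pi_i(c),\mathbf b)$ by Definition~\ref{def:expa} combined with representability inside $\mathbf C_i$. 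Because $q_n$ acts coordinatewise on the pure product, both sides of (3) evaluate any input $\mathbf a$ to the pair $(q_n^{\mathbf C_1}(\pi_1(c),\pi_1\mathbf a),\, q_n^{\mathbf C_2}(\pi_2(c),\pi_2\mathbf a))$, establishing equality of the $\nu$-operations and hence of the two algebras.

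With (3) in hand, clauses (4) and (5) fall out of Theorem~\ref{prop:varind} applied to the minimal $\mathsf{CA}_\nu$s $\mathbf C_1^{\pi_1}$ and $\mathbf C_2^{\pi_2}$, whose product $\mathbf C_1^{\pi_1} \times \mathbf C_2^{\pi_2}=\mathbf C_1\odot\mathbf C_2$ is minimal: the equivalence $(1)\Leftrightarrow(2)$ yields independence (clause (4)), and the closing assertion of that theorem gives the first equality of (5). For the second equality of (5) I would translate via the categorical isomorphism $\mathcal{MCA} \cong \mathcal{VAR}$ of Theorem~\ref{thm:iso}: since isomorphisms preserve categorical products, $\mathbf C_1\odot\mathbf C_2$ is sent to $\mathrm{Var}(\mathbf C_1)_{\tau_1}\odot\mathrm{Var}(\mathbf C_2)_{\tau_2}$ and simultaneously to $\mathrm{Var}((\mathbf C_1\odot\mathbf C_2)_\nu)$, whence the two varieties coincide. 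The main obstacle is the coordinate-by-coordinate verification in clause (3), which rests on recognising that the pure reducts of the two candidate algebras are literally the same and that representability together with the coordinatewise action of $q_n$ forces the $\nu$-operations to match; once that is in place, the remaining clauses are essentially bookkeeping on top of Theorems~\ref{prop:varind} and~\ref{thm:iso}.
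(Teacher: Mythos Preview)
Your proposal is correct and follows essentially the same route as the paper's proof: clauses (1)--(2) via Lemma~\ref{lem:expa} and the shared pure reduct, clause (3) by matching pure reducts and then the $\nu$-operations, and clauses (4)--(5) via Theorem~\ref{prop:varind} together with the categorical isomorphism of Theorem~\ref{thm:iso}. The only difference is that you spell out the verification of (3) in greater detail (unpacking representability and the coordinatewise action of $q_n$), whereas the paper simply invokes Definition~\ref{def:catprod} and the definition of the $\pi_i$-expansion; your elaboration is exactly what those invocations amount to.
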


\begin{proof} (1) By Lemma \ref{lem:expa}, because $\pi_i C_1\times C_2\to C_i$ is a pure homomorphism from $\mathbf C_1\odot\mathbf C_2$ onto $\mathbf C_i$.

(2) $\mathbf C_i^{\pi_i}$ and $\mathbf C_i$ have the same pure reduct.

(3) By Definition \ref{def:catprod} the pure reduct of $\mathbf C_1\odot\mathbf C_2$ is $(\mathbf C_1)_0\times (\mathbf C_2)_0$. The conclusion follows from the definition of $\mathbf C_i^{\pi_i}$.

(4) By (3) and Theorem \ref{prop:varind}, because $\mathbf C_i^{\pi_i}$ ($i=1,2$) is a minimal clone $\nu$-algebra. 

(5) By Theorem \ref{prop:varind} and the fact that the categories $\mathcal{MCA}$ and $\mathcal{VAR}$ are isomorphic.
\end{proof}

\section*{Conclusions}

All the original results presented in this paper stem from the definition of clone algebra, that is, therefore, the main contribution of this work.  The results listed below give evidence of the relevance
of the notion of clone algebra, that goes beyond providing a neat algebraic treatment of clones. Indeed, unexpected applications and promising further direction, as those we are going to describe, are often marks of the relevance and versatility of a new mathematical notion.
\begin{itemize}
\item Theorem \ref{thm:main}, the representation theorem, 
ensures that the variety of clone algebras provides 
an algebraic theory of clones.
\item In Theorem \ref{thm:BM}, by endowing free algebras with the structure 
of clone algebras, and clone algebras with the structure of free algebras, we are able to characterise the lattices of 
equational theories, thus providing a possible answer to a classical 
open question. 
\item Theorem  \ref{prop:varind}, and other results presented in Section \ref{sec:allvar},
show that clone algebras may be used to study other classical topics in universal algebras, like the equivalence and the independence of varieties.
\end{itemize}

The focus of the present paper is on the representation theorems and their meaning for the theory of clones and $\omega$-clones, 
 and partly on the categorical aspects of clone algebras illustrated in the last section of the paper. A closer examination of potential implications  to universal algebra is deferred to future work that is currently in progress. 
 We have here space to describe two possible directions of research.

We intend to analyse the relationship between a variety $\mathcal V$ of pure clone algebras and the corresponding subcategory $\mathbb{C}(\mathcal V)$ of
$\mathcal{VAR}$, where a variety $\mathcal W$ of $\tau$-algebras belongs to  $\mathbb{C}(\mathcal V)$ if the pure reduct of the clone $\mathcal W$-algebra $\mathbf{Cl}(\mathcal W)$ is an element of $\mathcal V$. We explain with an example the kind of connection we are looking for. 
Assume that the class of indecomposable members of $\mathcal V$ is a universal class.
  Since $\mathcal V$ is also a variety of $2\mathrm{CH}$s, then by \cite[Theorems 3.8,  3.9]{first} every member of $\mathcal V$ is a weak Boolean product of a family of indecomposable members of $\mathcal V$. We are interested in understanding how this weak Boolean representation influences the structure of the category $\mathbb{C}(\mathcal V)$.

There are some classical concepts of the theory of clones that have a more general and algebraic formulation within the theory of $\omega$-clones. For example, if $F$ is an $\omega$-clone and $f\in F$, then the centralizer $f^*$ (of the infinitary operations commuting with $f$) is a subalgebra of the pure $\mathsf{FCA}$ $\mathbf F=(F,q_n^\omega,\e_i^\omega)$. 
We are interested in understanding whether the centralizer is invariant up to isomorphism of pure $\mathsf{FCA}$s.



\section*{Appendix}
In this Appendix we conclude the proof of Theorem \ref{thm:main}.

In the following lemma we prove that  
a $\mathsf{RCA}_\tau$   $\mathbf B$ with value domain $\mathbf A$ can be embedded into the ultrapower $(\mathbf O_{ \mathbf A}^{(\omega)})^\omega/U$ of the full $\mathsf{FCA}_\tau$ $\mathbf O_{ \mathbf A}^{(\omega)}$,
for every   nonprincipal ultrafilter $U$  on $\omega$. 

\begin{lemma}\label{lem:ultrapower}
 Every $\mathsf{RCA}_\tau$ can be embedded  into an ultrapower of a $\mathsf{FCA}_\tau$. 
\end{lemma}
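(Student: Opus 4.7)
The plan is to fix a nonprincipal ultrafilter $U$ on $\omega$ and define an embedding
$F\colon \mathbf{B}\to (\mathbf{O}_\mathbf{A}^{(\omega)})^\omega/U$
by $F(\varphi)=[\langle (\varphi_{n,r})^\top\rangle_{n\in\omega}]_U$, using the $r$-relativized $n$-ary restriction $\varphi_{n,r}\colon A^n\to A$ introduced just before Lemma \ref{lem:toptop} and its top extension $(\varphi_{n,r})^\top\in \mathcal{O}_A^{(\omega)}$. Intuitively, each element of $\mathbf{B}$ is only defined on the thread $A^\omega_r$, and the sequence of its finite restrictions, glued together by the ultrafilter, should faithfully encode $\varphi$ inside an honest ultrapower of the full $\mathsf{FCA}$ $\mathbf{O}_\mathbf{A}^{(\omega)}$.

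For injectivity, given $\varphi\neq\psi$ in $B$ I would pick $s\in A^\omega_r$ with $\varphi(s)\neq\psi(s)$ and use $s\equiv r$ to choose $n_0$ with $s=r[s_1,\ldots,s_{n_0}]$. Then for every $n\geq n_0$ one has $(\varphi_{n,r})^\top(s)=\varphi(r[s_1,\ldots,s_n])=\varphi(s)$ and similarly for $\psi$, so $(\varphi_{n,r})^\top\neq(\psi_{n,r})^\top$ on a cofinite set of coordinates; since $U$ is nonprincipal this set lies in $U$, whence $F(\varphi)\neq F(\psi)$.

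For the homomorphism property, since operations on the ultrapower are coordinatewise it suffices to check, for each fundamental operator, that the relevant identity holds on a cofinite set of indices $n$. The case of $\e_i^r$ is immediate: $((\e_i^r)_{n,r})^\top=\e_i^\omega$ whenever $n\geq i$. The case of $\sigma\in\tau$ holds for every $n$ by directly unfolding $(\sigma^r(\psi_1,\ldots,\psi_m))_{n,r}(\mathbf{a})=\sigma^\mathbf{A}((\psi_1)_{n,r}(\mathbf{a}),\ldots,(\psi_m)_{n,r}(\mathbf{a}))$ and then taking the top extension. The main calculation is the $q_k^r$ case: for $n\geq k$ I would evaluate both sides of
\begin{equation*}
(q_k^r(\varphi,\psi_1,\ldots,\psi_k)_{n,r})^\top = q_k^\omega((\varphi_{n,r})^\top,((\psi_1)_{n,r})^\top,\ldots,((\psi_k)_{n,r})^\top)
\end{equation*}
at an arbitrary $u\in A^\omega$, observing that $r[u_1,\ldots,u_n]\in A^\omega_r$ plays the role of a thread, so that the substitution operator $[\,\cdot\,]$ commutes with the $n$-ary restriction and both sides collapse to $\varphi(r[\psi_1(r[u_1,\ldots,u_n]),\ldots,\psi_k(r[u_1,\ldots,u_n]),u_{k+1},\ldots,u_n])$.

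The main obstacle is conceptual rather than computational: a single finite approximation $(\varphi_{n,r})^\top$ may genuinely fail to satisfy the clone axioms in the full $\mathsf{FCA}$, because the tail values $r_{n+1},r_{n+2},\ldots$ of the thread enter only implicitly. It is essential that $U$ be nonprincipal so that the finitely many ``bad'' indices $n<k$ in the $q_k^r$ verification can be discarded; once that is granted, the whole argument reduces to careful bookkeeping of which coordinates of $r[u_1,\ldots,u_n]$ are fed into each operation.
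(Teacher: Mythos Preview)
Your proposal is correct and is essentially the paper's own proof: your function $(\varphi_{n,r})^\top$ is exactly the paper's $F_{r,n}(\varphi)$ from Lemma~\ref{lem:red} (both send $s\in A^\omega$ to $\varphi(r[s_1,\dots,s_n])$), and your embedding $F(\varphi)=[\langle(\varphi_{n,r})^\top\rangle_{n\in\omega}]_U$ coincides with the paper's $h(\varphi)=\langle F_{r,n}(\varphi):n\in\omega\rangle/U$. The injectivity argument (contrapositive, using that cofinite sets lie in any nonprincipal ultrafilter on $\omega$) and the coordinatewise verification of the operations on a cofinite set of indices $n\geq k$ are precisely what the paper does, the latter being packaged there as an appeal to Lemma~\ref{lem:red}.
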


\begin{proof}
 Let  $U$ be a nonprincipal ultrafilter on $\omega$ that contains the set $\{j: j\geq i\}$ for every $i\in\omega$. 
$U$ does not contain finite sets.
Let $\mathbf O_{ \mathbf A,r}^{(\omega)}$ be the full $\mathsf{RCA}$ with value domain $\mathbf A$ and thread $r$. Let $F_{r, n}$ be the function defined in Lemma \ref{lem:red}. We prove that the map
$$h(\varphi)=\langle F_{r, n}(\varphi): n\in\omega\rangle /U,\quad \text{for all $\varphi\in \mathcal O_{A,r}^{(\omega)}$}$$
is an embedding of the full $\mathsf{RCA}$ $\mathbf O_{ \mathbf A,r}^{(\omega)}$ into the  ultrapower $(\mathbf O_{ \mathbf A}^{(\omega)})^\omega/U$ of the full $\mathsf{FCA}$ $\mathbf O_{ \mathbf A}^{(\omega)}$ with value domain $\mathbf A$.

We prove that $h$ is injective. If $h(\varphi)=h(\psi)$ then $\{ n: F_{r, n}(\varphi)=F_{r, n}(\psi)\}\in U$. Then, for every $i\in\omega$, by the hypothesis on $U$ we have:
$$\{j: j\geq i\}\cap \{ n: F_{r, n}(\varphi)=F_{r, n}(\psi)\}\ \text{is an infinite set}.$$
Then there exists an increasing sequence $k_1 < k_2 <\dots < k_i <\dots$ of natural numbers such that 
$k_i\in \{j: j\geq i\}\cap \{ n: F_{r, n}(\varphi)=F_{r, n}(\psi)\}$ and $k_i > k_{i-1}$.
Let $s\in A^\omega_r$ such that $s=r[s_1,\dots,s_m]$. 
Let $k_n>m$. Then $s=r[s_1,\dots,s_m] = r[s_1,\dots,s_m,r_{m+1},\dots,r_{k_n}]$ and  we have:
\[
\begin{array}{llll}
\varphi(s)  &  = &  \varphi(r[s_1,\dots,s_m,r_{m+1},\dots,r_{k_n}]) &\\
  &  = & F_{r, k_n}(\varphi)(s)  &\text{by def. $F_{r, k_n}$} \\
  &  = &    F_{r, k_n}(\psi)(s)&\text{by $k_n\in \{ n: F_{r, n}(\varphi)=F_{r, n}(\psi)\}$}\\
    &  = & \psi(r[s_1,\dots,s_m,r_{m+1},\dots,r_{k_n}])&\\
&  = & \psi(s).&
\end{array}
\]
By the arbitrariness of $s$ it follows that $\varphi=\psi$.
We now prove that $h$ is a homomorphism. 
\[\begin{array}{llll}
&   &h(q^r_k(\varphi,\psi_1,\dots,\psi_k))& \\
 & =  & \langle F_{r, n}(q^r_k(\varphi,\psi_1,\dots,\psi_k)): n\in\omega\rangle /U & \\
  & =  & \langle q^\omega_k(F_{r, n}(\varphi),F_{r, n}(\psi_1),\dots,F_{r, n}(\psi_k)): n\in\omega\rangle /U& \\
\end{array}
\]  
because by Lemma \ref{lem:red} $\{n:F_{r, n}(q^r_k(\varphi,\psi_1,\dots,\psi_k))=q^\omega_k(F_{r, n}(\varphi),F_{r, n}(\psi_1),\dots,F_{r, n}(\psi_k)) \}\supseteq \{n: n\geq k\}\in U$.
Let $\mathbf B=\mathbf O_{ \mathbf A}^{(\omega)}$. By definition of $q^{\mathbf B^\omega/U}_k$, we obtain
\[\begin{array}{llll}
   &   & q^{\mathbf B^\omega/U}_k(h(\varphi),h(\psi_1),\dots,h(\psi_k))&\\
  &  = &  q^{\mathbf B^\omega/U}_k (\langle F_{r, n}(\varphi): n\in\omega\rangle /U, \langle F_{r, n}(\psi_1): n\in\omega\rangle /U,\dots, \langle F_{r, n}(\psi_k): n\in\omega\rangle /U)&\\
  & =  & \langle q^\omega_k(F_{r, n}(\varphi),F_{r, n}(\psi_1),\dots,F_{r, n}(\psi_k)): n\in\omega\rangle /U.& \\  
\end{array}
\]
Moreover, 
$h(\e_i^r)=\langle F_{r, n}(\e_i^r): n\in\omega\rangle /U = \langle \e_i^\omega: n\in\omega\rangle /U$
because 
$\{n:F_{r, n}(\e_i^r)=\e_i^\omega \}\supseteq \{n: n\geq i\}\in U$.
A similar computation works for $\sigma\in\tau$.
\end{proof}

The product of a family of $\mathsf{FCA}_\tau$s can be embedded into a  $\mathsf{FCA}$ whose value domain is the product of the value domains of the family.

\begin{lemma}\label{lem:subprod}
The class $\mathbb I\,\mathsf{FCA}_\tau$ is  closed under subalgebras and direct products.
\end{lemma}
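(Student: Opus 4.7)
The plan is to handle closure under subalgebras and closure under direct products separately. For subalgebras the argument is immediate: if $\mathbf C\in \mathbb I\,\mathsf{FCA}_\tau$ via an isomorphism onto a subalgebra of some $\mathbf O_{\mathbf A}^{(\omega)}$, and $\mathbf D$ embeds into $\mathbf C$, then composing with the isomorphism embeds $\mathbf D$ into $\mathbf O_{\mathbf A}^{(\omega)}$, so $\mathbf D\in\mathbb I\,\mathsf{FCA}_\tau$. All the content of the lemma therefore lies in closure under products.

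For a family $\{\mathbf C_j\}_{j\in J}$ with $\mathbf C_j$ a subalgebra of $\mathbf O_{\mathbf A_j}^{(\omega)}$, I would take the value domain of the target $\mathsf{FCA}$ to be the product $\mathbf A=\prod_{j\in J}\mathbf A_j$. Given $s\in A^\omega$ and $j\in J$, write $s^j\in A_j^\omega$ for the $j$th component thread defined by $s^j(n)=s(n)(j)$. Then define a map $H:\prod_{j\in J}\mathcal O_{A_j}^{(\omega)}\to \mathcal O_A^{(\omega)}$ by setting, for every family $\vec\varphi=\langle\varphi_j\rangle_{j\in J}$,
$$H(\vec\varphi)(s)=\langle\varphi_j(s^j)\rangle_{j\in J}\in A.$$
Injectivity of $H$ uses the observation that an arbitrary family $\langle t_j\rangle_{j\in J}$ with $t_j\in A_j^\omega$ is realized as $\langle s^j\rangle_{j\in J}$ by setting $s(n)=\langle t_j(n)\rangle_j$; so if $H(\vec\varphi)=H(\vec\psi)$ then $\varphi_j(t_j)=\psi_j(t_j)$ for every $t_j$ and every $j$. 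The plan is to prove that $H$, restricted to $\prod_{j\in J}C_j$, is an embedding of $\prod_{j\in J}\mathbf C_j$ into $\mathbf O_{\mathbf A}^{(\omega)}$, which finishes the argument since the image is then automatically a subalgebra of $\mathbf O_{\mathbf A}^{(\omega)}$.

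Preservation of the constants $\e_i$ is a direct unfolding: $H(\langle\e_i^\omega\rangle_j)(s)=\langle s^j(i)\rangle_j=\langle s(i)(j)\rangle_j=s(i)=\e_i^\omega(s)$. Preservation of each $\sigma\in\tau$ is automatic from the pointwise definition of $\sigma^{\mathbf A}$ on the product $\mathbf A=\prod_j\mathbf A_j$. The only real calculation is preservation of $q_n$, and the key identity is the coordinate-wise decomposition
$$\bigl(s[t_1,\ldots,t_n]\bigr)^{j}=s^j\bigl[t_1(j),\ldots,t_n(j)\bigr],$$
which follows from the definition of $s[t_1,\ldots,t_n]$ by splitting on $i\leq n$ versus $i>n$. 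Applying this identity with $t_k=H(\vec\psi_k)(s)$, so that $t_k(j)=\psi_k^j(s^j)$, one obtains
$$H(\vec\varphi)\bigl(s[t_1,\ldots,t_n]\bigr)=\Bigl\langle\varphi_j\bigl(s^j[\psi_1^j(s^j),\ldots,\psi_n^j(s^j)]\bigr)\Bigr\rangle_{j\in J}=\Bigl\langle q_n^\omega(\varphi_j,\psi_1^j,\ldots,\psi_n^j)(s^j)\Bigr\rangle_{j\in J},$$
which is precisely $H$ applied to the componentwise $q_n$ in the product algebra.

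The main obstacle is purely notational: keeping straight the two uses of ``component'', namely the $j$-slicing inside the product $\prod_j A_j$ (which is what the index $j$ in $s^j$ refers to) versus the positional coordinate in the $\omega$-sequence (which is what the argument of $\e_i^\omega$ or the bracket notation $s[\cdots]$ refers to). Once these are kept distinct, the calculation reduces to the bracketed identity above and everything goes through without additional hypotheses on the family $\{\mathbf C_j\}$.
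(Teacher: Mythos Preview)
Your proposal is correct and follows essentially the same approach as the paper: the product of a family of $\mathsf{FCA}$s with value domains $\mathbf A_j$ is embedded into the full $\mathsf{FCA}$ with value domain $\prod_j\mathbf A_j$ via the map $\langle\varphi_j\rangle_j\mapsto\bigl(s\mapsto\langle\varphi_j(s^j)\rangle_j\bigr)$, which is precisely the embedding the paper writes down (with indices swapped). You have simply spelled out the verifications of injectivity and preservation of $\e_i$, $\sigma$, and $q_n$ that the paper leaves implicit.
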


\begin{proof}
  The class of $\mathsf{FCA}_\tau$'s is trivially closed under subalgebras. It is also closed under products,
because $\prod_{i\in H} \mathbf B_i$, where $\mathbf B_i$ is a $\mathsf{FCA}_\tau$ with value domain $\mathbf A_i$,
can be embedded into the full $\mathsf{FCA}_\tau$ with value domain $\prod_{i\in H}\mathbf A_i$: the  sequence
$\langle \varphi_i: A_i^\omega \to A_i\in B_i\ |\ i\in H\rangle$ maps to $\varphi : (\prod_{i\in H} A_i)^\omega \to \prod_{i\in H} A_i$ defined by $\varphi(r)= \langle \varphi_i(\langle r_j(i):j\in\omega\rangle)\ |\ i\in H\rangle$.
\end{proof}


\begin{lemma}\label{lem:ultrafca}
Ultrapowers of $\mathsf{FCA}_\tau$s are isomorphic to $\mathsf{FCA}_\tau$s.
\end{lemma}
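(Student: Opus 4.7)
The plan is to build the target $\mathsf{FCA}_\tau$ by taking the value domain to be the ultrapower of the original value domain. Concretely, given an $\mathsf{FCA}_\tau$ $\mathbf{B}\subseteq \mathbf{O}_{\mathbf{A}}^{(\omega)}$ with value domain a $\tau$-algebra $\mathbf{A}$, and an ultrafilter $U$ on an index set $I$, I set $\mathbf{C}=\mathbf{A}^I/U$ (a $\tau$-algebra, because $\mathsf{FCA}_\tau$ value domains are closed under ultrapowers in the usual pointwise sense) and aim to embed $\mathbf{B}^I/U$ into $\mathbf{O}_{\mathbf{C}}^{(\omega)}$. Using the axiom of choice I fix once and for all a section $\tau:A^I/U\to A^I$ of the canonical projection, and I define $\Theta:\mathbf{B}^I/U\to \mathbf{O}_{\mathbf{C}}^{(\omega)}$ by
$$\Theta([\bar{\varphi}]_U)(\bar{s})\;=\;\bigl[\langle \varphi_i(r^{(i)})\rangle_{i\in I}\bigr]_U,\qquad r^{(i)}_k:=\tau([s^{(k)}])_i,$$
for $\bar{s}=([s^{(1)}]_U,[s^{(2)}]_U,\ldots)\in C^\omega$. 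Well-definedness in the representative $\bar{\varphi}$ is immediate: if $\bar{\varphi}\equiv_U\bar{\varphi}'$, then $\{i:\varphi_i(r^{(i)})=\varphi'_i(r^{(i)})\}\supseteq\{i:\varphi_i=\varphi'_i\}\in U$, so the ultrapower classes coincide.

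Next I verify that $\Theta$ is a homomorphism of clone $\tau$-algebras. Preservation of the nullary operators is a short calculation: $\Theta([\langle \e_i^\omega\rangle])(\bar s)=[\langle \tau([s^{(i)}])_j\rangle_j]_U=[s^{(i)}]$, which is the $i$-th projection in $\mathbf{O}_{\mathbf{C}}^{(\omega)}$. For $q_n$, I compute both sides at a given $\bar{s}\in C^\omega$. On the left, the pointwise definition gives the class of $\langle q_n^\omega(\varphi_i,\psi_{1,i},\ldots,\psi_{n,i})(r^{(i)})\rangle_i$. On the right, $q_n^\omega(\Theta[\bar{\varphi}],\Theta[\bar{\psi}_1],\ldots,\Theta[\bar{\psi}_n])(\bar{s})$ equals $\Theta[\bar{\varphi}](\bar{s}')$ with $\bar{s}'$ obtained from $\bar{s}$ by replacing its first $n$ entries by the ultrapower classes of $\langle\psi_{k,i}(r^{(i)})\rangle_i$. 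Writing out the canonical representatives via $\tau$ and using that a finite intersection of $U$-sets is again in $U$, one sees that for $U$-many $i$ the corresponding sequence $r'^{(i)}$ coincides with $r^{(i)}[\psi_{1,i}(r^{(i)}),\ldots,\psi_{n,i}(r^{(i)})]$, so the two sides represent the same class. Preservation of each $\sigma\in\tau$ is entirely analogous but simpler, because the basic $\tau$-operators act pointwise on the second component.

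The main obstacle is proving injectivity of $\Theta$. Suppose $[\bar\varphi]\neq[\bar\psi]$ and let $S=\{i:\varphi_i\neq\psi_i\}\in U$. For each $i\in S$, by the axiom of choice pick $a^{(i)}\in A^\omega$ with $\varphi_i(a^{(i)})\neq\psi_i(a^{(i)})$; then define $t^{(k)}\in A^I$ by $t^{(k)}_i=a^{(i)}_k$ for $i\in S$ and $t^{(k)}_i$ arbitrary otherwise, and set $\bar{s}=([t^{(k)}]_U)_k$. The delicate point is that $\tau([t^{(k)}])$ need only agree with $t^{(k)}$ on some $U$-set $T_k$, and one cannot in general conclude that $\bigcap_k T_k\in U$ for countably many $k$. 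I plan to bypass this by first choosing the witnessing sequence $\bar s$ and then exploiting freedom in the section $\tau$: replace the single section by a family of sections parametrised by the countable set of classes actually needed to witness $\bar s$, and arrange (using AC again) that $\tau([t^{(k)}])=t^{(k)}$ for every $k\in\omega$ in use. This produces, for each distinguishing pair $([\bar\varphi],[\bar\psi])$, an $\bar s$ with $\Theta[\bar\varphi](\bar s)\neq \Theta[\bar\psi](\bar s)$; since this is needed only to distinguish elements of $\mathbf{B}^I/U$, the countably many constraints can be accommodated by extending partial sections.

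Finally, $\Theta(\mathbf{B}^I/U)$ is a subalgebra of $\mathbf{O}_{\mathbf{C}}^{(\omega)}$, hence an $\mathsf{FCA}_\tau$ with value domain $\mathbf{A}^I/U$, and injectivity of $\Theta$ gives the isomorphism $\mathbf{B}^I/U\cong\Theta(\mathbf{B}^I/U)$, completing the proof.
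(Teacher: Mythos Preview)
Your construction of $\Theta$ via a fixed section and your verification that it preserves $\e_i$, $q_n$ and the $\sigma$'s are essentially the paper's map $h_{ch}$ and its homomorphism check (the key point being that $q_n$ perturbs only finitely many coordinates, so only a finite intersection of $U$-sets arises there).

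The gap is the injectivity argument. You correctly note that, for a fixed section, the natural witness $\bar s$ for a pair $[\bar\varphi]\ne[\bar\psi]$ works only if $\bigcap_k T_k\in U$, which need not hold. Your proposed fix---adjust the section so that $\tau([t^{(k)}])=t^{(k)}$ for all $k$---cannot in general be carried out even for a single pair (if $[t^{(k)}]=[t^{(k')}]$ while $t^{(k)}\ne t^{(k')}$, no function can satisfy both constraints), and in any case it produces a \emph{different} homomorphism for each pair, not a single injective one; the phrase about ``countably many constraints'' being accommodated by ``extending partial sections'' does not explain how the potentially uncountably many, potentially conflicting requirements coming from all pairs merge into one section. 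Yet your final paragraph asserts injectivity of a single $\Theta$ and concludes the isomorphism from that. The paper never claims any one $h_{ch}$ is injective: it lets the section range over \emph{all} choice functions, shows that the resulting family $\{h_{ch}\}_{ch}$ jointly separates points (tailoring $ch$ to each pair separately), and thereby embeds $\mathbf B^K/U$ as a subdirect product of $\mathsf{FCA}_\tau$'s. Since $\mathbb I\,\mathsf{FCA}_\tau$ is closed under subalgebras and direct products (Lemma~\ref{lem:subprod}), this finishes the argument. Replacing ``one injective map'' by ``a separating family plus $\mathbb S\mathbb P$-closure'' is exactly the missing ingredient.
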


\begin{proof} 
 Let $\mathbf B$ be a $\mathsf{FCA}$ with value domain $\mathbf A$,  $K$ be a set and $U$ be any ultrafilter on $K$.  By Lemma \ref{lem:subprod} we get the conclusion if the ultrapower $\mathbf B^K/U$ is isomorphic to a subdirect product of $\mathsf{FCA}$s.

 A choice function is a function
 $ch: A^K/U\to A^K$  such that $ch(w/U)\in w/U$ for every $w\in A^K$ (see \cite[Section 6]{SG99}). 
 Any choice function $ch$ induces a function $ch^+ : (A^K/U)^\omega \to (A^\omega)^K$:
 $$ch^+(r)_k = \langle ch(r_i)_k: i\in \omega \rangle,\qquad\text{for every $r\in (A^K/U)^\omega$ and $k\in K$.} $$
 We use the choice function $ch$ to define a function $h_{ch}: B^K/U\to \mathcal O_{A^K/U}^{(\omega)}$ as follows:
 $$h_{ch}(u/U)(r)=\langle u_k(ch^+(r)_k): k\in K\rangle/U,\quad\text{for every $u\in B^K$ and $r\in (A^K/U)^\omega$}.$$
 The map $h_{ch}$ is a homomorphism from the ultrapower $\mathbf B^K/U$ into the full $\mathsf{FCA}$  $\mathbf O_{\mathbf A^K/U}^{(\omega)}$ with value domain $\mathbf A^K/U$. 
 Let $\mathbf C:= \mathbf B^K/U$, $\mathbf D:=\mathbf O_{\mathbf A^K/U}^{(\omega)}$, $r\in (A^K/U)^\omega$ and $s_k:= ch^+(r)_k\in A^\omega$.
  \[
\begin{array}{llll}
h_{ch}(\e_i^\mathbf C)(r)&=& h_{ch}(\langle\e_i^\mathbf B:k\in K\rangle/U)(r)&\\
&=&h_{ch}(\langle\e_i^\omega:k\in K\rangle/U)(r)&\text{by $\mathbf B\in \mathsf{FCA}$ and Lemma \ref{lem:fun}}\\
&=&\langle \e_i^\omega(ch^+(r)_k): k\in K\rangle/U&\text{by def. $h_{ch}$}\\
&=&\langle ch(r_i)_k: k\in K\rangle/U &\text{by def. $ch^+$ and $\e_i^\omega$}\\ 
&=& ch(r_i)/U&\\ 
&=& r_i&\text{by $ch(r_i)\in r_i$}\\ 
 \end{array}
\]
Without loss of generality, we prove that $h_{ch}$ preserves $q_2^\mathbf C$.
 \[
\begin{array}{llll}
&&h_{ch}(q_2^\mathbf C(u/U,w^1/U,w^2/U))(r)& \\
 &  = & h_{ch}( \langle q_2^\mathbf B(u_k,w^1_k,w^2_k): k\in K\rangle/U)(r) &\text{by def. $q_2^\mathbf C$}\\
 &  = &  \langle q_2^\mathbf B(u_k,w^1_k,w^2_k)(s_k): k\in K\rangle/U &\text{by def. $h_{ch}$}\\
  & =  & \langle q_2^\omega(u_k,w^1_k,w^2_k)(s_k): k\in K\rangle/U&\text{by $\mathbf B\in \mathsf{FCA}$ and Lemma \ref{lem:fun}}  \\
  & =  &    \langle u_k(s_k[w^1_k(s_k),w^2_k(s_k)]): k\in K\rangle/U&\text{by def.  $q_2^\omega$}\\
  & =  &    \langle u_k(ch^+(r)_k[w^1_k(s_k),w^2_k(s_k)]): k\in K\rangle/U&\text{by def.  $s_k$}\\
  & =  &    \langle u_k( \langle ch(r_i)_k: i\in \omega \rangle[w^1_k(s_k),w^2_k(s_k)]): k\in K\rangle/U&\text{by def.  $ch^+$}\\
    & =  &    \langle u_k(w^1_k(s_k),w^2_k(s_k), ch(r_3)_k,ch(r_4)_k,\dots): k\in K\rangle/U&\\

\end{array}
\]

Let $t=r[\langle w^1_j(s_j): j\in K\rangle/U, \langle w^2_j(s_j): j\in K\rangle/U]$. 
 \[
\begin{array}{llll}
&&q_2^\omega(h_{ch}(u/U),h_{ch}(w^1/U),h_{ch}(w^2/U))(r)&\\
&=& h_{ch}(u/U)(r[h_{ch}(w^1/U)(r),h_{ch}(w^2/U)(r)]) & \text{by def.  $q_2^\omega$}\\
&=&h_{ch}(u/U)(r[\langle w^1_j(s_j): j\in K\rangle/U, \langle w^2_j(s_j): j\in K\rangle/U])&\text{by def.  $h_{ch}$}\\
&=& \langle u_k(ch^+(t)_k): k\in K\rangle/U&  \text{by def.  $h_{ch}$} \\
&=& \langle u_k(\langle ch(t_i)_k : i\in \omega \rangle): k\in K\rangle/U& \text{by def.  $ch^+$}  \\
&=& \langle u_k( ch(t_1)_k, ch(t_2)_k, ch(t_3)_k,ch(t_4)_k,\dots): k\in K\rangle/U&   \\
&=& \langle u_k( ch(t_1)_k, ch(t_2)_k, ch(r_3)_k,ch(r_4)_k,\dots): k\in K\rangle/U& \text{by def.  $t$}  \\
&=& \langle u_k( ch(\langle w^1_j(s_j): j\in K\rangle/U)_k, ch(\langle w^2_j(s_j): j\in K\rangle/U)_k, &\\
&&\qquad\qquad\qquad ch(r_3)_k,ch(r_4)_k,\dots): k\in K\rangle/U& \text{by def.  $t$}  \\
    & =  &    \langle u_k(w^1_k(s_k),w^2_k(s_k), ch(r_3)_k,ch(r_4)_k,\dots): k\in K\rangle/U&\\
\end{array}
\]
because $\{k\in K : ch(\langle w^i_j(s_j): j\in K\rangle/U)_k =w^i_k(s_k) \}\in U$ ($i=1,2$). A similar proof works for $\sigma\in\tau$.
Hence a homomorphic image of the ultrapower $\mathbf B^K/U$ is isomorphic to a $\mathsf{FCA}$.
 
 By \cite[Lemma 8.2]{BS} we have that the ultrapower $\mathbf B^K/U$ is isomorphic to a subdirect product of $\mathsf{FCA}$s if the family of maps $h_{ch}$ (indexed by choice functions) satisfies the following property: for all distinct $w/U, u/U\in B^K/U$ there exists a choice function $ch$ for which $h_{ch}(w/U)\neq h_{ch}(u/U)$. We are going to prove this fact.
 
 Let $w = \langle w_i:A^\omega\to A:i\in K\rangle$ and  $u = \langle u_i:A^\omega\to A:i\in K\rangle$.
 For every $j\in K$, let $\rho_j\in A^\omega$ such that $w_j(\rho_j)\neq u_j(\rho_j)$ whenever $w_j\neq u_j$. For every $i\in\omega$, let $r_i\in A^K$ such that $r_i(j)=\rho_j(i)$ for all $j\in K$. Define $s\in (A^K/U)^\omega$ as $s_i=r_i/U$ and consider any choice function $ch$ such that $ch(s_i)=r_i$. Then we have
$h_{ch}(w/U)(s)=\langle w_j(\rho_j): j\in K)\rangle/U$ and $h_{ch}(u/U)(s)=\langle u_j(\rho_j): j\in K)\rangle/U$, but
$\{j: w_j(\rho_j)=u_j (\rho_j)\}=\{j: w_j= u_j\} \notin U$.
since $w/U\neq u/U$. It follows that $h_{ch}(w/U)(s)\neq h_{ch}(u/U)(s)$ and then $h_{ch}(w/U)\neq h_{ch}(u/U)$.
 \end{proof}

\begin{corollary} Let $\mathbf B$ be a $\mathsf{FCA}$ with value domain $\mathbf A$ and $U$ be an ultrafilter on $\omega$.
Then there exists a set $J$ of the same cardinality as $B$ such that the ultrapower $\mathbf B^\omega/U$ is isomorphic to a $\mathsf{FCA}$ with value domain
$(\mathbf A^\omega/U)^J$.
\end{corollary}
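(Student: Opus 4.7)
The plan is to combine the subdirect embedding built in Lemma~\ref{lem:ultrafca} with the product-to-FCA construction of Lemma~\ref{lem:subprod}, controlling the cardinality of the index set throughout.

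First I would specialise Lemma~\ref{lem:ultrafca} to $K=\omega$: that lemma supplies, for every choice function $ch:A^\omega/U\to A^\omega$, a homomorphism $h_{ch}:\mathbf B^\omega/U\to\mathbf O^{(\omega)}_{\mathbf A^\omega/U}$, and its proof exhibits, for each pair $w/U\neq u/U$ in $\mathbf B^\omega/U$, a single choice function $ch_{w,u}$ that separates them. Collecting one such $ch_{w,u}$ per distinct pair gives a separating family $\mathcal F$, so $\mathbf B^\omega/U$ embeds subdirectly into $\prod_{ch\in\mathcal F}\mathbf O^{(\omega)}_{\mathbf A^\omega/U}$.

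The next step is to cut $\mathcal F$ down to size $|B|$. The number of distinct pairs in $\mathbf B^\omega/U$ is at most $|B^\omega/U|^2=|B^\omega/U|\le |B|^{\aleph_0}$, which under the natural cardinal-arithmetic regime $|B|^{\aleph_0}=|B|$ (for instance $|B|\ge 2^{\aleph_0}$, or $B$ finite so that the ultrapower collapses) is at most $|B|$. Padding with copies of a fixed choice function then gives an index set $J$ with $|J|=|B|$ exactly, together with a subdirect embedding $\mathbf B^\omega/U\hookrightarrow\prod_{j\in J}\mathbf O^{(\omega)}_{\mathbf A^\omega/U}$.

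Finally, Lemma~\ref{lem:subprod} embeds this product into the full FCA $\mathbf O^{(\omega)}_{(\mathbf A^\omega/U)^J}$ through the rule $\langle\varphi_j\rangle_{j\in J}\mapsto\varphi$ with $\varphi(r)(j)=\varphi_j(\langle r_i(j):i\in\omega\rangle)$, and composing the two embeddings delivers the isomorphism of $\mathbf B^\omega/U$ with a FCA of value domain $(\mathbf A^\omega/U)^J$. The main obstacle I foresee is precisely the cardinality estimate in the middle step: reducing the a~priori huge family of all choice functions to a separating subfamily of size \emph{exactly} $|B|$ relies on the bound $|B^\omega/U|\le|B|$, which is immediate in the principal regimes but has to be argued carefully (or absorbed into appropriate hypotheses on $|B|$) at borderline infinite cardinalities below the continuum.
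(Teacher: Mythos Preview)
Your overall strategy—specialising Lemma~\ref{lem:ultrafca} to $K=\omega$ to obtain a separating family of homomorphisms $h_{ch}$, and then feeding the resulting subdirect product through the construction of Lemma~\ref{lem:subprod}—is exactly what the paper intends; the corollary is stated without proof immediately after those two lemmas, so there is no further argument to compare against.

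You are also right that the cardinality bookkeeping is the only genuine issue, but the sufficient conditions you offer do not quite work. A nontrivial FCA is never finite: as soon as $|A|\ge 2$ the projections $\e_i^\omega$ are pairwise distinct, so $|B|\ge\aleph_0$ always. And $|B|\ge 2^{\aleph_0}$ alone does not force $|B|^{\aleph_0}=|B|$ (take $|B|=\beth_\omega$, or any cardinal of countable cofinality above the continuum). More seriously, when $|B|=\aleph_0$ and $U$ is non-principal the ultrapower $B^\omega/U$ has cardinality $2^{\aleph_0}$, so the separating family you extract from Lemma~\ref{lem:ultrafca}, which is naturally indexed by pairs in $B^\omega/U$, cannot be trimmed to size $|B|$ by any counting argument. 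Your caution in the final paragraph is therefore entirely justified: as literally stated, the conclusion $|J|=|B|$ appears to require either an additional hypothesis (such as $|B|^{\aleph_0}=|B|$) or weakening to $|J|\le|B|^{\aleph_0}$, and the paper does not address this point.
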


\end{document}